\newtheorem{step}{Step}
\newtheorem{question}{Question}
\newtheorem{theorem}{Theorem}
\newtheorem{proposition}[theorem]{Proposition}
\newtheorem{lemma}[theorem]{Lemma}
\newtheorem{example}[theorem]{Example}
 \newtheorem*{mques}{Motivating Link Questions}
\newtheorem{corollary}[theorem]{Corollary}
\theoremstyle{definition}
\newtheorem{definition}[theorem]{Definition}
\theoremstyle{remark}
\newtheorem{remark}[theorem]{Remark}
\def\dfn#1{{\em #1}}
\def\R{\mathbb{R}}
\def\Z{\mathbb{Z}}
\def\C{\mathbb{C}}
\def\tb{Thurston-Bennequin }
\def\leg{\Lambda}
\def\maxtb{\overline{tb}}
\numberwithin{theorem}{section}
\theoremstyle{plain}
\begin{document}

\title{Legendrian Torus and Cable Links}

\author{Jennifer Dalton}
\address{New England Innovation Academy \\ Marlborough \\ Massachusettes}

\author{John B. Etnyre}

\author{Lisa Traynor}

\address{School of Mathematics \\ Georgia Institute
of Technology \\  Atlanta  \\ Georgia \\
}

\email{etnyre@math.gatech.edu}

\address{Department of Mathematics \\ Bryn Mawr College \\ Bryn Mawr \\Pennsylvania}

\email{ltraynor@brynmawr.edu}

\subjclass[2000]{57R17}

\date{\today}

\begin{abstract} 
We give a classification of Legendrian torus links. Along the way, we give the first classification of infinite families of Legendrian links where some smooth symmetries of the link cannot be realized by Legendrian isotopies. We also give the first family of links that are non-destabilizable but do not have maximal \tb invariant and observe a curious distribution of Legendrian torus knots that can be realized as the components of a Legendrian torus link.  This classification of Legendrian torus links leads to a classification of transversal torus links.   

We also give a classification of Legendrian and transversal cable links of knot types that are uniformly thick and Legendrian simple. Here we see some similarities with the classification of Legendrian torus links but also some differences.  In particular, we show that there are Legendrian representatives of cable links of any uniformly thick knot type for which no symmetries of the components can be realized by a Legendrian isotopy, others where only cyclic permutations of the components can be realized, and yet others where all smooth symmetries are realizable.  \end{abstract}

\maketitle

\section{Introduction}

The study of Legendrian knots has gone hand-in-hand with the development of  contact geometry in dimension three, and there have been many classification results for Legendrian knots \cite{ChenDingLi15, EliashbergFraser09, EtnyreHonda01b, EtnyreHonda03, EtnyreHonda05, EtnyreLafountainTosun12, EtnyreNgVertesi13, EtnyreVertesi18, Ghiggini06c, Onaran18, Tosun13}. On the other hand, there have been surprisingly few concerning Legendrian links. Some key questions one might ask about such links are the following.  
\begin{mques} \label{mques} Let ${L}$ be a smooth link type with knot components ${K}_1\cup \ldots \cup {K}_n$.
\begin{enumerate}
\item {\em Realization of $n$-tuples:} \label{mques:real} Given Legendrian representatives $\Lambda_i$ of the knots ${K}_i$, is it possible to construct a Legendrian representative of ${L}$ that realizes the given Legendrian representatives $\Lambda_1, \ldots, \Lambda_n$ of the components? Some weaker sub-questions are:
\begin{enumerate}
\item What can be said about ``classical link geography", i.e.,  which Thurston-Bennequin ($tb$) invariants and rotation numbers ($r$) 
can be realized for the components of a Legendrian representative of ${L}$?
\item Are there Legendrian representatives of ${L}$ that do not destabilize as a link even though individual components do destabilize as knots?
\end{enumerate}
\item {\em Unordered classification:} \label{mques:unorder} How many ways can the smooth unordered  link $L$ be realized with Legendrian components realizing 
specified classical geography?
\item\label{q3} {\em Ordered classification:} \label{mques:order} Given a fixed representative $\Lambda$ of ${L}$, what smooth symmetries of the components of $\Lambda$ can be realized by a Legendrian isotopy?
\end{enumerate}
\end{mques}

\noindent
{\bf History:} One of the first results concerning the ordered classification ``symmetry'' questions was by the third author \cite{Traynor97} who used generating function techniques to show that there was a two component link called the ``helix link" in the 1-jet space of $S^1$ where there two components could be smoothly exchanged, but there was a Legendrian representative of the link where the two components could not be exchanged. This work was extended in  \cite{NgTraynor04, Traynor01}. The first results about Legendrian symmetries for links in $S^3$ were due to Mishachev, who in \cite{Mishachev03} used contact homology to show that the components of the $n$-copy of a Legendrian unknot could only be cyclically permuted, even though they could be smoothly permuted arbitrarily. This was followed by work of Ng \cite{Ng03} who showed that it was not possible to exchange the components of a $2$-copy of the maximal $tb$ invariant Legendrian figure-eight knot, which implies that it is not possible to do {\it any} permutations of the $n$-copy of the Legendrian figure-eight knot with max $tb$.

The first geography realization result was by Mohnke \cite{Mohnke01}, who used bounds on the Thurston-Bennequin invariant of links coming from the HOMFLY-PT polynomial to show that there were no realizations of the Borromean rings or the right handed Whitehead link where all the components had maximal Thurston-Bennequin invariant for their knot types.    

The first classification results for links were given by Ding and Geiges in \cite{DingGeiges07}. There they showed that the ``cable link", this is an unknot together with a $(p,q)$-cable of it, were Legendrian simple --- that is determined by their classical invariants (knot type, Thurston Bennequin invariant, and rotation number). They also showed an analogous result in the 1-jet space of $S^1$. With the exception of the helix link mentioned above,  these knots have no smooth symmetries so do not address Question~\eqref{q3}. In the paper \cite{DingGeiges10}, Ding and Geiges addressed the helix link in the 1-jet space of $S^1$, thus giving the first classification result where one can see topological symmetries of a link that cannot be realized by a Legendrian link in that link type. Recently Geiges and Onaran gave a classification of Legendrian links realizing the positive Hopf link \cite{GeigesOnaran20}. This work very interestingly goes beyond the classification of Legendrian Hopf links in the standard contact structure on $S^3$, but also gives the classification in all contact structures (both the tight one and all overtwisted ones) in $S^3$. 
\vskip .1in
\noindent
{\bf New results:} Below we give a complete classification of Legendrian torus links and prove several results about cable links. The main new phenomena that arise from this work are as follows.  Throughout this paper, $tb$ will refer to the \tb invariant, and $r$ will refer to the rotation number invariant.
 
\begin{enumerate}
\item The components of any Legendrian torus link   must all have a common destabilization.  This provides a strong restriction on the geography of negative torus links with 
knotted components.   
\item All max $tb$ representatives of a negative torus link with knotted components must have components with the same $tb$ and  $r$ invariants, however
 there are max $tb$ representatives of two-component negative torus links with unknotted components where the components have different $tb$ values.
When there are $3$ or more components of these negative torus links with unknotted components, there are representatives   that do not have max $tb$  
yet do not destabilize. These are the first such examples for links. Similar results hold for cable links. 
\item For max $tb$ representatives of negative torus links  with at least $3$ components, only cyclic permutations of the components    can be achieved through Legendrian isotopy. This gives an infinite family of examples where the ordered and unordered classification of Legendrian links differs.

\item Although the components of  a smooth cable link can be arbitrarily permuted via a smooth isotopy, every  uniformly thick, {non-cable} knot type, which includes the Figure-eight knot,   
admits some Legendrian cables (with Legendrian equivalent components) where no non-trivial permutations are possible, others where
only cyclic-permutations are possible, and yet others where all permutations are possible.    
\end{enumerate}

Many of the unordered torus link classification arguments parallel the torus knot classification arguments
of \cite{EtnyreHonda01b}, however the unordered classification of negative torus links  with unknotted components, $(n,-nq)$-torus links, requires new ideas. 
The most challenging portion of the torus link classification is to understand the restrictions on permutations of the components of negative torus links with maximal $tb$ invariant.  
Although this restriction was previously established for $(n,-nq)$-torus links, \cite{Mishachev03},
our analysis of pre-Lagrangian tori and annuli in terms of convex surface theory gives a 
geometric explanation for why non-cyclic permutations are impossible for all $(np, -nq)$-torus links. Moreover these convex surface arguments can also be adapted to
the setting of Legendrian cable links and allows for understanding when even cyclic permutations are not possible.

%Our analysis of the restrictions on permutations of the components of torus links gives a geometric explanation for the previously known obstructions to such permutations that were known in a few very special cases, and the restrictions for cable links was previously unknown. This explanation is in terms of pre-Lagrangian tori and annuli, see Theorem~\ref{thickenedtori} and Section~\ref{list}. We note that this is the most novel part of the paper. Much of the unordered classification follows fairly standard convex surface theory techniques. In particular, most readers of \cite{EtnyreHonda01b} could easily produce the unordered classification of torus links with knotted components though the classification with unknotted components is new despite using fairly standard convex surface theory. 
%We include sketches of the arguments here as they are short and help make the paper reasonably self-contained. However, the ordered classification requires one to interpret rigidity phenomena of pre-Lagrangian tori and annuli in terms of convex surface theory and is significantly new. In addition, finding a general criteria for cable links to have no Legendrian symmetries seems to be completely new as the only previously known example was for special cables of the figure eight knot mentioned above and there was no clear geometric idea how to generalize that example as it came down to intricacies in the algebra of its contact homology. 

\subsection{Torus Links}

Smooth torus links are links that, after a smooth isotopy, lie on an unknotted torus in $\mathbb R^3$; they form an important family of smooth links.  
Every torus link has 
components that are all unknots or all topologically equivalent torus knots.  Moreover, it is not hard to see that it is
possible to arbitrarily permute the components of any smooth torus link with a smooth isotopy.  
We will address the classification of Legendrian torus links.  First, it is useful to recall what is known about the
classification of Legendrian unknots and Legendrian torus knots.

Oriented Legendrian unknots were classified by Eliashberg and Fraser in \cite{EliashbergFraser98}; 
some alternate proofs can be found in \cite{EtnyreHonda01b, GeigesOnaran15}.
They found that unknots are simple in the sense that they are classified by their $tb$ and $r$ invariants. In particular,
there is a unique Legendrian unknot with $tb = -1$ and $r = 0$, and all other unknots are obtained
by stabilizations of this one with maximal $tb$ invariant. These results were obtained using an
intricate analysis of the characteristic foliations of Seifert disks for the knots.

Oriented Legendrian torus knots were classified by the second author and Honda in
\cite{EtnyreHonda01b} using the technique of convex surfaces. Since the $(p, q)$-torus link agrees
with the $(q, p)$-torus link and with the $(-p, -q)$-torus link, it suffices to look at
$(p, \pm q)$-torus links where $q \geq p \geq 1$.  An unknot arises when  $p = 1$; nontrivial torus knots correspond to $p > 1$ and
$\gcd(p,q) = 1$.  As is the case for unknots, Legendrian torus knots are classified by their
topological knot type, and their $tb$ and $r$ invariants. However, in \cite{EtnyreHonda01b} it is shown that the possible
range of the classical invariants is more intricate and has a different flavor depending on whether
one is looking at positive or negative torus knots. In particular:
\begin{enumerate}
\item When considering Legendrian torus knots that are topologically $(p, +q)$-torus knots, there is a unique one with max $tb$ invariant of $pq - p -q$; this max $tb$ representative will have $r = 0$, and all other representatives are stabilizations of this one with max $tb$;
\item When considering Legendrian torus knots that are topologically $(p, -q)$-torus knots, if 
$-m-1 < -q/p < -m$, $m \in \mathbb Z^{+}$, 
then there will be $2m$ representatives with 
$\max tb = -pq$;  each of these $\max tb$ represetntatives is distinguished by their $r$ invariant, which is an element of the set
\[
\{ \pm (q - p - 2pk) : k \in \Z, \quad 0 \leq k \leq m-1 \},
\]
and all other representatives are stabilizations of one with max $tb$.
\end{enumerate}
 
 It is common to represent all Legendrian representatives of a fixed topological type by a mountain range,
 which consists of  
 an infinite graph with vertices labeled by pairs of integers.  Every vertex of the mountain range represents a
unique Legendrian  knot with vertex labels giving the knot's $r$ and $tb$ invariants.
 Two vertices are connected if and only if the corresponding knots differ by a positive or
negative stabilization: a positive stabilization will lower the \tb invariant and raise the
rotation number while a negative stabilization will lower the \tb invariant and lower the rotation
number.
We can rephrase the above classification results by saying that we understand the mountain range of the unknot and all torus knots.
 For example, Figure \ref{fig-tree3,2} represents all Legendrian $(2, +3)$-torus knots. All
positive torus knots will have a mountain range  graph of this shape, however the ``peak" of the graph will occur at $tb
= pq - p -q$. In contrast, nontrivial negative torus knots will always have ``multiply peaked" mountain ranges.
Figure~\ref{fig-tree-7,3} gives the mountain range representing all Legendrian $(3, -7)$-torus knots.
    
\begin{figure}[ht]
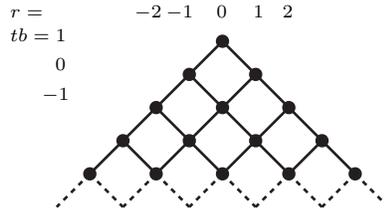

\tiny
\begin{overpic} 
{fig/tree3,2}
\put(-5,72){$r=$}
\put(42,72){$-2$}
\put(54,72){$-1$}
\put(73,72){$0$}
\put(87,72){$1$}
\put(98,72){$2$}
\put(-5,63){$tb=1$}
\put(12,52){$0$}
\put(7,41){$-1$}
\end{overpic}
\caption{The mountain range representing all possible Legendrian $(2, +3)$-torus knots.}
\label{fig-tree3,2}
\end{figure}    
    
\begin{figure}[ht]
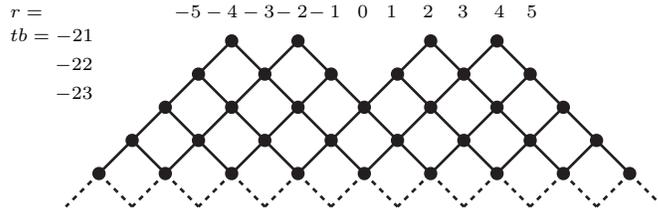

\tiny
\begin{overpic} 
{fig/tree-7,3}
\put(-10,72){$r=$}
\put(52,72){$-5 -4 -3 \!-2 \!-1 \;\;\: 0 \quad 1\quad\,\, 2\quad \; 3\quad  \,\, 4 \quad \,5$}
\put(-10,63){$tb=-21$}
\put(7,52){$-22$}
\put(7,41){$-23$}
\end{overpic}
  \caption{The mountain range representing all possible Legendrian $(3, -7)$-torus knots.}
  \label{fig-tree-7,3}
\end{figure}

In this paper, we classify both ordered and unordered Legendrian torus links: $(np, \pm nq)$-torus links
where  $q \geq p \geq 1$, $\gcd(p,q) = 1$, and $n \geq 2$.
Since an $(np, \pm nq)$-torus link has $n$ components each of which is a $(p, \pm q)$-torus knot, a useful way
to visualize an $(np, \pm nq)$-torus link is as an $n$-tuple of vertices on the mountain range that represents the
possible $(p,\pm q)$-torus knots.   
    {We will sometimes denote an $(np, \pm nq)$-torus link as an $n(p,\pm q)$-torus link.}
 
The following theorem answers Motivating Link Questions~(\ref{mques:real}) and (\ref{mques:unorder}) in the case of torus links. 
\begin{theorem} \emph{(Realization and Unordered Legendrian Torus Link Classification)}\label{unorderedclass}
Consider two oriented Legendrian torus links $L$ and $L'$ that are topologically
equivalent to the $(np, \pm nq)$-torus link, where $q \geq p \geq 1$, $\gcd(p,q) = 1$, and $n \geq 2$.
 If labels can be given to the components $L = \amalg_{i=1}^{n} \Lambda_{i}'$, $L' = \amalg_{i=1}^{n} \Lambda_{i}'$ so that $tb(\Lambda_i) = tb(\Lambda'_i)$ and 
$r(\Lambda_i) = r(\Lambda'_i)$, $i = 1, \dots, n$, then there exists a contact isotopy
taking $L$ to $L'$ (but not necessarily $\Lambda_i$ to $\Lambda'_i$). Moreover, the precise
range of the classical invariants is given as follows:
\begin{enumerate}
\item For any  positive torus link, namely any $(np, +nq)$-torus
  link, there exists a unique Legendrian representative  
  with maximal $tb$ invariant; for such a link all components will have $tb = pq-p-q$. For 
  any positive torus link with non-maximal $tb$ invariant, the components can be destabilized to obtain the one with maximal $tb$
  invariant.
   \item For any negative torus link with knotted components,  namely any $(np, -nq)$-torus
  link with $p > 1$, if $-m-1 < \frac{-q}{p} < -m$, $m \in \mathbb Z^{+}$, 
  there are $2m$ Legendrian realizations of the $(np, -nq)$-torus link with maximal $tb$
  invariant; all components of a maximal version will have $tb = -pq$ and the same rotation number.
  For any negative torus link with non-maximal $tb$ invariant, the components can be destabilized to obtain the one with maximal $tb$
  invariant.
\item For negative torus links with unknotted components, namely $(n, -nq)$-torus links, there is a set of $ \frac{q(q+1)}{2}$ nondestabilizable Legendrian realizations
  consisting of the $n$-copy of an unknot with $tb = -q$ and, for 
  $0 < t < q$, the $t$ Legendrian twist of the $n$-copy
  of an unknot with $tb = -q + t$. The $n$-copy will have maximal $tb$ invariant while
  the Legendrian twist versions will have maximal $tb$ invariant if and only if $n = 2$. Any other
  Legendrian $(n, -nq)$-torus link will destabilize to one in this nondestabilizable set.
\end{enumerate}
\end{theorem}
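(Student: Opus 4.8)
The plan is to reduce the classification to the theory of tight contact structures on solid tori and thickened tori, following the convex-surface strategy used by Etnyre--Honda for torus knots and by Ding--Geiges for cable links, with the $n=1$ classifications of Eliashberg--Fraser (unknots) and Etnyre--Honda (torus knots) as base cases. Write $S^{3} = V_{1}\cup_{T} V_{2}$ as a union of two solid tori glued along a standardly embedded torus $T$ carrying the link, normalizing coordinates so that the meridians of $V_{1}$ and $V_{2}$ have slopes $0$ and $\infty$ and the link components are curves of slope $q/p$ in the positive case and $-q/p$ in the negative case. The first step is a normalization lemma: an arbitrary Legendrian torus link can be made, by a contact isotopy, to have every component a Legendrian ruling curve or Legendrian divide of a single convex torus $T'$ isotopic to $T$. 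For one component this is standard; the content is that since the components are disjoint and mutually parallel on $T$, the Legendrian realization principle together with edge-rounding lets one place all of them simultaneously on one convex torus, after first thickening or thinning so that $T'$ has enough dividing curves. Having done this, the dividing slope $s$ and dividing number of $T'$ determine the $tb$ of each component, and the problem becomes: classify the tight contact structures on $V_{1}$ and on $V_{2}$ with convex boundary of slope $s$, together with the admissible positions of the $n$ components on $T'$.

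For positive torus links and for negative torus links with knotted components, the argument parallels the knot case. Maximizing $tb$ pins down $s$ and forces the complementary solid torus on the relevant side to carry a constrained tight structure; a Farey-graph (continued-fraction) count then reproduces the knot answer --- a unique maximal representative in the positive case, and $2m$ of them in the negative knotted case --- because fixing $n$ disjoint rulings or divides rather than one does not enlarge the set of compatible tight structures on $V_{1}\cup V_{2}$. That all components of a fixed maximal representative share the same $tb$ and the same $r$ is then forced: two of the components cobound an annulus lying on a convex torus, and since $c_{1}$ of a tight contact structure on $S^{3}$ vanishes, the rotation numbers of the two components agree (and equality of $tb$ is built into the maximal model). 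For non-maximal representatives one exhibits a bypass on a surface bounded by one of the components and reads off a destabilization; iterating, every representative destabilizes to a maximal one. The only place one must be careful is that this bypass is genuinely present and cannot be ``absorbed'' by an unseen thickening of a complementary solid torus --- which is exactly the uniform-thickness property of the relevant solid tori.

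The negative torus links with unknotted components are where the new phenomena occur and where the bookkeeping is genuinely different. Here the complementary solid tori need not support product $T^{2}\times I$ layers, so the $n$ components can be spread across several nested convex tori whose dividing slopes differ; this is precisely what permits a nondestabilizable representative whose components have distinct $tb$ invariants, a phenomenon impossible for knotted components. One organizes the nondestabilizable configurations by a twisting parameter $t$ with $0\le t< q$ and, for each $t$, the finitely many admissible ways to distribute the $n$ divides among the nested tori, recognizing the $t=0$ case as the $n$-copy of the $tb=-q$ unknot and the general case as a $t$-Legendrian twist of the $n$-copy of the $tb=-q+t$ unknot. A count shows these total $\tfrac{q(q+1)}{2}$, and convex-surface obstructions show both that these representatives are pairwise non-isotopic and that no representative outside this list is nondestabilizable: an isotopy between two of them, or a failure to destabilize outside the list, would thicken one of the complementary solid tori past its maximal thickness.

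Finally, the unordered/relabeling statement follows once one knows that every Legendrian torus link destabilizes to a member of the explicit list above, that distinct list members have distinct multisets of component invariants (or are otherwise rigid), and that stabilizations can be propagated back up canonically --- stabilizing a component inside its standard neighborhood is well-defined up to Legendrian isotopy of the whole link. Hence two representatives with the same collection of component $(tb,r)$ pairs share a common nondestabilizable ancestor and are therefore contact isotopic, after relabeling. I expect the two main obstacles to be the normalization step in the presence of many components --- arranging all components on a single convex torus while keeping precise control of the tight structures on the complementary solid tori --- and the enumeration in the unknotted case, where one must establish both the existence of the $\tfrac{q(q+1)}{2}$ configurations and the nonexistence of any further nondestabilizable representative.
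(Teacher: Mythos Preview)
Your overall strategy --- normalize onto a convex Heegaard torus, classify via tight structures on the complementary solid tori, then destabilize to a finite list --- is the paper's strategy, and the positive and negative-knotted cases go essentially as you outline. But there are two genuine gaps.

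First, your invocation of the ``uniform-thickness property of the relevant solid tori'' to guarantee bypasses is misplaced. The solid tori here are neighborhoods of the unknot, and the unknot is \emph{not} uniformly thick; the paper never uses uniform thickness for torus links. The destabilization argument (paper's Lemmas~\ref{posdestab}, \ref{negtorusstab}, \ref{destab}) instead uses the classification of tight contact structures on solid tori to locate, disjoint from the original torus $T$, a parallel convex torus $T'$ with dividing slope exactly $-q/p$ (or $-1$ in the positive case), and then applies the Imbalance Principle on an annulus from a component of $L$ to a ruling or divide of $T'$. No thickening hypothesis is needed. Relatedly, in the unknotted negative case you are missing the key input that gives the sharp bound $\sum tb(\Lambda_i)\le -nq$: the individual bound $tb\le -1$ is far too weak, and the paper instead appeals to Epstein's Kauffman-polynomial bound $tb(L)\le -n^2q$ for even $n$, together with a bootstrapping argument for odd $n$ (Proposition~\ref{maxtb}).

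Second, and more seriously, your final step --- ``two representatives with the same collection of component $(tb,r)$ pairs share a common nondestabilizable ancestor'' --- does not follow and is where the real work lies. In the negative knotted case there are $2m$ distinct maximal-$tb$ links, and a given non-maximal link may destabilize to several of them; two links $L,L'$ with matching invariants can destabilize to \emph{different} peaks. What must actually be shown is the converse direction: that distinct nondestabilizable links become Legendrian isotopic after the specific stabilizations that equalize their invariants. The paper does this by a concrete ``valley'' argument (Lemmas~\ref{negstabtoiso} and~\ref{integer-simple}): one realizes $S_{-,\text{all}}^e(L)$ and $S_{+,\text{all}}^e(L')$ as ruling curves on the boundary of the \emph{same} standard neighborhood of a Legendrian unknot with carefully chosen $(tb,r)$, forcing isotopy. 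Your proposal contains no mechanism for this merging, and without it Legendrian simplicity does not follow. (Your description of the unknotted nondestabilizable list is also off: the components are not ``spread across several nested tori''; rather, one component $\Lambda_1$ with $tb=-q+t$ is the core, and the remaining $n-1$ components all sit as ruling curves on $\partial N(\Lambda_1)$. The $q(q+1)/2$ count then comes from the choices of $t$ and $r(\Lambda_1)$, not from distributing divides.)
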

\noindent
This theorem follows from Theorem~\ref{thm:p-unorder-class} for the Case~(1), from Theorem~\ref{oclass} and Lemma~\ref{negmaxtb} for Case~(2), and Theorem~\ref{uoclass} for Case~(3). 
The $n$-copy of a Legendrian link will be defined in Section~\ref{generalLeg}, and the $t$ Legendrian twist of the $n$ copy will be defined in Definition~\ref{t-twist}.
 
We will  represent the components of Legendrian $(np, \pm nq)$-torus link by circling $n$ vertices on the mountain range that represents
all possible  Legendrian $(p, \pm q)$-torus knots.  
\begin{figure}[ht]
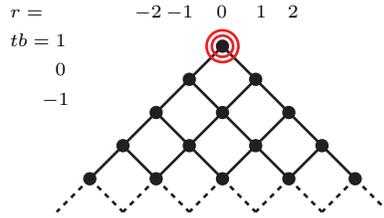

\tiny
\begin{overpic} 
{fig/tree6,4}
\put(-5,74){$r=$}
\put(42,74){$-2$}
\put(54,74){$-1$}
\put(73,74){$0$}
\put(88,74){$1$}
\put(100,74){$2$}
\put(-5,63){$tb=1$}
\put(12,52){$0$}
\put(7,41){$-1$}
\end{overpic}
  \caption{The two circles represent the unique unordered Legendrian $2(2,+3)$-torus link with maximal $tb$ invariant.  
    In fact, any two vertices on this mountain range for the $(2,+3)$-torus knot represent a  Legendrian $2(2,+3)$-torus link.}                    
  \label{fig-tree6,4}
\end{figure}   
 
By Theorem~\ref{unorderedclass}, an $n$-tuple of vertices on the mountain range of the $(p, \pm q)$-torus
knot 
can represent at most one unordered $(np, \pm nq)$-torus link.
Figure \ref{fig-tree6,4} represents the unique Legendrian $(4, +6)$-torus link with maximal $tb$ invariant. In
fact, by the above theorem, there is a one-to-one correspondence between $(4,+6)= 2(2,+3)$-torus links and
pairs of vertices on the $(2, +3)$-mountain range. However, for negative torus links, the above theorem
indicates that there are more restrictions on the $n$-tuples of vertices that correspond to torus
links. For example, Figure~\ref{fig-nonexist-14,6} shows some pairs of vertices that do {\it not}
\begin{figure}[ht]
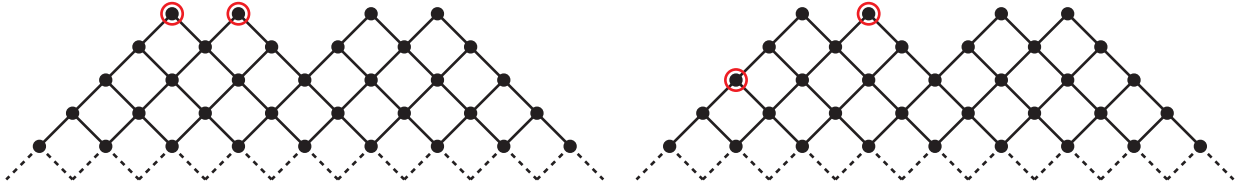

\tiny
\begin{overpic} 
{fig/nonexist-14,6}
\end{overpic}
  \caption{It is not possible to construct Legendrian $2(3,-7)$-torus links with components as
    indicated in these mountain ranges.}                      
  \label{fig-nonexist-14,6}
\end{figure}   
correspond to a $2(3,-7)$-torus link. When the components are nontrivial knots ($p > 1$),   Legendrian $(np,
-nq)$-torus links with maximal $tb$ invariant correspond to a ``peak" of the $(p, -q)$-mountain range chosen $n$ times;
if the components are unknots, the nondestabilizable  Legendrian representatives of $(n,-nq)$-torus links
correspond to a vertex with $tb=-q$
chosen $n$ times or $n$ vertices all with the same rotation number consisting of one vertex with $tb=(-q + t)$ 
and $(n-1)$ vertices with $tb=(-q -t)$. This ``split level" representative
will have maximal $tb$ invariant only when $n = 2$. See Figures~\ref{fig-tree-6,2max} and~\ref{fig-2maxtb}.
\begin{figure}[ht]
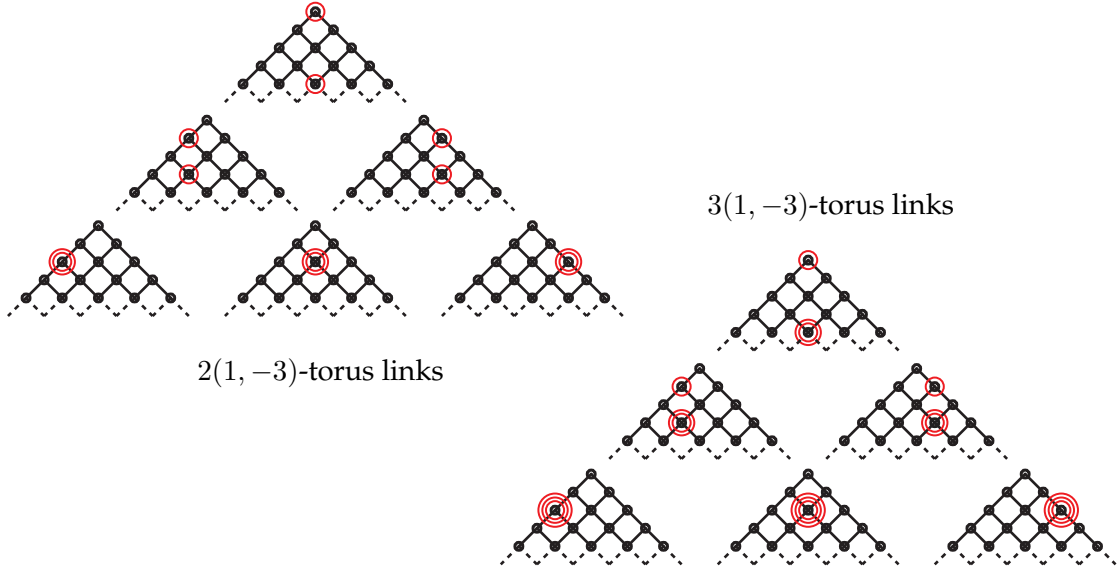

\begin{overpic} 
{fig/tree-6,2max2}
 \put(72,  70){$2(1,-3)$-torus links}
\put(265, 133){$3(1,-3)$-torus links}
\end{overpic}
  \caption{The $6= \frac{3(4)}{2}$ Legendrian $2(1,-3)$-torus links with max $tb$ invariant, and the $6$ non-destabilizable $3(1,-3)$-torus knots, three of which have max $tb$ invariant.  }                   
  \label{fig-tree-6,2max}        
\end{figure}

\begin{figure}[ht]
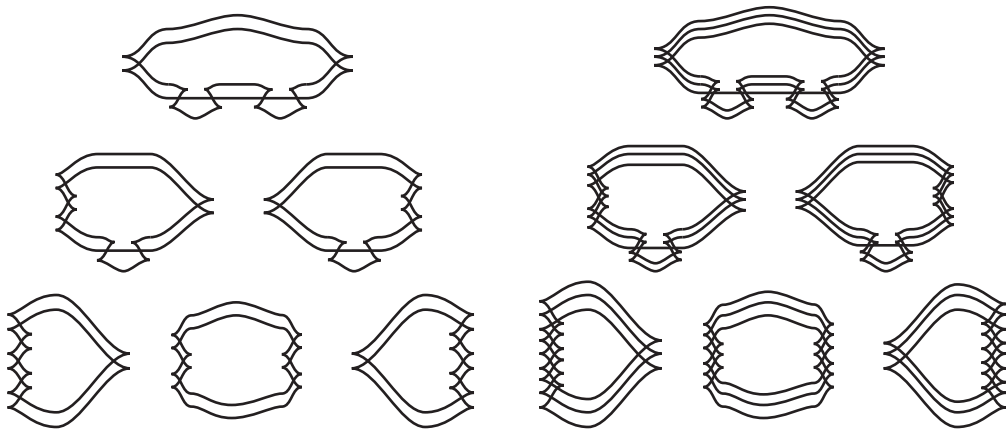

\small
\begin{overpic} 
{fig/LinkEx2}
\end{overpic}
        \caption{Max $tb$ invariant Legendrian $2(1, -3)$-torus links on the left and the non-destabilizable $3(1,-3)$-torus links on the right. All components are oriented {clockwise}.}
        \label{fig-2maxtb}
\end{figure}  

Now we address the symmetry question in the Motivating Link Question~(\ref{mques:order}). When considering the ordered classification of torus links,
notice that the components inherit a natural cyclic ordering from the underlying torus. The next
theorem tells us that for positive torus links, the unordered and ordered classifications agree, but
in the ordered classification of negative torus links, the cyclic order of particular components
must be preserved. This result extends the findings of Mishachev \cite{Mishachev03} for $(n,-nq)$-torus links.

\begin{theorem}  \emph{(Ordered Legendrian Torus Link Classification)} \label{orderedclass}
Consider two ordered, oriented Legendrian torus links $L = (\Lambda_1, \dots, \Lambda_n)$ and $L' =
(\Lambda'_1, \dots, \Lambda'_n)$ that are topologically equivalent to the $(np, \pm nq)$-torus
link,  where  $q \geq p \geq 1$, $\gcd(p,q) = 1$, and $n \geq 2$. Suppose $tb(\Lambda_i) = tb(\Lambda'_i)$ and $r(\Lambda_i) = r(\Lambda'_i)$, $i = 1, \dots, n$.
\begin{enumerate}
\item If $L$ and $L'$ are positive torus links, then there exists a contact isotopy taking
  $L$ to $L'$ such that $\Lambda_i$ is mapped to $\Lambda'_i$.
\item If $L$ and $L'$ are negative torus links, then there exists a contact isotopy
  taking $L$ to $L'$ such that $\Lambda_i$ is mapped to $\Lambda'_i$ if and only if the cyclic
  ordering of the components with $tb = -pq$ is preserved.
\end{enumerate}
\end{theorem}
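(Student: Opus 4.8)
I will deduce the ordered classification from the unordered one (Theorem~\ref{unorderedclass}): it suffices to determine, for a fixed Legendrian $(np,\pm nq)$-torus link $L$, which permutations of the components are realized by Legendrian isotopies of $L$ to itself, since a standard transfer argument (two unordered isotopies between links with the same component-wise $tb$ and $r$ differ, in the induced permutation of components, by such a realizable self-permutation) then converts this into the statement about ordered links. For the normal forms I will use that, after a contact isotopy, every Legendrian $(np,\pm nq)$-torus link sits as a union of Legendrian ruling curves on a nested family of convex tori parallel to the Heegaard torus $T$ of $S^3$, with each component either lying on a maximally thickened pre-Lagrangian torus — when it realizes the relevant maximal $tb$ — or inside a strictly thinner solid-torus neighbourhood — when it is a stabilization; this is exactly the structure produced in the proofs feeding into Theorem~\ref{unorderedclass}. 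Two mechanisms are common to all cases. First, if a family of parallel Legendrian ruling curves together fills a convex (or pre-Lagrangian) torus $\Sigma$, then the ``deck'' rotation of a standard neighbourhood $\Sigma\times I$ that shifts the rulings is a contact isotopy of $S^3$, so a cyclic permutation of those curves is always realizable. Second, by the thickened-torus analysis of Theorem~\ref{thickenedtori}, a component $\Lambda_j$ whose standard-neighbourhood complement in the other components is non-minimal can be pushed off its convex torus across a bypass, carried through the extra room in the complementary region, and reinserted on the far side of any chosen component, with all other components held fixed; iterating, such a $\Lambda_j$ can be moved to any position relative to the others.

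In the positive case, the structure of the relevant thickened-torus regions from the proof of Theorem~\ref{unorderedclass}(1) gives that for \emph{every} component $\Lambda_j$ of a positive Legendrian torus link the complement in the others has room to thicken; hence the ``carry past'' mechanism is available for every component, and composing such moves realizes every transposition of cyclically adjacent components, hence all of $S_n$. So every permutation of the components is realizable, which together with Theorem~\ref{unorderedclass} proves part~(1).

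For the ``if'' direction of the negative case, let $S\subseteq\{1,\dots,n\}$ be the set of components with $tb=-pq$. By Theorem~\ref{unorderedclass}(2)–(3) and Lemma~\ref{negmaxtb} the components in $S$ all lie on the essentially unique maximally thickened pre-Lagrangian torus $\Sigma$, while every component with $tb>-pq$ is strictly stabilized, so its complement in the others is non-minimal and it can be carried to any position by the second mechanism. Thus, given a permutation $\sigma$ whose restriction to $S$ is a rotation of the cyclic order that $L$ induces on $S$, one first slides every strictly-stabilized component out of a collar of $\Sigma$, then realizes the required rotation of the components of $S$ by the deck isotopy of $\Sigma\times I$, and then reinserts each stabilized component at its $\sigma$-target; this exhibits $\sigma$ as a realizable self-permutation. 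The unknot case $p=1$ is covered verbatim, with $S$ empty for the twisted representatives of Definition~\ref{t-twist} and $S=\{1,\dots,n\}$ for the $n$-copy.

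The remaining, and main, point is the ``only if'' direction of part~(2): a Legendrian isotopy cannot change the cyclic order that $L$ induces on $S$. The invariant is the pre-Lagrangian torus $\Sigma$ itself. One shows that for any negative Legendrian torus link the union of the maximal-$tb$ components lies on a pre-Lagrangian (equivalently, maximally thickened convex) torus $\Sigma$ that is unique up to a contact isotopy fixing the link setwise, and that this uniqueness is natural in one-parameter families, so that any Legendrian isotopy from $L$ to $L'$ can be followed by an ambient contact isotopy carrying the torus $\Sigma$ for $L$ to the torus $\Sigma'$ for $L'$. Since the components in $S$ are precisely the leaves of the (closed, linear) characteristic foliation of $\Sigma$ that belong to the link, and any homeomorphism between two such foliated tori must act on the leaves in each bunch by a rotation, the cyclic order of the components in $S$ is preserved; with the transfer argument this finishes part~(2). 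I expect the extraction and propagation of this rigid pre-Lagrangian torus through an arbitrary Legendrian isotopy — equivalently, proving and parametrising in one-parameter families the rigidity of pre-Lagrangian tori and annuli underlying Theorem~\ref{thickenedtori} in convex-surface-theoretic terms — to be the crux of the argument, and precisely the reason part~(2) is genuinely harder than part~(1): for positive torus knots the maximal-$tb$ complement still admits thickenings, so no such rigid $\Sigma$ exists and the obstruction disappears, in keeping with part~(1) and with the discussion in Section~\ref{list}.
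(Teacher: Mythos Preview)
Your overall strategy---reduce to determining which permutations of a fixed $L$ are realizable, then split into realizable permutations (flexibility) and obstructions (rigidity)---matches the paper. The positive case and the ``if'' direction of the negative case are handled similarly in spirit, though your write-up has some errors: you say ``every component with $tb>-pq$ is strictly stabilized,'' but when $p>1$ no component has $tb>-pq$, and when $p=1$ the component with $tb=-q+t>-q$ in a twisted $n$-copy is precisely the one that is \emph{not} a stabilization. Also, your citation of Theorem~\ref{thickenedtori} for the ``carry past'' mechanism is backwards: that theorem is the obstruction result, not a flexibility tool. The paper's actual mechanism for moving ruling curves past each other (Theorem~\ref{oposlink}, Lemma~\ref{stab-perm}) uses the $I$-invariant neighbourhood of a convex torus directly.

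The real gap is in your ``only if'' direction. You propose to show that the pre-Lagrangian torus $\Sigma$ carrying the maximal-$tb$ components is unique up to contact isotopy fixing the link, and that this uniqueness persists in one-parameter families. But this is exactly the statement the paper does \emph{not} prove and explicitly flags as open in Remark~\ref{pre-lag-questions}: whether two pre-Lagrangian tori in a basic slice sharing common leaves must be isotopic. The paper circumvents this by passing from the pre-Lagrangian torus to a convex torus $T_1'$ together with a non-rotative neighbourhood and a complementary annulus (Lemma~\ref{prelagmodel}, Lemma~\ref{claim2}), which encodes the cyclic order combinatorially. The rigidity then comes from Honda's minimality result for $S^1$-invariant contact structures (Proposition~\ref{KoS1invariant}) applied to dividing curves on a punctured surface, after a delicate discretization-of-isotopy and disk-equivalence argument to propagate the ordering through the isotopy. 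Your proposal correctly identifies where the difficulty lies, but the route you suggest---uniqueness and one-parameter naturality of $\Sigma$---is not known, and the paper's actual argument is substantially different and more combinatorial.
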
  

See Figures~\ref{fig-perm-21,9} and \ref{fig-perm-8,4} for illustrations of this theorem.

\begin{figure}[ht]
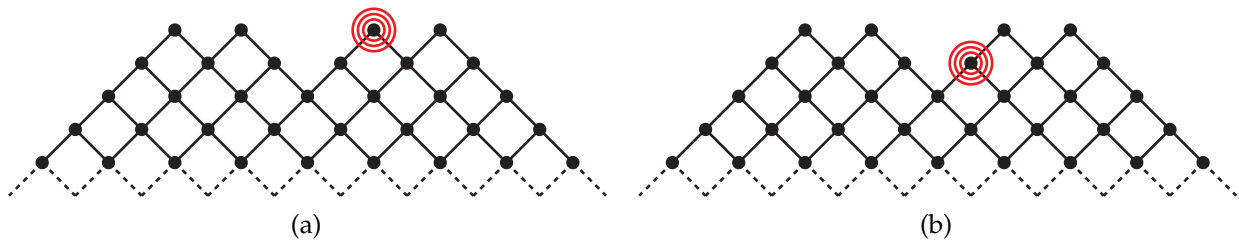

\small
\begin{overpic} 
{fig/perm-21,9}
\put(107,4){(a)}
\put(345,4){(b)}
\end{overpic}
  \caption{Two examples of unordered Legendrian $3(3,-7)$-torus links.  When considered as 
    ordered links, in (a) noncyclic permutations produce nonequivalent ordered oriented Legendrian  
    links, while in (b) all  permutations produce equivalent ordered oriented  Legendrian links.}                 
  \label{fig-perm-21,9}    
\end{figure}  

\begin{figure}[ht]
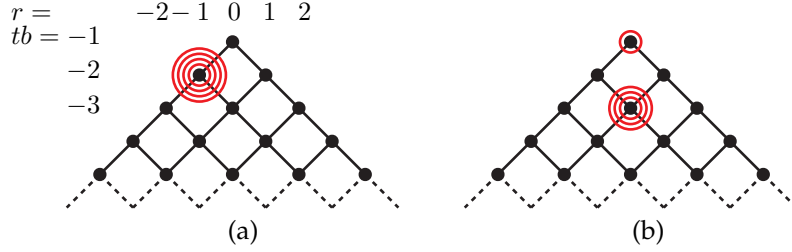

\small
\begin{overpic} 
{fig/perm-8,4}
\put(75,4){(a)}
\put(228,4){(b)}
\put(-7,87){$r=$}
\put(40,87){$-2 \!-1 \quad\!\! 0 \;\;\; 1\;\;\; 2$}
\put(-7,78){$tb=-1$}
\put(14,65){$-2$}
\put(14,52){$-3$}
\end{overpic}
  \caption{Two examples of unordered Legendrian $4(1,-2)$-torus links.  When considered as 
    ordered links, in (a) noncyclic permutations produce nonequivalent ordered oriented Legendrian  
    links, while in (b) all  permutations (preserving $tb$ and $r$) produce equivalent ordered 
    oriented Legendrian links.}                    
  \label{fig-perm-8,4}
\end{figure}

From this classification of Legendrian torus links, we can deduce the classification of transversal torus links.
Recall that a  \dfn{ smooth knot type is transversely simple} if  any two transversal representatives $\mathcal T$ and $\mathcal T'$ with the same
self-linking number are transversally isotopic.  
A \dfn{ smooth knot type is negatively stable simple} if for any two Legendrian representatives $\Lambda$ and $\Lambda'$
satisfying $tb(\Lambda) - r(\Lambda) = tb(\Lambda') - r(\Lambda')$, there exists $n_1$ and $n_2$ such that
$(S_-)^{n_1}(\Lambda)$ is Legendrian isotopic to $(S_-)^{n_2}(\Lambda')$, where $S_\pm(\Lambda)$ denote the $\pm$-stabilization of $\Lambda$.  
  In fact,  
it is shown in \cite{EpsteinFuchsMeyer01, EtnyreHonda01b} a knot type is negatively stable simple if and only if it is
transversally simple.
Analogously, a \dfn{ smooth link type is transversally simple}  if for any two transversal representatives
$(\mathcal T_1, \dots,  \mathcal T_m)$ and $(\mathcal T_1', \dots, \mathcal T_m')$ with corresponding self-linking numbers, 
$$ \ell(\mathcal T_i) = \ell(\mathcal T_i'), \quad \text{for all }i,$$
there exists a transversal isotopy from $(\mathcal T_1, \dots,  \mathcal T_m)$ to $(\mathcal T_1', \dots, \mathcal T_m')$. 
We will say that a \dfn{ smooth link type is negatively stable simple} if for any two Legendrian representatives
$(\Lambda_1, \dots, \Lambda_m)$ and $(\Lambda_1', \dots, \Lambda_m')$ with
$$ tb(\Lambda_i) - r(\Lambda_i) = tb(\Lambda_i') - r(\Lambda_i'), \quad \text{for all }i,$$
there exists $n_1, \dots, n_m$ and $n_1', \dots, n_m'$ such that
$\left( (S_-)^{n_1}(\Lambda_1), \dots, (S_-)^{n_m}(\Lambda_m) \right)$ is Legendrian isotopic to 
$\left( (S_-)^{n_1'}(\Lambda_1'), \dots, (S_-)^{n_m'}(\Lambda_m') \right)$.  Observe that Theorems~\ref{unorderedclass} and \ref{orderedclass} show that
all torus links are negatively stable simple.
 The proof in
\cite{EtnyreHonda01b} can easily be extended to show that if a link type is negatively stable simple, then it is
transversally simple.

\begin{theorem}[Ordered Transversal Torus Link Classification] For $q \geq p \geq 1$, $\gcd(p,q) = 1$, and $n \geq 2$,
 there is a unique transversal $(np, \pm nq)$ torus link that cannot be destabilized;
all components of this nondestabilizable link have self-linking number $\pm q(p-1) - p$. All other
transversal  representatives of this torus link destabilize to this one.   In particular, two transversal $(np, \pm nq)$-torus links are determined by the self-linking
numbers of their components. 
 For all transversal torus links, all (self-linking number preserving) permutations of the components  
 are realizable through transversal isotopy.\hfill \qed
\end{theorem}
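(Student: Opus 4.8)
The plan is to derive the entire statement from the Legendrian classification (Theorems~\ref{unorderedclass} and~\ref{orderedclass}) together with the standard dictionary relating Legendrian and transversal links; the conceptual content is light and no new geometric input is needed.

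\emph{Reducing to transversal simplicity.} As noted just above the statement, Theorems~\ref{unorderedclass} and~\ref{orderedclass} show that every $(np,\pm nq)$-torus link is negatively stable simple, and the argument of~\cite{EtnyreHonda01b} (see also~\cite{EpsteinFuchsMeyer01}), extended from knots to links, shows that a negatively stable simple link type is transversally simple. First I would record the two immediate consequences: any two transversal $(np,\pm nq)$-torus links with equal lists of component self-linking numbers are transversally isotopic, which is the ``determined by self-linking numbers'' assertion; and, given such a link $(\mathcal T_1,\dots,\mathcal T_n)$ and a permutation $\sigma$ with $\ell(\mathcal T_{\sigma(i)})=\ell(\mathcal T_i)$ for all $i$, applying transversal simplicity to the ordered links $(\mathcal T_1,\dots,\mathcal T_n)$ and $(\mathcal T_{\sigma(1)},\dots,\mathcal T_{\sigma(n)})$ produces a transversal isotopy carrying $\mathcal T_i$ to $\mathcal T_{\sigma(i)}$, i.e.\ realizing $\sigma$. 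This settles the last two sentences.

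\emph{Producing the maximal representative.} Next I would recall the Legendrian--transversal dictionary: every transversal link is the positive transversal push-off of a Legendrian link $\Lambda=(\Lambda_1,\dots,\Lambda_n)$; the $i$-th component of the push-off has self-linking number $tb(\Lambda_i)-r(\Lambda_i)$; the push-off is unchanged when a component is negatively stabilized; and positively stabilizing a component corresponds to transversally stabilizing that component of the push-off. In particular the maximal self-linking number of a transversal representative of a knot type equals $\max_\Lambda\bigl(tb(\Lambda)-r(\Lambda)\bigr)$, and reading this off the Legendrian torus-knot classification recalled in the introduction gives the value $\pm q(p-1)-p$ for the $(p,\pm q)$-torus knot (attained at the unique peak in the positive case, along the left-hand edge descending from the $\max tb$ representative with $r=p-q$ in the negative case, and at the $tb=-1$ unknot when $p=1$). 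By Theorem~\ref{unorderedclass} this maximum is attained \emph{simultaneously} on all components of a single Legendrian $(np,\pm nq)$-torus link: the unique $\max tb$ positive representative in case~(1), the $\max tb$ negative representative with every $r=p-q$ in case~(2), and the $n$-copy of the unknot with $tb=-q$, $r=-(q-1)$ in case~(3). Its positive push-off is then a transversal $(np,\pm nq)$-torus link all of whose components have self-linking number $\pm q(p-1)-p$.

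\emph{Uniqueness and destabilization.} Since this value is the largest possible, no component of that push-off can be transversally destabilized, so the link is non-destabilizable, and by the first paragraph it is the only transversal torus link with these component self-linking numbers. Finally I would show every other transversal $(np,\pm nq)$-torus link destabilizes to it: write an arbitrary one as the positive push-off of a Legendrian torus link and use that the $(p,\pm q)$-torus knot is transversally simple with maximal self-linking number $\pm q(p-1)-p$; any component whose self-linking number is not maximal is a transversal stabilization, so the link destabilizes in that component (the stabilizing kink can be removed inside a ball disjoint from the other components, leaving a transversal torus link with one more maximal component), and iterating terminates because each step raises a component's self-linking number by $2$ while these stay bounded above by $\pm q(p-1)-p$. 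The end result is the maximal representative, proving the remaining assertions.

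\emph{Main obstacle.} The only real work is the bookkeeping in the second step: extracting the exact value $\pm q(p-1)-p$ of $\max_\Lambda(tb-r)$ from the (multiply peaked, in the negative case) mountain ranges, and checking that this maximum is realized simultaneously on all $n$ components within one Legendrian torus link --- the negative unknotted case $(n,-nq)$, where the Legendrian picture is richest, needs the most care. Keeping the push-off and stabilization sign conventions consistent, so that ``non-destabilizable transversally'' lines up with ``maximal $tb-r$'', is the other place to be careful.
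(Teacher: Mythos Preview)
Your proposal is correct and follows precisely the route the paper intends: the theorem is stated with a \qed and no separate proof, relying on the sentence immediately preceding it (negatively stable simple $\Rightarrow$ transversally simple, via the argument of \cite{EtnyreHonda01b}) together with Theorems~\ref{unorderedclass} and~\ref{orderedclass}. You have simply filled in the bookkeeping that the paper leaves implicit, including the computation of $\overline{sl}=\pm q(p-1)-p$ and the verification that it is realized simultaneously on all components; nothing more is needed.
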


\subsection{Cable Links}\label{cable-links}
 Let $K$ denote an oriented knot type.  Then for $n \geq 1$, and $p, q \in \mathbb Z$ such that $p \geq 1$ and $\gcd(p,q) = 1$,  
the  \dfn{ $(np,nq)$-cable}  of $K$, denoted ${K}_{(np, nq)}$,  is the $n$-component link type obtained by taking an $(np,  nq)$-curve on the boundary of a tubular neighborhood of a representative of ${K}$.    Here by a $(p, q)$-curve we mean one that runs $p$ times longitudinally, with the  longitude given by a Seifert surface for ${K}$, and $q$ times meridionally. 
In our definition of a cable, we allow $K$ to be an unknot \footnote{This convention differs from the frequent convention in the definition of the more general satellite knots}, so in this paper we consider torus links to be cable links and the unknot and torus knots to be cable knots. 
Observe that if $q/p \in \mathbb Z$,  then $p = 1$, and thus ${K}_{(p, q)}$ is topologically equivalent to $K$. We allow all values of $q$ since, in contrast to the case of torus links,  ${K}_{(np, nq)}$ need not be topologically equivalent to ${K}_{(nq, np)}$.  We restrict to $p \geq 1$ since ${K}_{(-np, -nq)}$ will be topologically equivalent to $-K_{(np, nq)}$.

We will be able to understand the
Legendrian classification of ${K}_{(np, nq)}$ when $K$ is a ``uniformly thick'' and a Legendrian simple knot type. 
A knot type ${K}$ is called   \dfn{ uniformly thick}, {\em cf.\ }\cite{EtnyreHonda05}, if given any solid torus $S$ whose core is in the knot type ${K}$ then there is a solid torus $S'$ containing $S$ and isotopic to $S$, such that $S'$ is a standard neighborhood of a max-$tb$  Legendrian representative of ${K}$; see  Definition~\ref{std-nbhd} for an explanation of a standard neighborhood.    There are many uniformly thick knot types: the figure-eight knot \cite{MinPre},  all negative torus knots \cite{EtnyreHonda05}, and most twist knots are uniformly thick \cite{MinPre}. 
 However not all knots are: for example, the unknot, positive torus knots \cite{EtnyreHonda05, EtnyreLafountainTosun12}, and many Lagrangian slice knots \cite{McCulloughPre18} are not uniformly thick.

Towards understanding the Legendrian classification, it will be important to understand the non-destabilizable representatives of ${K}_{(np,nq)}$.  In Definition~\ref{cable-defn}, we define {standard Legendrian $(np, nq)$-cables} in terms of standard neighborhoods of any knot type $K$;  here we give their definition in terms of front projections.  
\begin{definition} \label{cable-front-algorithm} Given a knot type $K$, let $\overline{tb}(K)$ denote the maximal \tb invariant that can be realized by any Legendrian representative of $K$.  Then 
 for $n \geq 1$ and $p, q \in \mathbb Z$ such that $p \geq 1$ and $\gcd(p,q) = 1$, 
the \dfn{standard Legendrian $(np, nq)$-cable} of $K$ is defined according to the slope $q/p$ as follows; see Figure~\ref{fig8cable} for an illustration.
 
\begin{enumerate}
\item For {\it $\overline{tb}(K)$-slope cables}, $\frac{q}p =  \overline{tb}(K)$: 
\begin{itemize}
\item Fix $\leg$ such that $tb(\leg) = \overline{tb}(K)$.
\item Then $\Lambda_{(np,  nq)} = \Lambda_{(n1, nq)} $ is the $n$-copy of $\leg$, so its
front diagram can be obtained by starting with the front of $\leg$ and making $n$-copies from slight shifts in the $z$-direction.  
\end{itemize}
\item  To construct a {\it greater-slope cable},  $\frac{q}{p} > \overline{tb}(K)$, start by writing $\frac{q}{p} = \overline{tb}(K) + \frac{s}{p}$, for $s \in \mathbb Z$ positive.
\begin{itemize}
\item Fix $\leg$ such that $tb(\leg) = \overline{tb}(K)$.
\item Make the  $np$-copy of $\leg$.
\item Form $\leg_{(np,nq)}$  by  replacing a trivial $np$-stranded tangle of the $np$-copy with $\frac{ns}{np} = \frac{s}{p}$ 
of a full positive twist, which corresponds to repeating the fundamental positive crossing strand tangle, as shown at the top of Figure~\ref{changes}, $ns$ times. 
\end{itemize}
\begin{figure}[ht]
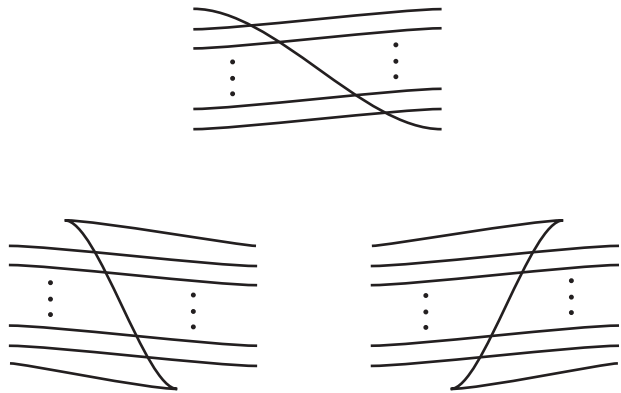

\small
\begin{overpic} 
{fig/changes}
\end{overpic}
        \caption{The upper diagram is a $1/p$ twist when there are $p$ strands. On the lower left is an $S$-tangle and on the lower right is a $Z$-tangle.} 
        \label{changes}
\end{figure}  
\item For {\it lesser-slope cables}, $\frac{q}p <  \overline{tb}(K)$,   start by writing $\frac{q}{p} =   \lceil \frac{q}{p} \rceil  -  \frac{s}{p}$, for  $ 0 \leq s < p$.  
\begin{itemize}
\item Let  $\leg$ be a Legendrian representative of $K$ with $tb(\leg) =  \lceil \frac{q}{p} \rceil \leq \overline{tb}(K)$.
\item Make the $p$-copy of $\leg$.  
\item Form $\leg^\pm_{(p,q)}$ by replacing a trivial $p$-stranded tangle of the $p$-copy of $\leg$ with either  $s$ fundamental $p$-stranded $Z$-tangles or $s$ fundamental $p$-stranded $S$-tangles as shown in Figure~\ref{changes}.
Observe that when $p =1$, $s = 0$, and thus
$\leg^+ = \leg^- = \leg$.
 \item Take the $n$-copy of $\leg^\pm_{(p,q)}$ to form $\leg^\pm_{(np,nq)}$.
\end{itemize}
\end{enumerate}
\end{definition}
\begin{remark} When $K$ is the unknot, and $q > p > 1$,  the standard Legendrian $(np, nq)$-cables of $K$ are the Legendrian torus links with maximal \tb invariant. 
\end{remark}

Figure~\ref{fig8cable} shows the front diagrams of some Legendrian cables of the figure-eight knot in all three slope types.
\begin{figure}[ht]
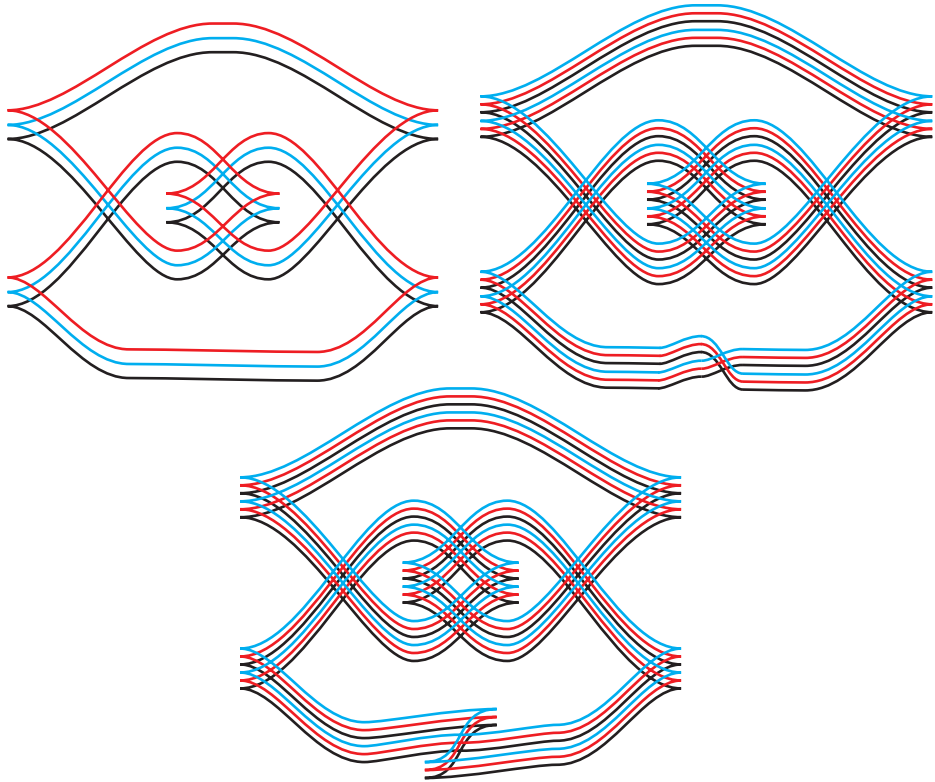

\small
\begin{overpic} 
{fig/fig8cables}
\end{overpic}
        \caption{Some cables of the figure-eight knot $K$ with $\overline{tb}(K)= -3$.  The upper left is the standard ($\overline{tb}(K)$-slope) $3(1,-3)$-cable of the figure-eight knot $K$. The upper right is the standard (greater-slope) $3(2,-5)$-cable of the figure-eight knot.  
         The lower diagram is a standard (lesser-slope) $3(2,-7)$-cable of the figure-eight knot.  
 } 
         
        \label{fig8cable}
\end{figure}  
For a uniformly thick knot type ${K}$, all of the standard Legendrian $(np,nq)$-cables will have max $tb$, see Lemma~\ref{std-cables-max-tb}.
When $\frac{q}{p} = \frac{q}{1} < \maxtb(K)$, observe that each standard  lesser slope $(n,nq)$-cable of $K$ is the  $n$-copy of a Legendrian representatives $\leg$ of $K$ with $tb(\leg) = q$.  There will be some additional non-destabilizable versions coming from  ``twisted $n$-copies'' that
parallel the torus links with unknotted components pictured in Figure~\ref{fig-2maxtb}. As was the situation for
the torus link with unknotted components, these integral and lesser-sloped twist versions will have 
max $tb$ if and only if $n = 2$.   
 {Since each component of a Legendrian integral, lesser-slope cable of $K$ is topologically equivalent to $K$, the components can be represented as an $n$-tuple in the mountain range of $K$.  For example, when $K$ is the figure-eight knot, and $\frac{q}{p} = -5 < -3 = \overline{tb}(K)$,  in parallel to Figure~\ref{fig-tree-6,2max}, there will be six maximal \tb Legendrian representatives of  the $(2, -10) = 2(1,-5)$ cable of the figure-eight knot, and six  non-destabilizable Legendrian representatives of  the $(3,-15) = 3(1,-5)$ cable of the figure-eight knot, three of which will have maximal \tb invariant.
 
   In Propositions~\ref{destab-non-integer} and \ref{destab-integer}, we show that  if $K$ is uniformly thick, then every
Legendrian representative of $K_{(np,nq)}$ will destabilize to either a standard Legendrian $(np,nq)$-cable or, in the case that $\frac{q}{p} = \frac{q}1 < \maxtb(K)$, to a twisted $n$-copy.
This allows us to classify unordered  Legendrian cables of a uniformly thick and Legendrian simple knot type $K$; see Theorem~\ref{unorderedcable}.
In particular,  for a uniformly thick and Legendrian simple   knot  type $K$, knowing the mountain range of $K$ allows us to determine the  mountain range of all Legendrian representatives of $K_{(np,nq)}$.

We also establish the ordered classification of Legendrian cable links of uniformly thick and simple knot types. As was seen in the case of Legendrian torus links, there is interesting flexibility and rigidity in terms of permutations of the
components in the max $tb$ representatives of ${K}_{(np, nq)}$.

\begin{theorem}\label{cableperms}
Let $K$ be a uniformly thick knot type. For $n \geq 2$, $p, q \in \mathbb Z$ such that $p \geq 1$ and $\gcd(p,q) = 1$, if $L_{(np,nq)} =(\Lambda_1,\ldots, \Lambda_n)$ is a  {standard } Legendrian $(np,nq)$-cable of $K$, where the $\Lambda_i$ are ordered as they appear on the torus or annulus used in the definition of standard cables,  the following permutations of the components are possible.
\begin{description}[align=left]
\item[greater-slope cables] If $q/p>\overline{tb}(K)$, then  any permutation of the $\Lambda_i$ is possible by a Legendrian isotopy. 
\item[$\overline{tb}(K)$-slope cables] If  $q/p=\overline{tb}(K)$, and $K$ is not a cable knot or $K$ is an $(r,s)$-cable knot and $q/p\not=rs$, then no permutation of the $\Lambda_i$ can be realized by a Legendrian isotopy. 
\item[lesser-slope cables ] If $q/p<\overline{tb}(K)$, and $K$ is not a cable knot or $K$ is an $(r,s)$-cable knot and $q/p\not=rs$, then
only cyclic permutations  of the $\Lambda_i$ can be realized. 
 \end{description}
\end{theorem}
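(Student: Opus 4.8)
The plan is to mirror the proof of the ordered torus link classification (Theorem~\ref{orderedclass}), with a maximal $tb$ representative $\leg_K$ of $K$ playing the role the Legendrian unknot plays for torus links; uniform thickness of $K$ is precisely the hypothesis that makes this substitution legitimate. Fix $\leg_K$ with $tb(\leg_K)=\overline{tb}(K)$ and a standard neighborhood $N\cong S^1\times D^2$ of it, with convex boundary carrying two dividing curves of slope $\overline{tb}(K)$. Using Definition~\ref{cable-front-algorithm} one records how a standard $(np,nq)$-cable $L_{(np,nq)}$ sits relative to $N$: in the $\overline{tb}(K)$-slope case it is the $n$-copy of $\leg_K$, living as $n$ parallel Legendrian curves on a torus isotopic to $\partial N$ and cobounding pre-Lagrangian annuli but, in the cases at hand, not lying on any pre-Lagrangian torus; in the greater-slope case it lies on a convex torus $T\subset N$ of slope $q/p>\overline{tb}(K)$, with the region between $T$ and $\leg_K$ a stack of basic slices at least one of which is positive --- this positive slice houses the positive twisting of Definition~\ref{cable-front-algorithm}(2); in the lesser-slope case it is the $n$-copy of a single nondestabilizable Legendrian knot $\leg^{\pm}_{(p,q)}$ (nondestabilizable by Lemma~\ref{std-cables-max-tb}), whose $n$ components sit on the boundary of a standard neighborhood of $\leg^{\pm}_{(p,q)}$, which can be taken to be a pre-Lagrangian torus of slope $q/p$. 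In every case the ambient ordering of the components is the cyclic one induced by the meridian of the relevant torus or annulus, and the first technical move is to put two given standard cables of the same type into such a normal form so that Theorem~\ref{thickenedtori} applies to any Legendrian isotopy between them.

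\emph{Realizing permutations.} Cyclic permutations in the lesser-slope case are realized by the ambient contactomorphism that rotates by $2\pi/n$ the meridional direction of the pre-Lagrangian torus through the components (equivalently, of a standard neighborhood of $\leg^{\pm}_{(p,q)}$), extended by the identity outside. For the greater-slope case I would additionally produce a local interchange of two adjacent components supported in a Darboux ball containing the positive twisting of Definition~\ref{cable-front-algorithm}(2): with enough positive twisting present, two adjacent $p$-strand bundles can be slid past one another by a Legendrian isotopy --- exactly the move that arbitrarily permutes the $n$-copy of a positively stabilized Legendrian knot. Combined with the cyclic rotations, the transpositions of adjacent components generate $S_n$, paralleling the construction for positive torus links in Theorem~\ref{orderedclass}(1).

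\emph{Obstructing the rest.} This is the heart of the matter and the step I expect to be the main obstacle. Given a Legendrian isotopy from $L_{(np,nq)}$ to one of its permutations, one promotes it via Theorem~\ref{thickenedtori} to an ambient contact isotopy in normal form and tracks its effect on the relevant pre-Lagrangian torus, or on the system of pre-Lagrangian annuli. In the lesser-slope case the rigidity of the pre-Lagrangian torus forces the induced self-map to be, up to isotopy through such tori, a meridional rotation, which permutes the components only cyclically, killing all non-cyclic permutations. In the $\overline{tb}(K)$-slope case the stronger rigidity of the pre-Lagrangian annuli --- available precisely when $K$ is not a cable knot, or is a cable knot but $q/p\neq rs$, so that no pre-Lagrangian torus re-emerges from a deeper cabling --- obstructs even the cyclic permutation, recovering and generalizing the results of Mishachev \cite{Mishachev03} and Ng \cite{Ng03}. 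The delicate point is the translation of the pre-Lagrangian rigidity of Theorem~\ref{thickenedtori} into convex-surface-theoretic terms, tracking bypasses across the cabling torus (or annuli) and showing that the only ones available fix the cyclic data; this is genuinely subtle because a purely topological symmetry of the link is being shown to fail contact-geometrically, and the behavior of the (non-uniformly-thick) unknot, whose $\overline{tb}$-slope $n$-copy still admits cyclic symmetry by Mishachev, shows that the argument must really exploit both that $K$ is knotted and that $q/p$ is not a cabling slope of $K$.
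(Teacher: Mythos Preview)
Your outline has the right architecture --- rigidity of pre-Lagrangians plus a localization step --- but two of the three cases contain genuine gaps, and the third is more complicated than necessary.

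For the greater-slope case, your Darboux-ball transposition is both vague and unneeded. The paper's argument (Lemma~\ref{poscableperms}) is much simpler: the components are ruling curves on a convex torus $T$, and the $I$-invariant neighborhood $T\times I$ lets you move each component to its own level $T\times\{a_i\}$, slide them independently through ruling curves, and bring them back. No twisting region or Darboux ball is involved.

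For the obstruction, your appeal to Theorem~\ref{thickenedtori} hides the real content. That theorem concerns pre-Lagrangian \emph{tori in a basic slice}; here the ambient object is $S^3$, and you must first \emph{localize} the isotopy into a solid torus neighborhood of $K$ before any pre-Lagrangian rigidity can be invoked. This localization (the paper's Lemmas~\ref{negcableperms} and~\ref{0cableperms}) is where both uniform thickness and the non-cable hypothesis are actually used: after isotopy the image torus $T_1'$ intersects the original $T_0$ in curves of slope $q/p$, and the resulting annuli in the knot complement are boundary-parallel \emph{precisely} by Theorem~\ref{cable-prop}, which requires $K$ non-cable or $q/p\neq rs$. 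Only then can $T_0\cup T_1'$ be engulfed in a single solid torus, which uniform thickness lets you standardize. Your remark that the hypothesis prevents a pre-Lagrangian torus from ``re-emerging from a deeper cabling'' is not how it enters.

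Finally, in the $\overline{tb}(K)$-slope case Theorem~\ref{thickenedtori} is the wrong tool entirely: there is no pre-Lagrangian torus, only an annulus, and the relevant rigidity (Theorem~\ref{torus0perms}) is proved directly via an $S^1$-invariant model of the solid torus and Honda's minimality proposition (Proposition~\ref{KoS1invariant}), not by reduction to basic slices. Your proposal does not supply this argument.
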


\begin{remark}
One may observe in the proof of Theorem~\ref{cableperms} that for standard cables with $q/p>\overline{tb}(K)$ does not need the hypothesis that $K$ is uniformly thick, but the other cases do need this hypothesis. 
\end{remark}

\begin{remark} \label{rem:tb-slope-non-cables} In the $\overline{tb}(K)$-slope cables of Theorem~\ref{cableperms},  the hypothesis that $K$ is not a cable is necessary in order to forbid {\it any} permutations.  For example, 
suppose $K$ is the $(2,-3)$-torus knot,  
 which is a $(2,-3)$-cable of the unknot.  Since $\overline{tb}(K) = -6$,   
the standard Legendrian $n(1, -6)$-cable of $K$ is  the $n$-copy of a max-$tb$ representative of $K$, and, by Theorem~\ref{orderedclass}, we know cyclic permutations of the components are allowed.  \end{remark}

\begin{example}  \label{fig-8-cables}  If $K$ is the uniformly thick, (non-cable) figure-eight knot, then, since $\overline{tb}(K) = -3$, Theorem~\ref{cableperms} tells us that it is not
possible to do any permutation of the $n$ components in  $L_{n(1, -3)}$, 
the standard   Legendrian $n(1,-3)$-cable of $K$;
this case recovers Ng's result from \cite{Ng03} that it is not possible to do any permutations of the $n$-copy of max-$tb$ Legendrian figure-eight knot.
However, if the slope $q/p > -3$ then it is possible to arbitrarily permute the components of $L_{n(p,q)}$, while  if $q/p < -3$, then it is only possible to cyclically permute the components of $L_{n(p,q)}$.
 \end{example}

After establishing  which permutations can be realized in the max-$tb$ standard Legendrian cables, we can give the {\it ordered} Legendrian classification of $K_{(np,nq)}$, where
$K$ is a uniformly thick and a Legendrian simple knot type.
\begin{theorem}  \emph{(Ordered Cable Link Classification)} \label{orderedcable} For $n \geq 2$, $p, q \in \mathbb Z$ such that $p \geq 1$ and $\gcd(p,q) = 1$, the $(np, nq)$-cable of a uniformly thick, Legendrian simple  knot type $K$ is Legendrian simple as an unordered link. The range of possible \tb and rotation number invariants for such a link will be given in Section~\ref{cables-non-destab}. Consider two ordered, oriented Legendrian links $L = (\Lambda_1, \dots, \Lambda_n)$ and $L' =
(\Lambda'_1, \dots, \Lambda'_n)$ that represent the knot type $K_{(p,q)}$, and
 suppose $tb(\Lambda_i) = tb(\Lambda'_i)$ and $r(\Lambda_i) = r(\Lambda'_i)$, for $i = 1, \dots, n$. Then: 
\begin{description}[align=left]
\item[Greater-Slope Cables] If $q/p>\overline{tb}(K)$, then  there exists a contact isotopy taking
  $L$ to $L'$ such that $\Lambda_i$ is mapped to $\Lambda'_i$.
\item[$\maxtb(K)$-Slope Cables] If $q/p = \overline{tb}(K)$ and   $K$ is not a cable knot or $K$ is an $(r,s)$-cable knot and $q/p\not=rs$,
then there exists a contact isotopy
  taking $L$ to $L'$ such that $\Lambda_i$ is mapped to $\Lambda'_i$ if and only if the
  ordering of the components  with $tb = \overline{tb}(K)$  is preserved.
\item[Lesser-Slope Cables] If $q/p<\overline{tb}(K)$ and $K$ is not a cable knot or $K$ is an $(r,s)$-cable knot and $q/p\not=rs$,
then there exists a contact isotopy
  taking $L$ to $L'$ such  that $\Lambda_i$ is mapped to $\Lambda'_i$ if and only if the cyclic
  ordering of the components with $tb =pq$   is preserved.
\end{description}
\end{theorem}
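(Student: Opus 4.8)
The unordered statement is a direct consequence of facts already in hand. By Propositions~\ref{destab-non-integer} and~\ref{destab-integer} every Legendrian representative of $K_{(np,nq)}$ destabilizes to a standard Legendrian $(np,nq)$-cable or, when $q/p=q/1<\maxtb(K)$, to a twisted $n$-copy; Theorem~\ref{unorderedcable} enumerates these nondestabilizable representatives and shows that distinct ones carry distinct unordered collections of classical invariants (using Legendrian simplicity of $K$ to pin down the Legendrian type of $K$ with a given $tb$), and Lemma~\ref{std-cables-max-tb} shows the standard cables have maximal $tb$. Together these give unordered Legendrian simplicity of $K_{(np,nq)}$ and the range of invariants recorded in Section~\ref{cables-non-destab}. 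So the plan is to concentrate on the ordered refinement, running the argument of Theorem~\ref{orderedclass} with Theorem~\ref{cableperms} in the role played there by the permutation analysis of the Heegaard-torus representatives of torus links.

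First I would reduce to standard representatives. Given ordered links $L=(\Lambda_1,\dots,\Lambda_n)$ and $L'=(\Lambda'_1,\dots,\Lambda'_n)$ of $K_{(np,nq)}$ with $tb(\Lambda_i)=tb(\Lambda'_i)$ and $r(\Lambda_i)=r(\Lambda'_i)$ for all $i$, destabilize each component inside a ball meeting only that component until it is nondestabilizable; because the classical invariants are matched componentwise, the $i$-th components of $L$ and $L'$ are destabilized by the same numbers of positive and negative stabilizations, and by unordered simplicity the resulting nondestabilizable links are standard cables or twisted $n$-copies $L_0, L_0'$ of the same type, still with componentwise-equal invariants. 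Since a Legendrian isotopy of $L_0$ realizing a permutation $\tau$ can be promoted to one of $L$ realizing $\tau$ by carrying along the (uniquely determined) local stabilization data, and conversely, the set of permutations realizable by Legendrian isotopies of $L$ equals that of $L_0$; so it is enough to prove the theorem for $L_0$ and $L_0'$.

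Next I would compute, for a standard representative $L_0$, the subgroup $G(L_0)\le S_n$ of permutations realizable by a Legendrian isotopy of $L_0$. This is precisely the content of Theorem~\ref{cableperms}: $G(L_0)=S_n$ for greater-slope cables, $G(L_0)$ is trivial for $\maxtb(K)$-slope cables under the stated non-cable hypothesis, and $G(L_0)$ is cyclic (generated by the rotation of the components along the torus or annulus used in the definition of standard cables) for lesser-slope cables; for the twisted $n$-copies of the integral lesser-slope case one argues exactly as in Case~(3) of Theorem~\ref{orderedclass}, so that the realizable permutations are exactly those preserving the cyclic order of the distinguished $tb=pq$ components. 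The ordered classification of general $L, L'$ then follows by the same bookkeeping as in the proof of Theorem~\ref{orderedclass}: the unordered classification yields a contact isotopy $L_0\to L_0'$ inducing some component-invariant-preserving permutation, and an order-preserving isotopy $L\to L'$ exists precisely when that permutation can be corrected by an element of $G(L_0)$, which unwinds to the three slope-dependent conditions in the statement.

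I expect the main obstacle to be Theorem~\ref{cableperms} itself; effectively all of the real work for this theorem is carried out there. Within Theorem~\ref{cableperms} the delicate parts are, on the one hand, the positive directions --- constructing explicit Legendrian isotopies realizing \emph{every} permutation of a greater-slope standard cable and \emph{every} cyclic permutation of a $\maxtb(K)$-slope or lesser-slope one --- which I would obtain by placing the components on a convex torus inside a standard neighborhood of a max-$tb$ representative of $K$ and sliding components past each other along Legendrian rulings (available once the inserted twisting makes the torus convex, unavailable on a pre-Lagrangian torus); and, on the other hand, the obstruction directions --- that no noncyclic, respectively no nontrivial, permutation of the distinguished components can be realized --- which rest on the rigidity of pre-Lagrangian tori and annuli packaged in Theorem~\ref{thickenedtori} together with uniform thickness of $K$. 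Once Theorem~\ref{cableperms} is available, the destabilization reduction and the coset bookkeeping above are routine and parallel the torus-link case.
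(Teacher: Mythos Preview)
Your proposal is correct and follows essentially the same approach as the paper: the paper's proof of Theorem~\ref{orderedcable} simply says that with Theorem~\ref{cableperms} in hand the argument is almost identical to that of Theorem~\ref{orderedclass}, which in turn is deduced immediately from the unordered classification together with the permutation results. You have spelled out the destabilization reduction and the coset bookkeeping more explicitly than the paper does, but the strategy---reduce to standard cables, invoke Theorem~\ref{cableperms} for the realizable permutation subgroup, and combine with unordered simplicity---is the same.
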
 

Immediately from the Legendrian classification, we obtain a classification of transverse representatives of cable links.
\begin{theorem}[Ordered Transversal Cable Link Classification] 
For $n \geq 2$, $p, q \in \mathbb Z$ such that $p \geq 1$ and $\gcd(p,q) = 1$, the cable link $K_{(np,nq)}$ of a uniformly thick, Legendrian simple  knot type $K$ is transversely simple. All such transverse links destabilize to the unique maximal self-linking number representative whose self-linking number will depend on the slope $q/p$ as follows.
\begin{description}[align=left]
\item[Greater-Slope Cables] If $q/p>\overline{tb}(K)$, then $\overline{sl}(K_{(p,q)})= pq-q+p\, \overline{sl}(K)$,
\item[Integral and  Lesser-Slopes Cables] If $q/p \leq \overline{tb}(K)$ and {$p = 1$},  
 then $\overline{sl}(K_{(p,q)})= \overline{sl}(K)$,
\item[Nonintegral and Lesser-Slope Cables] If $q/p<\overline{tb}(K)$ {and $p > 1$}, then \\
$\overline{sl}(K_{(p,q)})= pq-p(\overline{tb}(K)-\overline{sl}(K)+\lceil q/p\rceil)+q$. \end{description}
Moreover, all the components of such a transverse link can be permuted if they have the same self-linking number. 
\hfill \qed
\end{theorem}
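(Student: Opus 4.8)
The plan is to derive the whole statement from the Legendrian classification of Theorem~\ref{orderedcable} by means of the standard correspondence between transverse and Legendrian links. First I would recall that correspondence: every transverse link is transversely isotopic to the positive transverse pushoff $\mathcal T_+(\Lambda)$ of some Legendrian link $\Lambda=(\Lambda_1,\dots,\Lambda_n)$; the self-linking numbers of $\mathcal T_+(\Lambda)$ are $sl_i=tb(\Lambda_i)-r(\Lambda_i)$; $\mathcal T_+$ is unchanged under negative stabilizations of components, while a positive Legendrian stabilization of a component corresponds to a transverse stabilization of $\mathcal T_+(\Lambda)$; and $\mathcal T_+(\Lambda)$ is transversely isotopic to $\mathcal T_+(\Lambda')$ precisely when $\Lambda$ and $\Lambda'$ become Legendrian isotopic after finitely many negative stabilizations of components. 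Granting this dictionary, the theorem reduces to three assertions: (a) $K_{(np,nq)}$ is negatively stable simple; (b) the componentwise maximum of $tb-r$ over the non-destabilizable Legendrian cables is attained, is realized by a transversely non-destabilizable link to which all other transverse representatives destabilize, and equals the displayed value; and (c) after enough negative stabilizations the Legendrian obstructions to permuting components disappear.

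For (a) I would take two Legendrian cables $L,L'$ with $tb(\Lambda_i)-r(\Lambda_i)=tb(\Lambda'_i)-r(\Lambda'_i)$ for each $i$ and negatively stabilize, index by index, whichever of $\Lambda_i,\Lambda'_i$ has the larger $tb$ until the two $i$th components have equal $tb$; since $S_-$ lowers $tb$ and $r$ by one each, the coincidence of the $tb-r$ values then forces the rotation numbers to agree as well. One further common round of negative stabilizations on every index drops all components of both links strictly below the finitely many exceptional $tb$-levels appearing in Theorem~\ref{orderedcable} (the $\overline{tb}(K)$-peaks and the slope level $pq$), so that the ``ordering of the special components'' clauses there are automatically satisfied and the stabilized links are Legendrian isotopic. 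This shows $K_{(np,nq)}$ is negatively stable simple, hence transversely simple by the extension to links, recalled in the text, of the argument of \cite{EtnyreHonda01b}.

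For (b), I would read the $tb$ and $r$ invariants of the components of the non-destabilizable Legendrian cables off the mountain ranges described in Section~\ref{cables-non-destab} --- the standard Legendrian $(np,nq)$-cables of Definition~\ref{cable-front-algorithm} and, when $q/p=q/1<\overline{tb}(K)$, the twisted $n$-copies. Because $\mathcal T_+$ maximizes self-linking exactly when $tb-r$ is maximized in each component, the maximal transverse representative is the pushoff of the Legendrian cable whose components sit at the appropriate peak with minimal rotation number; one reads off the mountain ranges that this pushoff is transversely non-destabilizable and, since any Legendrian cable is obtained from a non-destabilizable one by stabilizations and transverse destabilization only increases $sl$, every transverse cable link destabilizes to it. Evaluating $tb-r$ at these components and splitting into the three regimes $q/p>\overline{tb}(K)$, \ $p=1$ with $q/p\le\overline{tb}(K)$, and $p>1$ with $q/p<\overline{tb}(K)$ produces the three self-linking formulas.

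Finally, for (c): once the components of a Legendrian cable have been negatively stabilized off the exceptional levels, Theorem~\ref{orderedcable} permits an arbitrary permutation of the components that preserves the componentwise $(tb,r)$ data, and negative stabilization does not alter $\mathcal T_+$; thus any two transverse cables with the same multiset of self-linking numbers are related by a transverse isotopy realizing any self-linking-preserving permutation. The step I expect to require the most care is not a single estimate but the bookkeeping in (b): correctly identifying, among the various non-destabilizable Legendrian cables --- including the twisted $n$-copies, whose components sit at two distinct $tb$-levels but share a rotation number, and allowing for the possibility that $K$ itself has several maximal-$tb$ representatives --- the one maximizing $tb-r$ componentwise and checking that its transverse pushoff really is non-destabilizable, so that the three formulas come out exactly as stated.
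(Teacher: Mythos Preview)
Your approach is correct and is exactly what the paper does: the statement is marked with \qed\ and the paper simply says ``Immediately from the Legendrian classification, we obtain a classification of transverse representatives of cable links,'' invoking the negatively-stable-simple $\Leftrightarrow$ transversely-simple correspondence discussed earlier in the introduction. Your sketch fills in the standard details of that deduction, and your identification of the bookkeeping in part~(b) as the most delicate step is apt---indeed, carrying it out carefully is worthwhile, since the displayed formula in the nonintegral lesser-slope case should be checked against the values in Remark~\ref{std-cable-tb-calc}.
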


The results in this paper give a complete picture of Legendrian torus links as well as cables of Legendrian simple, uniformly thick knot types, but there are several interesting questions left open and brought up by this work. 
\begin{question}
Can one classify Legendrian representatives of uniformly thick non-simple knot types?
\end{question}
We expect the techniques developed in this paper could lead to a classification of such Legendrian links. For such a knot type $K$, one has some finite number of maximal \tb invariant  
knots. We expect that all cable links will destabilize to a standard cable of one of these knots or to a twisted version in the integral case when the cable slope is less than  
{$\overline{tb}(K)$},
 and that two such links will become isotopic only if all the components of each link have been stabilized enough for the constituent knots to become isotopic.  We believe the ordered classification will follow along lines similar to the ones discussed above. In particular, in some cases, we expect the cable links to be non-transversely simple. Good candidates for such knots are negative twist knots, see \cite{EtnyreNgVertesi13, MinPre}. 
 \begin{question}
What is the classification of Legendrian representatives of cables of a non-uniformly thick knot type?
\end{question}

A knot $K$ can fail to be uniformly thick in two ways. It can admit tori in the knot type with convex boundary having dividing slope less than {$\overline{tb}(K)$}
that do not thicken to a standard neighborhood of a maximal \tb invariant representative of the knot type, or there can be tori in the knot type that have convex boundary with dividing slope greater than $\overline{tb}(K)$.
  In the former case we expect there to be similar results to those presented here. For example, we expect it to be a tractable problem to determine the Legendrian cable links of positive torus knots. The latter case seems to be a more difficult problem requiring new ideas. 

\begin{question}
In a basic slice, are two pre-Lagrangian tori with characteristic foliations of the same slope  necessarily   isotopic? If the tori have some leaves in common, can the isotopy be done relative to those leaves?
\end{question}

A key part in the proofs of our ordered classification results involves understanding the ordering of the components of torus links and cable links coming from pre-Lagrangian tori. From this we expect that the answer to the above questions to be YES, and such an answer would simplify several of our proofs (and make their geometric content more obvious). We discuss this more thoroughly in Remark~\ref{pre-lag-questions}.

 \medskip
 
 \subsection{Contact structures on thickened and solid tori}
In our proofs we need to use many results about contact structures on $T^2\times[0,1]$, some of which are new. We mention several of the ones that might be of general interest here. Any unfamiliar terminology will be defined  where the result occurs in the text. 

The first result restricts the slopes of pre-Lagrangian tori in a thickened torus. 
\begin{proof}[\bf Lemma~\ref{notut}]{\em
Let $\xi$ be a minimally twisting contact structure on $T^2\times [-1,1]$ that is the union of a $\mp$ basic slice on $T^2\times [-1,0]$ and a $\pm$ basic slice on $T^2\times[0,1]$. If $s_i$ denotes the slope of the dividing curves on $T^2\times\{i\}$, $i = -1, 0, 1$, then there is no pre-Lagrangian torus parallel to the boundary in $(T^2\times[-1,1],\xi)$ whose characteristic foliation has slope $s_0$. }
\end{proof}
We also study the relation between pre-Lagrangian tori and convex tori in thickened tori and enhance Lemma~3.17 from \cite{EtnyreHonda01b}. To state this result we first must develop the idea of a {\em complementary annulus} in Section~\ref{nonrotativesection}. This is an annulus in a thickened torus with a non-rotative contact structures that determines the contact structure and can also give a cyclic order to the dividing curves of convex tori.
\begin{proof}[\bf Lemma~\ref{prelagmodel}]{\em 
Consider a   basic slice $T^2\times [0,1]$, where $T^2 \times \{0\}$ has slope $s_0$, and
$T^2 \times \{1 \}$ has slope $s_1$. Suppose $s\in (s_0, s_1) \subset \partial D^{2}$, and 
$T' \subset T^{2} \times [0,1]$ is a boundary parallel convex torus in standard form with slope $s$. Then:

\begin{enumerate}
\item  There is a pre-Lagrangian torus $T$ isotopic to $T'$ that intersects $T'$ transversely, and  $T' \cap T$ is exactly the union of the Legendrian divides of $T'$.
 \item  Furthermore, 
 $T'$ is contained in a non-rotative thickened torus $R'$ of slope  $s$ that has 
a complementary annulus $A'$ with two dividing curves that run from one boundary component of $A'$ to the other; $A'$ 
defines a cyclic ordering of the Legendrian divides of $T'$ that agrees with the cyclic ordering of these curves on the pre-Lagrangian $T$.   
\end{enumerate}}
\end{proof}
We prove an analogous result in Lemma~\ref{prelagmodel2} for tight contact structures on solid tori. 
 
 \medskip
 
 \subsection{Outline} 
The remainder of the paper is organized as follows.  In Section~\ref{sec:background}, after defining our notation conventions for torus links and reviewing classification results of contact structures on thickened tori,
we show that it is possible to use a ``complementary annulus'' to define a cyclic ordering of the Legendrian divides of a convex torus in a basic slice or in a solid torus with 
convex boundary having two dividing curves parallel to the core of the solid torus. We also find the maximum \tb invariant for Legendrian torus links, which in the case of $(n,-nq)$-torus links is more restrictive than the known upper bounds  for the components.  In Section~\ref{sec:positive} we establish the unordered classification of all Legendrian positive torus links, and in Section~\ref{sec:negative-knotted} we establish the unordered classification of all Legendrian negative torus links with knotted components.
In Section~\ref{negunknotedcpts}, we give the unordered classification of all Legendrian negative torus links that have unknotted components;
 this involves defining the  $t$-twisted $n$-copies of a Legendrian unknot and establishing that these
links are non-destabilizable and, when $n\geq 3$,  do not have maximum \tb invariant.  In Section~\ref{oclassification} we establish the ordered classification of all Legendrian torus links.  In particular, using convex surface theory, we reprove Mishachev's result that forbids non-cyclic permutations of the $(n,-nq)$-torus links with max $tb$, and we describe which permutations can and cannot be obtained for all Legendrian $(np,\pm nq)$-torus links.  Non-cyclic rigidity appears in permutations of the components of a Legendrian $(np,-nq)$-torus link with $tb = -pq$. By studying pre-Lagrangian tori, we first show that it is impossible to do these
non-cyclic permutations in a basic slice, and then we show that we can ``localize'' isotopies in $S^{3}$ to the basic slice situation.    
In Section~\ref{sec:cable}, we give the ordered and unordered
classification of Legendrian cable links of Legendrian simple, uniformly thick knot types.  In particular, we show that many of the Legendrian
 torus link results generalize to Legendrian cable links, however now a study of pre-Lagrangian annuli in solid tori shows that there are some slopes for which in the ordered classification not even cyclic permutations are possible.

\smallskip
\noindent
{\bf Acknowledgements:} The authors thank Lenny Ng for helpful conversations and an anonymous referee for many valuable suggestions. The authors also thank IAS where the authors rekindled their work on this project; {the third author was supported at IAS from The Fund
for Mathematics.} The second author was partially supported by  the NSF CAREER Grant DMS-0239600, NSF Focused Research Grant FRG-024466, and NSF grants DMS-1608684, DMS-1906414, and DMS-2203312. 

\section{Background and Preliminary Results} \label{sec:background}

We assume the reader is familiar with the basic notions concerning Legendrian knots and convex surface theory. Sections~2 and~3 of \cite{EtnyreHonda01b} should be sufficient, but the reader might also want to consult \cite{Etnyre05}. 

In Subsection~\ref{toruslinksconvention}, we recall the definition of a  torus link.  In Subsection~\ref{farey}, we will review the construction of the Farey graph, which provides a useful labeling scheme for curves on a torus.  Subsections~\ref{ssec:solidtori} and \ref{ssec:solid-tori}
establish what we need to know about contact structures on thickened and solid tori. In Subsection~\ref{generalLeg}, we give the definition of the $n$-copy of a Legendrian knot and make some observations about its relation to torus links.  Finally, in Subsection ~\ref{ssec:tb-invariants}, we discuss the \tb invariant for links and, in particular, for torus links.

\subsection{Torus Knots and Links}\label{toruslinksconvention}

Recall that a standardly embedded torus $T$ provides a genus one Heegard splitting of $S^3$, $S^3 =
V_0 \cup_T V_1$, where $V_0$ and $V_1$ are solid tori. Then any curve on $T$ can be written as $ p \lambda + q\mu$, 
where $\mu$ is the unique curve that bounds a disk in $V_0$, and $\lambda$ is the unique
curve that bounds a disk in $V_1$; such a curve will be called a \dfn{ $(p, q)$-torus knot}.
 We orient $\mu$ arbitrarily and then orient $\lambda$ so that
$\lambda, \mu$ form a positive basis for $H_1(T)$, where $T$ is oriented as the boundary of $V_0$. 
We will often identify the torus $T$ with a quotient of the square $[0,1] \times [0,1]/ \sim$ with  $(0, y) \sim (1, y)$ and $(x, 0) \sim (x,1)$ for all $x, y$,   
where the circle obtained as the quotient of a horizontal line (slope $0$)  corresponds to the longitude $\lambda$, and the circle obtained as the
 quotient of a vertical line (slope $\infty$) corresponds
  to the meridional curve $\mu$.  More generally, when $q \geq p \geq 1$ and $\gcd(p,q) = 1$,  a line of slope $\pm q/p$ corresponds to a $(p, \pm q)$-torus knot.

\begin{remark} \label{rem:slopes}
The slope convention here 
 is the inverse of the slope convention in \cite{EtnyreHonda01b} and in much of the early contact topology literature, but agrees with the convention typically used by topologists.  
\end{remark}

 \begin{remark} \label{rem:orientations}
 We always choose consistent orientations on the components of a $(np, \pm nq)$-torus link $(K_1, \dots, K_n)$. So we have that the linking numbers  $lk(K_i, K_j) = \pm pq$, for all $i \neq j$.
 \end{remark}

\subsection{Farey graph}\label{farey} A convenient way  to keep track of curves on $T^2$ is through  the {Farey graph}. Consider the unit disk $D^2$ with the interior given the standard hyperbolic metric. We will
label a collection of points in $\partial D^2$; these  labeled points will  be in  one-to-one correspondence with embedded curves on $T^2$.

Label the point $(0,1)$ by $0=0/1$ and  the point $(0,-1)$ by $\infty=1/0$; join these two labeled points by a hyperbolic geodesic. 
Now inductively consider points $(x,y)\in \partial D^{2}$ with $x > 0$ half way between two points with labelings $a/b$ and $c/d$. Label this point $(a+c)/(b+d)$ and join it to the points with labels $a/b$ and $c/d$  by hyperbolic geodesics. We can label points in $\partial D^{2} \cap \{x<0\}$ similarly except $\infty$ is considered as $-1/0$. See Figure~\ref{fareygraph}. Once we have a basis $\lambda,\mu$ fixed for $H_1(T^2)$, a point $a/b$ in the Farey graph corresponds to the embedded curve representing the homology class $b\lambda + a\mu$. Additionally, two curves give a basis for $H_1(T^2)$ if and only if they are joined by an edge in the Farey graph.  It is also useful to know that if two curves are represented by $a/b$ and $c/d$, then their minimal geometric intersection is given by $|ad-bc|$. Given two points $s_0$ and $s_1$ in the Farey graph, we denote by $[s_0,s_1]$ the interval on $\partial D^2$ obtained from a clockwise-oriented curve from $s_0$ to $s_1$.  
\begin{figure}[htb]
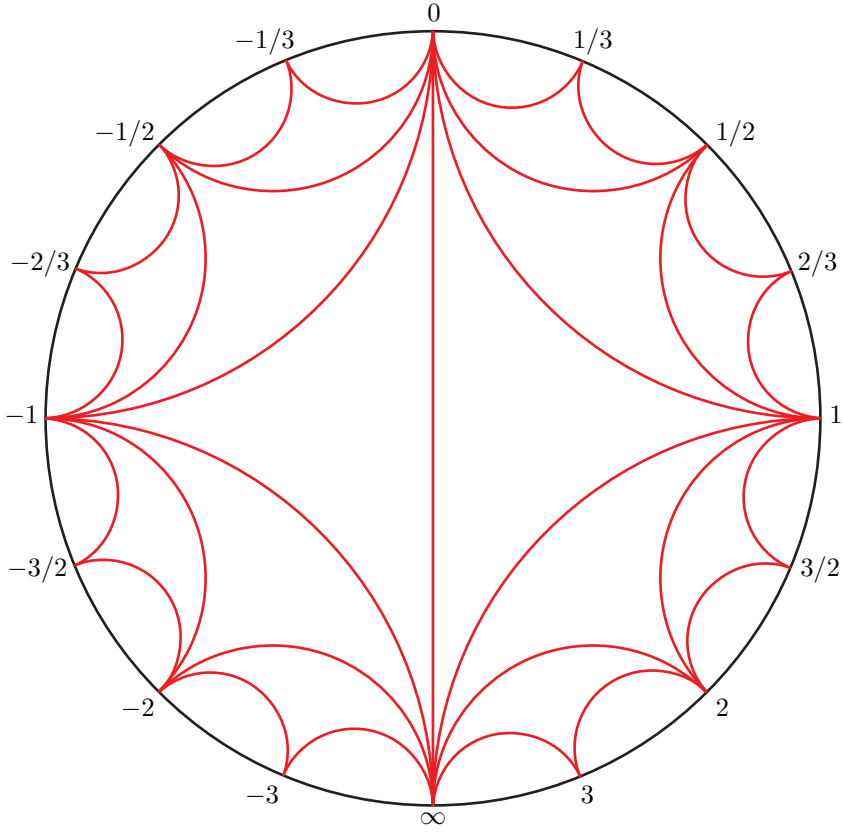
{\small
\begin{overpic} 
{fig/farey}
\put(142, 2){$\infty$}
\put(145, 306){$0$}
\put(-15, 154){$-1$}
\put(297, 154){$1$}
\put(29, 43){$-2$}
\put(254, 43){$2$}
\put(19, 261){$-1/2$}
\put(254, 261){$1/2$}
\put(72, 296){$-1/3$}
\put(200, 296){$1/3$}
\put(-13, 212){$-2/3$}
\put(285, 212){$2/3$}
\put(-14, 96){$-3/2$}
\put(286, 96){$3/2$}
\put(76, 10){$-3$}
\put(203, 10){$3$}
\end{overpic}}
\caption{The Farey graph: a basic slice can be represented by a signed geodesic. }
\label{fareygraph}
\end{figure}

\subsection{Tight Contact Structures on $T^2 \times [0,1]$} \label{ssec:solidtori}
In this section we will consider contact structures on thickened tori. In particular, we consider basic slices and  minimally twisting contact structures, which are stacks of basic slices.  
We also develop the notion of a ``complementary annulus'' for a non-rotative contact structure; this will be useful in 
 Section~\ref{oclassification} when we analyze symmetries of negative torus links.  

Throughout this paper, we will be using established facts about convex surfaces.  In particular, recall that a convex surface will have \dfn{dividing curves} that more-or-less encode
the contact structure near the surface.   For convex tori, we will often apply  ``Giroux Flexibility" \cite{Giroux91} to assume that  the characteristic 
foliation  is in \dfn{ standard form}, meaning that it has curves of singularities parallel to the dividing
curves, called \dfn{ Legendrian divides}, and the rest of the foliation consists of linear
curves (containing singularities) of a slope $s$ that is not equal to the slope of the dividing curves.

\subsubsection{Basic Slices and Minimally Twisting Contact Structures} \label{sec:basic-slice}
A \dfn{ basic slice} is a tight contact structure $\xi$ on $T^2\times [0,1]$ such that the boundary tori $T_i= T^2\times \{i\}, i=0,1,$ are convex with {\it two} dividing curves of slope $s_i$ that are connected by a geodesic in the Farey graph, and any convex torus $T$ in $T^2\times [0,1]$ parallel to the boundary has dividing slope in $[s_0,s_1] \subset \partial D^{2}$.  Honda \cite{Honda00a} and Giroux \cite{Giroux00} have shown that there are exactly two basic slices $\xi_\pm$ for any such $s_0$ and $s_1$, and  $\xi_-=-\xi_+$. So as unoriented contact structures they are the same. 
  A basic slice can be represented by a geodesic in the Farey graph equipped with a sign. 
One can build explicit models for a basic slice with boundary  dividing
slopes $s_{0}$ and $s_{1}$ and see that any  $s\in (s_0,s_1)$ can be realized as the slope of a pre-Lagrangian torus parallel to the boundary;
recall a \dfn{ pre-Lagrangian torus} is a torus with a non-singular linear characteristic foliation.

A  contact structure $\xi$ on $T^2\times [0,1]$  is \dfn{minimally twisting} if the boundary tori $T_i= T^2\times \{i\}, i=0,1,$ are convex with 
  dividing curves of slope $s_i$
and any convex torus $T$ parallel to the boundary has dividing slope in $[s_0,s_1] \subset \partial D^2$. 
A minimally twisting  contact structure is necessarily tight and when each boundary component has just two dividing curves, it can be broken into pieces that are basic slices, \cite{Giroux00, Honda00a}. More specifically, if one takes a minimal path (of signed geodesics) in the Farey graph in the clockwise direction from $s_0$ to $s_1$, then each edge in the path corresponds to a basic slice, and $(T^2\times [0,1], \xi)$ is the concatenation of these basic slices.  It is known that $\xi$ is \dfn{ universally tight} if and only if all the signs are the same, \cite{Honda00a}.  Moreover if $\xi$ is universally tight, then for any $s\in (s_0,s_1)$ there is a pre-Lagrangian torus $T$ in $T^2\times [0,1]$ whose characteristic foliation has slope $s$, \cite{Honda00a}.   In Section~\ref{oclassification}, we will use the fact that  
this is not the case if the contact structure on $T^2\times [0,1]$ is not universally tight:

\begin{lemma}\label{notut}
Let $\xi$ be a minimally twisting contact structure on $T^2\times [-1,1]$ that is the union of a $\mp$ basic slice on $T^2\times [-1,0]$ and a $\pm$ basic slice on $T^2\times[0,1]$. If $s_i$ denotes the slope of the dividing curves on $T^2\times\{i\}$, $i = -1, 0, 1$, then there is no pre-Lagrangian torus parallel to the boundary in $(T^2\times[-1,1],\xi)$ whose characteristic foliation has slope $s_0$. 
\end{lemma}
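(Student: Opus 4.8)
The plan is to argue by contradiction: suppose $T$ is a pre-Lagrangian torus parallel to the boundary inside $(T^2\times[-1,1],\xi)$ whose characteristic foliation has slope $s_0$. Since $T$ is pre-Lagrangian, it is in particular convex after a $C^\infty$-small perturbation, and its dividing set consists of two curves of slope $s_0$ (a pre-Lagrangian torus of slope $s_0$ perturbs to a convex torus with two dividing curves of slope $s_0$). The torus $T$ is isotopic to $T^2\times\{0\}$ and separates $T^2\times[-1,1]$ into two pieces, $N_-$ containing $T^2\times\{-1\}$ and $N_+$ containing $T^2\times\{1\}$. The first step is to identify the contact structures on $N_\pm$: because $\xi$ is minimally twisting, every convex torus parallel to the boundary has dividing slope in $[s_{-1},s_1]$, and $s_0$ lies strictly between $s_{-1}$ and $s_1$ on this arc, so $N_-$ is a minimally twisting contact structure on $T^2\times[0,1]$ with boundary slopes $s_{-1}$ and $s_0$ joined by an edge of the Farey graph, hence a basic slice; similarly $N_+$ is a basic slice with boundary slopes $s_0$ and $s_1$.

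The key step is then to determine the signs of these two basic slices. Here I would use the fact (from Honda's gluing/classification results, \cite{Honda00a}) that decomposing a minimally twisting contact structure along a parallel convex torus refines the minimal Farey path, and the sign attached to each sub-edge is inherited from the original decomposition: the edge from $s_{-1}$ to $s_0$ and the edge from $s_0$ to $s_1$ together subdivide (or equal) the original edges $[s_{-1},s_0]$ and $[s_0,s_1]$, so $N_-$ carries the sign $\mp$ of the original $T^2\times[-1,0]$ slice and $N_+$ carries the sign $\pm$ of the original $T^2\times[0,1]$ slice. Thus $N_-$ and $N_+$ are basic slices of \emph{opposite} sign glued along a convex torus of slope $s_0$.

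Now I would derive the contradiction from the pre-Lagrangian hypothesis. A pre-Lagrangian torus of slope $s_0$ admits, on each side, a collar in which the characteristic foliation is a (singular) linear foliation of slope $s_0$ accumulating onto $T$; equivalently, after perturbing $T$ to be convex, there is a one-parameter family of parallel pre-Lagrangian tori of slope $s_0$ sweeping out a neighborhood of $T$, so in fact slope $s_0$ is realized by a pre-Lagrangian torus that sits arbitrarily close to $T$ on either side and, more to the point, one can realize a pre-Lagrangian torus of slope $s_0$ that is pushed to either side. But by the characterization of when pre-Lagrangian tori of a given interior slope exist (\cite{Honda00a}), a basic slice with boundary slopes $s',s''$ contains a pre-Lagrangian torus of every slope in the \emph{open} interval $(s',s'')$, while the boundary slopes themselves are realized by genuinely convex (not pre-Lagrangian) tori; moreover a minimally twisting $T^2\times[0,1]$ contains a pre-Lagrangian torus of slope $s$ for $s$ an interior slope only when the two flanking basic slices in its decomposition at $s$ have the \emph{same} sign. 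Concretely: the existence of the pre-Lagrangian torus $T$ of slope $s_0$ forces the two basic slices $N_-,N_+$ abutting it to form a \emph{universally tight} $T^2\times[-1,1]$ in a neighborhood of $T$ (a pre-Lagrangian torus of an interior slope cannot separate two basic slices of opposite sign, since gluing opposite-sign basic slices along their common boundary is exactly the non-universally-tight, ``non-rotative-type'' situation whose lack of interior pre-Lagrangian tori at the junction slope is the content we want). This contradicts the sign computation $\mp$ versus $\pm$ from the previous step, completing the proof.

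\textbf{Main obstacle.} The delicate point is the last step: making precise the claim that a pre-Lagrangian torus of slope $s_0$ cannot sit at the interface of a $\mp$ basic slice and a $\pm$ basic slice. The clean way to see this is to note that a neighborhood of a pre-Lagrangian torus of slope $s_0$ is contactomorphic to a neighborhood of a pre-Lagrangian torus in a minimally twisting contact structure, which on both sides looks like a piece of a universally tight $T^2\times[0,1]$; extracting from $(T^2\times[-1,1],\xi)$ a small bi-collar of $T$ and recognizing it as a basic slice of slopes $s_0-\epsilon,s_0+\epsilon$ (for suitable perturbed slopes) that must be universally tight, then using uniqueness of the sign decomposition to propagate this to the full $N_-\cup N_+$, contradicts opposite signs. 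Alternatively, one can invoke directly Honda's statement that a pre-Lagrangian torus of slope $s$ exists in a thickened-torus contact structure iff $s$ is a boundary slope or all basic slices in the minimal decomposition on one side of slope $s$ through the other agree in sign near $s$ --- this is exactly the ``universally tight'' criterion quoted in the paragraph preceding the lemma --- and apply it to the local decomposition at $s_0$. I expect that spelling out this local-to-global sign propagation is where the real work lies; everything else (identifying $N_\pm$ as basic slices, reading off their boundary slopes from the Farey interval $[s_{-1},s_1]$) is routine given the results already cited in the excerpt.
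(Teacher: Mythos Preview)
Your proposal arrives at the right idea but takes a detour before getting there, and the detour contains a gap.

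The steps where you perturb $T$ to a convex torus and then assert that the resulting pieces $N_-$, $N_+$ inherit the signs $\mp$, $\pm$ from the original decomposition are not justified as written. The perturbed torus is \emph{not} $T^2\times\{0\}$, so ``inherited from the original decomposition'' has no immediate meaning; you would need to invoke the classification of tight structures on $T^2\times I$ to argue that \emph{any} convex torus of slope $s_0$ with two dividing curves yields the same pair of signs, and you would also need to rule out a shuffling relation $(\mp,\pm)\sim(\pm,\mp)$ collapsing to a same-sign pair. More importantly, once you reach your ``main obstacle'' paragraph you essentially abandon this setup and start over with the local model of the pre-Lagrangian torus --- which is exactly what the paper does from the outset.

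The paper's proof is your ``main obstacle'' paragraph made precise, with no preliminary perturbation step. It works directly with the pre-Lagrangian $T_0$: in the standard neighborhood model $\ker(\cos t\, d\phi+\sin t\, d\theta)$ one finds convex tori of slopes $-1/n$ and $1/n$ (after normalizing $s_0=0$) bounding a region $N_0'\ni T_0$. This $N_0'$ is universally tight (it sits inside the model), so its two constituent basic slices $B_-$, $B_+$ have the \emph{same} sign. The complementary regions $C_\pm$ satisfy $C_-\cup B_-$ and $C_+\cup B_+$ being basic slices (they are the original ones, cut by the new convex tori), and Honda's Theorem~4.25 forces all sub-signs within each to agree. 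Since $B_-$ and $B_+$ already agree, everything agrees, so $\xi$ is universally tight --- contradicting the opposite-sign hypothesis.

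So your instinct in the obstacle section is correct and matches the paper; the earlier perturbation-and-sign-inheritance scaffolding should be dropped.
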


\begin{proof} 
For a contradiction, suppose $T_0$ is a pre-Lagrangian torus in $T^2\times [-1,1]$ parallel to the boundary with characteristic foliation having slope $s_0$. Choose coordinates on $T^2$ so that $s_0=0$. Since the characteristic foliation of a surface determines the contact structure in a neighborhood, there is a neighborhood $N_0$ of $T_0$ that agrees with the standard model $T^2\times (-\epsilon,\epsilon)$ with contact structure $\ker(\cos t\, d\phi +\sin t\, d\theta)$. For sufficiently large $n$, we can find convex tori in this model with {two dividing curves of } slopes $-1/n$ and $1/n$ that cobound a thickened torus $N_0' \subset N_0$ that contains $T_0$. {Since $N_0'$ is minimally twisting (being a subset of $T^2\times [-1,1]$) and in the Farey graph we know that  there are geodesic paths from $-1/n$ to $0$ and from $0$ to $1/n$,
 the contact structure on $N_0'$ is the union of two basic slices.} The contact structure on $N_0$ (and hence on $N_0'$) is universally tight, and thus the signs of the basic slices making up $N_0'$ have the same sign.  Let $T_{0}'$ be the torus in $N_0'$ that divides $N_0'$ into two basic slices, $B_-$ and $B_+$ of the same sign.
Now $T^2\times [-1,1] \setminus N_0'$ is a union of two thickened tori $C_-$ and $C_+$ with boundary slopes $s_{-1}$ and $-1/n$ on the first and $1/n$ and $s_1$ on the second.   
The {assumption that $T^{2} \times [-1,1]$ is the union of two basic slices implies that   $C_-\cup B_-$ and  $C_+ \cup B_+$ are basic slices.} Since they are both tight, by Theorem 4.25 in \cite{Honda00a},    
 all the signs of the basic slices making up $C_{-}$ agree with those of $B_{-}$, and similarly all signs in the basic slices making up $C_{+}$ agree with the sign of $B_{+}$. But since we already observed that $B_{-}$ and $B_{+}$ have the same sign, we see that all the signs that determine $\xi$ on $T^2\times [-1,1]$ must be the same. This means that $\xi$ is universally tight, contradicting our hypothesis.  
\end{proof}

\subsubsection{Non-rotative contact structures}\label{nonrotativesection} 
 A contact structure $\xi$ on $T^2\times[0,1]$ is called \dfn{ non-rotative} if the boundary tori $T_0$ and $T_1$ are convex with equal dividing slopes, $s_1 = s_0$, and any convex torus $T$ in $T^2\times[0,1]$ that is parallel to the boundary also  has dividing slope $s_0$; these conditions imply $\xi$ is tight. 
For a non-rotative
$(T^2\times[0,1], \xi)$, we can construct  a ``complementary annulus'' as follows.

\begin{definition}[Complementary Annulus] \label{defn:complementary} Suppose $T^2\times[0,1]$ has a non-rotative contact structure with dividing slope $s$. 
Choose $t$ such that  $s$ and $t$ are connected by an edge in the Farey graph. 
By isotopy, we can assume the boundary of $T^2\times [0,1]$ has
ruling curves of slope $t$; let $L_t \subset T^2$ be a curve of slope $t$.  It is possible to
construct  an annulus $A$ in $T^2\times[0,1]$ such that $A$  is isotopic to $L_t \times [0,1]$, $A$ is convex, and $\partial A$ consists of ruling curves.
We call this a \dfn{ complementary annulus} for the non-rotative $T^2\times[0,1]$.
 \end{definition}

 \begin{lemma}\label{rem:annulus-unique} A non-rotative contact structure on $T^{2} \times [0,1]$ of slope $s$ is uniquely determined by the dividing curves on a complementary annulus.
 \end{lemma}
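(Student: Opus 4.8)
The plan is to show that a non-rotative contact structure on $T^2 \times [0,1]$ of slope $s$ is completely recovered from the dividing set on a complementary annulus $A$. First I would set up coordinates so that $s = \infty$ (the meridional slope) and the complementary annulus has boundary of slope $0$ on $T^2 \times \{0,1\}$; by Giroux flexibility the boundary tori may be assumed to be in standard form with two Legendrian divides of slope $\infty$ and ruling curves of slope $0$, and $A$ convex with Legendrian boundary consisting of ruling curves. The key structural fact I would invoke is that cutting $T^2 \times [0,1]$ along $A$ yields a ball (more precisely a $D^2 \times [0,1]$, i.e.\ a ball), since $A$ is a properly embedded annulus generating $H_1$ of the complement appropriately. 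The edge condition ($s$ and $t = 0$ joined in the Farey graph) is exactly what guarantees that $A$ is non-separating and cutting along it unknots the $T^2$ factor into a disk.

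Next I would use the standard ``cut and reconstruct'' principle from convex surface theory: a tight contact structure on $T^2 \times [0,1]$ (with fixed convex boundary in standard form) is determined, up to isotopy rel boundary, by the dividing curves on any convex surface whose complement is a union of balls, because a tight contact structure on a ball is unique up to isotopy (Eliashberg's uniqueness). So I would cut along $A$ to get a ball $B$; since $\xi$ is non-rotative hence tight, it restricts to the unique tight contact structure on $B$. The dividing set $\Gamma_A$ on $A$ determines, via edge-rounding, the dividing set on $\partial B$, and hence determines $\xi|_B$ up to isotopy fixing $\partial B$. Reglueing $B$ along the two copies of $A$ recovers $\xi$ on $T^2 \times [0,1]$, and the reglueing is determined by how $\Gamma_A$ sits on $A$. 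Thus two non-rotative contact structures with the same dividing curves on a common complementary annulus become isotopic.

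One subtlety I need to handle carefully is that ``dividing curves on a complementary annulus'' should be interpreted as an isotopy class of configuration on $A$ rel the combinatorics of how $\partial A$ meets the dividing curves of the boundary tori — since the boundary dividing slope $s$ is fixed, and the ruling slope $t$ is fixed by the edge condition, the intersection pattern $\partial A \cap \Gamma_{T^2 \times \{i\}}$ is determined, so the only data is the dividing arcs and closed curves of $\Gamma_A$ in the interior of $A$. By tightness of $\xi$ and the fact that $A$ is in a tight manifold, $\Gamma_A$ has no closed contractible components (no homotopically trivial dividing curves bounding a disk in $A$ with the wrong sign — this would force overtwistedness by Giroux's criterion), so $\Gamma_A$ is a union of boundary-parallel arcs. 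The count and nesting pattern of these arcs is then the complete invariant.

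The main obstacle I anticipate is verifying cleanly that the complement of $A$ in $T^2 \times [0,1]$ is a ball (or a disk cross interval) and that the edge-rounding along $\partial A$ correctly translates $\Gamma_A$ together with the fixed boundary dividing data into a dividing configuration on the boundary sphere of that ball — this is the bookkeeping step where one must be careful that the edge condition in the Farey graph (minimal geometric intersection one between slopes $s$ and $t$) is precisely what makes $A$ a ``disk-slicing'' annulus. Once that topological input is in place, the rest is a direct application of the uniqueness of tight contact structures on the ball together with Honda's gluing/classification machinery (e.g.\ the results of \cite{Honda00a} already cited above), so I would cite those rather than reprove them. I would also remark that this is essentially the annular analogue of the bypass/convex-decomposition argument, and could alternatively be phrased by observing that any such non-rotative $T^2 \times [0,1]$ embeds in a standard model determined by $\Gamma_A$.
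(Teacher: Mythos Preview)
Your strategy of cutting along $A$ and invoking uniqueness on the complement is the right idea and matches the paper's approach, but there is a topological error: cutting $T^2 \times [0,1]$ along the single annulus $A \cong L_t \times [0,1]$ does \emph{not} yield a ball. Cutting $T^2$ along one essential simple closed curve gives an annulus, not a disk, so the complement is $(\text{annulus}) \times [0,1]$, a solid torus. The uniqueness result you need is therefore not Eliashberg's theorem on the ball but Kanda's theorem (Theorem~\ref{thm:solid-tori}) for a solid torus with two longitudinal dividing curves on its convex boundary; the Farey edge condition between $s$ and $t$ is exactly what makes the rounded dividing set longitudinal. This is precisely how the paper argues: match the two contact structures on a neighborhood of $\partial(T^2\times[0,1]) \cup A$, then appeal to uniqueness on the complementary solid torus.

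A secondary issue: your description of $\Gamma_A$ as consisting only of boundary-parallel arcs is incorrect. Giroux's criterion rules out contractible closed components, but arcs running from one boundary component of $A$ to the other are allowed and indeed typical (see the model in Lemma~\ref{lem:model-order}). This does not affect the corrected argument, but your characterization of the invariant data needs adjustment.
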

 
\begin{proof}    
If you have two non-rotative contact structures on $T^2\times[0,1]$ with the same boundary (and hence we assume the characteristic foliation on their boundaries is the same) and the dividing curves on their complementary annuli agree, then after isotopy we can assume that the complementary annuli and the annuli's characteristic foliations agree. 
 Then 
the contact structures are isotopic in a neighborhood of $\partial(T^2\times [0,1])\cup A$. The complement of this region is a solid torus with convex boundary having two
 longitudinal dividing curves (since the slope of the dividing curves and the slope of the complementary annulus form a basis for the homology of $T^2$), which has a unique tight contact structure; see Theorem~\ref{thm:solid-tori}. 
 \end{proof}

\begin{remark} [Non-rotative models]  \label{rem:non-rot-model}
Given an  annulus $A$ and a set of curves $\Gamma$ that can arise as the dividing curves for a non-rotative structure (that is the dividing curves run from one boundary component of the annulus to the other), 
we can build an explicit model for a non-rotative contact structure on $T^{2} \times [0,1]$ such that $A$ is a complementary annulus with dividing set $\Gamma$; our construction will yield a contact structure that is $S^1$-invariant in the direction of the dividing curves on the boundary.  
For this construction, start with  $A$ and $\Gamma \subset A$. We first 
  construct a foliation on $A$ satisfying the conditions of $\Gamma$ being dividing curves for the foliation, \cite{Giroux91}.
  There is an $\R$-invariant contact structure on $\R\times A$ that induces $\Gamma$ as the dividing curves on $A\times \{0\}$. Quotienting by the action of $\Z$ on $\R$ will give a contact structure on $S^1\times A=T^2\times [0,1]$ that is $S^1$-invariant. Moreover, one can show that the contact structure is non-rotative and that the dividing curves on the boundary of $T^2\times [0,1]$ are parallel to the first $S^1$ factor. 
 Thus we have an explicit model for any non-rotative contact structure that is $S^1$-invariant, where the $S^1$-action is in the direction of the dividing curves on the boundary. 
\end{remark}

{In a pre-Lagrangian torus, there is a natural cyclic ordering of the leaves of the foliation. In a convex torus, there is again a natural cyclic ordering of
leaves made from {\it ruling curves}, but it is less obvious how to define the ordering of leaves 
formed from the {\it Legendrian divides}.}
In Subsection~\ref{order}, 
we will define a cyclic ordering of the Legendrian divides of a convex torus that {lies either in a basic slice or in a universally tight $T^{2} \times [0,1]$.} {Our model} will be 
 a  convex torus with a non-rotative $T^{2} \times I$ neighborhood that has
a complementary annulus $A$ with a dividing set
consisting of {\it two} curves that run from one boundary component {to the other}.  
We now explain our model $T^2 \times I$ and how  the complementary annulus defines a cyclic  ordering of the Legendrian divides on convex tori  in this model. 

\begin{lemma}  \label{lem:model-order}Consider a non-rotative $T^2 \times [0,1]$  of slope $s$ 
with a complementary annulus $A$ that has two dividing curves that run from one boundary component of $A$ to the other. A closed curve $\gamma$ in the interior of $A$ isotopic to the core of $A$ that transversally intersects the dividing curves of $A$ gives rise to a convex torus $T_{\gamma}$.  From $A$, we can define 
a cyclic ordering of the Legendrian divides of $T_{\gamma}$. Moreover, any complementary annulus isotopic to $A$ 
 will induce the same cyclic ordering on the Legendrian divides of $T_{\gamma}$.
\end{lemma}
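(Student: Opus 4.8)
The plan is to push the whole picture into an explicit $S^1$-invariant model, read the cyclic ordering off the combinatorics of $\Gamma_A$, and then recognize the independence statement as a matter of promoting an isotopy of convex annuli to a contact isotopy.

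First I would reduce to a model. By Lemma~\ref{rem:annulus-unique} the non-rotative contact structure on $T^2\times[0,1]$ is determined by $A$ together with its dividing set, so after a contact isotopy I may take $(T^2\times[0,1],\xi)$ to be the $S^1$-invariant model of Remark~\ref{rem:non-rot-model}, in which $T^2\times[0,1]=S^1\times A$, the $S^1$-factor has slope $s$ (so it is tangent to the dividing curves, and hence to the Legendrian divides, of every boundary-parallel convex torus), and $A$ is identified with $\{pt\}\times A$. A closed curve $\gamma$ in the interior of $A$ isotopic to the core and transverse to $\Gamma_A$ then produces the torus $T_\gamma=S^1\times\gamma$. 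Using the standard behaviour of $S^1$-invariant contact structures, $T_\gamma$ is convex precisely because $\gamma$ is transverse to $\Gamma_A$, its dividing set is $\Gamma_{T_\gamma}=S^1\times(\gamma\cap\Gamma_A)$, and this is a union of $2m$ curves of slope $s$, where $2m=|\gamma\cap\Gamma_A|$ is even and positive since $\gamma$ is a closed essential curve crossing the spanning arcs of $\Gamma_A$. Putting $T_\gamma$ in standard form by Giroux flexibility, the $2m$ Legendrian divides run one apiece alongside the components of $\Gamma_{T_\gamma}$.

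Next I would define the ordering: traversing the circle $\gamma$ visits the $2m$ points of $\gamma\cap\Gamma_A$ in a well-defined cyclic order, and equivalently, under the identification $T_\gamma=S^1\times\gamma$, the curve $\{pt\}\times\gamma$ is a ruling curve of slope $t$ meeting each Legendrian divide of $T_\gamma$ exactly once, so traversing it cyclically orders the Legendrian divides (once an orientation is fixed). I take this to be the ordering determined by $A$. It does not depend on the chosen standard form, since any two standard forms differ by an isotopy of $T_\gamma$ fixing $\Gamma_{T_\gamma}$, which preserves the cyclic order of any family of disjoint parallel essential curves on the torus, and it does not depend on the chosen slope-$t$ ruling curve, since any two such are isotopic through slope-$t$ curves.

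Finally I would prove the independence statement. Let $A'$ be a complementary annulus isotopic to $A$. Adjusting $A'$ near the two boundary tori, I may assume $\partial A'=\partial A$, and then $A'$ is smoothly isotopic to $A$ rel boundary with $\Gamma_{A'}$ isotopic to $\Gamma_A$ (two spanning arcs of an annulus with fixed endpoints). As in the proof of Lemma~\ref{rem:annulus-unique}, a neighbourhood of $\partial(T^2\times[0,1])\cup A$ is a fixed standard piece whose complement is a solid torus with two longitudinal dividing curves carrying the unique tight contact structure of Theorem~\ref{thm:solid-tori}; using this, the smooth isotopy from $A$ to $A'$ can be upgraded to a contact isotopy $\phi_t$ of $T^2\times[0,1]$ with $\phi_1(A)=A'$. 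Since $\phi_1$ is isotopic to the identity, it carries $\gamma$ to $\gamma'=\phi_1(\gamma)\subset A'$, $T_\gamma$ to $T_{\gamma'}$, dividing set to dividing set and Legendrian divides to Legendrian divides, and it preserves cyclic orders of families of parallel curves on tori; hence the ordering induced by $A'$ (via $\gamma'$) coincides with the one induced by $A$ (via $\gamma$). The genuinely delicate step is exactly this last one, promoting a smooth isotopy of convex annuli with common boundary and isotopic dividing sets to a contact isotopy and checking that it transports the constructed ordering; everything else is routine $S^1$-invariant convex surface theory.
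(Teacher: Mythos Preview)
Your definition of the cyclic ordering is not the one the paper intends, and this is a genuine gap rather than a stylistic difference. You order the Legendrian divides of $T_\gamma$ by traversing $\gamma$ itself (equivalently, a slope-$t$ ruling curve on $T_\gamma$). That ordering is intrinsic to the convex torus $T_\gamma$: any transversal curve on $T_\gamma$ meets the divides in the same cyclic order, so it does not depend on $A$ at all, and your independence statement becomes vacuous. The paper instead builds a closed loop on $A$ from the two dividing arcs of $\Gamma_A$ together with the arcs $e_0,e_1\subset\partial A$ lying in the negative region, and orders the points of $\gamma\cap\Gamma_A$ by traversing that loop. These two orderings do not agree in general once $|\gamma\cap\Gamma_A|>2$: in the picture of Lemma~\ref{local-model}, where the pre-Lagrangian is $T=S^1_\theta\times C$ and $T_\gamma=S^1_\theta\times\gamma$, the paper's ordering follows $C$ (the dividing curve) while yours follows $\gamma$, and two transverse simple closed curves in an annulus typically visit their intersection points in different cyclic orders. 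The entire purpose of this lemma is that the complementary-annulus ordering agrees with the pre-Lagrangian ordering; with your definition it does not, so the lemma as you have proved it cannot feed into Lemma~\ref{prelagmodel} or the arguments of Section~\ref{oclassification}.

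Your independence argument also does not establish what is claimed. You promote the isotopy $A\leadsto A'$ to a contact isotopy $\phi_t$ and observe that $\phi_1$ carries $(A,\gamma,T_\gamma)$ to $(A',\gamma',T_{\gamma'})$. This shows that the $A$-ordering on $T_\gamma$ matches the $A'$-ordering on $T_{\gamma'}$, but the statement asks you to fix $T_\gamma$ and compare the ordering it receives from $A$ with the one it receives from $A'$. The paper handles this by arranging $A'$ to meet $T_\gamma$ in a ruling curve, splitting both $A$ and $A'$ along $T_\gamma$ into sub-annuli $A_\pm$ and $A'_\pm$, and then invoking Honda's minimality result (Proposition~\ref{KoS1invariant}, i.e.\ \cite[Proposition~4.4]{Honda00b}) to force $\Gamma_{A'_\pm}$ to coincide with $\Gamma_{A_\pm}$; this is exactly what pins down the combinatorics of the loop on $A'$ relative to the fixed $T_\gamma$.
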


\begin{proof}  As mentioned in Lemma~\ref{rem:annulus-unique}, the slope of the non-rotative neighborhood and the dividing 
curves on the complementary annulus uniquely determine the contact structure.  So 
  let $A$ be an annulus with dividing set $\Gamma$ consisting of
two curves  that each run from one boundary component to the other.
Using the construction from Remark~\ref{rem:non-rot-model}, we obtain a non-rotative contact structure $\xi$ on $T^2\times [0,1]$ where
each boundary torus will have two dividing curves. This contact structure is $S^{1}$-invariant in the direction of the dividing curves on the boundary tori and is also  $[0,1]$-invariant.

From curves on $A$, we can construct convex tori.
Let $\gamma$ be any closed curve in the interior of $A$ that is isotopic to the core of $A$ and has a transversal intersection with $\Gamma$.  
Then $T_\gamma=S^1\times \gamma$   is a convex torus that splits $T^2\times [0,1]$ into two thickened tori $R_-$ and $R_+$. The contact structure $\xi$ restricted to each of these is non-rotative. We also know that the number of dividing curves on $T_\gamma$   is $2n= |\gamma \cap \Gamma|$ and $S^1\times (\gamma \cap \Gamma)$ are Legendrian divides on $T_\gamma$, since the dividing curves on $A$ are precisely where the contact structure is tangent to the $S^1$ fibers.

 {The complementary annulus  $A$ defines a cyclic ordering of the Legendrian divides of $T_\gamma$} as follows; {see Figure~\ref{makeorder}.}     
  \begin{figure}[ht]
\small
\begin{overpic}%[grid,tics=10] 
{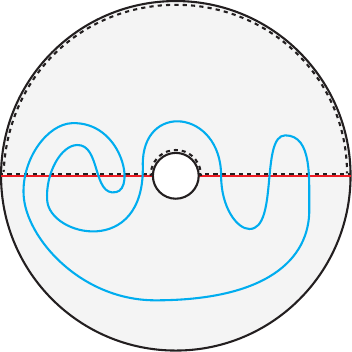}
\put(80, 30){$\gamma$}
\put(109, 88){$1$}
\put(122, 88){$2$}
\put(151, 88){$3$}
\put(7, 88){$4$}
\put(20, 88){$5$}
\put(38, 88){$6$}
\put(53, 88){$7$}
\put(63, 88){$8$}
\put(80, 100){$e_0$}
\put(80, 160){$e_1$}
\end{overpic}
  \caption{The annulus $A$ in the construction of  $T^2\times [0,1]$. The dividing curves are the horizontal red curves and $\gamma$ is shown in blue. The dotted circle is used to order the Legendrian divides on $T_\gamma$. }
\label{fig:pre-lag-model}                    
  \label{makeorder}    
\end{figure}  

 Recall $\Gamma$ divides the convex $A$ into positive and negative regions; for $i = 0,1$, let $e_i$ be the arc in $S^1\times \{i\}\subset \partial A$ that is contained in the negative region of  $A$.  Then the union of $e_0,e_1,$ and the two arcs in $\Gamma$ form a closed loop on $A$; orient this loop so that the portions of it coming from $e_i$ are oriented as the boundary of $A$. Now when one traverses this oriented loop, one encounters all of the elements of $\gamma \cap \Gamma$, and hence a cyclic order is induced on these points, which correspond to the Legendrian divides on $T_\gamma$. 
  
We claim any other convex annulus isotopic to $A$ defines the same ordering. To see this let $A'$ be another annulus isotopic to $A$ with boundary rulings curves and intersecting $T_\gamma$ in a ruling curve. The torus $T_\gamma$ breaks $A$ and $A'$ into two sub-annuli $A_\pm$ and $A'_\pm$ where $A_+$ and $A'_+$ have boundary on $T^2\times \{1\}$. Notice that $A_\pm\times S^1$ is an $S^1$-invariant contact structure determined by the dividing curves on $A_\pm$. According to \cite[Proposition~4.4]{Honda00b} any other convex surface isotopic to $A_\pm$ will have dividing set containing that of $A_\pm$ (after isotopy). Thus the dividing set of $A'_\pm$ contains that of $A_\pm$. But since the dividing set of $A_\pm$ has arcs that run across the annuli, there can be no closed dividing curves in $A'_\pm$. Moreover, since the number of times the dividing curves intersect $T_\gamma$ and $T^2\times \{i\}$ is determined by the number of dividing curves on $T_\gamma$ and $T^2\times \{i\}$, we see that the dividing curves on $A'_\pm$ agree with those on $A_\pm$. Thus the ordering of the Legendrian divides coming from $A'$ will be the same as the one coming from $A$. 
\end{proof}

\subsubsection{Neighborhoods of pre-Lagrangian tori and convex tori} \label{order}

In \cite[Lemma 3.17]{EtnyreHonda01b}, it is shown that the Legendrian divides on a convex torus arise as the intersections of the convex torus with a pre-Lagrangian torus.
We will need an enhanced version of this lemma that includes a special non-rotative neighborhood of the convex torus; the corresponding complementary annulus  will enable
 us to define a cyclic ordering of the Legendrian divides on the convex torus, which will coincide with the cyclic ordering on the pre-Lagrangian. 
 We explain this special neighborhood in  the following local model {where we will start with a pre-Lagrangian
 torus and construct models for convex tori.}

 \begin{lemma}\label{local-model} In any neighborhood of a pre-Lagrangian torus $T$, one can construct a convex torus $T'$ with Legendrian divides given as $T' \cap T$. {For any choice of such $T'$}, 
 one can find
 a non-rotative $T^2 \times I$ containing $T'$ that has a complementary annulus $A$ consisting of two dividing curves that run from one boundary component to the other. This
  $A$ induces a cyclic
 ordering of the Legendrian divides of $T'$, {and this cyclic ordering agrees with that given by the pre-Lagrangian $T$. }
 \end{lemma}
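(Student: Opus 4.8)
The plan is to build the local model explicitly starting from the pre-Lagrangian torus $T$ and then reduce everything to the situation already handled in Lemma~\ref{lem:model-order}. First I would fix coordinates on $T^2$ so that the characteristic foliation of $T$ has slope $s$, and recall that a neighborhood of $T$ is contactomorphic to the standard model $T^2 \times (-\epsilon, \epsilon)$ with contact form $\cos t\, d\phi + \sin t\, d\theta$ (where $s$ corresponds to $t=0$). In this model the tori of irrational slope are pre-Lagrangian and the tori $T^2 \times \{t\}$ for rational $t \neq 0$ small are convex with two dividing curves; more precisely, perturbing $T$ slightly produces a convex torus $T'$ whose ruling curves can be taken of any slope, and by \cite[Lemma~3.17]{EtnyreHonda01b} (the result being enhanced here) the Legendrian divides of $T'$ are exactly $T' \cap T$. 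This gives the first two assertions; the content is in producing the non-rotative neighborhood together with its complementary annulus and checking the cyclic orderings match.

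Next I would choose $t'$ connected to $s$ by an edge in the Farey graph, and inside the standard model pick two convex tori $T'_-$ and $T'_+$ parallel to the boundary, on either side of $T'$, each with two dividing curves of slopes $s_- , s_+$ respectively chosen so that $s_-, s, s_+$ are consecutive along a short path in the Farey graph (e.g. slopes just below and just above $s$), with $T'$ sitting between them. The region $N$ they cobound is non-rotative of slope $s$ (every parallel convex torus inside has slope $s$, since we are inside the standard $S^1$-invariant model and the slopes available are only those "between" $s_-$ and $s_+$ in the relevant sense, all equal to $s$ after the perturbation is undone — more carefully, $N$ is a subset of the non-rotative neighborhood of $T$). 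Inside $N$ I construct the complementary annulus $A$ of slope $t'$ with $\partial A$ on ruling curves of the boundary; because the contact structure on $N$ is $S^1$-invariant in the direction of slope $s$ and has exactly two dividing curves on each boundary torus, $A$ can be arranged to have dividing set consisting of exactly two arcs running from one boundary component of $A$ to the other — this is the key structural point, and it follows from the $S^1$-invariant model description in Remark~\ref{rem:non-rot-model} together with the count of dividing curves. With $N = T^2 \times I$, $T'$, and $A$ in hand, Lemma~\ref{lem:model-order} applies verbatim and gives a cyclic ordering on the Legendrian divides of $T'$ independent of the choice of $A$ within its isotopy class.

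Finally I would verify that this cyclic ordering agrees with the one the pre-Lagrangian $T$ induces on its leaves (restricted to the leaves through the points $T \cap T'$). This is where I expect the main obstacle: one must trace through the construction of Lemma~\ref{lem:model-order} — the loop built from $e_0$, $e_1$, and the two dividing arcs of $A$ — and compare the order in which it meets $\gamma \cap \Gamma$ with the order in which the linear foliation of $T$ meets those same points as one goes once around a leaf. The cleanest way is to put everything in the $S^1$-invariant, $I$-invariant model of Remark~\ref{rem:non-rot-model}: there the pre-Lagrangian leaves through $T \cap T'$ and the $S^1$-orbits through the dividing points of $A$ are both "straight lines" in the product structure, and the cyclic order induced by traversing the distinguished loop on $A$ is, by inspection of Figure~\ref{makeorder}, exactly the order in which a leaf of slope $s$ winds past the Legendrian divides. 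The point to be careful about is orientations: one must check that the orientation convention on $e_0, e_1$ (as boundary of $A$) matches the orientation of the pre-Lagrangian leaves coming from the standard model, so that the two cyclic orders are equal and not reversed; this amounts to one consistency check of sign conventions, and I would carry it out by testing the simplest case where $|\gamma \cap \Gamma| = 2$ (two Legendrian divides) and invoking $S^1$-invariance to propagate to the general case. Once that orientation bookkeeping is settled, the lemma follows.
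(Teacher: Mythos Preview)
Your overall strategy (build an explicit local model near $T$, then feed it into Lemma~\ref{lem:model-order}) is the same as the paper's, but the construction of the non-rotative thickened torus is flawed. You propose boundary tori $T'_-$ and $T'_+$ with dividing slopes $s_-$ and $s_+$ adjacent to $s$ in the Farey graph. If $s_- \ne s \ne s_+$, the region they cobound is minimally twisting (indeed a union of two basic slices), not non-rotative: interior convex tori realize every slope in $[s_-, s_+]$, not only $s$. Your parenthetical attempt to rescue this (``all equal to $s$ after the perturbation is undone'') does not make sense---dividing slopes of convex tori are discrete invariants and do not collapse to $s$ under any perturbation. Consequently Definition~\ref{defn:complementary} does not apply to your $N$, and Lemma~\ref{lem:model-order} cannot be invoked as written.

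The paper avoids this by working in the transverse annulus $A = \{\theta = 0\}$ inside the standard neighborhood $T^2 \times (-\epsilon,\epsilon)$ with contact form $\ker(\cos t\, d\phi + \sin t\, d\theta)$; here $A$ is convex with dividing set the single circle $C = T \cap A$. For any embedded curve $\gamma \subset A$ isotopic to $C$ and meeting $C$ transversely, the $S^1_\theta$-sweep $T_\gamma = S^1_\theta \times \gamma$ is convex with Legendrian divides exactly $S^1_\theta \times (\gamma \cap C) = T_\gamma \cap T$. One then chooses auxiliary curves $\gamma^\pm$ with $|\gamma^\pm \cap C| = 2$ enclosing $\gamma$; the corresponding tori $T_{\gamma^\pm}$ each have exactly \emph{two} dividing curves of slope $s$, so the region $R$ between them is genuinely non-rotative of slope $s$, and $A \cap R$ is its complementary annulus with two dividing arcs $C \cap R$. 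Because $C$ is literally the intersection of the pre-Lagrangian $T$ with $A$, the loop in Lemma~\ref{lem:model-order} traverses $\gamma \cap C$ in the same cyclic order that $T$ does---no orientation bookkeeping is needed beyond inspecting the picture. The fix for your argument is exactly this: build the boundary tori of the non-rotative region from curves on $A$ hitting $C$ twice, rather than from nearby level tori of different slope.
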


\begin{proof} 
Recall that the characteristic foliation on a surface determines the contact structure up to isotopy in a neighborhood of the surface.  Thus
given a pre-Lagrangian torus $T$, we can choose coordinates on $T$ so that the foliation has slope $0$, and the contact structure in a neighborhood 
$T \times (-\epsilon, \epsilon)$ is given by $\ker (\cos t\, d\phi + \sin t\, d\theta)$, where $t \in (-\epsilon, \epsilon)$, and  $\theta,\phi \in S^1$ are angular coordinates on $T$.  
Consider the  circle $C \subset T \times \{ t = 0 \}$ given by $C = \{ \theta = 0, t = 0 \}$, and then the annulus  
$A =  C \times (-\epsilon, \epsilon) = \{\theta = 0, t \in (-\epsilon, \epsilon) \} \subset T \times (-\epsilon, \epsilon)$. Observe that $A$ is convex: the contact vector field $\frac{\partial}{\partial \theta}$ is transverse to $A$,
and the dividing curve of $A$  is the circle $C$.  Keep in mind that our pre-Lagrangian  $T$ is $S^1_{\theta}\times C$.

 \begin{figure}[ht]
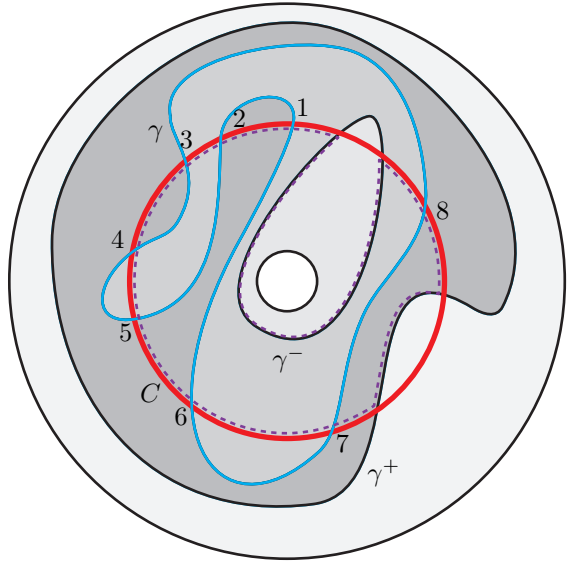

\small
\begin{overpic} 
{fig/modeltori}
\put(50, 60){$C$}
\put(53, 160){$\gamma$}
\put(109, 168){$1$}
\put(85, 164){$2$}
\put(65, 156){$3$}
\put(39, 118){$4$}
\put(42, 83){$5$}
\put(63, 51){$6$}
\put(124, 41){$7$}
\put(162, 129){$8$}
\put(136, 30){$\gamma^+$}
\put(100,73){$\gamma^-$}
\end{overpic}
  \caption{ Starting with a pre-Lagrangian torus $T = C \times S^1_\theta$, $T_{\gamma} = \gamma \times S^1_\theta$ is a convex torus with
Legendrian divides given by $(\gamma \cap C) \times S^1_\theta = T_{\gamma} \cap T$.  The tori $T_{\gamma^\pm} = \gamma^\pm \times S^1_\theta$ form the boundary of a region $R$ with a
non-rotative contact structure such that $A \cap R$ is a complementary annulus with dividing curves given by $A\cap C$.
The dotted curve $p$, constructed as in the proof of Lemma~\ref{lem:model-order} 
from the dividing curves and portions of the boundary of the complementary annulus, 
 induces a cyclic ordering on the Legendrian divides of $T_{\gamma}$ that agrees with 
the cyclic ordering from $T$. }
\label{fig:pre-lag-model}                    
  \label{ffig:pre-lag-mode}    
\end{figure}

 We now show how curves on $A$ give rise to convex tori whose Legendrian divides are given by the intersections of
these tori with the pre-Lagrangian $T$.  
Let $\gamma \subset A$ be the image of the circle $C$ under a smooth isotopy such that $\gamma \cap C \neq \emptyset$, and,   
 for all $p \in\gamma \cap C$,
$T_p\gamma$ is spanned by $\frac{\partial}{\partial t}$; {see Figure~\ref{fig:pre-lag-model}}.    Then  $T_\gamma=   S^1_{\theta} \times \gamma$
is a convex torus: singularities of the characteristic foliation happen precisely along $S^1_{\theta}\times (\gamma \cap C) $, and it follows that we can choose dividing curves for the foliation
thus guaranteeing $T_{\gamma}$ is convex.
  By perturbing $\gamma$ relative to $\gamma \cap C$,  it is possible to assume
  the characteristic foliation is in standard form with ruling curves of slope $\infty$, meaning parallel to $C$ ; observe that  there are $|\gamma\cap  C|$ dividing curves
  and the  Legendrian divides of $T_{\gamma}$ are exactly $T_\gamma \cap T$.
  
  Next we show that, from curves $\gamma^{\pm}$ on $A$, we can construct a non-rotative neighborhood $R$ of $T_\gamma$ such that both boundary components of $\partial R$ are
  convex tori with two dividing curves, and {the complementary annulus for $R$ is given by $A \cap R$}.
Let $\gamma^\pm \subset A$ be disjoint images of the circle $C$ under a smooth isotopy such that  $\gamma^\pm$ transversally intersects $C$ in {\it two} points, and $\gamma$ is contained
in the region of $A$ bounded by $\gamma^\pm$; see Figure~\ref{fig:pre-lag-model}.  Following a procedure as in the construction of $T_{\gamma}$ above, we let 
 $T_{\gamma^\pm} = S^1_\theta\times  \gamma^\pm$;   the $T_{\gamma^\pm}$ are convex with two dividing curves of slope $0$, meaning parallel to the  $\theta$-axis.
 The contact structure on the region $R$ between the two tori $T_{\gamma^\pm}$  is non-rotative.
 It follows that $R$ can be identified with an $I$ invariant neighborhood of a convex torus with two dividing curves.
  Notice that $T_\gamma$ splits $R$ into two regions $R_- \cup R_+$. As mentioned above, we can assume all the curves are chosen so that the
  tori $T_{\gamma^-}, T_\gamma$ and $T_{\gamma^+},$ have ruling curves of slope $\infty$. The annuli $A_\pm =A\cap R_\pm$ can be slightly perturbed to have boundary being the union of ruling curves. Then the $A_\pm$ are convex annuli with Legendrian boundary, and the dividing curves can be taken to be $T \cap A_\pm = C \cap A_\pm$. The contact structure on each $R_\pm$ is non-rotative and is completely determined by the isotopy class of the dividing curves on $A_\pm$. Notice that $R$ is exactly the type of non-rotative thickened torus considered in Lemma~\ref{lem:model-order}.
     Thus the complementary annulus $A_+\cup A_-$ determines an ordering on the Legendrian divides of $T_\gamma$ that agrees with the ordering coming from $T$.
   \end{proof}

  The following is our needed enhancement of \cite[Lemma 3.17]{EtnyreHonda01b}.

\begin{lemma}\label{prelagmodel} 
Consider a   basic slice $T^2\times [0,1]$, where $T^2 \times \{0\}$ has slope $s_0$, and
$T^2 \times \{1 \}$ has slope $s_1$. Suppose $s\in (s_0, s_1) \subset \partial D^{2}$, and 
$T' \subset T^{2} \times [0,1]$ is a boundary parallel convex torus in standard form with slope $s$. Then:

\begin{enumerate}
\item  There is a pre-Lagrangian torus $T$ isotopic to $T'$ that intersects $T'$ transversely, and  $T' \cap T$ is exactly the union of the Legendrian divides of $T'$.
 \item  Furthermore, 
 $T'$ is contained in a non-rotative thickened torus $R'$ of slope  $s$ that has 
a complementary annulus $A'$ with two dividing curves that run from one boundary component of $A'$ to the other; $A'$ 
defines a cyclic ordering of the Legendrian divides of $T'$ that agrees with the cyclic ordering of these curves on the pre-Lagrangian $T$.   
\end{enumerate}
\end{lemma}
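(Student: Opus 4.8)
The plan is to reduce Lemma~\ref{prelagmodel} to the local model established in Lemma~\ref{local-model}. The idea is that a basic slice $T^2 \times [0,1]$ is, near any boundary-parallel convex torus $T'$ of slope $s \in (s_0, s_1)$, indistinguishable from the neighborhood of a pre-Lagrangian torus of slope $s$, so the two statements to be proved follow by transporting the constructions of Lemma~\ref{local-model} into the basic slice.

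First I would invoke the standard structure theory for basic slices: since $s \in (s_0, s_1)$, there is a boundary-parallel pre-Lagrangian torus $T$ of slope $s$ in $T^2 \times [0,1]$ (this is recalled in Subsection~\ref{sec:basic-slice}, from the explicit models for basic slices). Choosing coordinates on $T^2$ so that $s = 0$, the characteristic foliation of $T$ is nonsingular linear of slope $0$, and hence a neighborhood $T \times (-\epsilon, \epsilon)$ of $T$ inside the basic slice is contactomorphic to the standard model $\ker(\cos t\, d\phi + \sin t\, d\theta)$. Now the given convex torus $T'$ has slope $s=0$ as well, and by Giroux Flexibility its characteristic foliation is in standard form with Legendrian divides parallel to the $\theta$-axis. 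I want to arrange, after an isotopy of $T'$ through convex tori of slope $s$ (which does not change its dividing set or the ambient contact structure up to isotopy), that $T'$ sits inside the standard model neighborhood of $T$ in exactly the position of the torus $T_\gamma = S^1_\theta \times \gamma$ appearing in the proof of Lemma~\ref{local-model}, where $\gamma$ is a curve on the annulus $A = C \times (-\epsilon,\epsilon)$ meeting the core circle $C$ transversely in $2n = \#(T' \cap T)$ points. The key point enabling this identification is that a convex torus of a fixed slope with a fixed number of dividing curves in a neighborhood of $T$ is, up to isotopy fixing $T$, unique, so $T'$ may be realized as such a $T_\gamma$.

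Once $T'$ is identified with $T_\gamma$ inside the standard model of a neighborhood of the pre-Lagrangian $T$, part (1) is immediate: by the construction in Lemma~\ref{local-model}, the Legendrian divides of $T_\gamma$ are exactly $T_\gamma \cap T = (\gamma \cap C) \times S^1_\theta$, so $T$ is the desired pre-Lagrangian torus. For part (2), I would again quote Lemma~\ref{local-model}, which produces the non-rotative neighborhood $R$ of $T_\gamma$ cut out by tori $T_{\gamma^\pm}$ for suitable curves $\gamma^\pm$ on $A$ flanking $\gamma$, together with its complementary annulus $A \cap R$ having exactly two dividing curves running from one boundary component to the other, and which shows that the induced cyclic ordering on the Legendrian divides of $T_\gamma$ agrees with the one coming from the pre-Lagrangian $T$. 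Setting $R' = R$ and $A' = A \cap R$ finishes the proof; one should also note $R'$ can be taken inside the basic slice since $T \times(-\epsilon,\epsilon)$ was, and that $R'$ is genuinely non-rotative of slope $s$ because it is a subset of the minimally twisting basic slice and has convex boundary of slope $s$.

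I expect the main obstacle to be the identification step: one must justify carefully that the given convex torus $T'$, after an isotopy supported near $T$ and through convex surfaces, can be placed in the standard model as one of the explicit tori $T_\gamma$ with the prescribed number of Legendrian divides and ruling slope $\infty$. This uses uniqueness of convex tori of a fixed slope and fixed dividing number in the neighborhood of a pre-Lagrangian (equivalently in an $I$-invariant neighborhood), the ability to realize prescribed rulings by Giroux flexibility, and the fact that such an isotopy does not disturb $T$. The remaining subtlety is checking that the cyclic ordering defined by the complementary annulus $A'$ is independent of the choices of $\gamma^\pm$ and of the isotopy used — but this independence is exactly what the last paragraph of the proof of Lemma~\ref{lem:model-order} provides, via \cite[Proposition~4.4]{Honda00b}, so I would simply cite it rather than re-derive it.
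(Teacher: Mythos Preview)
Your overall strategy---reduce to Lemma~\ref{local-model}---matches the paper's, but the identification step you flag as ``the main obstacle'' is a genuine gap, and the justification you offer does not close it. The given torus $T'$ lives somewhere in the basic slice; it need not be anywhere near any particular pre-Lagrangian torus $T$. The ``uniqueness of convex tori of a fixed slope and fixed dividing number in the neighborhood of a pre-Lagrangian'' that you invoke is a statement about tori \emph{already} lying in that neighborhood; it says nothing about bringing an arbitrary boundary-parallel $T'$ into such a neighborhood. An isotopy ``supported near $T$'' cannot move $T'$ at all if $T'$ starts outside that support. So the key step---producing an ambient contact isotopy (or contactomorphism) of the basic slice carrying $T'$ onto some model $T_\gamma$---is exactly what needs to be proved, and you have assumed it.

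The paper fills this gap by working from $T'$ outward rather than from $T$ inward. First one builds, by attaching bypasses found on annuli running from $T'$ to the boundary tori $T^2\times\{i\}$, a non-rotative region $R'$ around $T'$ whose boundary tori each have two dividing curves of slope $s$; let $A'$ be a complementary annulus for $R'$, and record the dividing curves on $A'_\pm = A'\cap R'_\pm$. Separately, in a neighborhood of a pre-Lagrangian $T$ of slope $s$, Lemma~\ref{local-model} produces a model $T_\gamma$ and a non-rotative region $R$ with complementary annulus $A$; by choosing $\gamma$ one arranges that the dividing sets on $A_\pm$ match those on $A'_\pm$. Lemma~\ref{rem:annulus-unique} then gives a contactomorphism $R\to R'$ taking $T_\gamma$ to $T'$. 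The two remaining pieces $R_i$ and $R'_i$ of the basic slice are matched by the classification of tight contact structures on thickened tori, yielding a contactomorphism of the whole basic slice sending $T_\gamma$ to $T'$. The image of the model pre-Lagrangian $T$ is the torus you seek. In short, the missing ingredient in your argument is the construction of $R'$ around the given $T'$ via bypasses and the global piecewise matching; without it, the reduction to the local model is circular.
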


The statement of $(1)$ is given in \cite[Lemma 3.17]{EtnyreHonda01b}, but no explicit proof is given.  Here we will give the proof of the strengthened statement.

\begin{proof}  Fix a basic slice $T^2 \times I$ with boundary slopes $s_{0}, s_{1}$, and fix $s \in (s_{0}, s_{1})$.
 
Given a convex torus $T'$ of slope $s$ in a basic slice that is parallel to the boundary, one can find convex tori $T_\pm$ in $T^2\times[0,1]$ that both have two dividing curves of slope $s$ and cobound a region $R'$ with a non-rotative contact structure that contains $T'$ (this is just by attaching bypasses to $T'$ that can be found on annuli from $T'$ to $T^2\times\{i\}$  with boundary being curves of slope $s_i$). 
Choose a complementary annulus $A'$ (see Definition~\ref{defn:complementary}) for the non-rotative region $R'$ where the  boundary curves of $A'$ have  slope $t$;
 {the convex $A'$ will have two dividing curves that run from one boundary component to the other.}
 In this way, we break our basic slice into three regions: $R'_0$ (containing $T^2\times \{0\}$), $R'$, and $R'_1$ (containing $T^2\times \{1\}$). Moreover, $R'$ is split into two thickened tori $R'_-\cup R'_+$ by $T'$ and we can set $A'_\pm=A'\cap R'_\pm$. 
 
We now move to a
model situation {that incorporates the pre-Lagrangian torus} that has the stated properties, and then 
we will prove this model is contactomorphic to our situation as described in the previous paragraph. 
 In our basic slice, we can find a pre-Lagrangian torus $T$ {of slope $s$}. By Lemma~\ref{local-model}, in any neighborhood of  $T$
 we can find a convex torus $T_\gamma$ and convex tori $T_{\gamma^\pm}$ that bound a non-rotative region $R$ containing $T_\gamma$. 
 Choose a complementary annulus $A$ of slope $t$ for $R$. The torus $T_\gamma$ splits $R$ into two thickened tori $R_-\cup R_+$. Set $A_\pm=A\cap R_\pm$. 
 
 By the appropriate choice of $\gamma$ we can assume that the dividing curves on $A_\pm$ and $A'_\pm$ agree. 
Now the   tori $T_{\gamma^{\pm}}$ divide
our basic slice into three pieces: $R_0$, $R$, and $R_1$. Since the dividing curves on $A$ and $A'$ agree, $R$ and $R'$ are contactomorphic; see Lemma~\ref{rem:annulus-unique}.  Moreover, this contactomorphism takes $T_\gamma$ to $T'$. Indeed, since the dividing curves on $A_\pm$ and $A'_\pm$ agree, there is a contactomorphism from $R_-$ to $R'_-$ and from $R_+$ to $R'_+$. Taken together these give the desired contactomorphism of $R$ to $R'$. The classification of contact structures on thickened tori implies that $R_i$ and $R'_i$ are also contactomorphic. Thus we have a contactomorphism of our basic slice that takes $T_\gamma$ to $T'$.

 The image of $T$ is the desired pre-Lagrangian torus  that satisfies (1).
For the first part of statement $(2)$ we can take $T_\pm$
 to be the image of $T_{\gamma^\pm}$ under the contactomorphism. In the model, 
 the ordering of the Legendrian divides on $T_\gamma$ given by $A$ is the same as given by $T$, and thus the same will be true for $T'$. 
\end{proof}
 
\begin{remark}\label{prelagmodel-extended}
The lemma can be extended from basic slices to universally tight contact structures $\xi$ on $T^2\times[0,1]$. We know that a universally tight $\xi$ is obtained by stacking together several basic slices with the same sign. If the slope $s$ lies in a basic slice then one just repeats the proof in this basic slice and ignores the others. If $s$ is one of the slopes on the boundary of one (and hence two) basic slices, then from our discussion above we know that there is a pre-Lagrangian torus with the desired slope contained in the union of the two basic slices with boundary having slope $s$. Then one may repeat the above proof with little modification to reach the same conclusion. 
\end{remark}

\subsection{Tight Contact Structures on $S^1 \times D^2$ and neighborhoods of Legendrian knots}\label{ssec:solid-tori}

We begin with the simplest classification result. 
 
\begin{theorem} [Kanda 1997 \cite{Kanda97}] \label{thm:solid-tori} For all $q \in \mathbb Z$, there is a unique  contact structure on $S^1 \times D^2$ with
 convex boundary having two dividing curves of slope $q$.
\end{theorem}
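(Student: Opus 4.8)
The plan is to reduce the uniqueness of a tight contact structure on $S^1 \times D^2$ with two longitudinal-type dividing curves of slope $q$ to the case of two dividing curves of slope $0$ (i.e.\ meridional in a suitably renormalized basis), and then to establish that case directly via convex surface theory on the meridional disk. First I would normalize: after an orientation-preserving diffeomorphism of $S^1 \times D^2$ we may change the framing so that the dividing slope on $\partial(S^1 \times D^2)$ is any fixed integer we like; thus it suffices to treat one value of $q$, and I would take the dividing curves to have slope $0$, meaning they are boundaries of meridional disks. (One must check that such a diffeomorphism exists and carries a standard-form convex boundary to a standard-form convex boundary of the new slope; this is routine since self-diffeomorphisms of the solid torus act transitively on framings.)

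With the boundary arranged to have two dividing curves of meridional slope, the main step is to take a convex meridional disk $D$ with Legendrian boundary on $\partial(S^1 \times D^2)$, put $\partial D$ in Legendrian position with respect to the ruling of the boundary torus so that $\partial D$ meets the dividing set in exactly two points, and apply the Legendrian Realization Principle to make $D$ convex. Then $D$ has a dividing set $\Gamma_D$ which is a collection of properly embedded arcs and closed curves with $\partial \Gamma_D = \partial D \cap \Gamma_{\partial}$ consisting of two points, so there is exactly one arc of $\Gamma_D$ and the rest of $\Gamma_D$ is closed curves. By tightness and Giroux's criterion (a convex disk in a tight contact manifold has connected dividing set, i.e.\ no closed dividing curves, since a closed dividing curve on a disk would bound and force overtwistedness via the Giroux criterion), $\Gamma_D$ is a single boundary-parallel arc. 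Cutting $S^1 \times D^2$ along $D$ yields a tight contact structure on a $3$-ball $B^3$ with convex boundary whose dividing set is a single closed curve (the ``edge-rounded'' sphere), and by Eliashberg's theorem there is a unique tight contact structure on $B^3$ with this boundary data. Reassembling (via the uniqueness of how to glue back, i.e.\ invariance of the contact structure under the gluing once the dividing data on $D$ is fixed) shows any two such contact structures on $S^1 \times D^2$ are isotopic rel boundary; existence follows by exhibiting the standard model $\ker(\cos\theta\, dx - \sin\theta\, dy)$ on $S^1 \times D^2$ or by the explicit construction in Remark~\ref{rem:non-rot-model}.

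I expect the main obstacle to be the bookkeeping around the \emph{uniqueness of the gluing}: showing that the cut-open ball carries a unique tight structure is Eliashberg's theorem, but one must be careful that edge-rounding after cutting produces the correct (single dividing curve) sphere, and that reversing the cut — i.e.\ the fact that two contact structures agreeing on $S^1\times D^2 \setminus D$ after isotopy must agree on all of $S^1\times D^2$ — genuinely follows from the disk being convex with fixed dividing set. A secondary subtlety is making sure $\partial D$ can always be isotoped to intersect $\Gamma_\partial$ in the minimal number of points, which requires the boundary ruling curves to be chosen of the right slope and uses the Legendrian Realization Principle together with the fact that the dividing curves are nonseparating on $\partial(S^1\times D^2)$. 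None of these steps is deep; each is a standard application of convex surface theory, so the proof should be short, essentially a citation of Kanda's argument with the disk-decomposition made explicit.
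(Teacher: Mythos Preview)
The paper does not give its own proof of this theorem; it is simply quoted as a result of Kanda with a citation. So there is nothing in the paper to compare against beyond noting that your outline is exactly the standard argument (meridional disk, single dividing arc, cut to a ball, apply Eliashberg's uniqueness on $B^3$), and would be accepted as a correct sketch.

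One genuine confusion to fix: your normalization ``slope $0$, meaning they are boundaries of meridional disks'' is wrong in the paper's conventions. Slope $0$ curves are longitudes $\lambda$; the meridian has slope $\infty$. A self-diffeomorphism of the solid torus must preserve the meridian, so it can shift integer slopes among themselves but cannot make a dividing curve meridional. Fortunately the normalization is also unnecessary: for \emph{any} integer $q$, a slope-$q$ curve meets the meridian exactly once, so a meridional disk $D$ with Legendrian boundary already has $|\partial D\cap\Gamma_{\partial}|=2$, and your argument runs unchanged without first changing $q$. Finally, note that the statement (as written in the paper) suppresses the word ``tight''; your proof correctly and necessarily uses tightness via Giroux's criterion, so you should make that hypothesis explicit.
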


\begin{remark} Classification results on contact manifolds with convex boundary are usually stated in terms of the dividing curves.
It is useful to keep in mind that the characteristic foliation on the boundary is an invariant.   So, for example, the uniqueness statement for contact structures on $S^{1} \times D^{2}$ with a ``convex boundary having two dividing curves of slope $q$''  is shorthand for a uniqueness statement for contact structures on $S^{1} \times D^{2}$ with a
 ``fixed characteristic foliation that has two dividing curves of slope $q$;'' {uniqueness then means unique up to an isotopy fixing the boundary.}
\end{remark}

\begin{remark}[Model for solid torus with boundary slope $0$]\label{ex:n-copy-model} 
There is a simple model for the unique contact structure on $S^{1} \times D^{2}$ with convex boundary having two dividing curves of slope $0$.  
Consider a neighborhood $N_0$ of the $x$-axis in
$\R^3/\sim$, where $(x,y,z)\sim (x+1,y,z)$ and $\R^3$ has the standard contact structure $\xi=\ker
(dz-ydx)$. We can assume the boundary of the neighborhood is convex with two dividing
curves of slope $0$.  By Giroux Flexibility \cite{Giroux91}, we can assume the characteristic 
foliation is in standard form
 (as described at the beginning of Subsection~\ref{ssec:solidtori}).
Notice that $\Lambda_0 = \{ y = z = 0 \}$ is a Legendrian knot in $N_0$, and the annulus $A_0 = \{ y= 0\} \subset N_0$ is
foliated by Legendrian curves isotopic to $\Lambda_0$.  
\end{remark}

\begin{remark}  By considering the standard model in Remark~\ref{ex:n-copy-model}, we see that if $S^1\times D^2$ has convex boundary  with dividing slope $q\in \mathbb Z$, for any 
$s \in (-\infty, q) \subset \partial D^{2}$ (as defined in Subsection~\ref{farey}), there is a  pre-Lagrangian torus $T^2$ parallel to the boundary that has slope $s$.   
\end{remark}

We use the $N_{0}$  from Remark~\ref{ex:n-copy-model} as a ``standard neighborhood'' of a Legendrian knot with $tb = 0$: the convex boundary of this neighborhood will have
two dividing curves and a characteristic foliation in standard form.   Standard neighborhoods for Legendrian knots
with $tb \neq 0$ are obtained by applying a diffeomorphism to $N_{0}$.

\begin{definition} \label{std-nbhd} Given a Legendrian knot $\Lambda$,  by the Legendrian Neighborhood Theorem (see, for example, \cite[Corollary 2.5.9]{Geiges08}), we know
there is a neighborhood $N$ of $\Lambda$ and a contact diffeomorphism of $N_0$ (as defined in Remark~\ref{ex:n-copy-model}) onto $N$; such an $N$ is a \dfn{ standard neighborhood of $\Lambda$}.
The diffeomorphism from $N_{0}$ to $N$ sends $\Lambda_0$ to $\Lambda$, the two slope $0$ 
dividing curves  of $\partial N_0$ to curves of slope $tb(\Lambda)$ on $\partial N$, and the annulus $A_0$ to an annulus $A$ in $N$ foliated by curves isotopic to $\Lambda$.  
\end{definition}

The analog of Lemma~\ref{prelagmodel} for solid tori is the following. 
\begin{lemma}\label{prelagmodel2} 
Let $(S^1\times D^2, \xi)$ be a solid torus with convex boundary having 2 dividing curves of slope 0 (that is they are parallel to $S^1\times \{p\}$).
Suppose $s \in \mathbb Q$ satisfies $s\in (-\infty, 0) \subset \partial D^{2}$.  
\begin{enumerate}
\item Given any convex surface $T' \subset S^1\times D^2$ in standard form with boundary slope $s$ and isotopic
to $\partial (S^1\times D^2)$, there is a pre-Lagrangian torus $T$ isotopic to $T'$ that intersects $T'$ transversely, and  $T' \cap T$ is exactly the union of the Legendrian divides of $T'$.
 \item  Furthermore, there are two convex tori $T_-$ and $T_+$ that co-bound a non-rotative thickened torus $R'$ containing $T'$ where $T_\pm$ each have two Legendrian divides of slope $s$ that arise as $T_\pm \cap T$.  The tori $T_\pm$ together with a complementary annulus $A$ to the slope $s$ define a cyclic ordering of the Legendrian divides of $T'$ that agrees with the cyclic ordering of these curves on $T$.   
\end{enumerate}
\end{lemma}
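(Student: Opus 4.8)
The plan is to reduce Lemma~\ref{prelagmodel2} to the basic-slice result Lemma~\ref{prelagmodel} (together with Lemma~\ref{local-model}) by exhibiting the relevant piece of the solid torus as (a sub-piece of) a basic slice, or more precisely by using the classification of tight contact structures on $S^1 \times D^2$ to put the neighborhood of the convex torus $T'$ into the same model we already understand. First I would recall, via the standard model $N_0$ of Remark~\ref{ex:n-copy-model}, that $(S^1\times D^2,\xi)$ with two longitudinal dividing curves contains for each $s\in(-\infty,0)$ a pre-Lagrangian torus parallel to the boundary of slope $s$; this is the remark immediately following Theorem~\ref{thm:solid-tori}. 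Call it $T$. Then I would apply Lemma~\ref{local-model} to $T$: in any neighborhood of $T$ one gets a convex torus $T_\gamma$ whose Legendrian divides are $T_\gamma\cap T$, together with convex tori $T_{\gamma^\pm}$ bounding a non-rotative region $R$ with complementary annulus $A$ of some auxiliary slope $t$ (with $s$ and $t$ joined by an edge in the Farey graph), and $A$ has exactly two dividing curves running from one boundary component to the other, inducing a cyclic ordering of the Legendrian divides of $T_\gamma$ agreeing with that from $T$. This already proves statements (1) and (2) for the torus $T_\gamma$; the remaining work is to transport this to an \emph{arbitrary} standard-form convex torus $T'$ of slope $s$ parallel to $\partial(S^1\times D^2)$.

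For that transport step I would mimic the argument in the proof of Lemma~\ref{prelagmodel}. Given $T'$ of slope $s$, attach bypasses (found on annuli from $T'$ to the boundary torus with boundary curves of the appropriate slope — here one must be a little careful, since only one side of $T'$ reaches $\partial(S^1\times D^2)$ while the other side contains the core, but a non-rotative collar of $T'$ of slope $s$ can still be produced since $T'$ is boundary-parallel and the contact structure on the region between $T'$ and a nearby parallel convex torus is non-rotative) to obtain convex tori $T_-,T_+$ each with two dividing curves of slope $s$ cobounding a non-rotative region $R'$ containing $T'$. Pick a complementary annulus $A'$ for $R'$ of the same auxiliary slope $t$, with two dividing curves running across it; this splits, via $T'$, into $A'_\pm = A'\cap R'_\pm$. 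By the right choice of the isotopy producing $\gamma$ in the model of the previous paragraph, I can arrange that the dividing curves on $A_\pm$ and $A'_\pm$ agree. Lemma~\ref{rem:annulus-unique} then gives a contactomorphism $R_\pm\to R'_\pm$, hence $R\to R'$ carrying $T_\gamma$ to $T'$; the classification of tight contact structures on the two remaining complementary pieces — the solid torus on the core side and the thickened torus out to $\partial(S^1\times D^2)$ — via Theorem~\ref{thm:solid-tori} and the thickened-torus classification, lets me extend this to a contactomorphism of all of $S^1\times D^2$ taking $T_\gamma$ to $T'$. Pushing the pre-Lagrangian $T$ and the annulus $A$ forward under this contactomorphism gives the torus and complementary annulus required for $T'$, and the cyclic ordering is transported along with them, yielding the claimed agreement.

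The main obstacle, and the one place where the solid-torus case genuinely differs from the basic-slice case, is the extension of the local contactomorphism $R\to R'$ across the \emph{core} side of the solid torus: in Lemma~\ref{prelagmodel} both complementary pieces were thickened tori governed by Honda--Giroux, but here one of them is a genuine solid torus containing $S^1\times\{0\}$, and I must check that it has convex boundary with two dividing curves of slope $s$ (equivalently, that $T_-$ — the inner torus — bounds a solid torus to its core side with this dividing data) so that Theorem~\ref{thm:solid-tori} applies and the contactomorphism extends uniquely. This amounts to verifying that the solid torus cut off by $T_-$ is the standard neighborhood of a Legendrian core curve of twisting number equal to the integer part of $s$ below $0$, which follows from the classification of tight contact structures on solid tori (no ``overtwisted'' or extra-twisting pieces can be hiding inside, precisely because $\xi$ on $S^1\times D^2$ is tight and $s<0$). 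Once that is in hand, everything else is a routine repetition of the basic-slice argument, so I would not belabor those details.
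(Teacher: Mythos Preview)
Your approach is essentially correct but unnecessarily laborious, and the paper takes a much shorter route. Rather than rebuilding the entire argument of Lemma~\ref{prelagmodel} inside the solid torus and then wrestling with the core-side extension, the paper simply observes that the unique tight contact structure on $S^1\times D^2$ with two longitudinal dividing curves is universally tight, so one can use the classification of contact structures on solid tori to find a universally tight thickened torus $T^2\times[0,1]$ inside $S^1\times D^2$ containing the given $T'$; then Lemma~\ref{prelagmodel} together with Remark~\ref{prelagmodel-extended} applies directly to that thickened torus. This completely sidesteps the issue you flag as the ``main obstacle,'' since you never have to glue across the core at all.

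On that obstacle itself: your invocation of Theorem~\ref{thm:solid-tori} for the core-side solid torus is not quite right, since that theorem only gives uniqueness for \emph{integral} boundary slope, and your $T_-$ has slope $s$, which is generally non-integral. The sentence ``no overtwisted or extra-twisting pieces can be hiding inside'' does not by itself give uniqueness---there can be several tight contact structures on a solid torus with rational boundary slope. What actually makes your extension work is that the ambient $\xi$ is universally tight, so both inner solid tori (the one in your model and the one bounded by $T_-$) inherit universally tight structures with the same sign data and are therefore contactomorphic. But once you have noticed this, you may as well use universal tightness the way the paper does and skip the transport argument entirely.
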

\begin{proof}
Given the convex torus $T'$ in Item~(1) we can use the classification of contact structures on solid tori to find a universally tight thickened torus containing $T'$ in $S^1\times D^2$. The result now follows from Lemma~\ref{prelagmodel} and Remark~\ref{prelagmodel-extended}.
\end{proof}

\subsubsection{Neighborhoods of Legendrian knots} \label{stab-basic-slice}
 
Given a Legendrian knot $\Lambda$, inside  a standard neighborhood $N$ of $\Lambda$ (as described in Remark~\ref{ex:n-copy-model}), there is a standard neighborhood $N_\pm$ of the $\pm$ stabilized $\Lambda$. The dividing curves on the boundary of $N_\pm$ have slope $tb(K)-1$, so the region $R_\pm$ between $N$ and $N_\pm$ is a basic slice. Moreover the sign of the basic slice is determined by the sign of the stabilization. 

\subsubsection{Constructing $(p,-q)$-torus knots} \label{sec:p-q-torus}
At this point we review the construction of  non-trivial, maximal \tb invariant $(p,-q)$-torus knots ($p > 1$) with varying rotation numbers, \cite[Section~4]{EtnyreHonda01b}.  
Choose $m \in \mathbb Z$ such that 
$-m - 1 < -q/p < -m$.  We can decompose $S^3$ as $U_{-m - 1} \cup ([0,1] \times T^2) \cup S_{-m}$, where
$S_{-m}$ is the closure of the complement of a standard neighborhood of a Legendrian unknot $U_{-m}$ with $tb = -m$,
$U_{-m-1}$ is a standard neighborhood of a Legendrian unknot $U_{-m-1}$ with $tb = {-m-1}$, and $T^2\times [0,1]$ is a basic slice with dividing slopes $-m-1$ and $-m$, see Figure~\ref{fig:realizeneg}. 
 \begin{figure}[ht]
\small
\begin{overpic}%[grid] 
{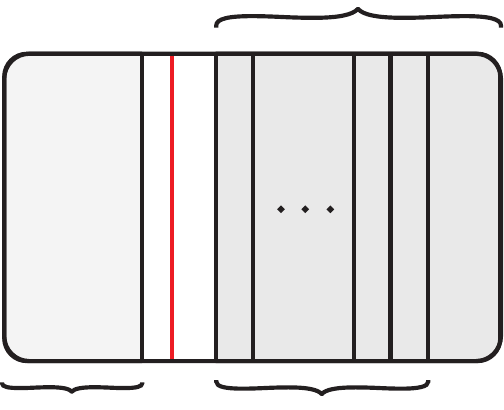}
\put(169, 200){$S_{-m}$}
\put(30, -3){$U_{-m-1}$}
\put(130, -3){$m-1$ basic slices}
\put(109, 120){$\pm$}
\put(175, 120){$\pm$}
\put(191, 120){$\pm$}
\end{overpic}
  \caption{The decomposition of $S^3$ used to find maximal \tb $(p,-q)$-torus knots. Each vertical line represents a Heegaard torus for $S^3$ and the $(p,-q)$-torus knot is found on the red torus.}                 
  \label{fig:realizeneg}    
\end{figure}  
Thus in $T^2\times [0,1]$ there is a pre-Lagrangian torus with linear characteristic foliation of slope $-q/p$. Our Legendrian $(p,-q)$-torus knot $K_{p,-q}$ is one of the leaves of this pre-Lagrangian torus. In \cite{EtnyreHonda01b} it was shown that any maximal Thurston-Bennequin invariant negative torus knot sits on such a torus for some choice of unknots $U_{-m}$ and $U_{-m-1}$. 

For $m > 1$, there will be options for the Legendrian unknot $U_{-m}$ distinguished by $r(U_{-m})$, and two choices for $U_{-m-1}$ determined by whether a positive or negative stabilization is done to $U_{-m}$; {this leads to the $2m$ Legendrian representatives of $K_{(p,-q)}$.}  The different choices for $r(U_{-m})$ and signs of stabilization for $U_{-m-1}$ will determine $r(K_{p,-q})$.  

When $p=1$, the $(1,-q)$-torus knot is an unknot, but $(n,-nq)$-torus links will be distinct links  as $q$ varies. 
One can still build models for maximal Thurston-Bennequin invariant $(n,-nq)$-torus links using pre-Lagrangian tori as shown in the proof of Lemma~\ref{maxntb}.

\subsection{The $n$-copy}\label{generalLeg}
Many non-destabilizable Legendrian negative torus links will be $n$-copies of a Legendrian torus knot.   

\begin{definition} \label{n-copy}  
For any Legendrian knot $\Lambda$, we see from Definition~\ref{std-nbhd} that 
  any standard neighborhood of $\Lambda$ contains an annular region $A$
containing $\Lambda$ that is foliated by Legendrian curves isotopic to $\Lambda$; 
the image of any $n$ curves in $A$  is the \dfn{ $n$-copy of $\Lambda$}, see the bottom row of Figure~\ref{fig-2maxtb} and upper left picture in Figure~\ref{fig8cable} for examples of $n$-copies. If $tb(\Lambda) = m$, then the $n$-copy is topologically the $n(1,m)$-cable of $\Lambda$.
\end{definition}

\begin{remark} \label{n-copy-front} In the front projection, the components of the $n$-copy can be obtained as slight shifts of $\Lambda$ 
in the $z$-direction, \cite{Mishachev03}.  
\end{remark}

\begin{lemma}\label{ncopyalt}
Let $T$ be a pre-Lagrangian torus and $\Lambda$ a leaf of the characteristic foliation of $T$. 
Then any $n$ leaves in the characteristic foliation of $T$ can be taken to be the $n$-copy
 of $\Lambda$. 
\end{lemma}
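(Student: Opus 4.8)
\textbf{Proof proposal for Lemma~\ref{ncopyalt}.}

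The plan is to reduce the statement to a standard-neighborhood picture and then invoke Definition~\ref{n-copy}. First I would use the fact that the characteristic foliation of a pre-Lagrangian torus $T$ is a nonsingular linear foliation, so all its leaves are parallel and isotopic (on $T$) to the chosen leaf $\Lambda$. Then I would pass to a neighborhood of $T$: since the characteristic foliation of a surface determines the contact structure in a neighborhood of that surface, a neighborhood of $T$ is contactomorphic to the model $T^2 \times (-\epsilon,\epsilon)$ with contact structure $\ker(\cos t\, d\phi + \sin t\, d\theta)$ in which $T = T^2 \times \{0\}$ has slope $0$ (after choosing coordinates on $T$), exactly as in the proof of Lemma~\ref{local-model}. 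In this model $\Lambda$ is one of the circles $\{\theta = \theta_0, t = 0\}$ and the other leaves of the foliation of $T$ are the circles $\{\theta = \theta_j, t = 0\}$ for finitely many $\theta_j$'s.

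The key step is then to produce a standard neighborhood of $\Lambda$ that contains the prescribed $n$ leaves as $n$ of the Legendrian curves in its foliated annulus. Inside the model $T^2\times(-\epsilon,\epsilon)$, take a Legendrian annulus $A' = \{\theta_0 \le \theta \le \theta_{\max},\ t = 0\}$ on $T$ (in fact one can take the whole torus minus a thin strip, or more conveniently work in the universal-$\theta$-cover so that $A'$ is an honest annulus) whose foliation by the curves $\{\theta = c, t=0\}$ includes all the $\theta_j$'s corresponding to the $n$ chosen leaves. Thicken $A'$ slightly in the $t$-direction (and round corners) to a solid torus $N$ whose convex boundary has two dividing curves of slope equal to the slope of $\Lambda$; this $N$ is then a standard neighborhood of $\Lambda$ in the sense of Definition~\ref{std-nbhd}, with the annulus $A'$ playing the role of the foliated annulus $A$ of that definition. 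By construction $A'$ contains the $n$ chosen leaves of $T$ among its Legendrian leaves, so by Definition~\ref{n-copy} those $n$ leaves form an $n$-copy of $\Lambda$.

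The main obstacle I expect is a bookkeeping point rather than a deep one: one must be careful that the ``annulus'' $A$ in Definition~\ref{n-copy} is genuinely embedded, so the chosen $n$ parallel leaves on $T$ cannot literally all lie on a single embedded sub-annulus of $T$ without first cutting $T$ open along one leaf (or passing to a cover). The cleanest fix is to choose the cutting leaf to be one that is \emph{disjoint} from all $n$ prescribed leaves — possible since the foliation has infinitely many leaves — cut $T$ along it to get an embedded annulus, verify that a tubular neighborhood of this cut-open annulus inside $T^2\times(-\epsilon,\epsilon)$ is a standard neighborhood of $\Lambda$ (its boundary is convex with two dividing curves of the right slope by the explicit model, exactly as in Remark~\ref{ex:n-copy-model} and Lemma~\ref{local-model}), and observe the $n$ prescribed leaves survive as Legendrian leaves of the foliated annulus inside it. Then Definition~\ref{n-copy} applies verbatim and the lemma follows.
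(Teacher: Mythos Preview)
Your proposal is correct and follows essentially the same approach as the paper: take an annulus $A\subset T$ containing the $n$ chosen leaves, use that the characteristic foliation determines the contact structure near a surface to identify a neighborhood of $A$ with a standard neighborhood of $\Lambda$ (as in Remark~\ref{ex:n-copy-model}), and then invoke Definition~\ref{n-copy}. The paper's version is slightly more streamlined in that it works directly with the annulus rather than first modeling the whole torus, and it leaves implicit the point you spell out about cutting $T$ along an unused leaf to obtain an embedded annulus.
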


\begin{proof}
Let $\Lambda_1, \Lambda_2, \ldots, \Lambda_n$ be $n$ leaves in the characteristic foliation of $T$. Let $A \subset T$ be an annulus  that 
 contains  $\cup _i \Lambda_i$. Since the characteristic foliation on a surface determines the contact structure in a neighborhood of the surface we see that $A$ has a neighborhood $N$ contactomorphic to a neighborhood of the $A_0$ defined in  Remark~\ref{ex:n-copy-model}. {So} $N$ is a standard neighborhood of $\Lambda$, and  
 $\Lambda_i$ are leaves in the foliation of $A$. Thus the $n$ leaves form the {$n$-copy of $\leg$.}
\end{proof}

The $n$-copy exists for any Legendrian knot.  When $\leg$ is an unknot
or a negative torus knot with maximal $tb$, then  the $n$-copy will be a torus link. 
\begin{lemma} \label{n-copy-maxtb}
\begin{enumerate}
\item  {For $q > p \geq 2$, $\gcd(p,q) = 1$}, if $\Lambda$ is a nontrivial Legendrian  $(p, -q)$-torus knot with $tb(\leg) = -pq$,  
 then the $n$-copy of $\Lambda$ is
a Legendrian $(np, -nq)$-torus link $L = (\Lambda_1, \dots, \Lambda_n)$ with 
$tb(\Lambda_1) + \dots + tb(\Lambda_n) = -npq$.  
\item If $\Lambda$ is a Legendrian unknot with $tb(\Lambda) = -q$, then the $n$-copy of $\Lambda$ is
a Legendrian $(n, -nq)$-torus link $L = (\Lambda_1, \dots, \Lambda_n)$ with
$tb(\Lambda_1) + \dots + tb(\Lambda_n) = -nq$.
\end{enumerate}
\end{lemma}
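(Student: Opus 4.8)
The plan is to verify two things for each of the two cases: first, that the $n$-copy of the given Legendrian knot is smoothly the claimed torus link, and second, that the sum of the Thurston--Bennequin invariants of the components is as stated. The smooth-type assertion is essentially already recorded: by Definition~\ref{n-copy}, the $n$-copy of a Legendrian knot $\Lambda$ with $tb(\Lambda)=m$ is the $(n,nm)$-cable of $\Lambda$. So in case (1), where $\Lambda$ is a $(p,-q)$-torus knot with $tb(\Lambda)=-pq$, the $n$-copy is the $(n, -npq)$-cable of a $(p,-q)$-torus knot, which (since the $(p,-q)$-torus knot sits on a standard torus with the $pq$-framing being exactly the surface framing, i.e.\ the framing with which $tb$ is computed agreeing with the torus framing for a max-$tb$ negative torus knot) is precisely the $(np,-nq)$-torus link; this identification is the standard cabling computation and can be cited or done by hand in one line. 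In case (2), $\Lambda$ is an unknot with $tb(\Lambda)=-q$, so the $n$-copy is the $(n,-nq)$-cable of the unknot, which is by definition the $(n,-nq)$-torus link.

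The second assertion, about $\sum_i tb(\Lambda_i)$, is the computational heart. The clean way to do it is via the pre-Lagrangian/convex torus picture already set up in Subsections~\ref{order} and \ref{sec:p-q-torus}: by Lemma~\ref{ncopyalt}, the $n$-copy can be taken to be $n$ leaves $\Lambda_1,\dots,\Lambda_n$ of the characteristic foliation of a single pre-Lagrangian torus $T$ (of slope $-q/p$ in case (1), slope $-q$ in case (2)). The contact framing of each $\Lambda_i$ is the framing induced by $T$, and the Thurston--Bennequin invariant $tb(\Lambda_i)$ is the difference between this torus framing and the Seifert framing of $\Lambda_i$ as a knot in $S^3$. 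Since each $\Lambda_i$ is a parallel copy on $T$, the torus framing of $\Lambda_i$ equals $\mathrm{lk}(\Lambda_i,\Lambda_j)$ for $j\ne i$ (push-off along $T$), which by Remark~\ref{rem:orientations} is $-pq$ in case (1) and $-q$ in case (2). But the torus framing of a single leaf is also the framing used to compute its own $tb$: a max-$tb$ $(p,-q)$-torus knot has $tb=-pq$ realized exactly on such a torus, and a $tb=-q$ unknot sits on a convex torus of slope $-q$ likewise. Hence $tb(\Lambda_i)=-pq$ for all $i$ in case (1) and $tb(\Lambda_i)=-q$ for all $i$ in case (2), and summing over the $n$ components gives $-npq$ and $-nq$ respectively, as claimed. (Alternatively, one can avoid identifying each individual $tb(\Lambda_i)$ and instead compute $\sum_i tb(\Lambda_i)$ directly as the self-framing of the multicurve $\Lambda_1\cup\cdots\cup\Lambda_n$ on $T$ against its Seifert framing, which is $n\cdot(\text{torus framing of one leaf})$ since all the linking contributions are equal; this yields the same answer and is perhaps the more robust bookkeeping.)

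I expect the only real subtlety — the ``main obstacle'' — to be making the framing bookkeeping airtight, i.e.\ being careful that the framing on $T$ that computes $tb$ of a single leaf is the \emph{same} framing as the one given by a push-off onto a nearby parallel leaf, and that this is the framing whose difference with the Seifert framing is what the linking numbers $\mathrm{lk}(\Lambda_i,\Lambda_j)$ record. Once that is pinned down, the statement drops out. A secondary point to handle cleanly is that in case (1) the leaves $\Lambda_i$ are genuinely max-$tb$ representatives (so that the torus has the right slope and the framing count is $-pq$ and not something smaller); this is guaranteed because we started with $\Lambda$ having $tb(\Lambda)=-pq$ and the other leaves are Legendrian isotopic to $\Lambda$ through leaves of the foliation, hence have the same $tb$. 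Everything else is direct appeal to Definition~\ref{n-copy}, Lemma~\ref{ncopyalt}, Remark~\ref{rem:orientations}, and the construction in Subsection~\ref{sec:p-q-torus}.
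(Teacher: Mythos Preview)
Your proof is correct and uses essentially the same ingredients as the paper's. The paper's argument is a bit more direct: it simply notes that the only thing to check is that the $n$-copy lies on a standardly embedded (Heegaard) torus. For the unknot this is immediate; for a max-$tb$ $(p,-q)$-torus knot, the Etnyre--Honda construction (Section~\ref{sec:p-q-torus}) places $\Lambda$ as a leaf of a pre-Lagrangian Heegaard torus, and then Lemma~\ref{ncopyalt} says $n$ leaves on that torus \emph{are} the $n$-copy---so the $n$-copy manifestly sits on the Heegaard torus as $n$ parallel $(p,-q)$-curves, i.e.\ the $(np,-nq)$-torus link. Your route goes through the intermediate identification ``$n$-copy $=$ $(n,-npq)$-cable of $\Lambda$'' and then the framing observation that the Heegaard-torus framing of a $(p,-q)$-knot equals $-pq$; this lands in the same place and is perfectly valid, just slightly more roundabout.

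For the $tb$ sum, the paper does not spell anything out because it is immediate once the $n$-copy is realized as $n$ leaves of a single foliated annulus (or pre-Lagrangian torus): the leaves are Legendrian isotopic to one another and to $\Lambda$, so each $\Lambda_i$ has $tb(\Lambda_i)=tb(\Lambda)$. Your framing-bookkeeping paragraph reaches the same conclusion but works harder than necessary; the ``secondary point'' you flag (that the other leaves have the same $tb$ as $\Lambda$ because they are Legendrian isotopic through leaves) is in fact the whole argument, and the linking-number discussion can be dropped.
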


\begin{proof} We need to show that when $\Lambda$ is an unknot or a negative torus knot, the $n$-copy lies on a standardly embedded torus.  
This is clear when $\Lambda$ is an unknot.
When $\Lambda$ is a  $(p,-q)$-torus knot with $tb = -pq$,   as described in Section~\ref{sec:p-q-torus}, it was shown
 in \cite{EtnyreHonda01b} that
 $\Lambda$ is a leaf in the characteristic foliation of a pre-Lagrangian torus $T$ that bounds an unknotted solid torus. Thus taking $n$ leaves in the foliation will give the $n$-copy of $\Lambda$ by the Lemma~\ref{ncopyalt}; but this is also the $(np,-nq)$-cable of the unknot, that is the $(np,-nq)$-torus link. 
 \end{proof}

\begin{remark}
Notice that for any Legendrian knot $\Lambda$, the $n$-copy of $\Lambda$ will be a $(n,n\, tb(\Lambda))$-cable of $\Lambda$.
\end{remark}

We will see in the next section, that these torus links formed as $n$-copies will have maximal \tb invariant.

\subsection{\tb  invariant of Legendrian torus links}\label{ssec:tb-invariants} \label{tb-torus-links}
The Thurston-Bennequin invariant for a link $L$ is defined in the same way as for knots. In
particular, if $L'$ is the Legendrian push-off of $L$ (that is a copy of $L$ obtained by flowing a
short time by a Reeb flow), then $tb(L)$ is the total linking of $L$ with $L'$. Using the combinatorial
description of $tb$ from a count of crossings and cusps, see for example \cite[2.62]{Etnyre05}, and the fact that each crossing is either from the same or different components,  it is not hard 
to see that for $L = (\Lambda_1, \dots, \Lambda_n)$,
\[
tb(L) = tb(\Lambda_1) + tb(\Lambda_2) + \dots + tb(\Lambda_n) + 2 \sum_{i< j} lk(\Lambda_i, \Lambda_j),
\]
For $(np, \pm nq)$-torus links $L=(\Lambda_1,\ldots,\Lambda_n)$, since  $lk(\Lambda_i, \Lambda_j) = \pm pq$ when $i \neq j$ (see Remark~\ref{rem:orientations}), we have
\begin{equation}\label{totaltb}
tb(L)= tb(\Lambda_1) + tb(\Lambda_2) + \dots + tb(\Lambda_n) \pm (n-1)(n) pq.
\end{equation}
From this we see that a link has {maximal \tb invariant} precisely when $tb(\Lambda_1) + tb(\Lambda_2) + \dots +
tb(\Lambda_n)$ is maximized. 

We now have the following observation. 

\begin{proposition} \label{maxtb}
For an oriented, Legendrian $(np, \pm nq)$-torus link, with $n \geq 2$,  let $\Lambda_1, \dots, \Lambda_n$ denote the $n$
components. 
\begin{enumerate} 
\item  For the Legendrian  $(np, +nq)$-torus link, 
$tb(\Lambda_1) + \dots + tb(\Lambda_n) \leq n(pq - p - q);$
\item For the Legendrian  $(np, -nq)$-torus link, 
$tb(\Lambda_1) + \dots + tb(\Lambda_n) \leq -npq$.
\end{enumerate}
\end{proposition}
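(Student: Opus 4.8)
The plan is to bound the individual Thurston--Bennequin invariants using the classification of Legendrian torus knots recalled in the introduction, together with the structure of the ambient torus, rather than by appealing directly to the HOMFLY-PT or Kauffman bounds. For the positive case~(1), the components of an $(np,+nq)$-torus link are each topologically $(p,+q)$-torus knots (when $p\geq 2$) or unknots (when $p=1$), and in either case we already know that the maximal $tb$ invariant of a single component is $pq-p-q$ (this equals $-1$ when $p=1$, consistent with $pq-p-q = q - 1 - q = -1$). So $tb(\Lambda_i) \leq pq-p-q$ for each $i$, and summing gives the claimed inequality immediately. No convex surface theory is needed here; this half is essentially a restatement of the Eliashberg--Fraser and Etnyre--Honda classifications component by component.

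The negative case~(2) is the substantive one, because the naive bound $tb(\Lambda_i) \leq -pq$ applied to each component gives only $\sum tb(\Lambda_i) \leq -npq$ --- which is in fact exactly what we want, \emph{but only when $p\geq 2$}. When $p = 1$ the components are unknots, whose maximal $tb$ is $-1$, and $-1 > -q$ in general, so the per-component bound is far too weak: it would give $\sum tb(\Lambda_i) \leq -n$, not $\leq -nq$. Thus the real content is to show that for $(n,-nq)$-torus links the components cannot simultaneously have large $tb$. Here I would use the fact that the link lies on a standardly embedded convex torus $T$ in $S^3$. After making $T$ convex and putting its characteristic foliation in standard form, the link is isotoped to lie in the ruling curves, which have slope $-q$ (the slope of the link). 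The complement of a neighborhood of the link in $S^3$ decomposes using the Heegaard tori $V_0, V_1$; computing $tb$ of each component via the surface framing induced by $T$ versus the Seifert framing, one finds each component picks up a contribution of $-q$ from the framing difference plus a nonpositive correction measured by half the number of dividing curves it meets. More precisely, if $T$ has $2k$ dividing curves of some slope and the $i$-th ruling curve is used, the $tb$ contribution relative to the torus framing is $\leq 0$, and the torus framing itself differs from the Seifert framing by $\pm pq$ per component; assembling these yields $\sum tb(\Lambda_i) \leq -npq$ uniformly in $p$, including $p=1$.

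The main obstacle I anticipate is handling the case where the convex torus $T$ carrying the link has dividing slope \emph{not} equal to the slope $-q/p$ of the link, or where $T$ must be taken non-convex and then perturbed: one has to argue that isotoping $L$ onto a convex torus and into Legendrian ruling curves does not increase $\sum tb(\Lambda_i)$, and that the ruling-curve framing is bounded above by the Seifert framing plus the linking corrections. This is where the thickened-torus classification from Subsection~\ref{ssec:solidtori} and the pre-Lagrangian torus models (Lemma~\ref{prelagmodel}, Lemma~\ref{prelagmodel2}) enter: one embeds a neighborhood of $L$ in a basic slice or universally tight $T^2\times I$ with the correct boundary slopes, and the slope constraints in the Farey graph force the framing bound. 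I would organize the argument so that the inequality $tb(\Lambda_1)+\dots+tb(\Lambda_n) \leq -npq$ for $p=1$ follows from the explicit pre-Lagrangian model of Section~\ref{sec:p-q-torus} (the decomposition $S_{-m-1}\cup(T^2\times[0,1])\cup S_{-m}$), exactly as the $n$-copy construction in Lemma~\ref{n-copy-maxtb} realizes the bound with equality --- so the proposition is the ``upper bound'' companion to that realization result, and the two together pin down the maximal $tb$.
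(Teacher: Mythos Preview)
Your treatment of case~(1) and of case~(2) with $p\geq 2$ matches the paper: those really do follow from the per-component maximal $tb$ of torus knots. The substantive case~(2) with $p=1$ is where your proposal diverges from the paper and, as written, has a genuine gap.

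The convex-surface argument you sketch would show, in effect, that each $\Lambda_i$ has nonpositive twisting relative to the torus $T$, i.e.\ $tb(\Lambda_i)\leq -q$ for every $i$. But that per-component inequality is \emph{false}: the paper exhibits (Definition~\ref{t-twist}, Lemma~\ref{n-copy-tb-r}, Figure~\ref{fig-2maxtb}) Legendrian $(n,-nq)$-torus links in which one component has $tb=-q+t>-q$. So any argument that begins by making $T$ convex relative to the whole link and reading off $tw(\Lambda_i,T)\leq 0$ cannot work; the Relative Convex Realization Principle requires nonpositive twisting as a \emph{hypothesis}, and here that is exactly what is in question. Your ``main obstacle'' paragraph anticipates slope mismatches but not this more basic obstruction: when some $\Lambda_i$ has $tw(\Lambda_i,T)>0$, you simply cannot perturb $T$ to be convex while keeping $\Lambda_i$ on it, and the framing count never gets off the ground.

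The paper's proof of the $p=1$ case takes a completely different route: it invokes Epstein's bound (Theorem~\ref{Epstein}), $tb(L)\leq -n^2 pq$ for $np$ even, which is an algebraic bound coming from the Kauffman polynomial rather than from convex surface theory. Combined with Equation~\eqref{totaltb} this gives $\sum tb(\Lambda_i)\leq -nq$ directly when $n$ is even. For $n$ odd (hence $n\geq 3$) the paper argues by contradiction: if $\sum tb(\Lambda_i)>-nq$, one can extract a two-component sublink $\Lambda_1\cup\Lambda_2$ with $tb(\Lambda_1)+tb(\Lambda_2)>-2q$, contradicting the already-established $n=2$ case. This two-component reduction is the key idea your proposal is missing; note that it is precisely the mechanism that allows one component to have $tb>-q$ as long as the others compensate.
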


As discussed in the introduction we know the maximum \tb invariant of torus knots, from which the proposition easily follows for torus knots with knotted components
($p > 1$) and positive torus knots with unknotted components ($1 = p \leq q$).  However, observe that for  $(n, -nq)$-links, which are negative torus links with unknotted components,
 the known upper bound of $-1$ for
 a Legendrian unknot says that for a Legendrian $(n, -nq)$ torus link, $tb(\Lambda_1) + \dots + tb(\Lambda_n) \leq -n$, which is weaker than Proposition~\ref{maxtb} when $q \geq 2$.
For negative torus knots with unknotted components we need the following result to prove the proposition. 

\begin{theorem}[Epstein 1997, \cite{Epstein97}] \label{Epstein}
If $L$ is a Legendrian $(np, -nq)$-torus link with $np$ even, then $tb(L) \leq -n^2pq.$
\end{theorem}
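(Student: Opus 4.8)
The claim to prove is Theorem~\ref{Epstein}: if $L$ is a Legendrian $(np,-nq)$-torus link with $np$ even, then $tb(L) \le -n^2pq$. Since the paper attributes this to Epstein \cite{Epstein97}, I should reconstruct the kind of argument that gives such a bound; the natural approach is via the Kauffman (or HOMFLY-PT/Bennequin-type) polynomial bound on the Thurston--Bennequin invariant of a link. Recall that for any Legendrian link $L$ in the standard contact $S^3$ one has the self-linking/Bennequin-type inequality $tb(L) + |r(L)| \le -\chi(\Sigma)$ for a Seifert surface $\Sigma$, but this only gives the weak bound coming from the genus of the fiber surface and is far from $-n^2pq$ when $q$ is large; so instead one must use the stronger polynomial bounds. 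The plan is to use the Kauffman polynomial bound $tb(L) \le -\deg_a F_L(a,z) - 1$ (in the appropriate normalization), or equivalently the HOMFLY-PT bound, and then compute the relevant degree for the $(np,-nq)$-torus link.

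\textbf{Step 1: Set up the polynomial bound.} First I would recall the precise statement of the Kauffman polynomial estimate for Legendrian links (due to Rudolph, Tabachnikov, and in the form needed here worked out by Epstein, Fuchs, and others): for a Legendrian link $L$, $tb(L) \le -\maxdeg_a F_L - 1$ where $F_L$ is the Kauffman polynomial and $\maxdeg_a$ denotes the maximal degree in the framing variable $a$. (One could equivalently use the HOMFLY-PT bound $tb(L)+|r(L)| \le -\maxdeg_a P_L -1$.) This reduces the problem to a computation of the $a$-degree of the Kauffman polynomial of the negative torus link $T(np,-nq) = T(np,nq)$ with reversed orientation — but since $tb$ and the Kauffman polynomial degree are orientation-independent, it suffices to compute for $T(np,nq)$.

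\textbf{Step 2: Compute the degree for torus links.} The main work is determining $\maxdeg_a F_{T(np,nq)}$. For torus \emph{knots} $T(p,q)$ this degree is classically known and yields the sharp bound $\maxtb(T(p,-q)) = -pq$ (equivalently $\maxtb(T(p,+q)) = pq-p-q$), consistent with \cite{EtnyreHonda01b}. For torus \emph{links} $T(np,nq)$ one needs the analogous computation; here the key point is that the Seifert surface obtained from the standard torus-link diagram (Bennequin surface of the closure of the positive braid $(\sigma_1\cdots\sigma_{np-1})^{nq}$) has Euler characteristic $np - (np-1)(nq) = np - n^2pq + nq$, and for positive braid closures the HOMFLY-PT/Kauffman bound is \emph{sharp} on the appropriate side. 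Running this through the polynomial bound and adding back the $2\sum_{i<j} lk = -(n)(n-1)pq$ correction from Equation~\eqref{totaltb} (or rather working directly with $tb(L)$ as the total framing) should yield $tb(L) \le -n^2pq$. The hypothesis $np$ even is presumably needed to control the $r$-term or a parity issue in the degree formula — I would track where exactly evenness enters (likely ensuring the extremal coefficient of the polynomial is nonzero, or handling the distinction between the Kauffman and HOMFLY bounds).

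\textbf{Step 3: Assemble.} Combining Steps 1 and 2 gives the inequality. \textbf{The main obstacle} is Step 2: correctly identifying and computing the relevant extremal $a$-degree of the Kauffman (or HOMFLY-PT) polynomial of $T(np,nq)$ — the torus \emph{link} case is genuinely more delicate than the knot case because the multi-component structure changes the Seifert genus and because one must verify the polynomial bound is actually sharp (or at least attains $-n^2pq$) rather than merely an inequality going the wrong way. An alternative route that sidesteps the polynomial computation entirely would be a direct contact-geometric argument: realize any Legendrian $T(np,-nq)$ link on a convex torus, use the classification of tight contact structures on the complementary solid tori together with the bound on the number of dividing curves, and read off the total framing; but this essentially recapitulates the convex-surface machinery developed later in the paper and may be circular in context, so I would present the polynomial-bound proof as the cleaner self-contained argument, citing \cite{Epstein97} for the degree computation if a full reconstruction proves too lengthy.
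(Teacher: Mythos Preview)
The paper does not give its own proof of this theorem; it is simply quoted from Epstein's thesis \cite{Epstein97} and used as a black box in the proof of Proposition~\ref{maxtb}. So there is no proof in the paper to compare your attempt against.

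That said, your reconstruction is headed in the right direction: the bound is indeed obtained via the Kauffman polynomial inequality $tb(L)\le -\maxdeg_a F_L(a,z)-1$, together with a computation of that degree for the torus link $T(np,nq)$. A couple of points to sharpen. First, your Step~2 conflates the HOMFLY-PT/Bennequin surface computation with the Kauffman degree; the Bennequin surface Euler characteristic gives the HOMFLY bound, which involves $|r(L)|$ and does \emph{not} by itself yield $-n^2pq$ (it gives the weaker Seifert-genus bound you already dismissed in Step~1). The Kauffman degree must be computed separately, and this is where the actual content of Epstein's argument lies. Second, the hypothesis that $np$ is even is not a parity artifact in the degree formula as you guess; rather, the Kauffman polynomial is an invariant of \emph{unoriented} links, and when $np$ is even the torus link $T(np,nq)$ admits an orientation (reversing half the strands) for which the writhe of the standard diagram drops, which is what makes the Kauffman bound strictly stronger than the HOMFLY bound in that case. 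Your alternative convex-surface route would indeed be circular here, since the paper uses Epstein's bound precisely to bootstrap the convex-surface classification of $(n,-nq)$-links.
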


\begin{proof}[Proof of Proposition~\ref{maxtb}]  As discussed above we only need to verify that for a Legendrian $(n, -nq)$-torus link with $n \geq 2$ and  $q \geq 2$,
$$tb(\Lambda_1) + \dots + tb(\Lambda_n) \leq -nq.$$
 
We first assume that $n$ is even.  Then by  Theorem~\ref{Epstein}, we know
${tb(L)} \leq  -n^2q$.  Thus Equation~\eqref{totaltb} gives 
$$ tb(\Lambda_1) + tb(\Lambda_2) + \dots + tb(\Lambda_n) - (n-1)nq  \leq -n^2q.$$
So $tb(\Lambda_1) + \dots + tb(\Lambda_n) \leq -nq$, as desired.

Lastly consider the case where $n$ is odd, and thus $n \geq 3$, and suppose for a contradiction that it is possible to construct a Legendrian
version of $(n, -nq)$ with $tb(\Lambda_1) + \dots + tb(\Lambda_n) > -nq$. Then we know that there must be at
least one term with \tb invariant greater than $-q$: say
\[
tb(\Lambda_1) \geq tb(\Lambda_2) \geq \dots \geq tb(\Lambda_i) > -q \geq tb(\Lambda_{i+1}) \geq \dots \geq tb(\Lambda_n).
\]
If $i \geq 2$, then $\Lambda_1$ and $\Lambda_2$ will make a $(2, -2q)$-torus link with $tb(\Lambda_1) + tb(\Lambda_2) > -2q$, a
contradiction to the paragraph above. If $i = 1$, then we can again conclude that $tb(\Lambda_1) + tb(\Lambda_2)
> -2q$ as follows. Write $tb(\Lambda_1) = -q + j_1$, $tb(\Lambda_2) = -q - j_2$ for $j_1 > 0$, $j_2 \geq 0$. We can argue that $j_1 > j_2$ as follows.
If $j_1 \leq j_2$ then writing $tb(\Lambda_i) = -q -j_i$ where $j_i \geq 0$ for $i= 3, \dots, n$, we have
\[
\begin{aligned}
tb(\Lambda_1) + \dots + tb(\Lambda_n) &= (-q + j_1) + (-q -j_2) + (-q  - j_3) + \dots + (-q  - j_n), \\ 
&= -nq + (j_1 - j_2 - j_3 - \dots -j_n) \\ 
& \leq  -nq + (- j_3 - \dots -j_n) \\ 
& \leq -nq,
\end{aligned} 
\]
a contradiction to our starting assumption. Thus $j_1 > j_2$ and so the components $\Lambda_1$, $\Lambda_2$ make up a
$(2, -2q)$-torus link with $tb(\Lambda_1) + tb(\Lambda_2) = -q + j_1 + -q - j_2 = -2q + j_1 - j_2 >  -2q$, a contradiction to the
above paragraph.
\end{proof}

%%%%%%%%%%%%%%%%%%%%%%%%%%%%%%%%%%%%%%%%%%%%%%%%%%%%%%%%%%%
\section{Positive Torus Links}\label{sec:positive}
%%%%%%%%%%%%%%%%%%%%%%%%%%%%%%%%%%%%%%%%%%%%%%%%%%%%%%%%%%%
In this section, we will give the unordered classification of all Legendrian positive torus links. In other words, we will classify all Legendrian links that are topologically $(np, +nq)$-torus links with $n \geq 2$,  
$q \geq p \geq 1$,
and $\gcd(p, q) = 1$. 
  Observe that when $p = 1$, these are all links of unknots.  The ordered classification will be given in Section~\ref{oclassification}.

\begin{theorem} \label{thm:p-unorder-class}
Given $1 \leq p \leq q$
and $\gcd(p, q) = 1$, an unordered, oriented Legendrian  $(np, +nq)$-torus link 
is classified by the Thurston-Bennequin invariants and rotation numbers of the components.
\end{theorem}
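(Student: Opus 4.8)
The plan is to reduce the statement to two geometric facts, both established by convex surface theory, and then to conclude by a bookkeeping argument using the single-peaked shape of the mountain range of a positive torus knot. The two facts are: (1) there is a \emph{unique} Legendrian $(np,+nq)$-torus link all of whose components have the maximal possible \tb invariant $pq-p-q$; and (2) every Legendrian $(np,+nq)$-torus link with $tb(\Lambda_1)+\cdots+tb(\Lambda_n)<n(pq-p-q)$ is Legendrian isotopic to a link obtained from a link with strictly larger total \tb invariant by stabilizing one of its components. Granting these, the theorem follows: by the torus knot classification recalled above, the mountain range of the $(p,q)$-torus knot has a single peak, at $(r,tb)=(0,pq-p-q)$, so the pair $(tb(\Lambda_i),r(\Lambda_i))$ determines nonnegative integers $a_i,b_i$ such that $\Lambda_i$ is obtained from the max-$tb$ torus knot by $a_i$ positive and $b_i$ negative stabilizations. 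Iterating (2) exhibits any given link as the result of stabilizing the unique max-$tb$ link of (1); since link stabilization at a fixed component is well defined up to Legendrian isotopy and stabilizations at distinct components, or of opposite signs, commute, the resulting link is precisely the one obtained from the max-$tb$ link by performing $a_i$ positive and $b_i$ negative stabilizations on the $i$-th component. Hence two Legendrian $(np,+nq)$-torus links with the same multiset of component invariants $\{(tb(\Lambda_i),r(\Lambda_i))\}$ are each Legendrian isotopic to this model link, and therefore to each other; the same construction also shows exactly which tuples of invariants are realized.

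For the existence and maximality claims in (1), I would realize the max-$tb$ link as the union of $n$ parallel Legendrian ruling curves of slope $q/p$ on the convex boundary of a standard neighborhood of a max-$tb$ Legendrian unknot $U_0$ --- this is precisely the convex surface on which the max-$tb$ positive torus \emph{knot} is built in \cite{EtnyreHonda01b}, so each component is the max-$tb$ positive torus knot, with $tb=pq-p-q$ and, being the unique such knot, $r=0$. By Proposition~\ref{maxtb}, $n(pq-p-q)$ is the largest possible value of $tb(\Lambda_1)+\cdots+tb(\Lambda_n)$, so this link indeed has maximal \tb invariant.

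The uniqueness half of (1) and the destabilization statement (2) are the parts requiring real work. Given any Legendrian $(np,+nq)$-torus link $L$, the goal is to isotope it onto a convex torus $T$ standardly embedded in $S^3$ on which the $n$ components appear as parallel ruling curves; $T$ splits $S^3$ into solid tori $V_0$ and $V_1$ carrying tight contact structures, which are governed by Theorem~\ref{thm:solid-tori} and Honda's classification. If the total \tb of $L$ is maximal, the dividing set of $T$ together with the contact structures on $V_0$ and $V_1$ are forced to be the standard (minimal) ones, so after isotopy $L$ agrees with the model from the previous paragraph, giving uniqueness. If the total \tb is not maximal, then $T$, or an annulus joining a component of $L$ to $T$, must carry a dividing configuration more complicated than in the standard picture; locating a bypass along such an annulus and attaching it destabilizes the corresponding component while preserving the topological link type, which is (2). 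When $p=1$ the components are unknots, and one runs the same argument with the Eliashberg--Fraser unknot classification \cite{EliashbergFraser98} in place of the torus knot classification.

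The main obstacle is the destabilization step (2): one must produce a bypass that is localized to a single component of $L$ and check that its attachment genuinely lowers the number of stabilizations of that component while keeping $L$ within the fixed topological link type, rather than merely rearranging the contact structure in the complement. This is where the link (as opposed to knot) nature of the problem actually enters, and where the convex surface bookkeeping must be carried out with care.
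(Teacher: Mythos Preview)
Your proposal is correct and follows essentially the same approach as the paper: the theorem is reduced to the two facts you call (1) and (2), which are exactly Lemmas~\ref{uniquemaxpos} and~\ref{posdestab}, and your sketches of their proofs (ruling curves on the boundary of a standard neighborhood of the $tb=-1$ unknot for existence, Honda's classification for uniqueness, and bypass/Imbalance Principle arguments on annuli between $L$ and a parallel standard torus for destabilization) match the paper's arguments closely. The single-peaked mountain range bookkeeping you spell out is implicit in the paper's terse ``Theorem~\ref{thm:p-unorder-class} follows directly from Lemmas~\ref{uniquemaxpos} and \ref{posdestab}.''
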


The strategy to prove this theorem is to understand all oriented Legendrian representatives of the $(np, +nq)$-torus link
with maximal $tb$ invariant, and then show that if a link does not have maximal \tb invariant, it must destabilize to one with
maximal $tb$. 

\begin{lemma} \label{uniquemaxpos}
Given   $q \geq p \geq 1$ 
and $\gcd(p, q) = 1$, there exists a unique oriented Legendrian $(np, +nq)$-torus link with maximal
\tb invariant.  
\end{lemma}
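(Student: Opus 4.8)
The plan is to realize any maximal-$tb$ Legendrian $(np,+nq)$-torus link on a standardly embedded convex torus and then use the classification of tight contact structures on the two complementary solid tori to show that all such links are contactomorphic. First I would recall from the excerpt (the discussion following Proposition~\ref{maxtb}, together with Equation~\eqref{totaltb}) that a Legendrian $(np,+nq)$-torus link has maximal $tb$ precisely when $tb(\Lambda_1)+\cdots+tb(\Lambda_n)=n(pq-p-q)$, which forces each component to be a maximal-$tb$ Legendrian $(p,+q)$-torus knot, hence (by \cite{EtnyreHonda01b}) to have $r(\Lambda_i)=0$. So the only thing to prove is that the unordered link is unique; the classical invariants of the components are already pinned down.

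Next I would take a maximal-$tb$ representative $L=\Lambda_1\cup\cdots\cup\Lambda_n$ and find a standardly embedded torus $T$ on which all components sit as parallel $(p,+q)$-curves. The natural way to do this: by the classification of maximal-$tb$ positive torus knots, each $\Lambda_i$ is a Legendrian ruling curve (or Legendrian divide) of a convex torus $T_i$ that is the boundary of a tight solid-torus neighborhood of the core unknot, with $T_i$ convex, two dividing curves, and appropriate slope. One then argues that the components can be simultaneously isotoped onto a single such convex torus $T$ — here one uses that the complement of a standard neighborhood of the relevant maximal-$tb$ unknot, with convex boundary of the correct dividing slope, has a tight contact structure, and that one can arrange all $\Lambda_i$ as Legendrian divides (or ruling curves) of one convex torus $T$ parallel to the boundary. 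Edge-rounding / bypass arguments, standard in convex surface theory, let one push each $\Lambda_i$ onto $T$ without introducing obstructions because the $tb$ data is already maximal. Once all components lie on one convex torus $T$ with $n$ parallel Legendrian divides, the torus $T$ splits $S^3$ into two solid tori $V_0,V_1$, each with convex boundary and $2$ dividing curves of a fixed slope; by Theorem~\ref{thm:solid-tori} the contact structure on each is unique, so any two such configurations are contactomorphic by a contactomorphism preserving $T$ and hence carrying the $n$-tuple of divides to the $n$-tuple of divides. This gives the uniqueness up to contact isotopy of the unordered link.

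The main obstacle I expect is the step of getting \emph{all} components onto a single standardly embedded convex torus simultaneously — i.e., organizing the bypass/convex-surface arguments so that one controls all $n$ components at once rather than one at a time, and making sure the dividing-curve configuration on the common torus $T$ is exactly $2n$ curves (two per component, as it must be for the $n$-copy/parallel-curves picture) with no extra closed dividing curves and no boundary-parallel ones that would signal non-maximality or a thickening. One way to handle this cleanly is to instead show that a maximal-$tb$ $(np,+nq)$-torus link is forced to be the $n$-copy of the unique maximal-$tb$ $(p,+q)$-torus knot: find the torus for the first component, then show each successive component, having $tb=pq-p-q$ and being a torus knot on the ambient torus, must be a leaf parallel to the first inside an annular neighborhood (using Lemma~\ref{ncopyalt} or Lemma~\ref{n-copy-maxtb}), invoking the tightness/uniqueness on the complementary solid tori to rule out alternative embeddings. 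Then uniqueness of the $n$-copy follows from uniqueness of the underlying knot plus the contact-geometric rigidity of the annular neighborhood. The routine convex-surface bookkeeping (edge-rounding, checking no bypasses that would lower or obstruct $tb$) I would leave to the formal write-up.
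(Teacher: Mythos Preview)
Your proposal has a genuine conceptual error that would make the argument fail. You repeatedly speak of the components as \emph{Legendrian divides} on a convex torus, and of arranging a dividing set with ``$2n$ curves (two per component)''. But for a positive torus link the components are \emph{ruling curves}, not divides: the convex Heegaard torus one uses has exactly two dividing curves of slope $-1$ (independent of $n$), and the $\Lambda_i$ are $n$ of its ruling curves of slope $q/p$. Indeed, $tw(\Lambda_i,T)=tb(\Lambda_i)-pq=-(p+q)<0$, so the $\Lambda_i$ must cross the dividing set; they cannot be divides.

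This misconception leads to the more serious error in your fallback plan: you propose to show the maximal-$tb$ link is the $n$-copy of the maximal-$tb$ $(p,+q)$-torus knot, citing Lemmas~\ref{ncopyalt} and~\ref{n-copy-maxtb}. That is topologically false. The $n$-copy of a Legendrian knot $\Lambda$ is the $(n,n\,tb(\Lambda))$-cable of $\Lambda$; for the max-$tb$ $(p,+q)$-torus knot this is the $(n,n(pq-p-q))$-cable of the trefoil-type knot, which has pairwise linking number $pq-p-q$ between components. The $(np,+nq)$-torus link has pairwise linking number $pq$. These never agree for $p,q\geq 1$, so the $n$-copy is the wrong link entirely. (Lemma~\ref{n-copy-maxtb} is stated only for \emph{negative} torus knots and unknots for exactly this reason.)

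The obstacle you worry about --- getting all components onto one torus simultaneously --- is a non-issue: the link is a torus link, so by definition all components already lie on a single standardly embedded torus $T$; since each has negative twisting relative to $T$, you can make $T$ convex relative to $L$. The real work, which parallels \cite[Lemma~4.7]{EtnyreHonda01b}, is to show that (after destabilization-type arguments, or directly using maximality of $tb$) this torus can be taken to have two dividing curves of slope $-1$, with the $\Lambda_i$ as ruling curves. Uniqueness then follows because any two such tori are related by a contact isotopy of $S^3$ (classification on the solid tori plus Eliashberg's isotopy extension), and ruling curves on a fixed convex torus are all Legendrian isotopic.
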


\begin{proof}    This argument parallels the proof of \cite[Lemma 4.7]{EtnyreHonda01b}; additional details can be found there.

We first show the existence of a  Legendrian $(np, +nq)$-torus link $L$ with components $\Lambda_1, \dots, \Lambda_n$
such that $tb(\Lambda_i)  = pq - p - q$, for all $i$. Let
  $N$ be a solid torus neighborhood of
a Legendrian unknot with $tb = -1$ with convex boundary $T$ in standard form.  Then $T$ has dividing curves of slope $-1$; we can
assume that the ruling curves have slope $q/p$.  Let $L$ consist of $n$ ruling curves on $T$.  By
analyzing the number of intersections between the ruling and dividing curves, we find that
each ruling curve has $tb=pq-p-q$, as desired.

Next we consider uniqueness.  If $L$ and $L'$ are both Legendrian $(np, +nq)$-torus links
with maximal \tb invariant, then we can assume they lie as a subset of the ruling curves on convex tori $T$ and $T'$, where both $T$ and $T'$ have two dividing
curves of slope $-1$.   By Honda's classification of  contact structures on solid tori, we know there is a contact diffeomorphism of $S^3$ that takes $T'$ to $T$, and then a result of Eliashberg \cite[Corollary~2.4.3]{Eliashberg92a}
shows that there is a contact isotopy of $S^3$ that takes $T'$ to $T$.  So after isotopy, we can assume that $L$ and $L'$ are both collections of ruling curves on the
same convex torus.  It follows that there is an isotopy from the unordered link $L'$ to $L$. \end{proof}

\begin{figure}
\centerline{ \includegraphics{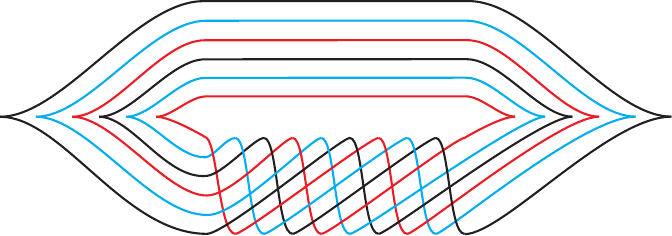} }
\caption{The front projection of a $3(2,+3)$-torus link.}
\label{pos-torus-ex22}
\end{figure}
\begin{remark} There is a simple algorithm for constructing a front projection of the max $tb$ Legendrian $(np,+nq)$-torus link:
\begin{itemize}
\item Begin with $np$ nested copies of the max $tb$ unknot with two cusps;
\item Replace a trivial $np$-stranded tangle with $q/p$ of a full positive twist; this corresponds to repeating the fundamental positive crossing tangle, as shown on the top of Figure~\ref{changes}, $nq$ times.
\end{itemize}
See Figure~\ref{pos-torus-ex22}.
\end{remark}

\begin{lemma} \label{posdestab} Suppose  $q \geq p \geq 1$
and $\gcd(p, q) = 1$.
Let $L = \amalg_{i=1}^{n} \Lambda_{i} $ be an oriented Legendrian $(np, +nq)$-torus link. 
If $tb(\Lambda_1) + \dots + tb(\Lambda_n) < n(pq - p -q)$,
then there exists a Legendrian $(np, +nq)$-torus link $L^\prime = \amalg_{i=1}^{n} \Lambda_{i}' $
such that $tb(\Lambda_1) + \dots + tb(\Lambda_n) < tb(\Lambda_1^\prime) + \dots + tb(\Lambda_n^\prime)$
and $L$ is a stabilization of $L^\prime$.
\end{lemma}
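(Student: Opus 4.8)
The plan is to realize the Legendrian link $L$ on a convex torus and argue that non-maximality of the total $tb$ forces a bypass which gives the desired destabilization. First I would take the convex torus $T$ on which $L = \amalg \Lambda_i$ sits (one may assume, after an isotopy, that $L$ is a collection of $n$ ruling curves of slope $q/p$ on a convex torus $T$ bounding a solid torus $N$ containing the core of the torus link, since all components are parallel curves of the same slope on a torus used in the definition of torus links). The dividing curves of $T$ have some slope; if that slope is $-1$, then by the intersection-number computation in the proof of Lemma~\ref{uniquemaxpos} each ruling curve has $tb = pq - p - q$, contradicting the hypothesis $\sum tb(\Lambda_i) < n(pq-p-q)$. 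Hence the dividing slope of $T$ (or of some convex torus parallel to it in the solid torus $N_0 = V_0$ neighborhood) is not $-1$, i.e.\ the number of dividing curves of $T$ is larger, or the solid torus inside is thicker than a standard $tb=-1$ unknot neighborhood.

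Next I would invoke the classification of tight contact structures on the solid torus $V_0$ (the standard side of the Heegaard torus containing the cores of the $\Lambda_i$). Since the complement $V_1$ of a neighborhood of $L$ (or rather of the core) is a tight solid torus, and the contact structure is the standard one on $S^3$, there is a thickening of the torus $T$: one can find a convex torus $T'$ parallel to $T$, closer to the core of $V_1$, with dividing slope $-1$. Between $T$ and $T'$ sits a basic slice (or a stack of basic slices) $T^2 \times [0,1]$. On the annulus spanned by one ruling curve $\Lambda_i$ and a ruling curve on $T'$, the Imbalance Principle / bypass existence yields a bypass attached to $\Lambda_i$ along $T$ from the side of $V_1$. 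Attaching this bypass corresponds, on the level of the Legendrian knot, to a destabilization of $\Lambda_i$ (the standard dictionary: a bypass on a standard-neighborhood torus along a ruling curve destabilizes the core Legendrian). Doing this simultaneously for all $n$ ruling curves — they all lie on the same torus $T$ and we can find disjoint bypass half-disks, or equivalently thicken $T$ all at once through the whole basic slice — produces a Legendrian $(np,+nq)$-torus link $L'$ on $T'$ with strictly larger total $tb$, and $L$ is recovered from $L'$ by stabilizing each component, hence $L$ is a stabilization of $L'$ as a link.

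The main obstacle I expect is the simultaneity: I must produce a single basic slice (or convex torus $T'$) across which \emph{all} $n$ ruling curves destabilize at once, rather than destabilizing one component and then being stuck. The resolution is that thickening the torus $T$ to a parallel torus $T'$ of dividing slope one step closer to $-1$ (one edge in the Farey graph toward $-1$) changes the $tb$ of \emph{every} ruling curve of slope $q/p$ by a controlled amount, via the intersection-number formula $tb = -(\text{half the number of intersections of the ruling curve with the dividing set})$; the change is uniform across the ruling curves. Iterating edges in the Farey graph from the dividing slope of $T$ toward $-1$ and reading off how each ruling curve's $tb$ increases, one sees that $L$ destabilizes component-by-component, in lockstep, to the unique max-$tb$ representative of Lemma~\ref{uniquemaxpos}; the intermediate torus $T'$ with the next dividing slope gives the $L'$ required by the statement. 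A small secondary point to check is that the bypasses can be chosen on the $V_1$ side so that the resulting destabilization does not disturb the other components or change the link type — this follows because all the bypass attaching arcs can be taken on disjoint annuli pushed into the product region $T^2 \times [0,1]$ between $T$ and $T'$, where the contact structure is a (minimally twisting, in fact here non-rotative-after-an-edge) model we fully understand.
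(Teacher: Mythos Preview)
Your approach is essentially the paper's: put $L$ on a convex Heegaard torus $T$, find a parallel convex torus $T'$ with two dividing curves of slope $-1$, and use the Imbalance Principle on an annulus from a component of $L$ to a ruling curve of $T'$ to produce a destabilizing bypass.

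Two remarks. First, you cannot simply assume after an isotopy that the $\Lambda_i$ are ruling curves on $T$: once $T$ is made convex relative to $L$, a component $\Lambda_i$ may fail to intersect the dividing set $\Gamma_T$ minimally. The paper handles this as a separate (easy) case: a non-minimal intersection gives a bigon between $\Lambda_i$ and $\Gamma_T$, hence a bypass on $T$ that already destabilizes $\Lambda_i$, and the lemma is proved. Only after disposing of that case may one take the $\Lambda_i$ to be ruling curves. Second, your worry about simultaneity is unnecessary for this lemma. The statement only asks for \emph{some} $L'$ with strictly larger total $tb$ of which $L$ is a stabilization; destabilizing a single component $\Lambda_i$ (leaving the others alone) already yields such an $L'$. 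The paper accordingly applies the Imbalance Principle to one annulus at a time and simply iterates, rather than arranging disjoint bypasses or stepping through Farey edges in lockstep.
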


\begin{proof} This argument parallels the proof of \cite[Lemma 4.8]{EtnyreHonda01b}; additional details can be found there.

From the Relative Convex Realization Principle \cite{Kanda97}, we can assume $L$ sits on the convex
boundary  $T$ of an unknotted solid torus. We first notice that if the dividing curves on $T$ do not intersect each component of $L$ minimally, then there is a bigon in $T$ with one side on the dividing curves and one side on $L$. This gives a bypass which can be used to destabilize $L$. Thus after destabilization of some of the components of $L$ , we can assume that all components of $L$ intersect the dividing curves minimally, and thus can be assumed to be ruling curves of $T$.

If at this point, $T$ has two dividing curves of slope $-1$, then all components of $T$ have maximial \tb invariant.
Else, either  the slope of the dividing curves on $T$ is not $-1$, or $T$ has more than $2$ dividing curves of slope $-1$.
 In both of these cases, it can be argued that there is a convex torus $T'$ that is disjoint from $T$ and parallel to $T$ with two dividing curves of slope $-1$ and ruling
 curves of slope $q/p$.  Applying the Imbalance Principle, \cite[Proposition 3.17]{Honda00a},  to  an annulus with one boundary component on a component of $L$ in $T$ and the other on a ruling curve of $T'$
gives the existence of a destabilization of the component of $L$, as desired.  

In this way, we can continue the destabilization process until all components have maximal \tb invariant. 
\end{proof}

Theorem~\ref{thm:p-unorder-class} follows directly from Lemmas~\ref{uniquemaxpos} and \ref{posdestab}.

%%%%%%%%%%%%%%%%%%%%%%%%%%%%%%%%%%%%%%%%%%%%%%%%%%%%%%%%%%%
\section{Negative Torus Links with Knotted Components} \label{sec:negative-knotted}
%%%%%%%%%%%%%%%%%%%%%%%%%%%%%%%%%%%%%%%%%%%%%%%%%%%%%%%%%%%

In this section, we will classify all Legendrian negative torus links with knotted components, namely  links that are topologically $(np, -nq)$,
with $n \geq 2$, $q > p \geq 2$,    and 
 $\gcd(p,q) = 1$.  In this section,  we only consider the unordered classifcation. We will discuss the ordered classification in 
Section~\ref{oclassification} below.
  
\begin{theorem}\label{oclass}
Given   $q > p \geq 2$ and $\gcd(p,q) = 1$, unordered, oriented Legendrian $(np, -nq)$-torus links are determined by the Thurston-Bennequin and rotation number invariants of the components.
\end{theorem}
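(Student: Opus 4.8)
The plan is to mirror the positive-torus-link strategy of Section~\ref{sec:positive}, in two pieces: first, a classification of the maximal $tb$ representatives, and second, a destabilization argument showing every non-maximal representative destabilizes to a maximal one. For the maximal case, fix $m$ with $-m-1 < -q/p < -m$. By Proposition~\ref{maxtb} a maximal representative has $\sum tb(\Lambda_i) = -npq$; I would argue that each such link sits as $n$ ruling curves on a pre-Lagrangian (or convex) torus $T$ of slope $-q/p$ inside $S^3 = S_{-m-1} \cup (T^2\times[0,1]) \cup S_{-m}$ as in Subsection~\ref{sec:p-q-torus}. Concretely: realize $L$ on a convex torus, use bypasses/the Imbalance Principle to make all components ruling curves, and then show that the ambient decomposition of $S^3$ is forced to be (one of finitely many) standard ones — this is where the classification of tight contact structures on $S^1\times D^2$ (Theorem~\ref{thm:solid-tori}), on basic slices, and on $S_{-m}$ (from \cite{EtnyreHonda01b}) enters, giving $2m$ possibilities, each distinguished by the rotation numbers of the unknots $U_{-m}, U_{-m-1}$. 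Since $n$ ruling curves on a fixed such torus form a single unordered link, and since the rotation numbers of the components are determined by (and conversely determine) the choices of $r(U_{-m})$ and the sign of stabilization, one gets that maximal representatives are classified by the common $(tb,r)$ of their components, with $tb = -pq$.

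For the destabilization step, I would follow the template of Lemma~\ref{posdestab} and \cite[Lemma~4.8]{EtnyreHonda01b}. Put $L$ on a convex torus $T$ (Relative Convex Realization Principle). If the dividing curves do not meet some component minimally, a bigon gives a bypass and a destabilization, so assume all components are ruling curves. If the dividing slope of $T$ is not $-q/p$ adapted to a standard decomposition, find a parallel convex torus $T'$ with the ``right'' dividing slope and apply the Imbalance Principle to an annulus between a component of $L$ on $T$ and a ruling curve on $T'$ to destabilize. Iterate until all components lie on a torus belonging to a standard decomposition, i.e.\ until $\sum tb(\Lambda_i) = -npq$. Combined with the maximal classification and the fact (from the mountain range / standard stabilization theory for negative torus knots in \cite{EtnyreHonda01b}) that stabilizations of the components are detected by $(tb, r)$ of each component, this yields that unordered Legendrian $(np,-nq)$-torus links are determined by the $(tb,r)$ invariants of their components.

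I expect the main obstacle to be the uniqueness part of the maximal case: showing that any maximal $tb$ representative actually sits inside one of the finitely many standard decompositions $S_{-m-1}\cup(T^2\times[0,1])\cup S_{-m}$, rather than merely on some convex torus in $S^3$. One must rule out the possibility that the solid torus complementary to $T$ has a more complicated tight contact structure, or that the torus $T$ bounds in a nonstandard way, and then show the remaining tight structure on the complement is one of the known ones. This is exactly the kind of convex-surface-theory bookkeeping carried out in \cite[Section~4]{EtnyreHonda01b} for torus knots, and the link case should reduce to it because the $n$ parallel ruling curves all lie on a single torus $T$; once $T$ and its neighborhood are pinned down, the link is determined. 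A secondary (but routine) point is checking that stabilizing a single component of a negative torus link still produces a negative torus link and corresponds to a single move in the mountain range, so that ``destabilizes to the maximal one'' together with ``components have prescribed $(tb,r)$'' genuinely pins down the unordered link.
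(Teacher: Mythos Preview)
Your overall architecture matches the paper's: classify max-$tb$ representatives, show everything destabilizes to one, and then check that the different max-$tb$ representatives merge correctly under stabilization. But there is a genuine error in your max-$tb$ step and a real gap in your final step.

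\textbf{Ruling curves versus Legendrian divides.} In the max-$tb$ case each component has $tb(\Lambda_i)=-pq$, so by Equation~(\ref{tb-tw}) the contact twisting $tw(\Lambda_i,T)$ relative to the Heegaard torus is \emph{zero}. A ruling curve on a convex torus always intersects the dividing set, hence has strictly negative twisting; so the components cannot be ruling curves. The correct statement is that, after making $T$ convex and in standard form, the components are \emph{Legendrian divides} of $T$, and the dividing slope of $T$ is forced to be $-q/p$. This is exactly how Lemma~\ref{negmaxtb} proceeds. The distinction matters: the paper then invokes Lemma~\ref{prelagmodel} to put all the Legendrian divides on a single pre-Lagrangian torus, and Lemma~\ref{ncopyalt} to conclude the link is the $n$-copy of one leaf. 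That is what forces all components to have the \emph{same} rotation number and reduces the link classification to the knot classification of \cite{EtnyreHonda01b}. Your description (``make all components ruling curves'' and ``$n$ ruling curves on a fixed such torus form a single unordered link'') does not get you there, and would in fact contradict $tb=-pq$.

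\textbf{The valley argument is not ``routine.''} Your final paragraph treats the merging of stabilized max-$tb$ representatives as a componentwise fact inherited from the torus-knot mountain range. It is not: knowing that $S_-^e(\Lambda)$ and $S_+^e(\Lambda')$ are isotopic as \emph{knots} does not immediately give that $S_{-,all}^e(L)$ and $S_{+,all}^e(L')$ are isotopic as \emph{links}. The paper proves this as a separate lemma (Lemma~\ref{negstabtoiso}): one realizes both stabilized links simultaneously as $n$ ruling curves on the boundary of a standard neighborhood of the same Legendrian unknot with $tb=-m$, and then uses that any $n$ ruling curves on a fixed convex torus are Legendrian isotopic. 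You should expect to have to carry out this link-level argument explicitly; it is where the Imbalance Principle on $n$ disjoint annuli is actually used in the non-maximal regime.

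Your destabilization sketch (the analogue of Lemma~\ref{negtorusstab}) is fine and matches the paper: find a parallel convex torus with dividing slope $-q/p$ and apply the Imbalance Principle on an annulus from a Legendrian divide of $T'$ to a component of $L$.
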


As in Section~\ref{sec:positive}, this classification follows from first understanding those with max $tb$.

\begin{lemma}\label{negmaxtb} 
 Given $q > p \geq 2$ and $\gcd(p,q) = 1$, choose $m\in \mathbb Z$ such that 
 $-m-1 < -q/p < -m$.  
Then there are $2m$ Legendrian realizations of the $(np, -nq)$-torus link with maximal \tb invariant.  
Each of these maximal $tb$ examples arise as
an $n$-copy of a Legendrian
$(p, -q)$-torus knot with maximal tb.   
\end{lemma}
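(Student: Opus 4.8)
The plan is to count the max-$tb$ realizations of the $(np,-nq)$-torus link by placing the link on a standardly embedded torus $T$ and reducing to the classification of tight contact structures on the two solid tori $V_0$, $V_1$ it bounds. First I would show every max-$tb$ representative can be isotoped onto a convex torus $T$ parallel to the standard Heegaard torus: starting from an arbitrary Legendrian $(np,-nq)$-torus link $L$ with $tb(L)$ maximal, the Convex Realization Principle puts $L$ on the convex boundary of an unknotted solid torus; if the dividing curves do not intersect each component minimally, a bypass gives a destabilization of some component, which (using Equation~\eqref{totaltb} and Proposition~\ref{maxtb}) would contradict maximality unless after such destabilizations the components become ruling curves of slope $-q/p$ on $T$. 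I would then argue, as in the construction in Section~\ref{sec:p-q-torus}, that maximality forces the dividing slopes of $T$ on the two sides to be as tight as possible, namely the tori on either side must be standard neighborhoods of Legendrian unknots $U_{-m}$ and $U_{-m-1}$ with $-m-1<-q/p<-m$, so that $L$ lies on a pre-Lagrangian torus of slope $-q/p$ sitting inside the basic slice $T^2\times[0,1]$ with boundary slopes $-m-1$ and $-m$. Since the components are $n$ leaves of this pre-Lagrangian torus, Lemma~\ref{ncopyalt} identifies $L$ as the $n$-copy of a single Legendrian $(p,-q)$-torus knot $\Lambda$ of maximal $tb=-pq$, and conversely Lemma~\ref{n-copy-maxtb}(1) shows every such $n$-copy is a max-$tb$ Legendrian $(np,-nq)$-torus link.

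Next I would count these. From \cite[Section~4]{EtnyreHonda01b}, recalled in Section~\ref{sec:p-q-torus}, there are exactly $2m$ Legendrian $(p,-q)$-torus knots with maximal $tb=-pq$, distinguished by their rotation numbers, arising from the $m$ choices of $r(U_{-m})$ together with the two signs of stabilization passing from $U_{-m}$ to $U_{-m-1}$. So there are at most $2m$ possibilities for the $n$-copy, and it remains to see they are genuinely distinct as links. Distinctness follows because the components of the $n$-copy of $\Lambda$ all have $tb=-pq$ and $r=r(\Lambda)$, so two $n$-copies with different underlying $\Lambda$ have components with different rotation numbers and hence are not Legendrian isotopic even as unordered links; this uses only the invariance of the multiset of $(tb,r)$ pairs of the components.

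I would finish by verifying these $2m$ links are the ones with maximal $tb$: each $n$-copy realizes $tb(\Lambda_1)+\dots+tb(\Lambda_n)=-npq$, which by Proposition~\ref{maxtb}(2) is the maximum of $tb(\Lambda_1)+\dots+tb(\Lambda_n)$, and by Equation~\eqref{totaltb} this is exactly when $tb(L)$ is maximal. The main obstacle I expect is the first step — showing that a max-$tb$ representative really must sit on a convex torus whose complementary solid tori are \emph{thinnest possible} (standard neighborhoods of $U_{-m}$, $U_{-m-1}$), rather than on some torus with extra dividing curves or a different dividing slope; this requires a careful bypass/Imbalance-Principle argument (as in Lemma~\ref{posdestab}) to rule out the possibility that a non-optimal convex torus could still carry a link of maximal total $tb$, together with the input from \cite{EtnyreHonda01b} that maximal $tb$ negative torus knots are exactly the leaves of such pre-Lagrangian tori.
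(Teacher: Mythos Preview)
Your overall architecture is correct and matches the paper's: place $L$ on a convex Heegaard torus, get it onto a pre-Lagrangian torus of slope $-q/p$, invoke Lemma~\ref{ncopyalt} to recognize an $n$-copy, then count via the $2m$ max-$tb$ $(p,-q)$-torus knots. The distinctness and count arguments you give are fine.

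However, you are making the first step harder than it is, and the ``obstacle'' you flag at the end is not actually present. The paper's key observation is that maximality of $tb(L)$ forces each component individually to have $tb(\Lambda_i)=-pq$ (since each $\Lambda_i$ is a $(p,-q)$-torus knot, each satisfies $tb(\Lambda_i)\le -pq$, so the sum reaches $-npq$ only if every term is $-pq$). Then Equation~\eqref{tb-tw} gives $tw(\Lambda_i,T)=0$ for every $i$. Once you make $T$ convex relative to $L$, each $\Lambda_i$ is therefore \emph{disjoint} from the dividing set, which forces the dividing curves to be parallel to the $\Lambda_i$, i.e.\ to have slope $-q/p$. So the dividing slope is pinned immediately; there is no need for a bypass or Imbalance-Principle argument to rule out other slopes or extra dividing curves. (Your phrase ``the components become ruling curves of slope $-q/p$'' is a slip: with zero twisting they are Legendrian \emph{divides}, not ruling curves.)

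From there the paper simply notes that any standardly embedded convex torus with dividing slope $-q/p$ sits inside a basic slice with boundary slopes $-m-1$ and $-m$ (this is exactly the setup of Section~\ref{sec:p-q-torus}), and then Lemma~\ref{prelagmodel} produces the pre-Lagrangian torus carrying all the Legendrian divides. So your worry about showing the complementary solid tori are ``thinnest possible'' is handled for free by the $tw=0$ observation together with the ambient classification of tight structures on the complementary solid tori; no delicate thinning is required.
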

\begin{proof}
Compare the proof of \cite[Lemma 4.11]{EtnyreHonda01b}.

A Legendrian  $(np,-nq)$-torus link $L$ will have maximal \tb invariant precisely when
 $tb(\Lambda_i)=-pq$, for all $i$.   It is well-known that the difference between the Seifert framing and the framing coming from $T$ is $-pq$:
 \begin{equation}\label{tb-tw}
 tb(\Lambda_i) - tw(\Lambda_i, T) = -pq.
 \end{equation}
 Thus we find that $tw(\Lambda_i,T)= 0$. 
 Applying the Relative Convex Realization Principle,
 we can assume that $L$ lies in the  convex boundary $T$ of  an unknotted solid torus, and  each component $\Lambda_i$
must be disjoint from the dividing curves. 
We may thus take $L$ to be a subset of the Legendrian divides when $T$ is isotoped
to be a convex torus in standard form with dividing slope $-p/q$.  This torus sits inside a basic slice, and hence by Lemma~\ref{prelagmodel}, we know that all the Legendrian divides
are contained as leaves inside of some pre-Lagrangian torus. Thus, by Lemma~\ref{ncopyalt}, any maximal \tb invariant $(np,-nq)$-torus link is the 
Legendrian $n$-copy of a maximal \tb invariant $(p,-q)$ torus knot.  Hence all the components have the same rotation
number, and the link is classified by this rotation number.
\end{proof}

\begin{lemma}\label{negtorusstab}
Given  
$q > p \geq 2$ and $\gcd(p,q) = 1$, let $L = \amalg_{i=1}^{n} \Lambda_{i}$ be a Legendrian $(np,-nq)$-torus link. 
 If $tb(\Lambda_1) + \dots + tb(\Lambda_n) < -npq$,
then there exists a Legendrian $(np, -nq)$-torus link $L^\prime = \amalg_{i=1}^{n} \Lambda_{i}' $
such that $tb(\Lambda_1^\prime) + \dots + tb(\Lambda_n^\prime) > tb(\Lambda_1) + \dots + tb(\Lambda_n) $
and $L$ is a stabilization of $L^\prime$.
\end{lemma}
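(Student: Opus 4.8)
The plan is to mimic the destabilization argument used in Lemma~\ref{posdestab} (and in \cite[Lemma 4.8]{EtnyreHonda01b}), adapted to the negative case. First I would use the Relative Convex Realization Principle \cite{Kanda97} to arrange that $L=\amalg_{i=1}^n\Lambda_i$ sits on the convex boundary torus $T$ of an unknotted solid torus. As a first reduction, if the dividing curves on $T$ do not meet each component of $L$ efficiently (minimally), there is a bigon with one side on $L$ and one side on a dividing curve, producing a bypass that destabilizes the offending component and strictly raises the sum of the $tb$'s; so after finitely many such steps we may assume each $\Lambda_i$ is a ruling curve (or Legendrian divide) of $T$, and $T$ has some dividing slope $s$.

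Next I would analyze the dividing slope $s$ of $T$. Recall from Equation~\eqref{tb-tw} that for a curve on $T$ of slope $-q/p$, $tb(\Lambda_i)=tw(\Lambda_i,T)-pq$, and maximality $tb(\Lambda_i)=-pq$ is equivalent to $tw(\Lambda_i,T)=0$, i.e.\ to the $\Lambda_i$ being Legendrian divides of a $T$ in standard form with dividing slope exactly $-q/p$. So if $L$ is not yet maximal, either $s\neq -q/p$, or $s=-q/p$ but $T$ has more than two dividing curves (so the $\Lambda_i$ are ruling curves rather than divides, giving $tw<0$). In either case I would produce, by the standard thickening/convex-surface arguments (as in \cite[Lemma 4.11]{EtnyreHonda01b}, \cite{Honda00a}), a convex torus $T'$ parallel to and disjoint from $T$, on the "solid-torus side", with two dividing curves of slope $-q/p$ and ruling curves of slope $-q/p$ — i.e.\ a torus carrying a max-$tb$ $(p,-q)$ curve. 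Then apply the Imbalance Principle \cite[Proposition 3.17]{Honda00a} to an annulus running from a non-maximal component of $L$ on $T$ to a ruling curve on $T'$: the framing imbalance forces a bypass on the $L$ side, which destabilizes that component, strictly increasing $tb(\Lambda_1)+\dots+tb(\Lambda_n)$. This exhibits $L$ as a stabilization of a link $L'$ with larger $tb$-sum, as required; iterating terminates since the $tb$-sum is bounded above by $-npq$ via Proposition~\ref{maxtb}.

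The main obstacle I anticipate is the convex-surface bookkeeping needed to guarantee the existence of the auxiliary torus $T'$ of the right slope on the correct side of $T$, and to ensure the annulus between $T$ and $T'$ produces a destabilizing bypass rather than a bypass that merely moves dividing curves around. In the positive case one only has to reach dividing slope $-1$; here the relevant slope is $-q/p$, which sits inside the basic slice between unknot neighborhoods of framings $-m-1$ and $-m$ (in the notation of Section~\ref{sec:p-q-torus}), so one must be careful that thickening the complementary solid torus does indeed allow a parallel convex torus with exactly two dividing curves of slope $-q/p$ — this is where the minimal-twisting and non-rotative structure results of Section~\ref{ssec:solidtori} are used. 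One must also handle the case where some components are already maximal and only others need destabilizing, but since any two components form a $(2,-2q)$-sub-torus-link and the bypass argument is local to a single component's annulus, this causes no real difficulty. Once $T'$ is in place, the Imbalance Principle does the rest essentially as in the positive case.
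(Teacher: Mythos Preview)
Your approach is essentially the same as the paper's proof: put $L$ on a convex Heegaard torus $T$, destabilize components that do not meet the dividing set minimally, and if the dividing slope of $T$ is not $-q/p$, find a parallel convex torus $T'$ with dividing slope $-q/p$ and apply the Imbalance Principle to an annulus between a component of $L$ and $T'$.

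Two small corrections. First, on $T'$ you cannot take ruling curves of slope $-q/p$ when the dividing slope is already $-q/p$; the curve on $T'$ you want for the annulus is a \emph{Legendrian divide} (this is exactly what the paper uses). Second, your separate case ``$s=-q/p$ but $T$ has more than two dividing curves'' is not actually a problem: once each $\Lambda_i$ meets the dividing set minimally and the dividing slope equals $-q/p$, the minimal intersection number is zero, so each $\Lambda_i$ is already disjoint from $\Gamma_T$ and has $tb=-pq$ regardless of how many dividing curves there are. Thus that case falls under ``already maximal'' and needs no further work.
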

\begin{proof}
From the Relative Convex Realization Principle, 
we can assume that $L$ is contained in the convex boundary $T$ of an unknotted solid torus.   
By applying a destabilization
if necessary, we can assume that each component of $L$ intersects the dividing set minimally.  

If at this point if $T$ has dividing curves of slope $-\frac qp$, then all components of $L$ have maximial \tb invariant.
Else, the slope of the dividing curves on $T$ is not $-\frac qp$.
 It can be argued that there is a convex torus $T'$ that is disjoint form $T$ and parallel to $T$ with dividing curves of slope $-\frac qp$.  Applying the Imbalance Principle, \cite[Proposition 3.17]{Honda00a},  to  an annulus with one boundary component being a Legendrian divide of $T'$ and the other 
a component of $L$ in $T$ (and is otherwise disjoint from $L$) 
gives the existence of a destabilization of the component of $L$, as desired.   
\end{proof}

We now know that every negative torus link will destabilize to one with maximal \tb invariant.  To finish the classification, we need
to show that a given an $n$-tuple of vertices on the mountain range can represent at most one (unordered) Legendrian link.  Any $n$-tuple of vertices that can
be destabilized to at least one $n$-tuple peak with max $tb$ invariant can be represented by exactly one Legendrian link.  Some $n$-tuples can be destabilized to a unique $n$-tuple peak.
However, there are many
$n$-tuples of vertices that can be destabilized to $n$-copies of different Legendrian knots with max $tb$.  For example, if $L$ is the $n$-copy of the Legendrian $(3, -7)$-torus knot with max $tb$ and $r = 2$, and 
$L'$ is the $n$-copy of the Legendrian $(3, -7)$-torus knot with max $tb$ and $r = 4$, 
then we need to see that applying one $+$ and stabilization to all components of $L$
produces a link that is Legendrian isotopic to the link obtained from $L'$ by applying  $-$ stabilizations to all components.
That is we need to show that 
$S_{+, all} L$ is Legendrian isotopic to $S_{-, all} L'$; 
here $S_{+, all} L$ (resp $S_{-, all} L'$) means that we have applied a $+$ (resp $-$) stabilization of all components of $L$ (resp. $L'$).  
More generally, the strategy for uniqueness is to show that 
the $n$-tuple arising from  using the valley point between adjacent peaks $n$ times has a unique representative.
From this, it will follow that all $n$-tuples of vertices on the mountain range that can represent a Legendrian link will represent a  
Legendrian link that is unique up to isotopy.

As shown in \cite{EtnyreHonda01b},  in the Legendrian mountain range of a $(p, -q)$-torus knot,  if $q = mp + e$,
all ``adjacent'' maximal $tb$ representatives with  have rotation numbers that
differ by $2e$ or by $2(p-e)$.  In the following, if $L$ is an $n$-component link, 
$S_{\pm, all}^{m} L$ means that we have applied  $m$ $\pm$ stabilizations to each component
of $L$.

\begin{lemma}\label{negstabtoiso}
Let $L$ and $L^{'}$ be two topologically isotopic, oriented, Legendrian negative torus links with each component having maximal Thurston-Bennequin invariant.   If the rotation numbers of each component of $L$
and $L^{'}$ are $r$ and $r - 2e$, respectively, then $S_{-,all}^{e}(L)$ and $S_{+,all}^{e}(L^{'})$ are Legendrian isotopic.  If the rotation numbers of each component of $L$ and $L^{'}$ are $r$ and $r -
2(p-e)$, respectively, then $S_{-,all}^{p-e}(L)$ and $S_{+,all}^{p-e}(L^{'})$ are Legendrian isotopic.  
\end{lemma}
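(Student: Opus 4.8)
The plan is to work directly with the pre-Lagrangian torus models for maximal $tb$ negative torus knots described in Section~\ref{sec:p-q-torus}, and to show that stabilizing all components of an $n$-copy ``pushes'' the $n$-copy onto a new convex torus sitting in a larger region of $S^3$. Recall from Lemma~\ref{negmaxtb} that each maximal $tb$ representative $L$ of the $(np,-nq)$-torus link is the Legendrian $n$-copy of a maximal $tb$ representative $\Lambda$ of the $(p,-q)$-torus knot, where $\Lambda$ is a leaf of the characteristic foliation of a pre-Lagrangian torus $T$ of slope $-q/p$; this $T$ sits inside a basic slice $T^2\times[0,1]$ with boundary slopes $-m-1$ and $-m$, and $S^3 = S_{-m-1}\cup (T^2\times[0,1])\cup S_{-m}$ where $S_{-m}$ is the complement of a standard neighborhood of a Legendrian unknot $U_{-m}$ with $tb=-m$ and $S_{-m-1}$ is a standard neighborhood of a Legendrian unknot $U_{-m-1}$ obtained from $U_{-m}$ by a $\pm$-stabilization. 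Since $q = mp + e$ with $0 < e < p$, the relevant claim of \cite{EtnyreHonda01b} is that adjacent peaks are related by changing the sign of stabilization used to build $U_{-m-1}$ (giving a rotation-number change of $\pm 2e$) or by changing the rotation number of $U_{-m}$ itself (giving a change of $\pm 2(p-e)$, corresponding to moving to a different Legendrian unknot with $tb=-m$).

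First I would handle the first case. Here $L$ and $L'$ correspond to the same Legendrian unknot $U_{-m}$ but to the two different stabilizations $U_{-m-1}^+$ and $U_{-m-1}^-$ used to fill $S_{-m-1}$; equivalently the basic slice $T^2\times[0,1]$ carries a $+$ sign for one and a $-$ sign for the other. I would place $L$ (resp.\ $L'$) as $n$ leaves of the pre-Lagrangian torus $T$ (resp.\ $T'$) of slope $-q/p$ inside this basic slice, using Lemma~\ref{ncopyalt}. Stabilizing each component $e$ times corresponds geometrically to isotoping the $n$-copy across the basic slice so that it now sits as $n$ Legendrian divides on a convex torus of slope $-q/p$ obtained by pushing through; the key bookkeeping is that after $e$ negative stabilizations of every component of $L$ and $e$ positive stabilizations of every component of $L'$, both links live on convex tori sitting in the region $S_{-m-1}$, which is a standard neighborhood of a Legendrian unknot. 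Inside a single standard neighborhood of a Legendrian knot, the $n$-copy of a cable is determined up to Legendrian isotopy by its $tb$ and $r$ data (this is essentially the uniqueness in Lemma~\ref{negmaxtb} applied inside the solid torus, using Theorem~\ref{thm:solid-tori}), and since $tb$ and $r$ of the components now agree by hypothesis on the rotation numbers, $S_{-,all}^e(L)$ and $S_{+,all}^e(L')$ are Legendrian isotopic. I would make the ``$e$ stabilizations realizes the passage through the basic slice'' step precise by computing, via Equation~\eqref{tb-tw} and intersection numbers of the $n$ leaves with the dividing curves of the relevant convex tori, that the twisting number $tw(\Lambda_i,T)$ drops by exactly $e$ when one passes from the slope $-q/p$ torus in $S_{-m}$ to the corresponding torus whose core is $U_{-m-1}$; the sign of the stabilization is dictated by the sign of the basic slice being crossed, exactly as in \S\ref{stab-basic-slice}.

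The second case is analogous but now the two links $L$ and $L'$ use \emph{different} Legendrian unknots $U_{-m}$ and $U'_{-m}$ with $tb=-m$, differing in rotation number by $2$ (so that after the cabling construction the torus-knot rotation numbers differ by $2(p-e)$). The unknot is Legendrian simple, so $U_{-m}$ and $U'_{-m}$ become Legendrian isotopic after a single positive stabilization of one and a single negative stabilization of the other, landing both in a common Legendrian unknot $U_{-m-1}$. Stabilizing the $n$-copies $p-e$ times in the appropriate signs pushes $L$ and $L'$ into standard neighborhoods of $U_{-m-1}$ (now filling in from the ``outside'', i.e.\ reducing $tw$ against the torus of slope $-q/p$ on the $S_{-m}$ side by $p-e$); once both the $n$-copies sit on parallel convex tori of slope $-q/p$ inside a fixed standard neighborhood of a single Legendrian unknot, the same uniqueness-inside-a-solid-torus argument as above, together with the matching of $tb$ and $r$ of the components, shows $S_{-,all}^{p-e}(L)$ and $S_{+,all}^{p-e}(L')$ are Legendrian isotopic.

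The main obstacle I anticipate is making rigorous the claim that applying $k$ simultaneous stabilizations to \emph{all} components of an $n$-copy is the same as isotoping the whole $n$-copy onto a single new convex torus $k$ steps further along the chain of basic slices — in other words, that the stabilizations of the different components can be performed ``coherently'' so that the result is again literally an $n$-copy (a collection of Legendrian divides or leaves of one torus) rather than $n$ separately stabilized curves in some uncontrolled relative position. I would address this by doing the stabilizations on a common annulus: take the annulus $A\subset T$ containing all $n$ components of $L$, find a bypass on an annulus from $A$ to the relevant boundary torus of the basic slice, and attach that bypass to all of $A$ at once; since the bypass attachment is supported near the annulus, it stabilizes every leaf simultaneously and keeps them parallel, so the output is honestly the $n$-copy of the once-stabilized knot on the next convex torus. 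Iterating $k$ times and invoking Lemma~\ref{ncopyalt} to re-present the result as an $n$-copy of a leaf of the appropriate pre-Lagrangian torus closes the gap. Once both links are $n$-copies of Legendrian isotopic knots sitting inside a common standard neighborhood, a final appeal to the uniqueness established in the proof of Lemma~\ref{negmaxtb} (really Theorem~\ref{thm:solid-tori} plus Eliashberg's isotopy extension) completes the argument.
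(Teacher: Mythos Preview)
Your overall strategy matches the paper's: realize the stabilized links as curves on a common convex torus bounding a standard neighborhood of a single Legendrian unknot, then use the contact isotopy of such tori to identify them. However, in your first case you push in the wrong direction. You correctly note that $L$ and $L'$ share the same $U_{-m}$ but use different stabilizations $U_{-m-1}^{\pm}$, yet you then claim that after $e$ stabilizations both links sit in ``the region $S_{-m-1}$, which is a standard neighborhood of a Legendrian unknot.'' But $S_{-m-1}$ is a neighborhood of $U_{-m-1}^{+}$ for $L$ and of $U_{-m-1}^{-}$ for $L'$; these are \emph{different} Legendrian unknots (with different rotation numbers), so you have not landed in a single solid torus and the subsequent uniqueness argument does not apply. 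The counting is also off: a $(p,-q)$ ruling curve on a convex torus of dividing slope $-m-1$ has $tb=-pq-(p-e)$, not $-pq-e$, so pushing toward $S_{-m-1}$ costs $p-e$ stabilizations, not $e$.

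The fix, and what the paper does, is to push the other way: find a convex torus $T'$ parallel to $T$ with dividing slope $-m$ (this torus is the boundary of a standard neighborhood of the \emph{common} unknot $U_{-m}$), take $n$ ruling curves of slope $-q/p$ on $T'$, and use $n$ disjoint convex annuli from the components of $L$ to these ruling curves together with the Imbalance Principle to see that the ruling curves are exactly $S_{-,all}^{e}(L)$. The identical construction realizes $S_{+,all}^{e}(L')$ as $n$ ruling curves on the boundary of a standard neighborhood of the same $U_{-m}$; ambient isotopy of the two tori then identifies the links. Note that this ``$n$ annuli plus Imbalance Principle'' argument also dissolves your anticipated obstacle about coherently stabilizing all components: one does not need a single bypass on a big annulus, only the observation that each annulus separately exhibits the ruling curve as a stabilization of the corresponding component of $L$. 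Your second case (rotation numbers differing by $2(p-e)$) has the right destination, since there $U_{-m-1}$ is indeed common; the paper treats it symmetrically by pushing toward the slope $-m-1$ side.
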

\begin{proof} This argument parallels the proof of \cite[Lemma 4.12]{EtnyreHonda01b}; additional details can be found there.

 We consider the case where the rotation number of $L'$ is $r-2e$; the other case is similar. 
Let $T$ be a convex torus containing $L$ as in the proof of Lemma~\ref{negmaxtb}. From the classification of contact structures on solid tori \cite{Honda00a}, we know that
parallel to $T$ is a convex torus $T'$ with dividing curves of slope $-m$ (where  $q = mp + e$) that bounds a solid torus containing $T$. Make the ruling curves on $T'$ have slope 
$-\frac qp$. Let $L_s$ be any collection of $n$ ruling curves on $T'$. By connecting $L$ to $L_s$ with $n$ annuli and applying the Imbalance Principle, \cite[Proposition 3.11]{EtnyreHonda01b},
one sees that $L_s$ is $S_{-,all}^{e}(L)$. Thus $S_{-,all}^{e}(L)$ sits as $n$ ruling curves on the boundary of
a standard neighborhood of a Legendrian unknot with $tb=-m$ and rotation number $r$; the calculation of the rotation number is
discussed on pages 88 and 89 of \cite{EtnyreHonda01b}. One can similarly realize
$S_{+,all}^{e}(L^{'})$ as $n$ ruling curves on the boundary of a standard neighborhood of a Legendrian unknot with $tb =-m$ and rotation number $r$. After applying an isotopy,
we can assume that both $S_{-,all}^{e}(L)$ and $S_{+,all}^{e}(L^{'})$ lie among the ruling curves on the same convex torus, and thus are Legendrian isotopic.
 \end{proof}

%%%%%%%%%%%%%%%%%%%%%%%%%%%%%%%%%%%%%%%%%%%%%%%%%%%%%%  
\section{Negative Torus Links with Unknotted Components}\label{negunknotedcpts}
%%%%%%%%%%%%%%%%%%%%%%%%%%%%%%%%%%%%%%%%%%%%%%%%%%%%%%

In this section, we will classify all Legendrian negative torus links that have unknotted components. In other words, we will classify all Legendrian links that are topologically $(n, -nq)$ for $q \geq 1$. 
In this section, we only classify the links as unordered Legendrian links; the ordered classification can be found in 
Section~\ref{oclassification}.  The following theorem summarizes the classification; the non-destabilizable $t$-twisted $n$-copy of Legendrian unknot referenced here will be defined in Definition~\ref{t-twist}.

\begin{theorem}\label{uoclass} For $q \geq 1$,
two (unordered) oriented Legendrian $(n, -nq)$-torus links $L = \amalg_{i=1}^{n} \Lambda_{i}$ and $L' = \amalg_{i=1}^{n} \Lambda_{i}'$ are Legendrian isotopic if and only if 
there exists $\sigma \in S_{n}$ such that $tb(\Lambda_{i}) =tb(\Lambda_{\sigma(i)}')$ and $r(\Lambda_{i}) =r(\Lambda_{\sigma(i)}')$, for all $i$. 
In addition, every Legendrian $(n, -nq)$-torus link is
a stabilization of either:
\begin{itemize}
\item the $n$-copy of the Legendrian unknot with rotation number $r$ and 
$tb=-q$, denoted $nU_{-q}^r$,  or
\item  if $q \geq 2$,  the $t$-twisted $n$-copy of the Legendrian unknot with rotation number $r$ and $tb=-q+t$, for $1\leq t \leq q-1$, denoted  $T^t(nU_{-q+t}^r)$.
\end{itemize}
   When $q \geq 2$ and $n \geq 3$, the $t$-twisted $n$-copies $T^t(nU_{-q+t}^r)$ do not
have maximal \tb invariant even though they do not destabilize.  
\end{theorem}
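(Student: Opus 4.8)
The plan is to split the theorem into three stages: (I) every Legendrian $(n,-nq)$-torus link destabilizes to one of the listed building blocks; (II) those building blocks have the stated component invariants, are pairwise non-isotopic, and are non-destabilizable; (III) combine (I) and (II) to obtain the classification and then read off the final clause. For Stage (I), realize $L=\amalg\Lambda_i$ on a convex torus $T$ isotopic to the standard torus (Relative Convex Realization Principle, \cite{Kanda97}), bounding an unknotted solid torus $W$. Whenever a component fails to intersect $\Gamma_T$ efficiently, a bigon on $T$ gives a bypass and destabilizes it, so after finitely many destabilizations assume each $\Lambda_i$ meets $\Gamma_T$ efficiently. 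Using the Imbalance Principle \cite{Honda00a} on annuli joining components of $L$ to a parallel convex torus inside $W$ (or inside the complementary solid torus), together with Honda's classification of tight contact structures on solid tori, one destabilizes and thickens until no further thickening is possible. Since the framing a component inherits from $T$ differs from its Seifert framing by $-q$ (compare Equation~\eqref{tb-tw} with $p=1$), the extreme case is $\Gamma_T$ of slope $-q$: if $W$ can be thickened to have exactly two dividing curves of slope $-q$, then $W$ is a standard neighborhood of a $tb=-q$ unknot and $L$ is the $n$-copy $nU^r_{-q}$ (cf. the model in the proof of Lemma~\ref{maxntb}); otherwise the obstruction to further thickening is precisely that the relevant minimally twisting $T^2\times[0,1]$ of turning slope $-q$ is not universally tight, and unwinding Definition~\ref{t-twist} identifies $L$ with $T^t(nU^r_{-q+t})$ for some $1\le t\le q-1$.

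For Stage (II), the $n$-copy $nU^r_{-q}$ has every component with $tb=-q$ and rotation number $r$, so by Proposition~\ref{maxtb} its total component $tb$ is maximal; hence it cannot be a stabilization of a link of larger total $tb$, i.e. it is non-destabilizable, and by the Eliashberg--Fraser classification of Legendrian unknots \cite{EliashbergFraser98} it is determined by $r$. The twisted model $T^t(nU^r_{-q+t})$ has component $(tb,r)$-multiset $\{(-q+t,r)\}\cup\{(-q-t,r)\}^{n-1}$, which is never equal to a multiset $\{(-q,r')\}^n$ and from which $(t,r)$ can be recovered; so the $\tfrac{q(q+1)}{2}$ listed links realize pairwise-distinct multisets and are pairwise non-isotopic. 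The crux of Stage (II) is that $T^t(nU^r_{-q+t})$ does not destabilize: a destabilization of any component is witnessed by a bypass attached along a convex annulus, and pushing that bypass through the configuration (using the divide/ruling and ordering analysis of Lemmas~\ref{prelagmodel}--\ref{prelagmodel2}) would produce a pre-Lagrangian torus of slope $-q$ inside the minimally twisting, non-universally-tight $T^2\times[0,1]$ that separates the $tb=-q+t$ component from the others, contradicting Lemma~\ref{notut}.

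For Stage (III), the ``only if'' direction of the iff is immediate, since each component's $tb$ and $r$ are Legendrian isotopy invariants and an isotopy of an unordered link permutes components. For ``if'', given $L,L'$ with the same multiset of component $(tb,r)$ pairs, each destabilizes (by Stage (I)) to a building block; the distinctness computation in Stage (II) tells us exactly which building blocks a given multiset can be obtained from by component-wise stabilization, and the remaining point is that whenever two building blocks both stabilize to links realizing this common multiset, they stabilize to Legendrian-isotopic links. This reconciliation is the analog of Lemma~\ref{negstabtoiso}: one realizes both stabilized links as collections of ruling curves on a common convex torus (parallel to the standard one, obtained by thickening), and then an ambient contact isotopy identifies them. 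Hence $L$ and $L'$ are each isotopic to the canonical link attached to their common multiset, so $L$ and $L'$ are Legendrian isotopic. Finally, the last sentence: summing the component $tb$'s of $T^t(nU^r_{-q+t})$ gives $(-q+t)+(n-1)(-q-t)=-nq-(n-2)t$, which for $n\ge 3$ and $1\le t\le q-1$ is strictly less than $-nq$, the maximal value of $tb(\Lambda_1)+\cdots+tb(\Lambda_n)$ by Proposition~\ref{maxtb} (equivalently, of $tb(L)$ via Equation~\eqref{totaltb}); so these links are not of maximal $tb$, yet by Stage (II) they do not destabilize.

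The main obstacle is the non-destabilizability statement in Stage (II): turning the contact-geometric rigidity of Lemma~\ref{notut} (no pre-Lagrangian torus of the turning slope in a non-universally-tight minimally twisting $T^2\times I$) into the statement that no component of $T^t(nU^r_{-q+t})$ admits a destabilizing bypass, which requires carefully tracking how a hypothetical bypass interacts with the complementary annulus and induced ordering of Legendrian divides and how it would force the forbidden pre-Lagrangian torus to appear. A secondary difficulty is the reconciliation step in Stage (III) (the analog of Lemma~\ref{negstabtoiso}), where two a priori different stabilized building blocks must be simultaneously placed as ruling curves on a single convex torus.
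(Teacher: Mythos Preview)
Your overall three-stage plan matches the paper's architecture, but two of your key mechanisms are wrong, and in one place you have badly overcomplicated what is actually the easiest step.

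\textbf{Non-destabilizability of $T^t(nU^r_{-q+t})$.} You call this ``the main obstacle'' and propose that a hypothetical destabilizing bypass would, via Lemmas~\ref{prelagmodel}--\ref{prelagmodel2}, force a pre-Lagrangian torus of slope $-q$ into a non-universally-tight $T^2\times I$, contradicting Lemma~\ref{notut}. There is no such mechanism: a bypass on a convex annulus changes the dividing set of an adjacent torus, it does not produce a pre-Lagrangian torus of a prescribed slope, and nothing in the configuration forces the ambient thickened torus to be non-universally-tight in the first place. The paper's actual argument (Lemma~\ref{non-destab-twists}) is entirely elementary: the components have $tb$-values $-q+t$ (once) and $-q-t$ ($n-1$ times), so destabilizing any single component produces a two-component sublink whose total component $tb$ exceeds $-2q$, contradicting Proposition~\ref{maxtb}. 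That is the whole proof.

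\textbf{The Stage (I) dichotomy.} Your proposed alternative ``either $W$ thickens to slope $-q$ with two dividing curves, or the obstruction is non-universal-tightness of a turning-slope $T^2\times I$'' is not the right case split and does not lead to the identification with $T^t(nU^r_{-q+t})$. The paper (Lemma~\ref{destab}) instead splits on the largest component invariant $tb(\Lambda_1)$: if $tb(\Lambda_1)\le -q$ one destabilizes via bigons or the Imbalance Principle until everything is a Legendrian divide on a slope-$(-q)$ torus; if $tb(\Lambda_1)=-q+t>-q$ one puts the remaining components on a convex torus outside a standard neighborhood $N_1$ of $\Lambda_1$, shows its dividing slope must be $-q+t$ with exactly two dividing curves (else more destabilization is available), and concludes the other components are ruling curves on $\partial N_1$ --- which is exactly Definition~\ref{t-twist}. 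Universal tightness and Lemma~\ref{notut} play no role here.

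Your Stage (III) reconciliation is closer in spirit to the paper (Lemma~\ref{integer-simple}), though the paper proves specific identities $S_{+,\mathrm{all}}(L_{-q}^j)=S_{-,\mathrm{all}}(L_{-q}^{j+2})$ and $S_{\pm,1}(L_k^j)=S_{\mp,2}\circ\cdots\circ S_{\mp,n}(L_{k-1}^{j\pm1})$ rather than a single ``place both on a common torus'' argument; the second identity in particular is proved by observing that stabilizing the high-$tb$ component allows, via Lemma~\ref{destab}, destabilization of all the others.
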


We saw in Lemma~\ref{n-copy-maxtb} that the $n$-copy of 
an unknot with $tb = -q$ gives us
one way to construct a Legendrian $(n, -nq)$-torus link.   In fact, when $n\geq 3$, all Legendrian $(n, -nq)$-torus links with maximal 
Thurston-Bennequin invariant can be obtained as the Legendrian $n$-copy of a Legendrian
unknot with $tb = -q$.

\begin{proposition} \label{maxntb} 
For $n \geq 3$ and $q \geq 1$, the unordered oriented  $n$-component $(n, -nq)$-torus link has precisely $q$
Legendrian realizations  with maximal \tb invariant. Such a version can be constructed as the $n$-copy
of one of the $q$ Legendrian unknots with $tb = -q$. 
\end{proposition}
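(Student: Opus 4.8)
The plan is to mirror the strategy used for knotted components in Lemma~\ref{negmaxtb}, but now working with a solid torus neighborhood of a Legendrian unknot and keeping careful track of the constraint that $n \geq 3$. First I would observe, using Equation~\eqref{totaltb} together with Proposition~\ref{maxtb}(2), that a Legendrian $(n,-nq)$-torus link $L = (\Lambda_1, \dots, \Lambda_n)$ has maximal $tb$ invariant precisely when $tb(\Lambda_1) + \dots + tb(\Lambda_n) = -nq$. By the Relative Convex Realization Principle we may isotope $L$ onto the convex boundary $T$ of an unknotted solid torus, and after destabilizing if necessary we may assume every component intersects the dividing curves minimally. As in Lemma~\ref{negtorusstab}, if the dividing slope of $T$ is not $-q$ then an Imbalance Principle argument on an annulus from a component of $L$ to a ruling curve of a nearby convex torus of dividing slope $-q$ produces a destabilization, lowering $\sum tb(\Lambda_i)$ below $-nq$, contradiction; so $T$ has two dividing curves of slope $-q$ and the $\Lambda_i$ are Legendrian divides of $T$.

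The key step is then to invoke Lemma~\ref{prelagmodel} (or the $q \geq 2$ case hinted at in Section~\ref{sec:p-q-torus}, via the decomposition $S^3 = S_{-q-1} \cup (T^2 \times [0,1]) \cup S_{-q}$ with a basic slice in the middle): the convex torus $T$ of slope $-q$ sits inside a basic slice, so all of its Legendrian divides lie as leaves of a single pre-Lagrangian torus $T_{\mathrm{pL}}$ of slope $-q$. By Lemma~\ref{ncopyalt}, any $n$ leaves of $T_{\mathrm{pL}}$ form the $n$-copy of a single leaf, which is a Legendrian unknot $\Lambda$ with $tb(\Lambda) = -q$; hence $L$ is the $n$-copy of such a $\Lambda$. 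Since the $n$-copy has all components Legendrian isotopic to $\Lambda$, all components share the rotation number $r(\Lambda)$, and the Legendrian unknot classification of \cite{EliashbergFraser98} gives exactly $q$ choices of unknot with $tb = -q$ (rotation numbers $-q+1, -q+3, \dots, q-1$). Conversely Lemma~\ref{n-copy-maxtb}(2) shows each such $n$-copy is indeed a $(n,-nq)$-torus link with $\sum tb(\Lambda_i) = -nq$, i.e.\ maximal $tb$.

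For the uniqueness count — that two such $n$-copies with the same rotation number are Legendrian isotopic, while different rotation numbers give non-isotopic links — I would argue as in the proof of Lemma~\ref{uniquemaxpos}: both links lie among ruling (here, divide) curves on convex tori of dividing slope $-q$, and Honda's classification of tight contact structures on the solid torus together with Eliashberg's uniqueness for $S^3$ gives a contact isotopy carrying one configuring torus to the other, hence the links are isotopic; distinct rotation numbers are distinguished by the invariant $r(\Lambda_i)$ of the components. I expect the main obstacle to be the clean invocation of Lemma~\ref{prelagmodel} in the degenerate case $p = 1$, where the ``torus knot'' leaf is an unknot: one must make sure the basic-slice picture and the pre-Lagrangian torus of slope $-q$ are genuinely available (this is exactly the content flagged at the end of Section~\ref{sec:p-q-torus}, that one builds models for maximal-$tb$ $(n,-nq)$-torus links directly from pre-Lagrangian tori), and that the hypothesis $n \geq 3$ is what rules out the split-level ``$t$-twisted'' representatives of Theorem~\ref{uoclass} from also achieving maximal $tb$ — for $n = 2$ those do have maximal $tb$, so the argument that forces the dividing slope to be exactly $-q$ must genuinely use $n \geq 3$, presumably through the same $(2,-2q)$-sublink trick as in the proof of Proposition~\ref{maxtb}.
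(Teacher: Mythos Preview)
Your overall strategy is right and matches the paper's, but there is a genuine gap in the order of operations, and it is exactly the place where $n\geq 3$ enters --- not merely as a bookkeeping check at the end.

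You invoke the Relative Convex Realization Principle to put $L$ on a convex torus $T$ \emph{before} establishing that every component has $tb(\Lambda_i)\leq -q$. But RCRP requires each $\Lambda_i$ to have non-positive twisting relative to $T$, i.e.\ $\tw(\Lambda_i,T)=tb(\Lambda_i)+q\leq 0$. If some component has $tb(\Lambda_i)=-q+t$ with $t>0$ (which for $n=2$ genuinely happens at max $tb$, via the $t$-twisted $2$-copies), then $\tw(\Lambda_i,T)>0$ and you simply cannot make $T$ convex relative to $L$. Your Imbalance-Principle argument for the dividing slope never gets off the ground in that case. The paper's proof handles this by \emph{first} applying Lemma~\ref{tb-observation}: if one component has $tb=-q+t$ then the remaining $n-1$ components all have $tb\leq -q-t$, so the sum is at most $-nq-(n-2)t$, which is strictly less than $-nq$ when $n\geq 3$. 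This forces $tb(\Lambda_i)=-q$ for all $i$, after which $\tw(\Lambda_i,T)=0$ and convex realization is legitimate. You correctly sensed that the $(2,-2q)$-sublink trick is the mechanism, but it has to be applied up front, not as a patch on the dividing-slope argument.

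Two smaller points. First, ``destabilizing\ldots lowering $\sum tb(\Lambda_i)$ below $-nq$'' is backwards; destabilization \emph{raises} $tb$, and the contradiction is that you would exceed the maximum. Second, since $-q$ is an integer slope, the convex torus $T$ is not literally interior to a single basic slice: the paper uses Lemma~\ref{integral-union-basic} to place $T$ inside a \emph{universally tight union of two basic slices}, and then invokes the extension of Lemma~\ref{prelagmodel} given in Remark~\ref{prelagmodel-extended} to produce the pre-Lagrangian torus. Your decomposition $S_{-q-1}\cup(T^2\times[0,1])\cup S_{-q}$ has $-q$ as a \emph{boundary} slope of the basic slice, so Lemma~\ref{prelagmodel} as stated does not directly apply; this is precisely the technical point that Lemma~\ref{integral-union-basic} and Remark~\ref{prelagmodel-extended} are there to address.
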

Before giving the proof we make two observations.
\begin{lemma}\label{tb-observation}
For $n \geq 2$, if one component of a Legendrian $(n, -nq)$-torus link has $tb=-q+t$, for $t\geq 1$,
then all the other components have $tb\leq -q-t$.
\end{lemma}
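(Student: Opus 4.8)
The plan is to reduce immediately to the two–component case and then quote Proposition~\ref{maxtb}. First I would observe that any pair of components $\Lambda_1$ and $\Lambda_j$ of a Legendrian $(n,-nq)$-torus link is itself a Legendrian $(2,-2q)$-torus link: all $n$ components are simultaneously isotopic to parallel $(1,-q)$ curves on a standardly embedded torus, so any two of them again sit as parallel $(1,-q)$ curves on that torus. By Remark~\ref{rem:orientations} the orientations are chosen consistently, so $lk(\Lambda_1,\Lambda_j) = -q$, and the sublink is a $(2,-2q)$-torus link in exactly the sense used in Proposition~\ref{maxtb} (and not, say, a $(2,2q)$-torus link with the opposite linking sign).

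Next I would apply Proposition~\ref{maxtb}(2) to this two–component sublink, which gives $tb(\Lambda_1) + tb(\Lambda_j) \leq -2q$. Substituting the hypothesis $tb(\Lambda_1) = -q + t$ yields $tb(\Lambda_j) \leq -q - t$ for every $j \neq 1$, which is precisely the claim. Since $j$ was arbitrary, this handles all the other components at once.

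The only real content here is the reduction to a sublink, and the only place to be mildly careful is confirming that the orientation conventions make the sublink a genuine $(2,-2q)$-torus link so that Proposition~\ref{maxtb} applies on the nose; Remark~\ref{rem:orientations} takes care of this. No new convex surface theory is needed beyond what already goes into Proposition~\ref{maxtb} (which, in the relevant even case $n=2$, is just Epstein's bound, Theorem~\ref{Epstein}). Consequently I expect no genuine obstacle: the lemma is a short corollary of Proposition~\ref{maxtb}.
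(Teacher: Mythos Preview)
Your proposal is correct and is essentially the same argument as the paper's: take any other component together with the given one, observe this pair is a Legendrian $(2,-2q)$-torus link, and apply Proposition~\ref{maxtb}(2) to get $tb(\Lambda^{+})+tb(\Lambda)\leq -2q$. The paper's proof is just a terser version of what you wrote, omitting the explicit orientation check you included via Remark~\ref{rem:orientations}.
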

\begin{proof} Suppose $L$ is a Legendrian $(n, -nq)$-torus link.
Let $\Lambda^{+}$ be a component of $L$  with $tb(\Lambda^{+})=-q+t$. If $\Lambda$ is any other component of $L$ then $\Lambda^{+} \cup \Lambda$ is a
$(2, -2q)$-torus link, and thus Proposition~\ref{maxtb} says $tb(\Lambda^{+})+tb(\Lambda)\leq -2q$.  The result follows.
\end{proof}

{
In Section~\ref{sec:p-q-torus} we explained that the max-$tb$ non-trivial negative torus knots lie as leaves of a pre-Lagrangian torus in a basic slice, or, equivalently by 
Lemma~\ref{prelagmodel}, as the Legendrian
divides on a convex torus in a basic slice.  Legendrian unknots can be seen as Legendrian divides of a convex torus in a ``nice'' union of basic slices.}

\begin{lemma} \label{integral-union-basic} Suppose $T$ is a standardly embedded, convex torus in standard form with Legendrian divides of slope $-q$, for $q \geq 1$.  Then $T$ is contained in the universally
tight union of two basic slices.
\end{lemma}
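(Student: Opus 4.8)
The plan is to realize $T$ as the Heegaard torus of the genus-one splitting $S^3 = V_0\cup_T V_1$ that it determines, fixing the decomposition so that the meridian $\mu$ (the curve of slope $\infty$) bounds a disk in $V_0$, and then to thicken $T$ by exactly one basic slice to each side, arranging the two basic slices to have the same sign. Since the dividing slope $-q$ and $\infty$ are joined by an edge of the Farey graph (their geometric intersection is $1$), the convex torus $T=\partial V_0$ has two dividing curves each meeting $\mu$ once, so by Theorem~\ref{thm:solid-tori} the tight solid torus $V_0$ is a standard neighborhood of its core $\Lambda_0$; this core is isotopic in $S^3$ to the curve $\lambda$ of slope $0$, which bounds a disk in $V_1$, so $\Lambda_0$ is a Legendrian unknot, and reading off the dividing slope relative to its Seifert framing ($\lambda$) gives $tb(\Lambda_0)=-q$. (Note $q\ge 1$ is exactly what is needed for $-q\le\overline{tb}$ of the unknot.)

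Next I would build the two basic slices. On the $V_0$ side: picking a stabilization $S_\epsilon\Lambda_0$ (the sign $\epsilon$ pinned down below) and passing to a standard neighborhood $N(S_\epsilon\Lambda_0)\subset V_0$, the region $B_- := V_0\setminus\operatorname{int} N(S_\epsilon\Lambda_0)$ is, by Subsection~\ref{stab-basic-slice}, a basic slice of sign $\epsilon$ with boundary components $T$ (slope $-q$) and a torus of slope $-q-1$. On the $V_1$ side I want one basic slice $B_+\subset V_1$ having $T$ as a boundary component. If $q\ge 2$, then $\Lambda_0$ is a Legendrian unknot with $tb=-q\le-2$, hence by \cite{EliashbergFraser98} a stabilization $\Lambda_0=S_\delta\Lambda_1$ of a Legendrian unknot $\Lambda_1$ with $tb=-q+1$; after a contact isotopy of $S^3$ we may take $V_0=N(S_\delta\Lambda_1)\subset N(\Lambda_1)$, and then $B_+ := N(\Lambda_1)\setminus\operatorname{int} V_0\subset V_1$ is a basic slice of sign $\delta$ with boundary components $T$ and $\partial N(\Lambda_1)$ of slope $-q+1$. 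If $q=1$, then $\Lambda_0$ is the max-$tb$ unknot and instead I use that $V_1=S^3\setminus\operatorname{int} N(\Lambda_0)$ is itself the standard neighborhood of a Legendrian unknot $\Lambda_1$ with $tb=-1$ (its meridian $\lambda$ has slope $0$, adjacent to the dividing slope $-1$, and its core is an unknot with $tb=-1$); stabilizing the core inside $V_1$ produces a basic slice $B_+\subset V_1$ of a sign $\delta$ of our choosing, with boundary components $T$ and a convex torus of slope $-\tfrac12$. Either way $B_+$ is a basic slice inside $V_1$ with $T$ as one boundary component and some definite sign $\delta$.

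Finally I would set $\epsilon:=\delta$ and let $W := B_-\cup_T B_+$. This is a thickened torus containing $T$ in its interior, written as the union of the two basic slices $B_-$ and $B_+$; its three successive boundary slopes ($-q-1,\,-q,\,-q+1$ when $q\ge 2$, or $-2,\,-1,\,-\tfrac12$ when $q=1$) form a minimal path in the Farey graph, since the two outer slopes are not joined by an edge (their geometric intersection exceeds $1$), so $W$ is minimally twisting; as $B_-$ and $B_+$ carry the same sign, $W$ is universally tight by Honda's characterization \cite{Honda00a}. Hence $T$ lies in the universally tight union of the two basic slices $B_-$ and $B_+$, as claimed. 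The step I expect to need the most care is the $V_1$ side: for $q\ge 2$ the complement $V_1$ is no longer a standard Legendrian neighborhood (its two dividing curves wrap $q$ times around the core), so one cannot simply ``unstabilize a core,'' and the fix is to obtain $B_+$ externally as the collar $N(\Lambda_1)\setminus V_0$; the $q=1$ corner case, where $\Lambda_0$ is already of maximal $tb$, is handled separately by exploiting that $V_1$ is then also a max-$tb$ unknot neighborhood. The sign bookkeeping is then painless, because $B_-$ can always be taken of either sign and we simply match it to $\delta$.
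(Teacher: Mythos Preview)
Your argument assumes without justification that $T$ has exactly two dividing curves: you write ``the convex torus $T=\partial V_0$ has two dividing curves each meeting $\mu$ once,'' and then invoke Theorem~\ref{thm:solid-tori} to identify $V_0$ as a standard Legendrian neighborhood. But the hypothesis of the lemma only fixes the dividing \emph{slope}, not the number of dividing curves, and the lemma is applied precisely in situations where $T$ has many dividing curves---for instance, in the proof of Proposition~\ref{maxntb} the torus $T$ carries an $n$-component link ($n\ge 3$) among its Legendrian divides, forcing at least four dividing curves. With more than two dividing curves $V_0$ is not a standard neighborhood, Kanda's uniqueness theorem does not apply, and your construction of $\Lambda_0$ (and hence of both basic slices $B_\pm$) does not get off the ground.

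The paper's proof handles this by first locating, inside each of the two complementary solid tori, a boundary-parallel convex torus with exactly two dividing curves of slope $-q$; the region between these two tori is a non-rotative, $I$-invariant layer containing $T$, which is then absorbed into one of the basic slices built off the two-curve tori. Once that reduction is inserted, the rest of your argument---stabilize the core on one side, and on the other side either destabilize (for $q\ge 2$) or pass to the dual max-$tb$ unknot (for $q=1$), then match the signs---is essentially the paper's construction, although the paper treats the second side more uniformly by simply choosing any slope $s\in[-q,0)$ Farey-adjacent to $-q$ rather than splitting into the cases $q\ge 2$ and $q=1$.
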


\begin{proof} Notice that $T$ splits $S^3$ into two solid tori $V_{-\infty}$ and $V_{0}$, each with convex boundary having dividing slope $-q$, where $V_{-\infty}$ contains convex tori with dividing slope in the range $(-\infty, -q]$ and a torus $T_{-\infty}$ with two dividing curves of slope $-q$; similarly, $V_0$ contains convex tori with slopes in the range $[-q,0)$ and a torus $T_0$ with two dividing curves of slope $-q$. The region $R_{I}$ between $T_{0}$ and $T_{-\infty}$ is an $I$-invariant contact structure containing $T$.  Choose any $s$ in $[-q,0)$ that has an edge to $-q$ in the Farey graph. Let $B_{0} \subset V_{0}$ be the region between the convex torus with slope $s$ and $T_{0}$; this is a basic slice with some sign. We know that $V_{-\infty}$ is a solid torus neighborhood of a Legendrian knot. Inside $V_{-\infty}$ we have two solid tori that are neighborhoods $N_\pm$ of the $\pm$ stabilization of the Legendrian knot; $B_{-\infty}^{\pm} = V_{-\infty}\setminus N_\pm$ is a basic slice with sign $\pm$.  
 By choosing the appropriate $B_{-\infty}^{\pm}$ so that the sign agrees with the sign of  $B_{0}$,  we find that $T$ is contained in $B = B_{0} \cup \left(R_{I} \cup B_{-\infty}^{\pm}\right)$,  a universally tight union of basic slices. 
\end{proof}

\begin{proof}[Proof of Proposition~\ref{maxntb}] We first argue that, when $n \geq 3$, each component of a max-$tb$, Legendrian $(n, -nq)$-torus link  has $tb = -q$.
From Lemma~\ref{tb-observation}, we know that if one component of a Legendrian $(n, -nq)$-torus link
has $tb=-q+t$, for $t \geq 1$, then all other components have $tb \leq -q -t$.  Thus the sum of the $tb$ invariants of the components is  at most $-nq - (n-2)t < -nq$, since $n \geq 3$.
So, by Proposition~\ref{maxtb}, in order for a Legendrian $(n, -nq)$-torus link to have the max $tb$, all the components must have $tb=-q$.

There are $q$ Legendrian unknots with $tb = -q$.  By taking the $n$-copy of each of these,  we get $q$ distinct Legendrian
$(n, -nq)$-torus links with max $tb$. We must now show that if $L$ is any Legendrian $(n,-nq)$-torus link with
max $tb$, then $L$ is isotopic to one of these $n$-copies.  As argued in the above paragraph, each component of $L$
must have $tb=-q$. So if $T$ is a standardly embedded torus on which $L$ sits then the twisting of
each component of $L$ with respect to $T$ is 0. Thus we may make $T$ convex and standard relative to $L$; vanishing twist tells us that 
$L$ will be a subset of the Legendrian divides on $T$.  By Lemma~\ref{integral-union-basic}, $T$ is contained in the interior of a universally tight union of two basic slices.  This  concludes the proof since
 by Remark~\ref{prelagmodel-extended} {and Lemma~\ref{prelagmodel} },  $L$ can be taken to be leaves in a pre-Lagrangian torus, and thus, by Lemma~\ref{ncopyalt}, $L$ is the $n$-copy of one of the leaves, 
  which is a Legendrian unknot with $tb=-q$. 
\end{proof}

In contrast, when $n=2$, not all max-$tb$ Legendrian $2(1,-q)$-torus links are obtained as $2$-copies.  
For example, Figure \ref{fig-2maxtb} shows numerous examples of Legendrian $2(1,-3)$-torus
links with maximal $tb$.   All elements of the bottom row 
are obtained as doubles of an unknot with $tb = -3$.  However elements of the second and first rows have components with different $tb$ invariants, and thus cannot be doubles.  These are obtained by  introducing ``Legendrian twists" into $2$-copies of unknots with $tb = -2$, and $tb=-1$, respectively.
In general,  when $q \geq 2$, one can construct Legendrian
$n(1,-q)$-torus links by introducing ``Legendrian twists" into $n$-copies of  an unknot with $-q+1 \leq tb \leq -1$.
{Although in this section, we are only interested in twists of Legendrian unknots, in Section~\ref{sec:cable} we will consider twists of more general Legendrian knots, and so we give the general definition here.}

\begin{definition} \label{t-twist} Given a Legendrian knot $\leg$ and $t \in \mathbb Z^{+}$,  the \dfn{$t$-twisted $n$-copy of $\leg$}, denoted $T^{t}(n\leg)$ is constructed as follows.  Consider a standard
neighborhood of $\leg$; there will be two dividing curves of slope $tb(\leg)$, and we can assume that the ruling curves have slope $tb(\leg) - t$.  Then $T^{t}(n\leg)$ is the union of $\leg$ and $(n-1)$ ruling curves. 
\end{definition} 

\begin{remark} The $0$-twisted $2$-copy is merely the $2$-copy; however, in the spirit of Definition~\ref{t-twist}, we could also define the $0$-twisted $2$-copy to be the union of $\leg$ and a Legendrian divide on a standard neighborhood of $\leg$.
\end{remark}

\begin{lemma}\label{n-copy-tb-r} If $tb(\leg) = \beta + t$ and $r(\leg) = \rho$, then all components of $T^{t}(n\leg)$ will have $r = \rho$; one component will have $tb = \beta + t$ and the remaining $n-1$ components
will have $tb = \beta - t$.
\end{lemma}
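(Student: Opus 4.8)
The plan is to handle the two kinds of components of $T^t(n\leg)$ separately. By Definition~\ref{t-twist}, one component of $T^t(n\leg)$ is $\leg$ itself, which has $tb=\beta+t$ and $r=\rho$ by hypothesis, so there is nothing to check for it; the work is all in the other $n-1$ components. Each of these is a ruling curve of slope $tb(\leg)-t=\beta$ on the convex boundary torus $T=\partial N$ of a standard neighborhood $N$ of $\leg$, put in standard form (Giroux flexibility \cite{Giroux91}) so that $T$ has two dividing curves of slope $tb(\leg)$ and rulings of slope $tb(\leg)-t$; call such a ruling curve $\Lambda'$. These ruling curves are all Legendrian isotopic to one another, so it suffices to compute $tb(\Lambda')$ and $r(\Lambda')$ once, and the computation is the standard twisting/rotation-number bookkeeping of Sections~2 and~4 of \cite{EtnyreHonda01b}.

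First the Thurston--Bennequin number. Writing $\mu$ for the meridian of $N$ and $\lambda$ for the Seifert longitude of the knot type of $\leg$, the two dividing curves of $T$ have homology class $\lambda+tb(\leg)\mu$, while $\Lambda'$ has class $\lambda+(tb(\leg)-t)\mu$; hence $\Lambda'$ meets each dividing curve in $t$ points and the whole dividing set in $2t$ points, so $tw(\Lambda',T)=-\tfrac12\cdot 2t=-t$ relative to the framing induced by $T$. Since the $T$-framing exceeds the Seifert framing of the knot type of $\leg$ by the meridional coefficient $tb(\leg)-t$, we obtain
\[
tb(\Lambda')=tw(\Lambda',T)+\big(tb(\leg)-t\big)=-t+\big(tb(\leg)-t\big)=tb(\leg)-2t=\beta-t .
\]

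Now the rotation number, for which I would use a convex annulus. The Legendrian push-off $\leg'$ of $\leg$ satisfies $tb(\leg')=tb(\leg)$ and $r(\leg')=\rho$, and $\Lambda'$ and $\leg'$ are smoothly isotopic inside the solid torus $N$ (both are longitudes of $N$), so they cobound an embedded annulus $A'\subset N$. Perturb $A'$ to be convex with Legendrian boundary; using tightness of $\xi|_N$ one arranges that the dividing set of $A'$ has no closed components, and the boundary-parallel arcs that remain occur in a balanced $t$-positive, $t$-negative pattern (equivalently, $\Lambda'$ arises from $\leg'$ by $t$ positive and $t$ negative stabilizations). Then $\chi(A'_+)=\chi(A'_-)$, and the relative Euler-class formula
\[
r(\Lambda')-r(\leg')=\langle e(\xi),A'\rangle=\chi(A'_+)-\chi(A'_-)=0
\]
gives $r(\Lambda')=\rho$. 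The one step requiring genuine care is exactly this control of the dividing set of $A'$ — ruling out closed dividing curves and checking the positive/negative balance of the boundary-parallel arcs, which is where the model structure of the standard neighborhood is used; everything else is the arithmetic above, carried out as in \cite{EtnyreHonda01b}. Since all $n-1$ ruling-curve components are Legendrian isotopic, they each carry $(tb,r)=(\beta-t,\rho)$, which finishes the proof.
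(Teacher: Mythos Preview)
Your Thurston--Bennequin computation is correct and essentially identical to the paper's. The rotation-number argument, however, has a genuine gap.

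You assert that the dividing set of the convex annulus $A'$ between $\leg'$ and $\Lambda'$ consists of boundary-parallel arcs that are ``balanced $t$-positive, $t$-negative'' (equivalently, that $\Lambda'$ is obtained from $\leg'$ by $t$ positive and $t$ negative stabilizations). But this balance is precisely the statement $r(\Lambda')=r(\leg')$ that you are trying to prove; nothing you have written forces it. Tightness of $\xi|_N$ only rules out homotopically trivial closed dividing curves, not core-parallel ones, and even granting no closed curves, the signs of the boundary-parallel half-disks are not determined by the intersection count alone. You flag this as ``the one step requiring genuine care'' and point to ``the model structure of the standard neighborhood,'' but you never actually carry out that step. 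Without it, the argument is circular. (One could try to push this through via the involution $(x,y,z)\mapsto(x,-y,-z)$ of the standard model, which swaps positive and negative regions, but that requires choosing $A'$ symmetrically and checking the details.)

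The paper sidesteps the annulus entirely. It writes $\Lambda'$ homologically on $T=\partial N$ as $\lambda + (tb(\leg)-t)\mu$, where $\lambda$ is a Legendrian divide (Legendrian isotopic to $\leg$, so $r(\lambda)=\rho$) and $\mu$ is the Legendrian boundary of a convex meridional disk. Since $\partial N$ has two dividing curves, the disk has a single dividing arc, so $tb(\mu)=-1$ and hence $r(\mu)=0$. Then the linearity formula for rotation numbers of curves on a convex torus (as in Section~4.2 of \cite{EtnyreHonda01b}) gives $r(\Lambda') = 1\cdot r(\lambda) + (tb(\leg)-t)\cdot r(\mu) = \rho$ immediately. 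This avoids any analysis of dividing curves on an auxiliary annulus.
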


\begin{proof} 
 It suffices to verify these calculations of $tb$ and $r$ in the $t$-twisted $2$-copy, for $t \geq 1$.
Let $\Lambda_1$ be one of the ruling curves of slope $tb(\leg) - t$ on a standard neighborhood of $\leg$.  Then
$tw(\leg_1,\partial N)=-\frac 12 \#(\leg_1 \cap \Gamma_{\partial N})=-t$, so, using Equation~(\ref{tb-tw}),  
$$tb(\leg_1)= -t + (tb(\leg) - t) = tb(\leg) - 2t = \beta - t.$$ 
 
It remains to show that $r(\leg_1)=r(\leg)$.  Observe that topologically $\leg_1 = 1 \lambda + (tb(\leg) -t) \mu$, where $\lambda$ is a Legendrian divide and $\mu$ is the Legendrian boundary of a convex
meridonal disk $D$ for $N$;  $r(\lambda)$ and $r(\mu)$ will determine $r(\leg_1)$.  First observe that  $\lambda$ is isotopic to $\leg$, and thus $r(\lambda) = r(\leg)$.  
Next observe that $\mu$ is an unknot.  {Since $\partial N$ has two dividing curves, we see that $D$ will have a single dividing curve; it follows that 
 $tb(\mu) = -1$,}  and thus $r(\mu) = 0$.  Then 
 arguing as in Section~4.2 of
\cite{EtnyreHonda01b}, we see that $r(\leg_1)=1 r(\lambda) - (tb(\leg) -t) (r(\mu))= r(\lambda) = r(\leg)$, as claimed.
\end{proof}

\begin{remark} \label{t-twist-tb}  From the proofs of Lemmas~\ref{2maxtb} and~\ref{nondestab} below,  we see that the front projection of $T^{t}(n\leg)$ can be obtained as follows.  Start with the $n$-copy of $\leg$.  Then
replace a trivial $n$-stranded tangle with $t$ copies of the twist tangle as shown on the right side of Figure~\ref{fig-nt}. 
For a general $\leg$ with $tb(\leg) = \beta$, the $n$-copy of $\leg$ will
be the slope $\beta$ cable of $\leg$, and  $T^{t}(n\leg)$ will be the slope $\beta-t$ cable of $\leg$. 
\begin{figure}[ht]
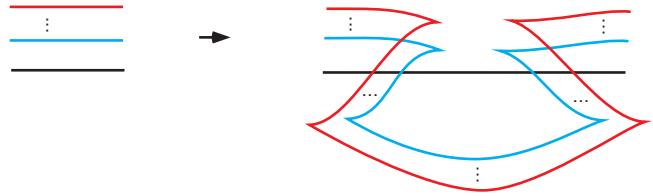

\small
\begin{overpic} {fig/n_twist}
\end{overpic}
        \caption{The Legendrian twist operation for $n$ strands.}                      
        \label{fig-nt}
\end{figure}
  \end{remark}

In fact, all Legendrian $(2, -2q)$-torus links with max $tb$   can be obtained  as the $2$-copy of 
an unknot with $tb = -q$ or as  Legendrian twists of the 2-copy of a Legendrian unknot with Thurston-Bennequin 
invariant in $\{ -q+1, \dots, -1\}$.

\begin{lemma}\label{2maxtb}
For the oriented topological $(2, -2q)$-torus link, there are precisely $q(q+1)/2$
unordered Legendrian realizations with maximal Thurston-Bennequin invariant. For every Legendrian
unknot $\leg_1$ with Thurston-Bennequin invariant in $\{ -1, \dots, -q\}$, there will be a unique
Legendrian $(2, -2q)$-torus link with maximal \tb invariant whose other component is a Legendrian unknot $\leg_2$ with $tb(\leg_2) =
-2q - tb(\leg_1)$ and $r(\leg_2)=r(\leg_1)$. 
\end{lemma}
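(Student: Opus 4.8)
The plan is to first pin down the admissible $(tb,r)$-invariants of the two components, then realize every admissible configuration as a twisted $2$-copy, and finally prove uniqueness by splitting on the $tb$-splitting: in the balanced case one reuses the pre-Lagrangian argument of Proposition~\ref{maxntb}, and in the unbalanced case one passes to the complement of a standard neighborhood of the ``thicker'' component. First I would record the possibilities: by Proposition~\ref{maxtb}(2) a maximal-$tb$ Legendrian $(2,-2q)$-torus link $L=\Lambda_1\cup\Lambda_2$ satisfies $tb(\Lambda_1)+tb(\Lambda_2)=-2q$, and since both components are topological unknots we have $tb(\Lambda_i)\le -1$; hence, after relabeling, $tb(\Lambda_1)=-q+t$ and $tb(\Lambda_2)=-q-t$ for a single integer $t\in\{0,\dots,q-1\}$. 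Equivalently $tb(\Lambda_1)\in\{-1,\dots,-q\}$ and $tb(\Lambda_2)=-2q-tb(\Lambda_1)$, as asserted.

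For existence and the count, fix $t\in\{0,\dots,q-1\}$, a Legendrian unknot $\Lambda_1$ with $tb(\Lambda_1)=-q+t$, and any one of its $q-t$ rotation numbers $\rho$, and form the $t$-twisted $2$-copy $T^{t}(2\Lambda_1)$ of Definition~\ref{t-twist} (for $t=0$ this is the ordinary $2$-copy). Since $\Lambda_1$ is an unknot this link lies on the boundary of a standard neighborhood of $\Lambda_1$ together with the core, so it is a $(2,-2q)$-torus link (the slope-$(-q)$ cable picture of Remark~\ref{t-twist-tb}), and by Lemma~\ref{n-copy-tb-r} its components carry invariants $(-q+t,\rho)$ and $(-q-t,\rho)$; in particular it has maximal $tb$ and realizes exactly the configuration named in the statement. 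No two of these are Legendrian isotopic, since the unordered multiset of $(tb,r)$-pairs of the components is an invariant, so this produces $\sum_{t=0}^{q-1}(q-t)=q+(q-1)+\dots+1=q(q+1)/2$ distinct links. It then remains only to show that every maximal-$tb$ Legendrian $(2,-2q)$-torus link is Legendrian isotopic to one of these, and in particular that $r(\Lambda_1)=r(\Lambda_2)$.

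For uniqueness I would split on $t$. When $t=0$, both components have $tb=-q$ and hence zero twisting relative to any standardly embedded torus $T$ carrying the link, so $L$ lies on such a $T$ as a union of Legendrian divides; by Lemma~\ref{integral-union-basic} this $T$ sits in a universally tight union of two basic slices, so by Remark~\ref{prelagmodel-extended} and Lemma~\ref{ncopyalt} the link is the $2$-copy of one of its leaves, a $tb=-q$ unknot — exactly as in the proof of Proposition~\ref{maxntb} — whence it is determined by the common rotation number. When $t\ge 1$, take a standard neighborhood $N_1$ of the component $\Lambda_1$ with $tb=-q+t$, so $\partial N_1$ is convex with two dividing curves of slope $-q+t$ and $\Lambda_2\subset W:=S^3\setminus\operatorname{int}N_1$, a solid torus; using the topology of the $(2,-2q)$-torus link together with $lk(\Lambda_1,\Lambda_2)=-q$ (cf.\ Remark~\ref{rem:orientations}), $\Lambda_2$ is topologically a slope-$(-q)$ curve, and a direct count of geometric intersections with the dividing set shows a slope-$(-q)$ ruling curve on $\partial N_1$ has $tb=-q-t$, matching $tb(\Lambda_2)$. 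I would then run the convex-surface argument of \cite{EtnyreHonda01b} — a convex annulus from $\Lambda_2$ to such a ruling curve, together with the classification of tight contact structures on solid tori (Theorem~\ref{thm:solid-tori}, \cite{Honda00a}) and the Imbalance Principle, exactly as in the proofs of Lemmas~\ref{negmaxtb} and \ref{maxntb} — to isotope $\Lambda_2$ onto $\partial N_1$ as a slope-$(-q)$ ruling curve. Then $L=\Lambda_1\cup\Lambda_2$ is by definition $T^{t}(2\Lambda_1)$, so Lemma~\ref{n-copy-tb-r} gives $r(\Lambda_2)=r(\Lambda_1)$ and, since oriented Legendrian unknots are classified by $tb$ and $r$ \cite{EliashbergFraser98}, $L$ is determined by $t$ and $r(\Lambda_1)$.

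The hard part is the last step for $t\ge 1$: showing that $\Lambda_2$, with its prescribed value $tb=-q-t$, must actually lie on $\partial N_1$ as a ruling curve rather than being ``stuck'' on some parallel convex torus of another slope. Because the unknot is not uniformly thick, the contact structure on $W$ need not be universally tight and one cannot simply invoke thickening of $N_1$; one has to argue directly, via the Imbalance Principle applied to annuli between $\Lambda_2$ and boundary-parallel convex tori, that the maximality of $tb(\Lambda_2)$ within its topological type in $W$ forces it onto $\partial N_1$, and that the resulting configuration is independent of the choices made. The remaining ingredients — the $tb$ and rotation-number bookkeeping and the Farey-graph slope conventions — are routine, having already been carried out for Lemmas~\ref{n-copy-tb-r} and \ref{maxntb}.
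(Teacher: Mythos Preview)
Your overall architecture matches the paper's: the same existence via $t$-twisted $2$-copies, the same case split on $t$, and for $t\ge 1$ the same plan of passing to a standard neighborhood $N_1$ of $\Lambda_1$ and forcing $\Lambda_2$ to be a slope-$(-q)$ ruling curve on $\partial N_1$. For $t=0$ your argument via Lemma~\ref{integral-union-basic} and Lemma~\ref{ncopyalt} is essentially the paper's (the paper phrases it slightly differently, first moving $\Lambda_2'$ to $\partial N_1$ and then invoking Lemma~\ref{prelagmodel}, but it is the same content).

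The one place where you have not quite landed the argument is precisely the step you flag as hard in the $t\ge 1$ case. The Imbalance Principle is not the right tool here: a ruling curve on $\partial N_1$ already has $tb=-q-t$, the same as $\Lambda_2$, so a convex annulus between them need not have any boundary-parallel dividing arcs. The paper instead puts $\Lambda_2'$ on a convex torus $T'$ parallel to $\partial N_1$ in the complement of $N_1$, so its dividing slope $s$ satisfies $s\ge -q+t$, and then does a direct Farey-graph intersection count: if $s>-q+t$, choose $k$ with $-k\ge s\ge -k-1\ge -q+t$ and write $\gamma_s=a\gamma_k+b\gamma_{k+1}$ with $a,b\ge 0$; since a $(1,-q)$-curve meets $\gamma_k$ in $q-k>t$ points and $\gamma_{k+1}$ in $q-k-1\ge t$ points (with equality only when $-k-1=-q+t$), one gets $\#(\Lambda_2'\cap\Gamma_{T'})>2t$, contradicting $tb(\Lambda_2')=-q-t$. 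Hence $s=-q+t$, and the same $tb$ count forces exactly two dividing curves; then the region between $T'$ and $\partial N_1$ is $[0,1]$-invariant and $T'$ isotopes to $\partial N_1$. Your concern about the unknot not being uniformly thick is a red herring: no thickening of $N_1$ is needed, only the observation that any boundary-parallel convex torus in $S^3\setminus N_1$ has dividing slope $\ge -q+t$, together with this intersection-number bound.
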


\begin{proof}  
 { We have already seen that the $2$-copy of a Legendrian unknot with $tb = -q$ and the $t$-twisted $2$-copy of a Legendrian unknot with $tb = -q + t$, $t \geq 1$, are
max-$tb$ representatives of $(2,-2q)$-torus links; in these representatives, all components have equal rotation numbers. }

To prove the uniqueness statement, suppose $L^\prime = \leg'_1 \cup \leg_2^\prime$ is any max-$tb$ Legendrian representative of the $(2, -2q)$-torus link.  Since we know that 
$tb(\leg'_1) + tb(\leg_2') = -2q$, we know that $tb(\leg'_1) = -q + t$, for some $t \geq 0$.  Thus there is a unique $L=\leg_1 \cup \leg_2$, a $2$-copy or a twisted $2$-copy of a Legendrian unknot with max $tb$ as constructed above,
such that $tb(\leg'_1) = tb(\leg_1)$ and $r(\leg'_1) = r(\leg)$.  Since the unknot is a Legendrian simple knot, we can Legendrian isotop $\leg'_1$ to $\leg_1$, and hence we can assume that $L'=\leg_1 \cup \leg_2'$.
It remains to show that $L$ and $L^\prime$ are Legendrian
isotopic. We give separate uniqueness proofs in the cases of
$t =0$ and $t > 0$. 

When $t > 0$, as in Definition~\ref{t-twist}, we let $N_1$ be a standard neighborhood of
$\leg_1$: $\partial N_1$ is convex with two dividing curves of slope ${-q+t}$ and ruling
curves of slope $-q$. Let $T^\prime$ be a torus, parallel to $\partial N_1$, on which $\leg_2'$
sits. Since $tw(\leg_2', T')=- t <0$, we can make $T^\prime$ convex. Since we can assume
$T^\prime$ lies in the complement of $N_1$, we know the dividing slope of
$T^\prime$ is greater than or equal to ${-q+t}$. If the dividing slope $s$  is greater than ${-q+t}$, then we can argue\footnote{To see that $\#(\leg_2^\prime \cap \Gamma_{T^\prime}) > 2t$, 
choose an integer $k$ such that
$0 \geq -k \geq s\geq -k-1 \geq -q+t$.  
If $\gamma_k$, $\gamma_{k+1}$ are
simple closed curves in $T^2$ representing the homology class $(1,-k)$ and $(1, -(k+1))$
respectively, then 
$\leg_2^\prime \cdot \gamma_k=-k+q >  t$ and $\leg_2^\prime \cdot \gamma_{k+1} = -(k+1) + q \geq t$ 
with equality only when $-k-1 =-q+t$. Let $\gamma$ be a simple
closed curve on $T^2$ representing the homology class given by the slope $s$. 
We may find  non-negative integers $a,b$ such that
$\gamma=a\gamma_k+b\gamma_{k+1}$.  Notice that if $a = 0$, then $b = 1$ and $k+1 < q-t$ since, by assumption,
$s > {-q+t}$.  One may now easily see that  $\leg_2^\prime \cdot \gamma> t$, proving
$\#(\leg_2^\prime \cap \Gamma_{T^\prime}) > 2t$.},
$\#(\leg_2^\prime \cap \Gamma_{T^\prime}) > 2t$, and thus
$tw(\leg_2^\prime,T^\prime) < -t$, a contradiction. 
Thus the dividing slope of $T^\prime$ is ${-q + t}$, and  
$\#(\leg_2^\prime \cap \Gamma_{T^\prime}) = 2 \ell t$ where $2\ell$ is the number of dividing curves.
Then since $tb(\leg_2^\prime) = -q - \ell t$ must equal $-q - t$, we know that 
 $T^\prime$ has exactly two dividing curves of slope ${-q+t}$.
Moreover $\leg_2^\prime$ minimally intersects the dividing set and hence can be made one of the ruling
curves on $T^\prime$. Now $T^\prime$ and $\partial N_1$ cobound a $T^2\times [0,1]$. Since the
contact structure on here is minimally twisting and the dividing slope on $T^2\times\{0\}$ and
$T^2\times\{1\}$ are the same there is a product structure on $T^2\times[0,1]$ such that the contact
structure is $[0,1]$-invariant.
Thus there is a contact isotopy that takes $T^\prime$ to $\partial N_1$, and so we can
assume that $\leg_2^\prime$ is a ruling curve on $\partial N_1$. Since any two ruling curves on
$\partial N_1$ are Legendrian isotopic, we see that $L$ is Legendrian isotopic to
$L'$.
 
A similar proof will work to show uniqueness when $tb(\leg_1) = -q$ except now $\leg_2'$ is a Legendrian
divide on $T'$.  If there are only two Legendrian divides on $T'$ then we can proceed as above:
there will be a $[0,1]$-invariant neighborhood $T^2 \times [0,1]$ between $T^\prime$ and $\partial N_1$, and
thus we can assume $\leg_2^\prime$ is also a Legendrian divide of $\partial N_1$.  
Then as follows from Lemma~\ref{prelagmodel}, there exists a
pre-Lagrangian torus containing all the Legendrian divides, and so in particular $\leg_2$, $\leg_2^\prime$, are among its leaves. 
If there are more than two Legendrian divides on $T'$, then observe that we can find another standard neighborhood $N_1'$ of $\leg_1$
such that $T'$ is inside $N_1'$. Thus working in a standard model of a Legendrian knot it is easy to
see that we can reduce the number of dividing curves on $T'$ without moving $\leg_2'$. This completes
the uniqueness statements for the unordered Legendrian $(2,-2q)$-torus links with maximal
Thurston-Bennequin invariant. \end{proof}

Now that we understand the Legendrian $(n, -nq)$-torus links with max $tb$, it
is natural to ask if all Legendrian $(n, -nq)$ will destabilize to one with max $tb$.  
We will see that this is true if $n =2$, however,  this is not true if $n \geq 3$; see Figure~\ref{fig-non_destab}.
{
\begin{lemma}\label{non-destab-twists} For $n \geq 3$ and $t \geq 1$, the $t$-twisted $n$-copies of a Legendrian unknot with $tb = -q + t$ are
Legendrian representatives of the $(n,-nq)$-torus link that 
do not have maximal \tb invariant yet do not destabilize.
\end{lemma}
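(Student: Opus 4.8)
The plan is to prove the two assertions separately: that $L := T^{t}(nU_{-q+t}^{r})$ fails to have maximal \tb invariant, and that no component of $L$ can be destabilized.

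For the first assertion, I would record the classical invariants of the components using Lemma~\ref{n-copy-tb-r}, applied with $tb(\leg) = -q+t$ (so $\beta = -q$): the ``central'' component $U_{-q+t}^{r}$ has $tb = -q+t$, each of the remaining $n-1$ ruling-curve components has $tb = -q-t$, and every component has rotation number $r$. (That $L$ is a Legendrian $(n,-nq)$-torus link at all follows from Remark~\ref{t-twist-tb}: with $\leg = U_{-q+t}^{r}$ an unknot, $T^{t}(n\leg)$ is the slope $-q$ cable of the unknot, namely the $(n,-nq)$-torus link.) Summing,
\[
tb(\Lambda_{1}) + \dots + tb(\Lambda_{n}) = (-q+t) + (n-1)(-q-t) = -nq - (n-2)t,
\]
which is strictly less than $-nq$ because $n \geq 3$ and $t \geq 1$. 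By Proposition~\ref{maxtb}(2) the maximum value of $tb(\Lambda_{1}) + \dots + tb(\Lambda_{n})$ over all Legendrian $(n,-nq)$-torus links is $-nq$, so $L$ does not have maximal \tb invariant.

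For non-destabilizability, suppose toward a contradiction that some component $\Lambda$ of $L$ destabilizes: there is a Legendrian $(n,-nq)$-torus link $L'$, agreeing with $L$ away from $\Lambda$, such that $L$ is obtained from $L'$ by a single stabilization of the corresponding component $\tilde\Lambda$ of $L'$, supported in a ball disjoint from the other components; in particular $tb(\tilde\Lambda) = tb(\Lambda) + 1$. Now choose a second component $\Lambda''$ of $L$ with $tb(\Lambda) + tb(\Lambda'') = -2q$: if $\Lambda$ is the central component, let $\Lambda''$ be any ruling-curve component, and if $\Lambda$ is a ruling-curve component, let $\Lambda''$ be the central component --- in either case the sum is $(-q+t) + (-q-t) = -2q$. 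Any two-component sublink of a $(n,-nq)$-torus link is a $(2,-2q)$-torus link (with consistent orientations, cf.\ Remark~\ref{rem:orientations}); since the stabilization was supported away from $\Lambda''$, the sublink $\{\tilde\Lambda, \Lambda''\}$ of $L'$ is again a $(2,-2q)$-torus link, and
\[
tb(\tilde\Lambda) + tb(\Lambda'') = \bigl(tb(\Lambda) + 1\bigr) + tb(\Lambda'') = -2q + 1 > -2q,
\]
contradicting Proposition~\ref{maxtb}(2) with $n = 2$ and $p = 1$.

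The hypothesis $n \geq 3$ enters only in the first paragraph: for $n = 2$ the link \emph{does} attain maximal \tb invariant (see Lemma~\ref{2maxtb}). The step in the second paragraph that merits care is the passage to a two-component sublink --- namely, that a destabilization of a component of $L$ induces a destabilization of every two-component sublink containing that component, which holds because the operation is purely local and does not touch the other strands. Beyond this bookkeeping I do not anticipate a serious obstacle.
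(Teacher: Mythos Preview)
Your proposal is correct and follows essentially the same argument as the paper: compute the sum of component Thurston--Bennequin invariants to see it falls strictly below $-nq$ when $n\geq 3$, and rule out destabilization by pairing the destabilizable component with a suitably chosen partner to produce a $(2,-2q)$-sublink violating the $-2q$ bound of Proposition~\ref{maxtb}. The only cosmetic point is that Proposition~\ref{maxtb}(2) by itself gives only the upper bound $-nq$; to conclude $L$ is non-maximal you implicitly use that the $n$-copy of a $tb=-q$ unknot realizes this bound (Lemma~\ref{n-copy-maxtb}), which the paper also leaves implicit.
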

}

\begin{proof}   Let $L$ be a $t$-twisted $n$-copy of a Legendrian unknot with $tb = -q + t$, where $t \geq 1$.  Then, since $n \geq 3$,
$tb(\Lambda_1) + \dots + tb(\Lambda_n) = (-q+t) + (n-1)(-q -t) < -nq$, showing the non-maximality of $tb$. If $L$ had a destabilization, then using the component
that can be destabilized together with another appropriately chosen component of $L$,  we could
construct a $(2,-2q)$-torus link with $tb(\Lambda_1) + tb(\Lambda_2) > -2q$, a contradiction to Proposition~\ref{maxtb}.
\end{proof}

\begin{figure}[ht]
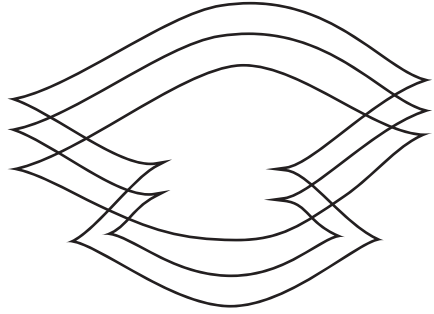

\small
\begin{overpic} 
{fig/non_destab}
\end{overpic}
        \caption{A Legendrian $(3,-6)$-torus link with non-maximal \tb invariant that cannot be
        destabilized.}                      
        \label{fig-non_destab}
\end{figure}

Our next lemma basically says that  as long as the \tb invariants of a Legendrian $(n, -nq)$-torus link do not 
agree with those of a  $t$-twisted $n$-copy of a Legendrian unknot with $tb = -q + t$, then we can destabilize.  

\begin{lemma}\label{destab} 
For $n \geq 2$,
 consider a Legendrian $(n, -nq)$-torus link $\leg$ with components
$$\leg_1, \leg_2, \dots, \leg_n$$
satisfying $tb(\leg_1) \geq tb(\leg_2) \geq \dots \geq tb(\leg_n)$.
\begin{enumerate}
\item If $tb(\leg_1) \leq -q$ and $tb(\leg_1) + \dots + tb(\leg_n) < -qn$, {then $\leg$  has a destabilization.} 
\item If $tb(\leg_1) = -q + t$, for  $1 \leq t \leq q-1$, and $tb(\leg_2)$, \dots, $tb(\leg_n)$ do
not all equal $-q - t$, then $\leg$ has a destabilization.  
\end{enumerate}
\end{lemma}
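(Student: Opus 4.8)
The plan is to produce, in either case, a bypass for one component of $L$ and use it to destabilize that component, following the template of the proofs of Lemmas~\ref{posdestab} and~\ref{negtorusstab} (the Relative Convex Realization Principle together with the Imbalance Principle), while paying attention to the fact that components of different $tb$ cannot all be placed on a single convex torus. The computation to keep in mind is that a component $\leg_i$, which is a $(1,-q)$-curve, sitting on a convex torus parallel to the standard torus $T$ has $\tw(\leg_i,T)=tb(\leg_i)+q$: it is a Legendrian divide when $tb(\leg_i)=-q$, a genuine ruling curve when $tb(\leg_i)<-q$, and it cannot lie on a convex torus at all when $tb(\leg_i)>-q$ -- this last is precisely $\leg_1$ in part~(2). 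As in the earlier lemmas, I would first destabilize components as needed so that each one meets minimally the dividing set of the torus it lies on.

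\smallskip\noindent\textbf{Part (1).} Here every $tb(\leg_i)\le -q$, and since $tb(\leg_1)+\dots+tb(\leg_n)<-qn$ they are not all equal to $-q$, so some component has $tb<-q$. I would realize $L$ in a neighborhood of a convex torus $T$ of dividing slope $-q$, with the $tb=-q$ components appearing as Legendrian divides of $T$ (or a parallel copy) and the components of more negative $tb$ lying on boundary-parallel convex tori nested inside, a component with $tb=-q-\tau$ lying on a torus of dividing slope $-q-\tau$ with two dividing curves. Let $\leg_j$ be a component of largest $tb$ among those with $tb<-q$, lying on $T_j$ of slope $-q-\tau_j$ (with any ties also on $T_j$). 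Since $-q-\tau_j<-q$, I can slip a convex torus $T'$ with two dividing curves of slope $-q$ into the thickened torus between $T_j$ and $T$ so that $T'$ is disjoint from $L$: the $tb=-q$ components stay on the $T$ side of $T'$, and every other component with $tb<-q$ is nested at least as deep as $T_j$. Then a convex annulus from $\leg_j\subset T_j$ to a Legendrian divide of $T'$, with interior in the region between them, is disjoint from the rest of $L$; since $\#(\leg_j\cap\Gamma_{T_j})=2\tau_j>0$ while the divide of $T'$ misses $\Gamma_{T'}$, the Imbalance Principle yields a bypass on the $\leg_j$ side, and attaching it destabilizes $\leg_j$ in the complement of the other components, so $L$ is a stabilization of the resulting link.

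\smallskip\noindent\textbf{Part (2).} Now $tb(\leg_1)=-q+t>-q$, so $\leg_1$ has positive twisting with respect to any boundary-parallel torus and is the core of a solid torus, which I would take to be a standard neighborhood $N_1$ (Definition~\ref{std-nbhd}) with $\partial N_1$ convex with two dividing curves of slope $-q+t$. The rest of $L$ lies in the solid torus $V=S^3\setminus N_1$, whose boundary-parallel convex tori realize all dividing slopes in $(-\infty,-q+t)$ and hence include slope $-q$ since $t\ge 1$; by Lemma~\ref{tb-observation} each $\leg_i$ with $i\ge2$ has $tb(\leg_i)\le -q-t$. If every such component has $tb(\leg_i)=-q-t$ there is nothing to prove; otherwise fix $\leg_i$ of largest $tb$ among those with $tb(\leg_i)<-q-t$, write $tb(\leg_i)=-q-\tau_i$ with $\tau_i>t$, and realize $\leg_i$ (with its ties) on a convex torus $T_i\subset V$ of dividing slope $-q-\tau_i$ with two dividing curves. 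Since $-q-\tau_i<-q<-q+t$, I can put all $tb=-q-t$ components on $\partial N_1$, all deeper components on tori of slope $-q-\tau_k$ with $\tau_k\ge\tau_i$, and slip a convex torus $T'\subset V$ of dividing slope $-q$ between $T_i$ and $\partial N_1$ disjoint from $L$. The Imbalance Principle applied to an annulus from $\leg_i$ to a Legendrian divide of $T'$ then destabilizes $\leg_i$ exactly as in Part~(1).

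The hard part will be the geometric organization in the two middle paragraphs: arranging the nested convex tori carrying $L$ so that the auxiliary torus $T'$ of dividing slope exactly $-q$ can be inserted \emph{disjoint from all of $L$} -- which is why $\leg_j$ (respectively $\leg_i$) must be chosen of largest Thurston--Bennequin invariant among the offending components, and why the hypotheses are sharp: in (1) the strict inequality $\sum tb(\leg_i)<-qn$ guarantees an offending component with $tb<-q$, while in (2) the assumption that not all of $tb(\leg_2),\dots,tb(\leg_n)$ equal $-q-t$, combined with Lemma~\ref{tb-observation}, guarantees one with $tb<-q-t$ lying deep inside $V$. Checking that the regions of $S^3$ cut out by these tori really are the thickened and solid tori to which the classification of Theorem~\ref{thm:solid-tori} and Section~\ref{ssec:solidtori} applies is the remaining bookkeeping; once $T'$ is in position the destabilization is immediate from the Imbalance Principle, just as in Lemma~\ref{negtorusstab}.
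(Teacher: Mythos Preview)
Your overall plan is right in spirit, but Part~(2) contains a genuine error that makes the argument fail. You assert that in $V=S^3\setminus N_1$ the boundary-parallel convex tori realize all dividing slopes in $(-\infty,-q+t)$, and hence include slope $-q$ and slopes $-q-\tau_i$. This is backwards. The meridian of $V$ is the Seifert longitude of the unknot $\leg_1$, which has slope $0$; tightness of $S^3$ then forces boundary-parallel convex tori in $V$ to have dividing slope in the interval $[-q+t,0)$, not in $(-\infty,-q+t)$. (The slopes below $-q+t$ live \emph{inside} $N_1$, where the meridian has slope $\infty$.) Consequently there is no torus $T_i\subset V$ of slope $-q-\tau_i$ and no torus $T'\subset V$ of slope $-q$, so your insertion of $T'$ and the subsequent Imbalance argument cannot be carried out. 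The paper handles this case quite differently: it places all of $\leg_2,\dots,\leg_n$ on a \emph{single} convex torus $T\subset V$, whose slope $s$ is necessarily $\ge -q+t$, and runs the Imbalance Principle against $\partial N_1$ itself (slope $-q+t$, ruling slope $-q$), treating separately the subcases $s=-q+t$ with more than two dividing curves and $s>-q+t$.

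Part~(1) does not suffer from the slope error, but both parts share a second gap: you assert without argument that the components of $L$ can be distributed across a nested family of convex tori, one for each distinct $tb$-value. This is not immediate from the Relative Convex Realization Principle, which only lets you put all of $L$ on \emph{one} convex torus; moving individual components to prescribed deeper tori while fixing the others would itself require bypasses or isotopies that you have not produced. The paper sidesteps this entirely by keeping everything on a single torus $T$: if the dividing slope of $T$ is $-q$, then any component with $tb<-q$ is a $(1,-q)$-curve intersecting the slope-$(-q)$ dividing set non-minimally, so a Whitney disk on $T$ gives a bypass directly; if the slope is not $-q$, one finds a disjoint parallel $T'$ of slope $-q$ on the appropriate side of the Heegaard decomposition and applies the Imbalance Principle to an annulus from $\leg_n$ to a Legendrian divide of $T'$. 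Note that the paper always destabilizes $\leg_n$, the component of \emph{smallest} $tb$, rather than the one of largest $tb$ among the offending components.
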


\begin{proof} 
We will proceed by 
considering the cases where $tb(\leg_1) = -q$, $tb(\leg_1) < -q$, and $tb(\leg_1) >-q$.
Below, $tw(\leg_i)$ will always refer to $tw(\leg_i,T)$, where $T$ is a standardly embedded Heegaard 
torus for $S^3$ on which $\leg_i$ sits as a $(1,-q)$-curve. 

In the case where $tb(\leg_1) = -q$, we have that $tw(\leg_1) = 0$ and $tw(\leg_n) =-k < 0$. Thus the link
$(\leg_1, \dots, \leg_n)$ lies as curves of slope $-q$ on a standardly embedded torus $T$ that can be
perturbed to be convex. Since $tw(\leg_1)=0$, we have $\#(\leg_1 \cap \Gamma_T) = 0$, and thus $\leg_1$ is
parallel to and disjoint from the dividing set $\Gamma_T$. The curve $\leg_n$ also has slope $-q$ and so could be topologically isotoped to be disjoint from $\Gamma_T$. However $\# (\leg_n \cap
\Gamma_T) = 2k > 0$, so there must be Whitney disks that cancels a pair of extraneous intersection points.
An innermost Whitney disk can be used to construct a bypass for $\leg_n$ on $T$ that is disjoint from
$\leg_1, \dots, \leg_{n-1}$.  Thus we may destabilize $\leg_n$.

In the case where $tb(\leg_1) < -q$, $tw(\leg_1), \dots, tw(\leg_n)$ are all negative so we can assume all the components
$\leg_1, \leg_2, \dots, \leg_n$ lie on a convex torus $T$ as slope $-q$ curves. If the dividing slope of $T$ is $-q$ then we can argue
as above to destabilize $\leg_n$ (and $\leg_1, \dots, \leg_{n-1}$), thus we assume that the dividing slope is $s\not= -q$. 
If $\leg_i$ does not intersect $\Gamma_T$ minimally, then there will be Whitney disks as
above and we can find a bypass to destabilize $\leg_i$.  So we can assume that $\leg_1, \dots, \leg_n$
intersect $\Gamma_T$ minimally, and hence we can isotop $T$, relative to $\leg_1\cup \dots \cup \leg_n$, so that
$\leg_1, \dots, \leg_n$  are ruling curves. The torus $T$ splits $S^3$ into two solid tori, one of which has
convex tori parallel to $T$ with any dividing slope in $(-\infty, s]$ and the other containing convex tori
parallel to $T$ with any dividing slope in $[s, 0)$. Thus we can find a standard convex torus $T'$
disjoint from $T$ with dividing slope $-q$. We can now take an annulus $A$ whose interior is
disjoint from $T\cup T'$ and has one boundary component a dividing curve on $T'$ and the other boundary
component being $\leg_n$. This annulus can be made convex. The dividing curves on $A$ will be disjoint from
$A\cap T'$ but non-trivially intersect $\leg_n$. Thus, by the Imbalance Principle, \cite[Proposition 3.11]{EtnyreHonda01b},  there will be boundary parallel dividing curves on $A$ that
can be used to construct bypasses for $\leg_n$ on $A$. Hence we can destabilize $\leg_n$ (without moving
$\leg_1\cup \dots \cup \leg_{n-1}$).

Lastly consider the case where $tb(\leg_1) > -q$; say $tb(\leg_1) = -q + t$, $1 \leq t \leq q-1$. So, in particular,
$tw(\leg_1) > 0$.  Recall that
by Lemma \ref{tb-observation}, we then know that $tw(\leg_i) < 0$, for $i \geq 2$.
Now $\leg_1$ does
not lie on a convex torus as a $-q$ curve. Consider a standard neighborhood of $\leg_1$. This
will be a solid torus $N_1$ with boundary a convex torus with two dividing curves of slope ${-q + t}$; by Giroux's Flexibility Theorem we can assume that the ruling curves are of slope
$-q$. Since $N_1$ can be made arbitrarily small, we can assume  the curves
$\leg_2, \dots, \leg_{n}$ lie on a convex torus $T$ in the complement of $N_1$, and thus the
slope of $\Gamma_T$ is greater than or equal to $-q+t$. As in the previous paragraph, we can
assume that $\leg_i$, $i \geq 2$, and $\Gamma_T$ intersect minimally (or we would already have a destabilization of
$\leg_i$), thus we can assume that $\leg_2, \dots, \leg_n$ are ruling curves on $T$. If the dividing slope is
${-q+t}$ then $\# (\leg_n\cap \Gamma_T)=2lt$, where $2l$ is the number of dividing curves on
$T$. Thus $tb(\leg_n)=-q - lt$. Since we know $tb(\leg_n)<-q-t$ we must have $l>1$. If we take an annulus
$A$ from $\partial N_1$ to $T$ with boundary on the ruling curves (and not $\leg_2\cup \dots \cup \leg_n$), then the
Imbalance Principle says there is a bypass for $T$ on $A$. Attaching this bypass to $T$ will 
reduce the number of dividing curves of $T$ so that $\leg_n$ (also $\leg_2, \dots \leg_{n-1}$) no longer intersects the dividing set minimally and we can
hence find a destabilization of $\leg_n$. We are left to consider the case when the dividing slope $s$ of
$T$ satisfies $s > {-q+t}$.  
An argument as in the proof of Lemma \ref{2maxtb} shows that $\# (\leg_n \cap \Gamma_T)> 2t$.
Given this we can
take an annulus $A$ between $T$ and $\partial N_1$ as above, except this time $A$ will have one
boundary component on $\leg_n$. The Imbalance Principle once again gives a bypass for $\leg_n$ on $A$, and
hence $\leg_n$ destabilizes. 
\end{proof}

\begin{lemma} \label{nondestab}
For $n \geq 3$, there exist Legendrian $(n, -nq)$-torus links with non-maximal \tb invariant that do not destabilize to one with maximal \tb
invariant. All such links have precisely one component $\leg_1$ with \tb invariant greater than $-q$ and
are $T^t(nU_{-q+t}^r)$ where $nU_{-q+t}^r$ is the Legendrian $n$-copy of a Legendrian unknot with
\tb invariant $-q+t$ and rotation number $r$.
\end{lemma}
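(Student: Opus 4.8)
The plan is to prove the two assertions separately: the existence of such links, and the claim that every Legendrian $(n,-nq)$-torus link which is neither maximal nor destabilizable is a $t$-twisted $n$-copy. For existence there is nothing to do beyond citing Lemma~\ref{non-destab-twists}, which says that for every $t\geq 1$ the link $T^t(nU_{-q+t}^r)$ is a Legendrian $(n,-nq)$-torus link with non-maximal $tb$ that does not destabilize; for this to be non-vacuous one of course needs $q\geq 2$, so that there is a $t$ with $1\leq t\leq q-1$.

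For the characterization, let $L=(\leg_1,\dots,\leg_n)$ be non-destabilizable with non-maximal $tb$, ordered so that $tb(\leg_1)\geq\cdots\geq tb(\leg_n)$. First I would pin down the $tb$ invariants. Part~(1) of Lemma~\ref{destab} says that if $tb(\leg_1)\leq -q$ then either $L$ destabilizes or $tb(\leg_1)+\cdots+tb(\leg_n)=-nq$; since $L$ is non-destabilizable and not maximal, neither alternative holds, so $tb(\leg_1)=-q+t$ for some $t\geq 1$. Because $\leg_1$ is an unknot, $tb(\leg_1)\leq -1$, forcing $t\leq q-1$. Now part~(2) of Lemma~\ref{destab}, together with non-destabilizability, forces $tb(\leg_i)=-q-t$ for all $i\geq 2$ (and Lemma~\ref{tb-observation} confirms these values are the only possibilities $\leq -q-t$). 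Hence $L$ has exactly one component with $tb>-q$, namely $\leg_1$, as claimed.

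It remains to identify $L$ up to Legendrian isotopy, and here I would reprise the final case of the proof of Lemma~\ref{destab}. Take $N_1$ a standard neighborhood of $\leg_1$, with $\partial N_1$ convex having two dividing curves of slope $-q+t$ and, by Giroux flexibility, ruling curves of slope $-q$; arrange $\leg_2,\dots,\leg_n$ to lie on a convex torus $T$ in the complement of $N_1$, so that the dividing slope $s$ of $T$ satisfies $s\geq -q+t$. Non-destabilizability forces each $\leg_i$ ($i\geq 2$) to meet $\Gamma_T$ minimally, hence to be a ruling curve; if $s>-q+t$ an Imbalance-Principle bypass (exactly as in Lemma~\ref{2maxtb}) would destabilize a component, and if $s=-q+t$ but $T$ had more than two dividing curves then $tb(\leg_n)=-q-\ell t$ with $\ell>1$, again yielding a destabilization. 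Thus $T$ has precisely two dividing curves of slope $-q+t$, so the region between $T$ and $\partial N_1$ is a minimally twisting $T^2\times[0,1]$ with equal boundary slopes, hence $[0,1]$-invariant; an isotopy carries $\leg_2,\dots,\leg_n$ to ruling curves of slope $-q$ on $\partial N_1$. By Definition~\ref{t-twist} this exhibits $L$ as a $t$-twisted $n$-copy of $\leg_1$, and since $\leg_1$ is determined up to Legendrian isotopy by $tb(\leg_1)=-q+t$ and $r(\leg_1)=:r$ (the unknot being Legendrian simple), we get $L=T^t(nU_{-q+t}^r)$, the rotation numbers of the other components being $r$ by Lemma~\ref{n-copy-tb-r}. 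I expect the main obstacle to be this last step: carefully ruling out the larger-slope and extra-dividing-curve configurations of $T$ via the Imbalance Principle, although this is essentially bookkeeping already carried out in Lemmas~\ref{destab} and~\ref{2maxtb}.
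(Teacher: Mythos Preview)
Your proposal is correct and follows essentially the same approach as the paper's proof: both use Lemma~\ref{destab} to pin down the \tb invariants of a non-destabilizable, non-maximal link, then place $\leg_2,\dots,\leg_n$ on a convex torus $T$ outside a standard neighborhood of $\leg_1$ and argue (as in Lemma~\ref{2maxtb}) that $T$ must have exactly two dividing curves of slope $-q+t$, so that $L$ is the $t$-twisted $n$-copy of $\leg_1$. The only difference is that you spell out explicitly the slope and dividing-curve elimination via the Imbalance Principle, whereas the paper simply writes ``Arguing as in the proof of Lemma~\ref{2maxtb}.''
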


\begin{proof}  Let $\leg$ be a Legendrian $(n, -nq)$-torus link with components 
$tb(\leg_1) \geq \dots \geq tb(\leg_n)$.  If $\leg$ does not have maximal $tb$ and it does not
destabilize to one with maximal $tb$, then by Lemma~\ref{destab}, we know $n \geq 3$,
$tb(\leg_1) =-q+t, 0<t<q$, and $tb(\leg_i)=-q-t$, for $i=2,\ldots, n$.

We can put $\leg'=\leg_2\cup \cdots \cup \leg_n$ on a torus $T$ that bounds a solid torus $N$ containing
$\leg_1$. Since the twisting of the $\leg_i$ with respect to $T$ is less than 0, we can make $T$ convex.
Arguing as in the proof of Lemma~\ref{2maxtb}, we can assume that the slope of the dividing curves on
$T$ is ${-q+t}$ and that there are just two dividing curves. Thus $\leg'$ sits as ruling curves
on the boundary of a standard neighborhood of $\leg_1$, and the isotopy class of $\leg$ is determined by
that of $\leg_1$.
\end{proof}

We now understand the set of links to which all Legendrian $(n, -nq)$-torus link destabilize.
As a last step, we need to understand how these non-destabilizable Legendrian $(n, -nq)$-torus
links are related under stabilization.  Recall we denote the $n$-copy of the unknot with $tb=-q$ and 
rotation number $r$ by $nU_{-q}^r$.
When listing the non-destabilizable Legendrian $(n, -nq)$-torus links we write $L_{-q}^r$
 for $nU_{-q}^r$. The other non-destabilizable links are Legendrian 
twists of $n$-copies of unknots with $tb > -q$: 
$T^t(nU_{-q+t}^r)$, where  $0< t < q$.  For shorter notation, we denote this
by $L_{-q+t}^r$. So the non-destabilizable Legendrian $(n, -nq)$-torus links are:
\[
\begin{aligned}
 &L_{-1}^{0} \\
 L_{-2}^{-1}, &\qquad L_{-2}^{1} \\
 &\hspace{.1in} \vdots \\
L_{-q}^{-q+1}, L_{-q}^{-q+3}, & \dots, L_{-q}^{q-3}, L_{-q}^{q-1} 
\end{aligned}
\]
We always label a component
with largest \tb invariant as $\leg_1$.
We then arbitrarily label the other components $\leg_2, \ldots, \leg_n$. A $\pm$-stabilization on the
$i^\text{th}$ component of a link $L$ is denoted by $S_{\pm,i}(L)$, and simultaneously stabilizing all components is denoted by $S_{\pm, all}(L)$.

\begin{lemma}\label{integer-simple}
Consider the non-destabilizable realizations of the $(n,-nq)$-torus link. We have the following
relations.
\[
 S_{+,all} (L_{-q}^j)=S_{-,all}(L_{-q}^{j+2}).
\]
If $-q< k\leq -1$, then
\[
S_{\pm,1}(L_{k}^j)= S_{\mp,2} \circ \cdots \circ S_{\mp,n}(L_{k-1}^{j\pm 1}).
\]
Thus as soon as the invariants of the components of two non-destabilizable Legendrian
$(n,-nq)$-torus links become the same under stabilization the links will become isotopic.
\end{lemma}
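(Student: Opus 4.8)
The plan is to prove the two stabilization identities by realizing both sides, after the indicated stabilizations, as explicit systems of ruling curves on the boundary of a standard neighbourhood, and then to deduce the ``eventually isotopic'' conclusion by a short combinatorial argument. Throughout I use the concrete models already in place: $L^r_{-q}=nU^r_{-q}$ is the $n$-copy of the unknot $U^r_{-q}$, sitting in a standard neighbourhood $N$ of $U^r_{-q}$ (dividing slope $-q$), and for $0<t<q$, $L^r_{-q+t}=T^{t}(nU^r_{-q+t})$ is $\Lambda_1=U^r_{-q+t}$ together with $n-1$ ruling curves of slope $-q$ on the boundary $\partial N$ of a standard neighbourhood $N$ of $\Lambda_1$ (dividing slope $-q+t$). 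The mechanism is the correspondence recalled in Section~\ref{stab-basic-slice}: a $\pm$-stabilization corresponds to shrinking a standard neighbourhood across a basic slice of sign $\pm$, and, more generally, joining a Legendrian curve $L$ on a convex torus $T$ to a ruling curve $L_s$ on a parallel convex torus $T'$ by an annulus and applying the Imbalance Principle identifies $L_s$ with a stabilization of $L$ by a number of times read off from the twistings, with a sign determined by the sign of the basic slice between $T$ and $T'$ (exactly as in the proof of Lemma~\ref{negstabtoiso}).

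For identity~(1): let $N_+\subset N$ be a standard neighbourhood of $S_+U^j_{-q}=U^{j+1}_{-q-1}$, so $N\setminus N_+$ is a $+$-basic slice. Joining the components of the $n$-copy $L^j_{-q}$ inside $N$ by $n$ annuli to $n$ ruling curves of slope $-q$ on $\partial N_+$, the Imbalance Principle identifies $S_{+,all}(L^j_{-q})$ with that system of $n$ ruling curves of slope $-q$ on the boundary of a standard neighbourhood of $S_+U^j_{-q}$. The same argument applied to $L^{j+2}_{-q}$ with $-$-stabilizations identifies $S_{-,all}(L^{j+2}_{-q})$ with $n$ ruling curves of slope $-q$ on the boundary of a standard neighbourhood of $S_-U^{j+2}_{-q}$. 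Since $S_+U^j_{-q}=S_-U^{j+2}_{-q}$ as Legendrian unknots (the unknot being Legendrian simple), these two standard neighbourhoods agree up to contact isotopy, and any two systems of $n$ parallel ruling curves of a fixed slope on a convex torus are Legendrian isotopic; hence $S_{+,all}(L^j_{-q})=S_{-,all}(L^{j+2}_{-q})$.

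For identity~(2): write $k=-q+t$, $1\le t\le q-1$, and let $N_\pm\subset N$ be a standard neighbourhood of $S_\pm\Lambda_1=U^{j\pm1}_{k-1}$, so $N\setminus N_\pm$ is a $\pm$-basic slice. After $S_{\pm,1}$, the link $L^j_{k}$ becomes $S_\pm\Lambda_1$ together with the unchanged $n-1$ ruling curves of slope $-q$ on $\partial N$, whereas $L^{j\pm1}_{k-1}=T^{t-1}(nU^{j\pm1}_{k-1})$ is $S_\pm\Lambda_1$ together with $n-1$ ruling curves of slope $-q=(k-1)-(t-1)$ on $\partial N_\pm$, and $S_{\mp,2}\circ\cdots\circ S_{\mp,n}$ applies a single $\mp$-stabilization to each of them. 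Thus identity~(2) reduces to showing that a ruling curve of slope $-q$ on $\partial N$ is a single $\mp$-stabilization of a ruling curve of slope $-q$ on $\partial N_\pm$, and that this can be carried out simultaneously on the $n-1$ parallel copies and rel the component $S_\pm\Lambda_1$: the twisting formula and the rotation-number computation from Lemma~\ref{n-copy-tb-r} show the two curves have $tb$ differing by $1$ and $r$ differing by $1$ with the correct sign, and a bypass argument across $N\setminus N_\pm$ upgrades this to a Legendrian isotopy of the whole link. I expect the main obstacle to be exactly this sign bookkeeping: one must track the orientation conventions for basic slices to see that crossing a $\pm$-basic slice in the relevant direction yields precisely one $\mp$-stabilization (and, symmetrically, one $\pm$-stabilization in identity~(1)), which is what produces the opposite signs on the two sides of~(2).

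Finally, identities~(1) and (2) supply enough elementary moves. Iterating identity~(2) gives $S_{+,1}^{t}(L^j_{k})\cong S_{-,2}^{t}\circ\cdots\circ S_{-,n}^{t}(L^{j+t}_{-q})$ (and symmetrically with signs reversed), so after enough $S_{+,1}$-stabilizations every non-destabilizable link becomes a stabilization of an $n$-copy $nU^{r}_{-q}$, while identity~(1) slides the rotation number among these $n$-copies. Since the component $(tb,r)$-invariants of $S_{+,i}^{a_i}S_{-,i}^{b_i}$ applied to $nU^r_{-q}$ are $(-q-a_i-b_i,\ r+a_i-b_i)$, one checks that after normalizing the rotation number via identity~(1) so that $\min_i b_i=0$, the data $r$ and $\{(a_i,b_i)\}$ is recovered from the multiset of component invariants; hence any two stabilized non-destabilizable links with the same multiset of component invariants are Legendrian isotopic. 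Together with the destabilization results of Section~\ref{negunknotedcpts}, this proves the lemma and completes the proof of Theorem~\ref{uoclass}.
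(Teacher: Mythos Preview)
Your approach is essentially the same as the paper's: for identity~(1) both you and the paper realize each side as $n$ ruling curves of slope $-q$ on the boundary of a standard neighbourhood of $U^{j+1}_{-q-1}$ and compare; for identity~(2) the paper simply invokes the proof of Lemma~\ref{destab} (stabilizing $\Lambda_1$ allows destabilizing $\Lambda_2,\ldots,\Lambda_n$, and the rotation numbers force the sign), which is exactly your bypass-across-the-$\pm$-basic-slice argument made explicit.

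One technical point you gloss over in~(1) that the paper handles carefully: when you join a component of the $n$-copy (twisting~$0$) to a ruling curve on $\partial N_+$ (twisting~$-1$) by a convex annulus, the dividing set on that annulus is a single boundary-parallel arc, hence connected, so the Legendrian Realization Principle does not directly produce a bypass. The paper fixes this by first perturbing the inner boundary to meet the dividing set of $T$ twice, which disconnects the dividing set on the annulus and makes the bypass (or a direct ruling-curve isotopy) available. A second point is that after destabilizing the ruling curves you must verify that the resulting link is the original $n$-copy $L^j_{-q}$ rather than merely some link with the correct componentwise invariants; the paper does this by placing the destabilized link on a convex torus of slope $-q$ and invoking Lemma~\ref{prelagmodel} and Lemma~\ref{ncopyalt} to see it is an $n$-copy. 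Both gaps are easily filled along these lines, and your sign bookkeeping in~(2) is correct.
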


\begin{proof} The proof that $S_{+, all} (L_{-q}^j)=S_{-, all}(L_{-q}^{j+2})$ parallels the proof of Lemma~\ref{negstabtoiso}: the strategy is to 
show that both these links can be isotoped so
that they lie as ruling curves on a standard neighborhood of a Legendrian unknot with
$tb = -(q +1)$ and $r = j+1$.    We will  first argue that 
$S_{+, all}(L_{-q}^\ell)$   
 sits as ruling curves on the boundary of a standard neighborhood of a Legendrian unknot with $tb = -(q+1)$ and $r= \ell + 1$ as follows.  The link $L_{-q}^\ell$ lies as a subset of Legendrian divides of slope $-q$ on a convex torus $T$.
Consider the Heegard splitting of $S^3$ with respect to $T$: $S^3 = V_0 \cup_T V_1$.  
Inside $V_0$ there is a solid torus $V_{q+1}^+$ with two dividing curves of slope
$-(q+1)$ which is a standard neighborhood of a Legendrian unknot with $tb = -(q+1)$
and rotation number $\ell + 1$.  Let $T_{q+1}^+$ be the boundary of this solid torus.
By Giroux's Flexibility Theorem, we can assume the ruling slope of $T_{q+1}^+$ is $-q$.
Consider $n$ disjoing convex annuli $A_i$ of slope $-q$ between $T$ and
$T_{q+1}^+$ each having one boundary on a component of $L_{-q}^\ell$, which recall is a Legendrian divide of $T$, and the other
on a ruling curve  of $T_{q+1}^+$.  Let $K_i \subset L_{-q}^\ell \subset T$ and 
$K_i^+ \subset T_{q+1}^+$ denote the boundary components of $A_i$.
The dividing curves of $A_i$ do not intersect
$\partial A_i \cap T = K_i$ but will intersect $\partial A_i \cap T_{q+1}^+ = K_i^+$ twice.  
So on each $A_i$, there is one boundary parallel dividing curve separating a disk that must
be positive since $r(K_i^+) - r(K_i) = 1$.   Unfortunately we cannot get a bypass from this disk as we cannot Legendrian realize a bypass on $A_i$ since the dividing set is connected. However, we can isotop the $A_i$ so that its boundary component in $T$ intersects the dividing curve twice. Now the $A_i$ will have either two dividing curves running from one boundary component to the other, or two boundary parallel dividing curves, one on each boundary. In the former case we can foliate $A_i$ by ``ruling curves" and use those to isotop $K_i^+$ to a curve on $T$ and on $T$ we can use a Whitney disk for $K_i^+$ and the dividing set to destabilize $K_i^+$. In the latter case we can find a bypass on $A_i$ to destabilize $K_i^+$. In particular we get 
a link $\widetilde L$ that lies between $T_{q+1}$ and $T$.
Notice that each component of $\widetilde L$ has $tb = -q$ and $ r = \ell$.  
Put $\widetilde L$ on a convex torus $\widetilde T$ parallel to
but disjoint from $T$.  We know the dividing set of $\widetilde T$ will have slope
$-q$.  
Thus $\widetilde L$ is a subset of the Legendrian divides of $\widetilde T$ and thus as argued in the proof of Lemma~\ref{maxntb} we see that $\widetilde L$ can be taken to be leaves in the characteristic foliation of a pre-Lagrangian torus. Hence, by Lemma~\ref{ncopyalt}, we see $\widetilde L$ is
the $n$-copy of a $tb=-q$ unknot. Moreover, since the rotation numbers of the components of $\widetilde L$ 
agree with those of $L$ we know that $\widetilde L$ and $L$ are both $n$-copies of the same Legendrian unknot
and hence are isotopic.  
A similar argument shows
$S_{-, all} (L_{-q}^{\ell})$ sits as ruling curves on the
boundary of a standard neighborhood of a Legendrian unknot with $tb = -(q+1)$ and $r = \ell -1$.
Here instead of $V_{q+1}^+$, we consider $V_{q+1}^-$ which is a standard neighborhood
of a Legendrian unknot with $tb = -(q+1)$ and $r = \ell -1$.  The corresponding annuli will
now have one boundary parallel arc separating off a negative disk.  Combining these, we
see that both $S_{+, all} (L_{-q}^j)$ and $S_{-,all}(L_{-q}^{j+2})$ sit as ruling curves on the boundary of standard neighborhoods
of an unknot with $tb = -(q+1)$ and $r = j - 1$.  Since all such neighborhoods are
isotopic, we can assume  $S_{+, all} (L_{-q}^j)$ and $S_{-,all}(L_{-q}^{j+2})$ sit as ruling curves on the same torus, and thus they must be isotopic.

To see that if $-q< k\leq -1$, then
$
S_{\pm,1}(L_{k}^j)= S_{\mp,2} \circ \cdots \circ S_{\mp,n}(L_{k-1}^{j\pm 1})
$
notice that the proof of Lemma~\ref{destab} shows that if $\leg_1$ in $L_k^j$ is stabilized, then
one may destabilize the components with the most negative Thurston-Bennequin invariant
which, in this case, will be $\leg_2, \dots, \leg_n$. 
\end{proof}

 %%%%%%%%%%%%%%%%%%%%%%%%%%%%%
\section{Ordered Classification}\label{oclassification}
%%%%%%%%%%%%%%%%%%%%%%%%%%%%%
 
 Now that we have established the unordered classification of all Legendrian torus links, we move on to the ordered classification.  The positive
 torus links and stabilized negative torus links will have a great deal of flexibility in the permutations that are allowed, while the negative torus
links with maximum \tb invariant will have a great deal of rigidity. 
 
 The unordered classification of positive torus links is given in Theorem~\ref{thm:p-unorder-class}.  For positive torus
links, any $tb$ and $r$ invariant-preserving permutation of the components is possible:
 
\begin{theorem}  \label{oposlink} For $q \geq p \geq 1$ and $\gcd(p,q) = 1$,
consider an ordered, oriented Legendrian $(np,+nq)$-torus link $L=(\Lambda_1,\ldots, \Lambda_n)$.
Any permutation of the components of $L$ preserving
the  \tb and rotation number invariants can be achieved by a Legendrian isotopy. 
\end{theorem}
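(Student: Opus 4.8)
The plan is to reduce the statement to realizing a single transposition of two components with equal classical invariants, and then to realize that transposition by an ambient contact isotopy built from a ``rotation'' inside an auxiliary solid torus that leaves the rest of the link fixed.

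First I would reduce to transpositions. By Theorem~\ref{thm:p-unorder-class} the unordered link $L$ is determined by the multiset $\{(tb(\Lambda_i),r(\Lambda_i))\}$, so it suffices to show that every $\sigma\in S_n$ with $tb(\Lambda_{\sigma(i)})=tb(\Lambda_i)$ and $r(\Lambda_{\sigma(i)})=r(\Lambda_i)$ is realized by a Legendrian isotopy. Such $\sigma$ are exactly the permutations preserving the partition of $\{1,\dots,n\}$ into blocks of components sharing a common $(tb,r)$, and that group is generated by transpositions $(i\ j)$ with $tb(\Lambda_i)=tb(\Lambda_j)$ and $r(\Lambda_i)=r(\Lambda_j)$. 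Hence it is enough to realize one such transposition by a Legendrian isotopy of $S^3$ that fixes every component other than $\Lambda_i$ and $\Lambda_j$.

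Next, using the Relative Convex Realization Principle together with the convex-surface arguments already used in the proofs of Lemma~\ref{uniquemaxpos} and Lemma~\ref{posdestab} (removing bigons between a component and the dividing set; applying the Imbalance Principle on annuli between parallel convex tori), I would arrange that every component of $L$ is a Legendrian ruling curve of slope $q/p$ on a convex torus parallel to the standard Heegaard torus, with components of equal invariants on a common such torus and distinct invariant-classes on disjoint nested convex tori. After a further isotopy, $\Lambda_i$ and $\Lambda_j$ may be taken to be ruling curves on one convex torus $T^*$ that are cyclically adjacent among the components, bounding an annular band $A\subset T^*$ with interior disjoint from $L$, while all other components lie outside a fixed neighborhood of $T^*$. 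Let $N'$ be the solid torus cobounded by $T^*$ whose boundary has two dividing curves of slope $-1$ (a standard neighborhood of a low-$tb$ Legendrian unknot), so that $\mathrm{int}\,N'$ is disjoint from the rest of $L$. The swap is then realized by pushing $\Lambda_i$ slightly into $\mathrm{int}\,N'$ onto a ruling curve of a parallel convex torus, rotating this curve inside $N'$ (relative to $\partial N'=T^*$) in the direction complementary to the ruling slope until its angular position has crossed that of $\Lambda_j$, and pushing it back out onto $T^*$ on the far side of $\Lambda_j$; doing the symmetric move with $\Lambda_j$ puts the two curves in each other's original positions. Every curve in motion stays a slope-$q/p$ ruling curve on a torus parallel to $T^*$, so $tb$ and $r$ are preserved, and the isotopy is supported in $N'$ together with a neighborhood of $A$, so the remaining components are fixed. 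Iterating over a generating set of transpositions produces the desired permutation $\sigma$.

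The main obstacle is the rotation step: one must produce an honest ambient contact isotopy of $N'$ that carries one ruling curve past a second ruling curve while fixing $\partial N'=T^*$ (and hence every other component of $L$). This is precisely where positivity is used --- for a negative torus link with $tb=-pq$ the analogous complementary region carries the rigid signed basic-slice structure behind Mishachev's obstruction \cite{Mishachev03} (this rigidity is what the negative-link part of the ordered classification in Section~\ref{oclassification} must confront), whereas here the complementary solid torus is a standard neighborhood of an unknot, which is flexible enough to admit the rotation. Turning ``flexible enough'' into a proof --- most directly by writing the move explicitly in the $\ker(dz-y\,dx)$ model of $N'$ from Remark~\ref{ex:n-copy-model} --- is the step requiring genuine work.
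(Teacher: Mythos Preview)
Your approach is in the right spirit but substantially more complicated than the paper's, and you have mislocated the difficulty. The paper's argument is very short: first reduce to the case where $L$ has maximal Thurston--Bennequin invariant (the general case follows from Lemma~\ref{posdestab}, since any invariant-preserving permutation of $L$ lifts to a permutation of its unique max-$tb$ destabilization, and the stabilizations can be carried along as local kinks). In the max-$tb$ case all $n$ components are ruling curves of slope $q/p$ on a single convex torus $T$ with two dividing curves of slope $-1$. The key observation, which dissolves your ``main obstacle'' immediately, is that $T$ has an $I$-invariant neighborhood $T^2\times[-1,1]$ in which every level $T^2\times\{t\}$ carries the same ruling foliation. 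One simply moves the $n$ components onto $n$ distinct levels, slides each freely along its own level to the target position, and pushes them back to $T$; this realizes an arbitrary permutation in one stroke, with no need to decompose into transpositions.

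Your transposition reduction and nested-tori setup become unnecessary once the max-$tb$ reduction is made, and your push--rotate--push maneuver inside $N'$ is just a localized version of the $I$-invariant move above. There is also a loose end in your setup: you assert that each component can be arranged as a \emph{ruling curve} on some convex Heegaard torus while keeping its invariants, but the bigon-removal and Imbalance arguments you cite from Lemmas~\ref{uniquemaxpos} and~\ref{posdestab} are precisely destabilizations, so they change $tb$. Not every $(tb,r)$ pair for a $(p,q)$-torus knot is realized by a ruling curve on a standardly embedded convex torus, so this step needs more care---another reason the paper's reduction to max~$tb$ is the cleaner route.
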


\begin{proof}
We prove the statement when $L$ is a  $(np,+nq)$-torus link with max $tb$; the general case follows from Lemma~\ref{posdestab}.
As in the proof of Lemma~\ref{uniquemaxpos}, we know $L$ sits on a convex torus $T$ as ruling curves of slope $q/p$. There is a
neighborhood $N=T^2\times [-1,1]$ of $T$ such that $T^2\times\{0\}=T$ and the contact structure is invariant in the $[-1,1]$
direction, \cite{EtnyreHonda01b, Honda00a}. So each $T^2\times\{pt\}$ is foliated by ruling curves of slope $q/p$. We can isotop each component of $L$
to a different torus in $N$, and  then further isotop the components on the different levels so that their order is permuted by
any preassigned permutation. Finally we isotop the permuted components back to $T$.
\end{proof}

\begin{remark}  
It is possible to do the permutations in the front diagram as was shown by the first author \cite{Dalton08}.  
\end{remark}

Next we move on to study the ordered classification of Legendrian negative torus links. 
 Theorem~\ref{uoclass}
gives the unordered classification of
Legendrian $(n, -nq)$-torus links with $q \geq 1$, while Theorem~\ref{oclass} gives the
unordered classification of  Legendrian $(np, -nq)$ 
torus links with $q > p \geq 2$.  We now consider the
ordered classification.  
 
We will see that there is rigidity to the allowable permutations among the set of components having the maximal of  $tb = -pq$; recall that 
all these components with $tb = -pq$ must have the same rotation number.  For the components with maximal $tb$, a pre-Lagrangian torus determines a cyclic ordering
of the components.
 
\begin{definition} \label{neg-pre-lag}  {Let $L = (\Lambda_1, \dots, \Lambda_n)$ be a Legendrian $(np, -nq)$-torus link such that $tb(\Lambda_{i}) = -pq$, for all $i$. Then $L$ is the union of leaves of a pre-Lagragian torus $T_0$, and $L$ can also be seen as the Legendrian divides of a convex torus $T$; if $p > 1$, $T, T_0$ are contained in a basic slice, and if $p = 1$, $T, T_0$ are  contained in a universally tight union of two basic slices (see Lemma~\ref{integral-union-basic}).  
Then $T_0$ (and equivalently, by Lemma~\ref{prelagmodel} and Remark~\ref{prelagmodel-extended}}, a complementary annulus for $T$, as defined in Definition~\ref{defn:complementary}) gives a \dfn{cyclic ordering} of the components of $L$.  We will always assume that the $\Lambda_i$ are numbered according to this ordering.  This ordering is well-defined up to cyclic permutation.
\end{definition}

\begin{lemma}\label{cyclic-perms} Let $L = (\Lambda_1, \dots, \Lambda_n)$ be a Legendrian $(np, -nq)$-torus link such that $tb(\Lambda_{i}) = -pq$, for all $i$. 
Then it is possible to do a cyclic permutation of the components of $L$ via a Legendrian isotopy.
\end{lemma}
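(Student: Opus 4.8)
The plan is to realize the cyclic permutation of the components as a rotation of the pre-Lagrangian torus on which $L$ sits. Recall from Definition~\ref{neg-pre-lag} that since every component has $tb(\Lambda_i)=-pq$, the link $L$ lies as a union of $n$ leaves of a pre-Lagrangian torus $T_0$, and the cyclic ordering of the $\Lambda_i$ is precisely the cyclic order in which these leaves appear on $T_0$. By construction (see the models in Section~\ref{sec:p-q-torus} and the model in Remark~\ref{ex:n-copy-model} as promoted by Lemma~\ref{integral-union-basic} in the $p=1$ case), $T_0$ lies inside a model contact structure on a thickened torus (a basic slice, or a universally tight union of two basic slices) that is $S^1$-invariant in the direction of the leaves of the characteristic foliation, i.e. in the direction of slope $-q/p$.

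First I would make this model precise: choose coordinates so the pre-Lagrangian torus has characteristic foliation of slope $0$, and a neighborhood $T_0 \times (-\epsilon,\epsilon)$ is contactomorphic to $\ker(\cos t\, d\phi + \sin t\, d\theta)$ with $\theta$ the leaf direction, exactly as in the proof of Lemma~\ref{local-model}. The $n$ components $\Lambda_1,\dots,\Lambda_n$ are then $n$ parallel circles $\{\theta \in S^1,\ \psi = c_i\}$ for cyclically ordered constants $c_1 < c_2 < \dots < c_n$ in the transverse $S^1$-coordinate $\psi$ on $T_0$. Next I would observe that the contact structure is invariant under the flow that rotates the $\psi$-coordinate (it is $S^1$-invariant in the leaf direction $\theta$, and rotation of $\psi$ preserves the linear foliation and hence extends to a contactomorphism of the neighborhood fixing the boundary, or more robustly one just flows along a contact vector field tangent to $T_0$ generating this rotation). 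Flowing by the appropriate amount slides each $\Lambda_i$ to the position previously occupied by $\Lambda_{i+1}$ (indices mod $n$), giving a contact isotopy of $S^3$, supported in a neighborhood of $T_0$, that realizes the cyclic permutation. Since all components also have the same rotation number (by Lemma~\ref{negmaxtb}, or by the $n$-copy description), this isotopy respects the $tb$ and $r$ data.

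The one point needing care — and the step I expect to be the main obstacle — is producing an ambient contact isotopy of $S^3$ (not merely an abstract contactomorphism of a neighborhood) that performs the rotation while fixing everything outside a neighborhood of $T_0$. The clean way is to note that the rotation of the transverse $S^1$ direction is generated, in the model $\ker(\cos t\, d\phi + \sin t\, d\theta)$, by a genuine contact vector field that one can cut off to be supported in $T_0 \times (-\epsilon,\epsilon)$; integrating the cutoff field gives the required compactly supported contact isotopy. Equivalently, one can use Lemma~\ref{ncopyalt}: any $n$ leaves of $T_0$ form the $n$-copy of a single leaf, and the $n$-copy in a standard neighborhood (Definition~\ref{n-copy}, Remark~\ref{ex:n-copy-model}) sits as $\{y=0,\ z=c_i\}$ inside the model $N_0$, where sliding the $z$-levels cyclically is manifestly realized by a compactly supported contact isotopy of $N_0$, hence of $S^3$. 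Either route shows the cyclic permutation is achievable, completing the proof.

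\begin{proof}
By Definition~\ref{neg-pre-lag} and Lemma~\ref{ncopyalt}, the components $\Lambda_1,\dots,\Lambda_n$ are $n$ leaves of the characteristic foliation of a pre-Lagrangian torus $T_0$ (contained in a basic slice if $p>1$, and in a universally tight union of two basic slices if $p=1$, by Lemma~\ref{integral-union-basic}), numbered according to their cyclic order on $T_0$. As in the proof of Lemma~\ref{local-model}, choose coordinates so that $T_0$ has linear characteristic foliation of slope $0$ in coordinates $(\theta,\psi) \in S^1 \times S^1$ with $\theta$ the leaf direction, and so that a neighborhood $T_0 \times (-\epsilon,\epsilon)$ is contactomorphic to $(T^2 \times (-\epsilon,\epsilon), \ker(\cos t\, d\phi + \sin t\, d\theta))$ with $\phi = \psi$ on $t=0$. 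In these coordinates $\Lambda_i = \{t=0,\ \psi = c_i\}$ for cyclically ordered $c_1,\dots,c_n \in S^1$.

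The vector field $X = \partial_\psi = \partial_\phi$ is a contact vector field for $\cos t\, d\phi + \sin t\, d\theta$, since the Lie derivative of this form along $\partial_\phi$ vanishes. Let $\rho\colon (-\epsilon,\epsilon) \to [0,1]$ be a cutoff function equal to $1$ near $t=0$ and vanishing near $t = \pm\epsilon$; then $\rho(t) X$ extends by zero to a contact vector field on all of $S^3$, supported in $T_0 \times (-\epsilon,\epsilon)$. Its time-$s$ flow $\varphi_s$ is a contact isotopy of $S^3$, fixed outside $T_0 \times (-\epsilon,\epsilon)$, and on $\{t=0\}$ it rotates $\psi \mapsto \psi + s$. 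Choosing $s$ so that $c_i + s = c_{i+1}$ for all $i$ (indices mod $n$) — possible because the $c_i$ are the $n$ leaves of the $n$-copy and hence can be taken equally spaced after an isotopy of $T_0$ rel $L$ — the contactomorphism $\varphi_s$ carries $\Lambda_i$ to $\Lambda_{i+1}$ for all $i$. Thus $\varphi_s$ is a Legendrian isotopy of $S^3$ realizing the cyclic permutation of the components of $L$.
\end{proof}
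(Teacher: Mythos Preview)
Your overall strategy matches the paper's: put $L$ on the pre-Lagrangian torus $T_0$ and slide the leaves around. The paper's proof is one line --- ``one may cyclically permute the components of $L$ through the leaves of the pre-Lagrangian $T_0$'' --- relying implicitly on the Legendrian isotopy extension theorem.

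However, your explicit construction contains a genuine error. You claim that $\rho(t)\,\partial_\phi$ is a contact vector field for $\alpha = \cos t\, d\phi + \sin t\, d\theta$. It is not: a direct computation gives
\[
\mathcal{L}_{\rho\partial_\phi}\alpha \;=\; d(\rho\cos t) + \iota_{\rho\partial_\phi}\,d\alpha \;=\; (\rho'\cos t - \rho\sin t)\,dt + \rho\sin t\,dt \;=\; \rho'(t)\cos t\,dt,
\]
which is a multiple of $\alpha$ only when $\rho'\equiv 0$. So the cutoff field does not preserve the contact structure, and your ``compactly supported contact isotopy'' is not one. (Cutting off a contact vector field by a function that is \emph{not} constant along the Reeb flow essentially never works.)

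The fix is to drop the explicit vector field entirely. On $T_0$ the map that slides each leaf to the next one (move every $c_i$ continuously to $c_{i+1}$, all in the same direction so the leaves stay disjoint) is a smooth isotopy of $L$ through Legendrian links, since every intermediate curve is a leaf of the characteristic foliation of $T_0$. The Legendrian isotopy extension theorem then produces the ambient contact isotopy of $S^3$. This also renders the ``equally spaced'' maneuver unnecessary; as written, ``an isotopy of $T_0$ rel $L$'' cannot change the positions $c_i$, so that sentence does not say what you intend.
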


\begin{proof} As described in Definition~\ref{neg-pre-lag}, $L$ lies among the leaves of a pre-Lagrangian torus $T_0$. One may cyclically permute the components of $L$ through the leaves of the pre-Lagrangian $T_0$.
\end{proof}
 
As seen in Lemma~\ref{negmaxtb} and Proposition~\ref{maxntb},  when $p > 1$, these  max $tb$ representatives of the $(np, -nq)$-torus links are the $n$-copies of a 
Legendrian  $(p,-q)$-torus knot  $\leg$ with  max $tb$, and when $p = 1$,   these  max $tb$ representatives of the $(n, -nq)$-torus links are the $n$-copies  of a Legendrian unknot  $\leg$ with 
  $tb = -q$.  In the front projections, these $n$-copies can be seen as slight shifts of $\Lambda$ in the $z$-direction, and 
  the cyclic ordering corresponds to increasing $z$-coordinate, with the uppermost component circling back to become the lowest.

\begin{theorem}\label{negperms}
For $q \geq p \geq 1$ and $\gcd(p,q) = 1$, let $L = (\Lambda_1, \dots, \Lambda_n)$ be an oriented, ordered Legendrian $(np, -nq)$-torus link. Let $I_{1}$ be the subset of $\{1,\ldots, n\}$ 
containing the indices such that $tb(\Lambda_i)= -pq$, and let $I_2$ be its complement. 
Then there is a Legendrian isotopy from $(\Lambda_1, \dots, \Lambda_n)$ to 
$(\Lambda_{\sigma(1)}, \dots, \Lambda_{\sigma(n)})$ where $\sigma$ is an element of the symmetric group $S_n$ if and only if 
\begin{enumerate}
\item $\sigma$ preserves the partition $I_1\cup I_2$,
\item $\sigma$ restricted to $I_1$ is a cyclic permutation, and
\item for all $i\in I_2$, $\Lambda_i$ and $\Lambda_{\sigma(i)}$ have the same \tb  and rotation number invariants.
\end{enumerate}
\end{theorem}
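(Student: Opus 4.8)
The plan is to prove the two directions of the biconditional separately, with the ``if'' direction following quickly from the unordered classification together with the cyclic-permutation lemma, and the ``only if'' direction — the real content — requiring an obstruction to non-cyclic permutations among the maximal-$tb$ components. First I would treat the \emph{if} direction: given $\sigma$ satisfying (1)--(3), we may decompose it as a permutation of $I_2$ and a cyclic permutation of $I_1$. The $I_2$-part is handled by a repeated application of the unordered classification results (Theorems~\ref{thm:p-unorder-class}, \ref{oclass}, \ref{uoclass}) at the level of sublinks after an isotopy separating the low-$tb$ components onto disjoint parallel tori, using that components with equal $tb$ and $r$ are Legendrian isotopic and can be exchanged by ``passing through levels'' as in the proof of Theorem~\ref{oposlink}; the $I_1$-part is exactly Lemma~\ref{cyclic-perms}. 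The mild technical point here is to carry out the $I_1$ cyclic permutation and the $I_2$ rearrangement simultaneously without them interfering, which one arranges by performing the $I_2$ moves in a thin neighborhood of a torus disjoint from the pre-Lagrangian torus $T_0$ carrying the $I_1$-components (possible because the $I_2$-components have strictly negative twisting relative to the standard Heegaard torus and hence can be pushed into a solid-torus neighborhood of $T_0$).

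The \emph{only if} direction splits into showing (1), (3) first and then the harder (2). Conditions (1) and (3) are forced: a Legendrian isotopy preserves the classical invariants $tb$ and $r$ of each component, so $\sigma$ must send each component to one with the same $tb$; since $I_1$ is precisely the set of components with the maximal value $tb = -pq$, this forces $\sigma(I_1) = I_1$ and $\sigma(I_2) = I_2$, and forces the $tb$-and-$r$-matching in (3). So the entire weight of the theorem is condition (2): the permutation induced on $I_1$ must be cyclic. This is where I would invoke the geometric machinery built up in Section~\ref{sec:background}. Writing $L_1 = (\Lambda_i)_{i \in I_1}$, by Definition~\ref{neg-pre-lag} the sublink $L_1$ lies as the Legendrian divides of a convex torus $T$ which sits inside a basic slice (if $p>1$) or a universally tight union of two basic slices (if $p=1$, via Lemma~\ref{integral-union-basic}), and equivalently as leaves of a pre-Lagrangian torus $T_0$; a complementary annulus to $T$ induces — canonically, by Lemmas~\ref{lem:model-order}, \ref{local-model}, \ref{prelagmodel}, \ref{prelagmodel2} — a cyclic order on the Legendrian divides agreeing with the leaf-order on $T_0$.

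The key step, and the one I expect to be the main obstacle, is proving that this cyclic order is a Legendrian-isotopy invariant of the ordered sublink $L_1$ (together with its ambient embedding), so that any ambient isotopy taking $L$ to $L_{\sigma}$ must preserve it up to cyclic rotation and orientation-reversal, and the orientation-reversal is ruled out by the consistent orientations on the torus-link components (Remark~\ref{rem:orientations}). Concretely I would argue: first localize the isotopy — by the usual convex-surface ``thickening/uniqueness of tight structures'' arguments (as in the proofs of Lemmas~\ref{negmaxtb}, \ref{maxntb}), any Legendrian isotopy of $L$ in $S^3$ may be assumed to stay inside a fixed (universally tight, minimally twisting) thickened-torus neighborhood $T^2 \times I$ of $T_0$, so it suffices to prove the cyclic order is preserved by isotopies supported there; second, observe that inside such a neighborhood the pre-Lagrangian torus of slope $s = -q/p$ (in the relevant coordinates) is essentially unique — this is where Lemma~\ref{notut} plays its role, obstructing a pre-Lagrangian torus of the ``wrong'' slope from appearing and hence pinning down the foliation through which the $L_1$-components thread — so the ambient isotopy must carry $T_0$ to an isotopic pre-Lagrangian torus with the same cyclic structure on leaves; third, conclude that the induced permutation of the leaf-labels is the cyclic rotation recording how far the isotopy ``rotated'' the pre-Lagrangian torus, so $\sigma|_{I_1}$ is cyclic. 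The delicate part is the second point: making precise that, relative to the leaves $L_1$, the pre-Lagrangian structure is rigid enough that no isotopy can permute the leaves non-cyclically. This is precisely the rigidity phenomenon the introduction flags as the novel heart of the paper, and I would expect the bulk of the argument — likely occupying its own sequence of lemmas about pre-Lagrangian tori in basic slices — to go into it; the remainder is bookkeeping with the already-established convex-surface toolkit.
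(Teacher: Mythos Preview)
Your outline for the ``if'' direction is close in spirit to the paper but slightly off in mechanism: rather than invoking the unordered classification theorems directly, the paper builds an explicit model (Lemma~\ref{stab-perm}) in which the $I_1$-components sit as leaves of a pre-Lagrangian torus inside a basic slice (or a universally tight union of two) and the once-stabilized components sit as ruling curves on the boundary tori; one then permutes the $I_1$-components cyclically along the pre-Lagrangian and the boundary ruling curves arbitrarily, exactly as in Theorem~\ref{oposlink}. Your version can be made to work but is less direct.

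The ``only if'' direction has two genuine gaps. First, your localization step --- ``any Legendrian isotopy of $L$ in $S^3$ may be assumed to stay inside a fixed thickened-torus neighborhood'' --- is not justified, and the paper does \emph{not} proceed this way. An ambient contact isotopy of $S^3$ can carry the link arbitrarily far from any fixed $T^2\times I$; what the paper does instead (Proposition~\ref{S3-basic-slice}) is compare only the \emph{endpoints}: it shows that the original pre-Lagrangian $T_0$ and the image $T_1=\psi_1(T_0)$, after perturbing $T_1$ to a convex torus $T_1'$, can both be placed inside a single basic slice in $S^3$. This requires the Seifert-fiber structure on $S^3$ with regular fibers $(p,-q)$-torus knots (Lemma~\ref{setup}), annuli connecting $T_0$ to the singular fibers to thicken via bypasses (Lemma~\ref{claim3}), and Honda's disk-equivalence of non-rotative outer layers to track the cyclic ordering through the perturbation (Lemma~\ref{claim2}). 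None of these ingredients appear in your sketch.

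Second, your appeal to Lemma~\ref{notut} for ``essential uniqueness'' of the slope-$s$ pre-Lagrangian torus is a misreading: that lemma only says a pre-Lagrangian of the middle slope cannot exist in a \emph{non}-universally-tight stack of basic slices, and is used to force the signs to agree. Uniqueness of pre-Lagrangian tori relative to common leaves is in fact left \emph{open} in the paper (Remark~\ref{pre-lag-questions}). What the paper actually proves is weaker but sufficient: Theorem~\ref{thickenedtori} shows that in a basic slice no non-cyclic permutation of leaves is possible, by embedding the slice into an $S^1$-invariant $\Sigma\times S^1$, removing $S^1$-invariant neighborhoods of the $\Lambda_i$, and deriving a contradiction from Honda's minimality of dividing curves on sections of $S^1$-invariant contact structures (Proposition~\ref{KoS1invariant}). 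This convex-surface argument, not any uniqueness of pre-Lagrangians, is the engine of the obstruction.
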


\begin{figure}[ht]
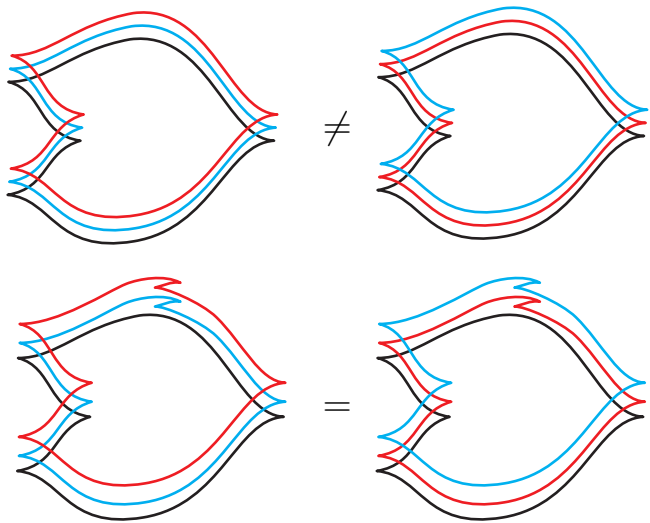

\begin{overpic} 
{fig/n_copy_perm}
\Large
\put(120,145){$\not=$}
\put(120,40){$=$}
\end{overpic}
        \caption{A noncylic permutation of the Legendrian $3(1,-2)$-torus link when all components have $tb = -pq$ is not possible, however once components are stabilized, non-cyclic permutations are possible.}                    
                \label{fig-$n$-copy_perm}
\end{figure}

Before proving Theorem~\ref{negperms}, we note that it is the last step needed to complete the 
proof of the ordered classification of torus links, Theorem~\ref{orderedclass}.

\begin{proof}[Proof of Theorem~\ref{orderedclass}]
The result immediately follows from the unordered classification,  
\ref{thm:p-unorder-class}, 
\ref{oclass}, and \ref{uoclass}, and Theorem~\ref{oposlink} and~\ref{negperms}.
\end{proof}

We begin the proof of Theorem~\ref{negperms} by observing that permutations mentioned are possible.

\begin{lemma} \label{stab-perm} 
With the notation from Theorem~\ref{negperms}, the permutations of the components of $L$ listed there can be achieved through a Legendrian isotopy. 
\end{lemma}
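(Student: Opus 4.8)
The plan is to verify that each of the three types of permutations allowed by Theorem~\ref{negperms} can be realized geometrically, treating them one at a time and then combining them. The three moves are: (i) cyclic permutations of the indices in $I_1$, (ii) permutations among indices in $I_2$ that preserve $tb$ and $r$ of each component, and (iii) the observation that these generate all permutations preserving the partition with the stated properties.

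For (i), the cyclic permutation of the $tb = -pq$ components, I would simply invoke Lemma~\ref{cyclic-perms}: the components with maximal $tb$ lie as leaves of a pre-Lagrangian torus $T_0$, and sliding the leaves around $T_0$ realizes a cyclic permutation. Since the non-maximal components $\Lambda_i$, $i\in I_2$, have $tw(\Lambda_i, T) < 0$ with respect to the relevant Heegaard torus, they can be pushed off to lie in a neighborhood disjoint from $T_0$, so this isotopy can be done without disturbing them. For (ii), suppose $i,j \in I_2$ with $tb(\Lambda_i) = tb(\Lambda_j)$ and $r(\Lambda_i) = r(\Lambda_j)$. Here I would use the description of the non-destabilizable representatives together with the stabilization structure: after destabilizing $L$ to one of the standard non-destabilizable models (an $n$-copy $nU_{-q}^r$ or a twisted $n$-copy $T^t(nU_{-q+t}^r)$ in the unknotted case, or an $n$-copy of a max-$tb$ $(p,-q)$-torus knot in the knotted case), the components sit as ruling curves (or Legendrian divides) on a common convex torus, where any two ruling curves are Legendrian isotopic to each other through ruling curves of the torus. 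Permuting two ruling curves on such a torus by sliding them past each other in the $[0,1]$-invariant neighborhood (as in the proof of Theorem~\ref{oposlink}) realizes the transposition; re-stabilizing then returns us to $L$. One must check that the bookkeeping of which component is stabilized how many times is compatible, which follows from Lemma~\ref{integer-simple} (and its analogue, Lemma~\ref{negstabtoiso}, in the knotted case): precisely when two components have matching $tb$ and $r$ they become isotopic after matching stabilizations, and the isotopy can be arranged to swap them.

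For (iii), I would note that any $\sigma \in S_n$ preserving $I_1 \cup I_2$, restricting to a cyclic permutation on $I_1$, and matching invariants on $I_2$ is a product of moves of type (i) and type (ii): the cyclic part on $I_1$ is handled by (i), and the restriction to $I_2$ is a product of transpositions of equal-invariant components, each handled by (ii). Since these isotopies can be performed in disjoint regions (the $I_1$ components near the pre-Lagrangian torus $T_0$, the $I_2$ components near the ruling curves of the enveloping convex torus), composing them presents no difficulty.

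The main obstacle I anticipate is the careful localization in case (ii): realizing the transposition of two equal-invariant components of $I_2$ without moving the maximal-$tb$ components of $I_1$, and without changing the $tb$ or $r$ of the other $I_2$ components. This requires arranging the destabilization so that the components to be swapped lie on a shared convex torus with an $[0,1]$-invariant neighborhood while the remaining components lie safely outside, and then checking that the re-stabilization undoing the destabilization is consistent with the swap. This is essentially a matter of combining the bypass/Imbalance-Principle arguments from the proofs of Lemmas~\ref{destab} and~\ref{2maxtb} with the $[0,1]$-invariance trick from Theorem~\ref{oposlink}, but the simultaneous handling of several components with different invariants is where care is needed.
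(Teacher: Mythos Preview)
Your decomposition into moves (i), (ii), (iii) is the right skeleton, but the mechanism you propose for (ii) has a real gap. ``Destabilize $L$, swap, then re-stabilize'' does not produce a Legendrian isotopy of $L$: destabilization is not an isotopy, it changes the link. Worse, after destabilizing, the components you want to swap now have $tb=-pq$ and land in $I_1$ of the destabilized link, where only \emph{cyclic} permutations are known to be available --- so in general you cannot perform the transposition at that stage at all. The bookkeeping appeal to Lemmas~\ref{integer-simple} and~\ref{negstabtoiso} does not rescue this: those lemmas show that certain stabilized links are Legendrian isotopic as unordered links, but do not hand you an isotopy exchanging two prescribed components while fixing the others.

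What is actually needed is a model for $L$ \emph{itself} in which the stabilized components already sit as ruling curves on a convex torus. The paper does this directly. First it reduces to the case where every component in your $I_2$ has been stabilized exactly once, splitting $I_2$ further into the positively-once-stabilized indices and the negatively-once-stabilized indices; all other cases are further stabilizations of this one. Then (for $p\neq 1$) it takes the two Farey parents $s,s'$ of $-q/p$ and realizes a basic slice $B$ with those boundary slopes, containing a pre-Lagrangian torus $T$ of slope $-q/p$. A relative Euler class computation shows that $-q/p$-ruling curves on one boundary component of $B$ are exactly the $+$-stabilizations of a leaf of $T$, and on the other boundary are the $-$-stabilizations. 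Now $L$ is realized with the $I_1$ components as leaves of $T$ and the once-stabilized components as ruling curves on the appropriate boundary tori; cyclic permutations on $T$ and arbitrary permutations of ruling curves on each boundary (via the $I$-invariant neighborhood trick you cite) give all required moves at once, with no re-stabilization step. For $p=1$ the basic slice does not embed in $S^3$, and one uses instead the universally tight union of two basic slices from Lemma~\ref{integral-union-basic}.
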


\begin{proof}
 Let $I_{1}$ be the subset of $\{1,\ldots, n\}$ 
containing the indices such that $tb(\Lambda_i)= -pq$, $I_2$ the subset with $tb(\Lambda_i)=-pq-1$ that have been positively stabilized once, and $I_3$ the subset with $tb(\Lambda_i)=-pq-1$ that have been negatively stabilized once.  We will show the result holds in the case where $I_1\cup I_2\cup I_3= \{1,\ldots, n\}$;  all the other cases will be stabilizations of this case, and hence the result will also follow in these other cases. 

We begin by assuming that $-q/p$ is not an integer, i.e., $p \neq 1$. Suppose that $-q/p$ arises in the Farey graph from the sum of $s$ and $s'$; observe that 
$s, s' \in \mathbb Q \cap [-\infty, -1]$.
Since there is also an edge in the graph between $s$ and $s'$, there will be a basic slice $B$ with dividing slopes $s$ and $s'$ that is 
embedded in $S^3$ as a neighborhood of a Heegaard torus. We take the ruling slopes on $\partial B$ to be $-q/p$. Inside $B$ is a pre-Lagrangian 
torus $T$ whose characteristic foliation has slope $-q/p$. Let $\Lambda_1$ be a leaf of the foliation of $T$. We claim that the ruling curves on one boundary component of $B$ are positive stabilizations of $
\Lambda_1$, and the ruling curves on the other boundary component are negative stabilizations of $\Lambda_1$. To see this, consider a convex annulus having one boundary component on $\Lambda_{1}$ and the other on a ruling curve of $\partial B$.  Since there is an edge in the Farey graph between $-q/p$ and $s$ (and between $-q/p$ and $s'$), we know that the ruling curves intersect each dividing curve on $\partial B$ exactly once.  Thus the \tb invariant of the ruling curve is one less that that of $\Lambda_1$. Moreover, an annulus between a ruling curve and $\Lambda_1$ has a singe boundary parallel dividing curve, thus the rotation number of the ruling curve differs by one from $\Lambda_1$ (the difference in rotation numbers can be computed in terms of the positive and negative regions of the convex annulus between them, see the proof of \cite[Section~3.2]{EtnyreHonda01b}). 
From the classification of Legendrian torus knots (or Lemma~\ref{negtorusstab}) we see that the ruling curves are stabilizations of $\Lambda_1$. To see that the curves on different boundary 
components of $B$ are different stabilizations, we note that if $A$ is an annulus in $B$ connecting the ruling curves then the relative Euler class of $B$ evaluated on $A$ is $\pm 2$; (this follows 
from the formula for the relative Euler class in \cite{Honda00a}, but to see it easily one may change coordinates so that $s=-\infty, -q/p=-1,$ and $s'=0$. Then the relative Euler class is Poincare dual to the $
\pm [1,1]$ curve on $T^2$ and hence evaluates on the $[1,-1]$ annulus as $\pm 2$). The difference in the rotation numbers of the boundary of this annulus agrees with this relative Euler class, 
\cite[Section~3.2]{EtnyreHonda01b}, and so we see that the rotation numbers of the ruling curves on the different boundary components of $B$ differ by $2$,  and thus they are different stabilizations. 
Now $L$ can be realized with the components indexed by $I_1$ as leaves in the foliation of $T$ and the components indexed by $I_2$ and $I_3$ as ruling curves on the boundary components of $B$. The components on $T$ can be cyclically permuted and, arguing as in the proof of Lemma~\ref{oposlink}, we can  arbitrarily permute the components on a common boundary component of $B$. 

If $p = 1$, which implies $-q/p$  is an integer, then for the $(n,-nq)$-torus links with non-maximal \tb invariant that do not destabilize, the result is clear from the model constructed in the proof of Lemma~\ref{destab}. For the $(n,-nq)$-torus links with max $tb$, the proof follows the ideas in the $p \neq 1$ case except we cannot take $s, s'$ as above since one of $s$ or $s'$ is $-\infty$, and so 
the desired basic slice does not exist in $S^3$. However, by Lemma~\ref{integral-union-basic}, we know that our link lies on a torus in the universally tight union of two basic slices. 
 Hence the pre-Lagrangian torus $T$ with foliation of slope $-q$ still exists \cite{Honda00a}, and the argument now proceeds exactly as above.  
 \end{proof}

We must now see that only cyclic permutations will produce
equivalent Legendrian links negative torus links  with maximal \tb invariant.  The first work along these lines was done for $n$-copies of any Legendrian unknot
by Mishachev using the Chekanov-Eliashberg DGA.

\begin{theorem}[Mishachev 2002, \cite{Mishachev03}]\label{Mishachev}
For any $q \geq 1$, let $(\Lambda_1, \dots, \Lambda_n)$ be the ordered max $tb$ Legendrian representative of the $(n, -nq)$-torus link obtained as
the $n$-copy of a Legendrian unknot with $tb = -q$. 
Then there is a Legendrian isotopy from $(\Lambda_1, \dots, \Lambda_n)$ to $(\Lambda_{\sigma(1)}, \dots, \Lambda_{\sigma(n)})$ where $\sigma \in S_n$ only if 
$\sigma$ is a cyclic permutation.  
 \end{theorem}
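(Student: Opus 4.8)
The plan is to show that the \emph{oriented} cyclic ordering of the components of a maximal-$tb$ representative, as provided by the pre-Lagrangian torus in Definition~\ref{neg-pre-lag}, is an invariant of the ordered Legendrian link up to Legendrian isotopy. The orientation-preserving symmetries of an oriented $n$-cycle are exactly its $n$ rotations, so once this invariance is established, any realizable permutation $\sigma$ must be a rotation, i.e.\ a cyclic permutation, which is the assertion of the theorem (and is the $p=1$, ``all components with $tb=-pq$'' case of Theorem~\ref{negperms}). Realizability of cyclic permutations is already Lemma~\ref{cyclic-perms}, so only the rigidity direction requires proof.

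First I would fix the geometric model. By Proposition~\ref{maxntb} the link $L$ is the $n$-copy of a Legendrian unknot $\Lambda$ with $tb(\Lambda)=-q$, so by Lemma~\ref{ncopyalt} its components are $n$ leaves of a pre-Lagrangian torus $T_0$, and equivalently $n$ of the Legendrian divides of a convex Heegaard torus $T$; by Lemma~\ref{integral-union-basic}, $T$ (and hence $T_0$) lies in a universally tight union $B$ of two basic slices. Orienting $T_0$ as the boundary of the solid torus of the Heegaard splitting in which the meridian bounds gives the cyclic order of the leaves of the characteristic foliation a canonical orientation. The next step is \emph{localization}: deform a given ambient contact isotopy $\phi$ of $S^{3}$ with $\phi(L)=L$, rel $L$, until it is supported near $B$. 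Concretely, $\phi$ may first be made to carry a standard neighborhood of $L$ (a disjoint union of standard neighborhoods of the components, each with two dividing curves of slope $-q$) to itself; using the classification of tight contact structures on the complementary solid and thickened tori one then confines $\phi$ to (a region isotopic to) $B$, preserving its two sides. The problem is thereby reduced to showing that a contactomorphism $g$ of $B$ fixing $\partial B$ with $g(L)=L$ preserves the oriented cyclic order of the components of $L$ on $T_0$.

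The heart of the proof is this basic-slice statement, and here I would pass to the convex side. By Lemma~\ref{prelagmodel} together with Remark~\ref{prelagmodel-extended}, the oriented cyclic order determined by $T_0$ coincides with the one determined by a complementary annulus (Definition~\ref{defn:complementary}) of the convex torus $T$, and by Lemma~\ref{lem:model-order} and part~(2) of Lemma~\ref{prelagmodel} this latter order does not depend on the choice of complementary annulus for $T$. Since $g$ carries $T$ to a convex torus $T'=g(T)$ that still contains $L$ among its Legendrian divides, and carries a complementary annulus for $T$ to one for $T'$, it sends the ``$T$-order'' to the ``$T'$-order''; it therefore suffices to prove that the oriented cyclic order is independent of the choice of convex torus in $B$ carrying $L$ among its Legendrian divides. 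For this I would reduce an arbitrary such torus to a normal form --- a boundary-parallel convex torus of slope $-q$ with exactly $2n$ dividing curves whose Legendrian divides are precisely $L$ --- by destroying any extra dividing curves with bypasses found via the Imbalance Principle on convex annuli running to a boundary component of the non-rotative region around $T$ or to a standard neighborhood of one component of $L$, exactly as in the proofs of Lemmas~\ref{2maxtb} and~\ref{nondestab}, and chosen disjoint from $L$. Two convex tori in this normal form cobound the same non-rotative region with two dividing curves of slope $-q$ on its far boundary, so by Lemma~\ref{rem:annulus-unique} their complementary annuli are isotopic and induce the same oriented cyclic order.

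The main obstacle is precisely this independence statement: it is essentially the open question, raised in the introduction and discussed in Remark~\ref{pre-lag-questions}, of whether two pre-Lagrangian tori of a common slope inside a basic slice are isotopic, and it must be extracted from the convex-surface normal-form argument above rather than from a direct isotopy of pre-Lagrangian tori --- this interpretation of pre-Lagrangian rigidity via convex surface theory is the delicate new ingredient. A secondary technical point is the localization step, where one must take care that the reduction to $B$ is carried out rel $L$ and that the ambient orientation data is transported so that the cyclic order keeps its orientation; but once the basic-slice rigidity is in hand, these are routine applications of the classification of tight contact structures on solid and thickened tori.
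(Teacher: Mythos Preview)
Your overall architecture matches the paper's: localize the isotopy from $S^3$ to a thickened torus (for you, the universally tight union $B$; for the paper, a genuine basic slice constructed in Lemma~\ref{claim3}), and then prove that inside this model the cyclic order of $L$ on any pre-Lagrangian or convex torus containing it is rigid. The difference lies entirely in how the rigidity step is executed, and this is where your proposal has a genuine gap.

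Your argument for independence of the cyclic order reads: reduce $T$ and $T'=g(T)$ to a common normal form (boundary-parallel, slope $-q$, exactly $2n$ dividing curves, divides $=L$) by attaching bypasses, and then ``Two convex tori in this normal form cobound the same non-rotative region with two dividing curves of slope $-q$ on its far boundary, so by Lemma~\ref{rem:annulus-unique} their complementary annuli are isotopic and induce the same oriented cyclic order.'' This step does not work as stated. After normalization, $T$ and $T'$ are still two distinct convex tori; they need not cobound anything, and even if each sits inside its own non-rotative thickening $R$, $R'$ with two dividing curves on the outer boundary, Lemma~\ref{rem:annulus-unique} only tells you that a non-rotative structure is determined by its complementary annulus --- it says nothing about why the complementary annuli for $R$ and $R'$ should agree. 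You also need to check that the bypass attachments used in the normalization do not themselves alter the ordering, which is not addressed. In short, the sentence you flag as ``the main obstacle'' is exactly the content that is missing, and the normal-form argument you sketch does not supply it.

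The paper fills this gap with two ingredients you do not invoke. First, for the basic-slice rigidity (Theorem~\ref{thickenedtori}), it embeds the basic slice into an $S^1$-invariant $\Sigma\times S^1$, removes $S^1$-invariant neighborhoods of the $\Lambda_i$, and compares dividing sets of two isotopic sections $\Sigma_{123}$ and $\psi^{123}(\Sigma_{123})$ using Honda's minimality result (Proposition~\ref{KoS1invariant}); the contradiction is that the dividing curves connect boundary components in combinatorially different ways. Second, for comparing the orderings coming from two different non-rotative thickenings of the same convex torus (which is what you actually need), the paper uses Honda's theorem that any two non-rotative outermost layers are \emph{disk equivalent} \cite[Theorem~1.3]{Honda01}; this is invoked in the proofs of Lemma~\ref{claim2} and Proposition~\ref{S3-basic-slice} and is precisely the tool that makes the ordering independent of the chosen thickening. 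Your localization step is also substantially less detailed than the paper's (which uses a Seifert-fibered model, discretization of isotopy, and careful construction of annuli in Lemmas~\ref{setup}--\ref{claim3}), but that is secondary: the essential missing idea is the disk-equivalence/minimality machinery for the rigidity step.
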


\begin{remark}
In Mishachev's DGA approach, he worked with the unique augmentation $\epsilon$ of $\Lambda$, where $\Lambda$ 
is either the Legendrian unknot with $tb = -1$ or the $2$-copy of a Legendrian unknot with $tb = -q$, for $q \geq 2$.
To different orderings, one can define ``characteristic algebras'' $CH_{123}(\epsilon)$ and $CH_{132}(\epsilon)$
 associated with non-cyclic permutations of $3$-copies of $\Lambda$.  In fact,  
  one of these characteristic algebras has zero divisors while the other does not, which shows that
 non-cyclic permutations are never possible.  Extending Mishachev's DGA approach to Legendrian $(np, -nq)$-torus links for $p \geq 2$ has difficulties.  
 There is again a unique augmentation of the $(2, -q)$-torus
 knot, for all $q > 2$, but  the characteristic algebras of interest both have $0$-divisors. L. Ng  
 observed that we can handle this $(2, -q)$ case by analyzing product structures.  When $p \geq 3$, one needs to double the knot to get an augmentation,
 and one gets {\it many} augmentations.   For example, when trying to apply this approach to study non-cyclic permutations of $(3, -4)$, one needs to find augmentations of
 $\leg = 2(3, -4) = (6, -8)$.  There are now $3$ augmentations of $\Lambda$, which translates into $27$ ``minimal" augmentations of the $3$-copy of 
 $\Lambda$ arising from lifts of augmentations of $\Lambda$.  
The authors made a number of attempts to extend the DGA approach, including looking at matrix-valued augmentations, but with little success. 
We give a convex surface argument that works in all cases. 
\end{remark}

   \begin{theorem} \label{non-cyclic-perm-gen} For $q \geq p \geq 1$ and $\gcd(p,q) = 1$,  
let $L$ be a Legendrian $(np, -nq)$-torus link such that all the components have $tb=-pq$. Then  no non-cyclic permutation can be achieved via a Legendrian isotopy.
\end{theorem}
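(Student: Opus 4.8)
The plan is to reduce the theorem to a statement about contact structures on a thickened torus and then to exploit the rigidity of pre-Lagrangian tori established in Lemma~\ref{notut}. First I would set up the geometric model: by Lemma~\ref{negmaxtb} (when $p>1$) or Proposition~\ref{maxntb} together with Lemma~\ref{integral-union-basic} (when $p=1$), the link $L$ lies on a convex torus $T$ with two dividing curves of slope $-q/p$, and by Lemma~\ref{prelagmodel} (and Remark~\ref{prelagmodel-extended} in the integral case) the components of $L$ are leaves of a pre-Lagrangian torus $T_0$ sitting inside a basic slice $B$ (or a universally tight union of two basic slices). The complementary annulus to $T$, equivalently $T_0$, fixes the cyclic ordering of the components per Definition~\ref{neg-pre-lag}. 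The crux is: a Legendrian isotopy of $L$ can be upgraded, using the fact that the components determine $T$ and then $T_0$ essentially uniquely, to an ambient contact isotopy that carries one such model configuration to another, and I must show such an ambient isotopy cannot realize a non-cyclic permutation.

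Next I would carry out the localization step. Given a Legendrian isotopy $\phi_t$ of $S^3$ taking $L$ to $\sigma(L)$, I want to show $\phi_1$ can be modified, rel $L$, so that it preserves a thickened-torus neighborhood of the pre-Lagrangian $T_0$ and acts on it in a standard way. The point is that $L$, being $n$ parallel Legendrian leaves all of the maximal Thurston--Bennequin number, forces any torus it sits on to have dividing slope $-q/p$ and forces a unique (up to isotopy) surrounding basic-slice structure; this is exactly the content of the uniqueness statements in Lemma~\ref{prelagmodel2}/Lemma~\ref{prelagmodel}. So after isotopy we may assume $\phi_1$ restricts to a contactomorphism of the basic slice $B$ fixing $L\subset T_0$ setwise and permuting its components by $\sigma$.

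Then I would derive the contradiction from rigidity. Suppose $\sigma$ is not cyclic. Pick two components $\Lambda_i,\Lambda_j$ that are \emph{adjacent} in the cyclic order but whose images under $\sigma$ are not adjacent (a non-cyclic permutation must disturb adjacency somewhere). The key geometric fact is that between two adjacent leaves of $T_0$ there is an embedded sub-annulus of $T_0$ disjoint from the other components, and the pair of ``outermost'' convex tori one can extract near these two leaves bound a non-rotative piece whose complementary annulus has exactly two dividing curves running across it; moving to a neighboring convex torus and applying Lemma~\ref{notut}, one sees there is no pre-Lagrangian torus of the wrong slope available, so the ``gap'' structure between consecutive components is a contact invariant. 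A contactomorphism preserving $T_0$ and $L$ must therefore send consecutive components to consecutive components, i.e.\ it acts on the index set as an element of the dihedral-type stabilizer of the cyclic order; combined with the orientation data (the relative Euler class computation as in the proof of Lemma~\ref{stab-perm}, which shows the two ``sides'' of $T_0$ carry stabilizations of opposite sign and hence are not interchangeable) this pins $\sigma$ down to be a cyclic rotation, contradicting our assumption. I expect the main obstacle to be the localization step: rigorously passing from an abstract Legendrian isotopy in $S^3$ to a contactomorphism that respects the whole basic-slice-plus-pre-Lagrangian model, which requires carefully invoking the convex surface uniqueness results to ``trap'' the isotopy, and making precise that adjacency of leaves in the pre-Lagrangian is detected by the dividing-curve combinatorics on complementary annuli rather than just by the unordered configuration.
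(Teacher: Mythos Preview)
Your high-level plan (localize the isotopy to a basic slice, then prove rigidity there) matches the paper's architecture, but both halves of your execution have genuine gaps.

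\textbf{The localization step.} You write that ``after isotopy we may assume $\phi_1$ restricts to a contactomorphism of the basic slice $B$ fixing $L\subset T_0$ setwise.'' This is not what happens, and it is not clear it can be arranged. The isotopy $\phi_t$ lives in $S^3$ and there is no reason $\phi_1$ should preserve any particular basic slice neighborhood of $T_0$. The paper does \emph{not} achieve this; instead it shows (via a Seifert-fibration argument, a discretization-of-isotopy argument using Honda's disk-equivalence of non-rotative outer layers, and an Imbalance-Principle thickening) that the image torus $T_1=\phi_1(T_0)$ can be traded for a second pre-Lagrangian torus $P_1$ inside a \emph{single} basic slice that also contains $T_0$, with $P_1$ inducing the permuted cyclic order on $L$. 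The reduction is to the statement ``two pre-Lagrangian tori in a basic slice sharing leaves induce the same cyclic order,'' not to ``$\phi_1$ preserves the basic slice.'' You flag this as the main obstacle but do not address it; invoking Lemma~\ref{prelagmodel2} alone does not trap the global isotopy.

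\textbf{The rigidity step.} Your proposed use of Lemma~\ref{notut} does not do what you need. That lemma says a \emph{non}-universally-tight union of two basic slices contains no pre-Lagrangian torus of the middle slope; it is a non-existence statement in a mixed-sign setting. Here we are in a universally tight piece where the pre-Lagrangian $T_0$ manifestly exists, so there is nothing to apply it to. Your claim that ``the gap structure between consecutive components is a contact invariant'' detected by Lemma~\ref{notut} is not substantiated: the region between two adjacent leaves is just a pre-Lagrangian sub-annulus, and nothing about Lemma~\ref{notut} distinguishes adjacent from non-adjacent pairs. The paper's rigidity argument is entirely different: it removes $S^1$-invariant neighborhoods of three components from an $S^1$-invariant thickened-torus model, obtains a product $\Sigma_{123}\times S^1$, and then invokes Honda's minimality result (Proposition~\ref{KoS1invariant}) to compare the dividing curves on $\Sigma_{123}\times\{\theta\}$ with those on its image under the putative permutation. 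A non-cyclic permutation forces the dividing curves to connect the boundary components of $\Sigma_{123}$ in combinatorially incompatible ways, contradicting minimality. Your adjacency-plus-Euler-class sketch does not access this mechanism, and even if adjacency were preserved, your argument for ruling out reflections (distinguishing the two sides of $T_0$) does not translate into a restriction on permutations of leaves \emph{within} $T_0$.
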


With this theorem we can now complete the proof of Theorem~\ref{negperms}.
\begin{proof}[Proof of Theorem~\ref{negperms}]
The existence of the claimed permutations of the Legendrian link follows from Lemmas~\ref{cyclic-perms},  \ref{stab-perm}, and the restrictions on the permutations follows from Theorem~\ref{non-cyclic-perm-gen}.
\end{proof}
 
 We begin the proof of Theorem~\ref{non-cyclic-perm-gen} by observing limitations on Legendrian isotopies in a basic slice, and then reduce the above theorem to this observation.

 \subsection{Forbidding Non-cyclic Permutations in Basic Slices} \label{ssec:non-cyclic-slice}
 
\begin{theorem}\label{thickenedtori}
Let $(T^2\times[0,1], \xi)$ be a basic slice, and suppose
$T$ is a boundary-parallel, pre-Lagrangian torus in this basic slice.
Let $L=(\Lambda_1,\ldots, \Lambda_n)$ be a Legendrian link consisting of leaves in the characteristic foliation of $T$ ordered as they appear on $T$. 
Then a non-cyclic permutation of the components of $L$ cannot be achieved by a Legendrian isotopy.  
\end{theorem}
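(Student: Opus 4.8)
The plan is to attach to $L$ --- given that its components are leaves of a pre-Lagrangian torus of slope $s$ in the basic slice $(T^2\times[0,1],\xi)$ --- an \emph{oriented} cyclic order on $\{\Lambda_1,\dots,\Lambda_n\}$, to prove this oriented cyclic order is a Legendrian isotopy invariant of $L$, and to observe that the group of permutations preserving an oriented cyclic order on $n$ elements is exactly the cyclic group $\mathbb Z/n\mathbb Z$; a non-cyclic permutation would then change the invariant, which is impossible.

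Suppose for contradiction that some non-cyclic $\sigma\in S_n$ is realized by a Legendrian isotopy inside $B=T^2\times[0,1]$. By the contact isotopy extension theorem, extend it to an ambient contact isotopy of $B$ supported in the interior, with time-one map $\Phi$: a contactomorphism of $B$, isotopic to the identity rel $\partial B$, with $\Phi(\Lambda_i)=\Lambda_{\sigma(i)}$. Using Lemma~\ref{prelagmodel}, after an isotopy we may take $L$ to be the set of Legendrian divides of a convex torus $T'$ in standard form of slope $s$ parallel to $\partial B$, and fix a non-rotative thickened torus $R'\supset T'$ of slope $s$ with complementary annulus $A'$ having two dividing arcs running from one boundary of $A'$ to the other. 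The construction in Lemma~\ref{lem:model-order} produces from $A'$ an \emph{ordered} list of the Legendrian divides of $T'$ --- it traverses an oriented loop assembled from the \emph{negative} regions of the convex annulus together with the orientation of $A'$ --- hence an oriented cyclic order on $\{\Lambda_1,\dots,\Lambda_n\}$, and by part (2) of Lemma~\ref{prelagmodel} this agrees with the order in which the leaves $\Lambda_i$ appear on $T$; normalize it to be $(1,2,\dots,n)$. Because $\Phi$ is a contactomorphism it preserves both the orientation of $B$ and the co-orientation of $\xi$, so $\Phi(T')$, $\Phi(R')$, $\Phi(A')$ carry the analogous structure, with $L$ as the Legendrian divides of $\Phi(T')$, and the induced oriented cyclic order on $L$ is now $(\sigma(1),\dots,\sigma(n))$.

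The heart of the proof is the claim that this oriented cyclic order depends only on $L$ in $(B,\xi)$ and not on the auxiliary choices; granting it, $(1,\dots,n)$ and $(\sigma(1),\dots,\sigma(n))$ are the same oriented cyclic order, so $\sigma$ is a rotation, i.e.\ a cyclic permutation, contradiction. I would prove the claim by showing the invariant is locally constant along an arbitrary Legendrian isotopy $L_t$ of $L$ inside $B$. Along such an isotopy the contact framing of each component always agrees with the framing induced by the slope-$s$ torus class --- this relative twisting is a Legendrian isotopy invariant and is $0$ for $L_0$ --- so by convex surface theory one can realize $L_t$ as the Legendrian divides of a convex torus of slope $s$ parallel to $\partial B$ equipped with a complementary annulus; Lemmas~\ref{rem:annulus-unique} and~\ref{lem:model-order} then show the resulting oriented cyclic order depends only on $L_t$, and it varies continuously in $t$, hence is constant.

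The main obstacle is precisely the well-definedness at a fixed time: two convex tori of slope $s$ parallel to $\partial B$ with the same set $L$ of Legendrian divides must induce the same oriented cyclic order on $L$. Since both tori contain $L$ they cannot be made disjoint, so I would argue inside the complement $B\setminus N(L)$ of a standard solid-torus neighborhood of the link, where each torus restricts to $n$ disjoint convex annuli; comparing the two collections of annuli by the corner-rounding and gluing operations for convex surfaces and the classification of tight contact structures on the resulting solid and thickened tori (Theorem~\ref{thm:solid-tori} together with the results of \cite{Honda00a}), one identifies the two complementary annuli up to an isotopy supported in a controlled region and concludes via Lemma~\ref{lem:model-order}. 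This is where the rigidity of pre-Lagrangian tori and annuli must be translated into convex surface theory, and where one must take care to localize the ambient isotopy --- using Lemma~\ref{prelagmodel} and Remark~\ref{prelagmodel-extended} --- so that the link never leaves a controlled non-rotative region of slope $s$, whose product structure furnishes an identification of the two copies of $L$ compatible with the isotopy.
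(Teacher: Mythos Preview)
Your overall strategy --- attach to $L$ an oriented cyclic order via a complementary annulus and show it is a Legendrian isotopy invariant --- is the right idea, and it is indeed the invariant the paper uses. But the proposal has a genuine gap at exactly the point you flag as ``the main obstacle'': showing that two convex tori of slope $s$ in $B$ sharing $L$ as their Legendrian divides induce the \emph{same} oriented cyclic order. Your sketch (``compare the two collections of annuli by corner-rounding and the classification of tight contact structures on solid and thickened tori'') is not a proof; those classification results determine contact structures up to isotopy, but they do not by themselves control how the dividing curves on two different complementary annuli connect the boundary components of a neighborhood of $L$. Lemma~\ref{lem:model-order} only compares complementary annuli inside a \emph{fixed} non-rotative layer $R'$; it says nothing about two different layers $R'$ and $\Phi(R')$. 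And the phrase ``it varies continuously in $t$, hence is constant'' does not make sense for a discrete invariant unless you first produce a continuous family of convex tori and complementary annuli carrying $L_t$ as Legendrian divides --- which is precisely the well-definedness you are trying to establish.

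The paper does not attempt to prove this well-definedness directly. Instead it (i) reduces to a three-component link with a contact isotopy fixing a neighborhood of $\Lambda_1$ and exchanging $\Lambda_2,\Lambda_3$; (ii) embeds the basic slice into an $S^1$-invariant thickened torus $\Sigma\times S^1$ (Lemma~\ref{basic-slice-extension}); (iii) removes $S^1$-invariant tubular neighborhoods $N_i$ of the $\Lambda_i$ so that the complement is $\Sigma_{123}\times S^1$ with an $S^1$-invariant tight contact structure; and (iv) derives a contradiction from Honda's minimality result \cite[Proposition~4.4]{Honda00b} (stated here as Proposition~\ref{KoS1invariant}): the dividing set of a horizontal section $\Sigma_{123}\times\{\theta\}$ is contained in that of any convex surface isotopic to it rel boundary, so the dividing arc connecting $\partial N_1$ to $\partial N_2$ cannot be replaced by one connecting $\partial N_1$ to $\partial N_3$. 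This $S^1$-invariant minimality is the key technical input your outline is missing; the statement that two pre-Lagrangian tori containing $L$ give the same cyclic order (Corollary~\ref{pre-lag-order}) is obtained only \emph{after} the theorem, as a consequence, not as an ingredient.
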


\begin{proof}
Let $(T^2\times[0,1], \xi)$ be a  basic slice with dividing curves on $T^2\times \{i\}$ having slope $s_i$, for $i = 0, 1$,
and $T \subset T^2 \times [0,1]$ be a boundary-parallel  pre-Lagrangian torus.  Since we are in a basic slice,  we know the characteristic foliation of $T$ will have slope $s\in (s_0,s_1)$.  
 
For the reader's convenience, we will first outline the steps of the proof.

\begin{step}\label{step 1} If there is a contact isotopy $\phi_t$ of $L$ in $T^2 \times [0,1]$ such that $\phi_1(L) = L$  and $\phi_1$ realizes a non-cyclic permutation of its components, then there
is a 3-component link $(\Lambda_1, \Lambda_2, \Lambda_3)$ consisting of leaves of the characteristic foliation of $T$, a neighborhood $N$ of $\Lambda_1$,  and a contact isotopy $\psi_t$ of $(T^2\times [0,1], \xi)$ such that   
 $\psi_{t} = id$ on $N$ and in a neighborhood of the boundary, $\psi_1(\Lambda_2) = \Lambda_3$, and $\psi_1(\Lambda_3) = \Lambda_2$.  
\end{step}
 
\begin{step}   \label{step 2}
There is a contact embedding of $(T^2\times[0,1], \xi)$, which is a basic slice with boundary slopes $s_0$ and $s_1$ that contains the pre-Lagrangian torus $T$ of slope $s$, into $(T^2\times [-1,2], \xi')$, where $\xi'$ is tight and each component of 
the boundary of $T^2\times [-1,2]$  is convex with two dividing curves of slope $s$. 
The contact structure $\xi'$ can be chosen such that the $S^1$-action given by rotating in the $s$-direction preserves $\xi'$. We extend the contact isotopy $\psi_t$ 
defined on $(T^2\times[0,1], \xi)$ in Step 1  by the identity to obtain an isotopy $\psi_{t}$ defined on $(T^2\times [-1,2], \xi')$.  We can write $(T^2\times [-1,2], \xi')= (\Sigma \times S^{1}, \xi')$, where $\Sigma = S^1 \times [-1, 2]$ and  $\Sigma \times \{ \theta \}$ is a convex annulus.
\end{step}

The $S^{1}$-invariance allows us to represent important objects in our $3$-manifold on the annular surface $\Sigma$.

\begin{step}  \label{step 3}
There are tubular neighborhoods $N_i$ of $\Lambda_i$, with $N_1 \subset N$, that are invariant under the $S^1$-action  from Step 2.
Letting $X_1=\overline{(T^2\times[-1,2])-N_1}$, $X_1$ is diffeomorphic to $\Sigma_{1} \times S^1$, where $\Sigma_{1}$ is an annulus with a disk removed, and
the contact isotopy $\psi_t$ from Step 2 restricts to define a contact isotopy $\psi_{t}^{1}$ of $ X_1$.
Letting $X_{123} =\overline{(T^2\times[-1,2])-(N_1 \cup N_{2} \cup N_{3})}$, $X_{123}$ is diffeomorphic to $\Sigma_{123} \times S^1$, where $\Sigma_{123}$ is $\Sigma_{1}$ with two disks removed,
and 
the contact diffeomorphism $\psi_{t}|_{t = 1}$ from Step 2 gives rise  to a contact diffeomorphism of $\psi^{123}$ of $ X_{123}$.
The neighborhoods $N_{i}$ can be chosen such that $X_{123}$ has  boundary consisting of convex tori with dividing curves of slope $\infty$, meaning parallel to the $S^{1}$-action,
and horizontal Legendrian rulings, meaning that  the boundary of $\Sigma_{123} \times \{\theta\}$ is a union of Legendrian ruling curves.
\end{step}

\begin{step}  \label{step 4}
For all $\theta$, $\Sigma_{123} \times \{ \theta \}$ is convex with Legendrian boundary.
 From  the convex surface $\psi^{123}(\Sigma_{123} \times \{ \theta \})$, we can construct a convex surface $\Sigma'$ whose boundary agrees with 
 the boundary of $\Sigma_{123} \times \{ \theta \}$ yet  the dividing curves of $ \Sigma_{123} \times \{ \theta \}$ and the 
  the dividing curves of $\Sigma'$  connect the 3 boundary components  coming from the neighborhoods of the $\Lambda_i$ in topologically different ways.
 Since  
 there is an isotopy rel boundary taking $\Sigma'$  to $\Sigma_{123} \times \{ \theta \}$,
    we get a contradiction to the following ``minimality'' proposition due to Honda.
  
  \begin{proposition} \cite[Proposition~4.4]{Honda00b} \label{KoS1invariant} Suppose $\Sigma$ is compact with boundary.
  Consider an $S^1$-invariant tight contact structure on 
  $\Sigma \times S^{1}$ where the boundary of $\Sigma \times S^{1}$
  consists of convex tori with dividing curves of slope $\infty$ (meaning parallel to the $S^{1}$-action) and horizontal Legendrian rulings (meaning  for all $\theta$,
  $\partial \Sigma \times \{ \theta \}$ is a Legendrian ruling curve).  Then $\Sigma \times \{\theta\}$ minimizes dividing curves in the following sense:
  if $\Sigma' \subset \Sigma \times S^{1}$ is a convex surface  that is isotopic rel boundary to $\Sigma \times  \{ \theta \}$, 
   then there exists an isotopy  $\phi_{t}$ of 
  $\Sigma'$  with $\phi_{1}(\Sigma') = \Sigma \times \{ \theta \}$ and $\phi_{t}|_{\partial \Sigma'} = id$  such that $\Gamma_{\Sigma \times \{ \theta\}} \subset \phi_{1}(\Gamma_{\Sigma'}) $.  
   {In particular,  if $\partial_1$ and $\partial_2$ are two components of $\partial(\Sigma \times \{\theta\})$ and there  there is a dividing curve  on $\Sigma \times \{ \theta \}$ that connects
   $p_1 \in \partial_1$ and $p_2 \in \partial_2$, then there will be a dividing curve on $\Sigma'$ that connects $p_1$ and $p_2$.} 
   \end{proposition}
\end{step}

Now we begin to verify the claims in Step 1.
 Suppose there is a contact isotopy $\phi_t$ of $L = (\Lambda_1, \dots, \Lambda_n)$ such that $\phi_1(L) = L$ and $\phi_1$ realizes a non-cyclic permutation of its components.
 We can do further cyclic isotopies by sliding $L$ along the leaves of the pre-Lagrangian torus $T$. By such an isotopy we can assume that $\phi_1(\Lambda_1) = \Lambda_1$. If we now label two components of $L$ that have their order interchanged (but the components are not necessarily sent to each other) by $\Lambda_2$ and $\Lambda_3$.  By a further isotopy  along $T$ we can assume that $\Lambda_2$ and $\Lambda_3$ are interchanged.  
Disregarding the other components we see that if there is an isotopy of $L$ realizing a non-cyclic permutation of its components, then there is a Legendrian link, $(\Lambda_1, \Lambda_2, \Lambda_3)$ on $T$, 
 and an isotopy $\phi_t$ such that $\phi_1(\Lambda_1) = \Lambda_1$, $\phi_1(\Lambda_2) = \Lambda_3$, and $\phi_1(\Lambda_3) = \Lambda_2$.    
{In our labeling, we are assuming that with respect to the  pre-Lagrangian $T$, one encounters $\Lambda_{1} = \phi_{1}(\Lambda_{1})$, then $\Lambda_{2} = \phi_{1}(\Lambda_{3})$, and then $\Lambda_{3} = \phi_{1}(\Lambda_{2})$.}

Consider the pre-Lagrangian torus $T' =\phi_1(T)$. Notice that $T \cap T'$ contains $L$.
The following lemma about pre-Lagrangian annuli will allow us to modify $\phi_{t}$ to a contact isotopy  that is the identity on a neighborhood of $\Lambda_{1}$.

\begin{lemma}\label{close}
Let $A$ and $A'$ be two pre-Lagrangian annuli whose characteristic foliations consist of circles. 
Assume that $\Lambda \subset A \cap A'$ is a Legendrian circle on both $A$ and $A'$. Then $A'$ may be isotoped through pre-Lagrangian annuli so that a neighborhood $U$ of $\Lambda$ in $A'$ is a subset of $A$. 
Moreover, $\Lambda$ and $\partial A'$ can be fixed throughout the isotopy.    
\end{lemma}

\begin{proof}
We construct a model for a neighborhood of $A.$ Let $A$ be the $xz$-plane in $\R^3/\sim$, where  $(x,y,z)\sim (x+1, y, z)$,
with contact structure  $\xi=\ker (dz-y\, dx)$; {$\leg$ is modeled by  the $x$-axis $S^1 \times \{0\}$. } We know that the annulus $A'$ is transverse to the $y$-direction along the $x$-axis, since a tangency would give a singularity in the characteristic foliation of $A'$.  Thus we can write $A'$ near $S^1\times\{0\}$ as a graph over $A$:  there
is some function $f(x,z)$ so that a neighborhood $U'$ of $S^1\times\{0\}$ in $A'$ is $\{(x,f(x,z),z)\}.$ The front projection of the foliation of $U'$ onto $A$
will give a foliation of a neighborhood of the $x$-axis by curves. As will be explained in the next paragraph, these curves can be straightened out near the $x$-axis, and this
straightening gives the isotopy from $U'$ to a subset of $A$.

To make this precise, notice that the projection of the foliation of $U'$ to the $xz$-plane can be parameterized by $F:S^1\times [-\epsilon, \epsilon]\to S^1\times \R$, where $F(\theta, t)= (\theta, t+f_t(\theta))$ 
 and the 
constant $t$ curves are the front projections of leaves of the characteristic foliation of $A'$. Notice that $f_0(\theta) =0$, and   near $t=0$ we have $\frac{\partial f_t(\theta)}{\partial t}$ is small since here the front projection of the corresponding leaves are almost horizontal: we can assume there exists  $\varepsilon$ and $B$ such that
$$\left| \frac{\partial f_t(\theta)}{\partial t} \right| \leq B < 1, \quad \forall t \in [-\epsilon, \epsilon], \quad \forall \theta \in S^1.$$

To construct an isotopy of pre-Lagrangian annuli, consider an increasing function $g: [-\epsilon,\epsilon] \to [-\epsilon, \epsilon]$ such that $g = 0$ on a neighborhood of $0$
and $g(t) = t$ near $\pm \epsilon$.   Then set $F_g(\theta, t)= (\theta, t+f_{g(t)}(\theta))$.  If $F_g$ is an embedding, then by considering each constant $t$ curve as the front projection of a Legendrian knot,
 we can lift the image of $F_g$ to a pre-Lagrangian annulus $U'$ that will coincide with $A$ along the center circle of this annulus and with $A'$ near its boundary.  Towards seeing that $F_g$
 is an embedding, we first claim that it is possible to choose $\epsilon$ and $g$ so that $F_g$ is an immersion.
Observe that $\det DF_{g} = (1 + \frac{\partial f_{g(t)}(\theta)}{\partial t} )$, and thus we want to guarantee that  with an appropriate choice of $g$, $\frac{\partial f_{g(t)} (\theta)}{\partial t} > -1$.
  Choose $C > 1$ such that $|BC| < 1$; it is possible to choose $g$ such that  $|g'(t)| < C$, for all $t \in [-\epsilon, \epsilon]$.
   By the chain rule, we then see that  $|\frac{\partial f_{g(t)} (\theta)} {\partial t} | < 1$,
 which will guarantee that $F_g$ is an immersion.  
 Moreover, we see that $F_g$ is an embedding by arguing $F_{g}$ is injective as follows. 
 First observe that if 
 $F_g(\theta_1,t_1)=F_g(\theta_2,t_2)$, then $\theta_1=\theta_2$. 
 Now by our choice of $g$, for a fixed $\theta$, the map $t\mapsto t + f_{g(t)}(\theta)$ has positive derivative  and thus is injective. 
  In addition, there is a family of such $g$ starting with the identity map and ending with a pre-chosen $g$, thus we get an isotopy through pre-Lagrangian annuli.
 \end{proof}

We can use Lemma~\ref{close} to complete Step 1. 

\begin{corollary} \label{fix1} If there is a contact isotopy $\phi_t$ of $L$ in the basic slice $(T^2 \times [0,1], \xi)$ such that $\phi_1(L) = L$ and $\phi_1$ realizes a non-cyclic permutation of its components, then there
is a 3-component link $(\Lambda_1, \Lambda_2, \Lambda_3)$, a neighborhood $N$ of $\Lambda_1$,  and a contact isotopy $\psi_t$ of $(T^2\times [0,1], \xi)$ such that   
 $\psi_{t} = id$ on $N$ and in a neighborhood of the boundary, $\psi_1(\Lambda_2) = \Lambda_3$, and $\psi_1(\Lambda_3) = \Lambda_2$.  
\end{corollary}

\begin{proof}
Let $A$ be an annular neighborhood of $\Lambda_1$ on the pre-Lagrangian torus $T$ and let $A'$  be an annular neighborhood of $\Lambda_1$ on $T' = \psi_1(T)$.
By Lemma~\ref{close}, we can isotop $T'$ to a pre-Lagrangian torus $T''$ that agrees with $T$ in a neighborhood $U$ of $\Lambda_1$. We can then use the leaves of the characteristic foliation on $T$ to isotop $\Lambda_2 \cup \Lambda_3$ into the neighborhood $U$ (without moving a neighborhood  of $\Lambda_1$), and then using $T''$ we can further isotop them back to $\Lambda_3 \cup \Lambda_2$ (interchanging order). We can now extend this Legendrian isotopy to a global contact isotopy $\psi_t$  as desired.  \end{proof}

Now we move onto Step 2, the first part of which is accomplished through the following lemma.

\begin{lemma} \label{basic-slice-extension} There exists an embedding of our basic slice $(T^2\times[0,1], \xi)$, which has boundary slopes $s_0$ and $s_1$ and contains the pre-Lagrangian torus $T$ of slope $s$, 
into $(T^2\times [-1,2], \xi')$ where $\xi'$ is tight and each of the  boundary components of $T^2\times [-1,2]$  is convex with two dividing curves of slope $s$.  
The contact structure $\xi'$ can be chosen such that the $S^1$-action given by rotating in the slope $s$-direction preserves $\xi'$. 
\end{lemma}

\begin{proof} Our strategy will be to build a model for $(T^2\times [-1,2], \xi')$ that contains a basic slice; {our result will then follow from the fact that basic slices are unique up to orientation.}  

Begin by writing $T^2=S^1\times S^1$ so that $\{p\}\times S^1$ is the curve of slope $s$. That is, we change coordinates on $T^2$ so that  the pre-Lagrangian is foliated by lines of slope $s=\infty$; let
$v_{i}$ denote the boundary slopes $s_{i}$ of the basic slice in these new coordinates.
We denote the angular coordinates on the $S^1$ factors by $\theta_1$ and $\theta_2$.   
In these coordinates, consider $T^2\times\R$ with the contact structure $\ker(-\cos t \, d\theta_1+ \sin t\, d\theta_2)$.  The torus $T^{2}\times \{0\}$ is pre-Lagrangian with slope $\infty$, as are the tori $T^2\times \{\pm \pi\}$. This contact structure is well-known to be tight\footnote{For example, this is the contact structure on a $\Z$-fold cover of the standard tight contact structure on $T^3$, which is universally tight.}, and the $S^1$-action given by rotating in the $\theta_2$-direction preserves the contact structure.   
As argued in the proof of Lemma~\ref{prelagmodel},
one may perturb the tori $T^2\times \{\pm \pi\}$ such that  each are convex, have two dividing curves of slope $\infty$, and have horizontal ruling curves. 
 The region $R$ between these convex tori is invariant under the $S^1$-action, and  
  $R$ is further divided into two regions $R_-$ and $R_+$ by $T^{2} \times\{0\}$, where $R_-$ contains $t$ coordinates with negative values. 
 From our local model, we see that by choosing a sufficiently small perturbation of the $T^2\times \{\pm \pi\}$,  we can  be guaranteed to
find a pre-Lagrangian torus of slope $v_0$ in $R_-$ and a pre-Lagrangian
 of slope $v_1$ in $R_+$.  Perturb each of these tori to be convex with two dividing curves, and let $B$ be the region between them. Then $T^{2}\times\{0\}$ is contained in $B$, and $B$ is a basic slice. 
 So we have built a model for our original basic slice $T^2\times [0,1]$.  Moreover the complementary regions $R\setminus B$ are the claimed thickened tori that can be added to $T^2\times [0,1]$ to create $(T^2\times [-1,2],\xi')$, as desired. 
\end{proof}

\begin{remark}
For future arguments, it will be convenient to think of $(T^{2} \times [-1, 2], \xi')$ as $(\Sigma \times S^{1}, \xi')$, where $\Sigma = S^1 \times [-1, 2]$ is an annulus, and $\xi'$ is
invariant in the $S^1$-direction.
\end{remark}

\begin{remark}\label{rem:Sigma-dividing}
 {The surfaces $\Sigma = \{ \theta_{2} = c \} \times [-1, 2]$ will be denoted by $\Sigma\times \{c\}$.  For later purposes,
  it will be important to notice that the dividing curves of $\Sigma \times \{c\}$ 
 will contain $(\Sigma \times \{c\}) \cap T$, where $T$ is the boundary-parallel, pre-Lagrangian torus containing our link $L = (\Lambda_{1}, \Lambda_{2}, \Lambda_{3})$: the pre-Lagrangian $T$
 is foliated by lines that are tangent to the direction of the $S^{1}$-action. }
\end{remark}

As the contact isotopy $\psi_t$ 
defined on $(T^2\times[0,1], \xi)$ in Corollary~\ref{fix1}  is the identity near $\partial (T^2 \times [0,1])$, $\psi_t$ extends by the identity to an isotopy $\psi_{t}$ of $(T^2\times [-1,2], \xi')$.  This completes Step 2.

\medskip

Turning to Step 3, we will first find special neighborhoods $N_i$ of $\Lambda_i$.

\begin{lemma}\label{invtnbhd} 
There are  $S^1$-invariant neighborhoods $N_i$ of $\Lambda_i$ such that each $N_i$ has a convex torus boundary with dividing slope parallel to the $S^1$-action
and ruling slope $0$.
Furthermore, we can assume that 
  $N_1 \subset N$,  and $\psi_{t}|_{N_{1}} = id$.   
\end{lemma}

\begin{proof} 
By changing coordinates the model for the $S^1$-invariant contact structure on a neighborhood of  our pre-Lagrangian $T$ is $S^1\times S^1\times (-1,1)$ with contact form $-d\theta_1 + t\, d\theta_2$, the $S^1$-action being rotation in the $\theta_2$-direction and $T=S^1\times S^1\times \{0\}$. 
We can moreover assume that
$\Lambda_1=\{ \theta_{1} = 0 \} \times S^1 \times \{0\}$, and note that a neighborhood of this curve can by given by $(-2\epsilon,2\epsilon)\times S^1\times (-1,1)$ {with contact from $-dx +t\, d\phi$}. 

Now consider the embedding 
 \[
 \begin{aligned}
 \nu: D^2\times S^1&\to (-2\epsilon,2\epsilon)\times S^1\times (-1,1) \\
 (r, \theta, \phi) &\mapsto ( \epsilon r\cos \theta, \quad \phi, \quad \epsilon r\sin \theta).
 \end{aligned}
 \]  
 By construction, the image of this map is  an $S^1$-invariant neighborhood $N_1$ of $\Lambda_1$, and in these coordinates the characteristic foliation on the boundary is given by 
 $\nu^{*}(-dx - t d\phi)|_{r = 1} = \epsilon\sin\theta\, (d\theta +  d\phi)$.  
 So $\partial N_1$ has two circles worth of singularities (Legendrian divides) at $\theta=0,\pi$,
 and is non-singular elsewhere. Thus     $\partial N_1$ is a standard convex torus, and in $(\theta,\phi)$ coordinates the dividing slope is $\infty$ and the ruling slope is $-1$.

 By construction we have vertical dividing curves, but the ruling curves  have slope $-1$ rather than the desired slope of $0$.  
To fix this, we claim that we can add an $S^1$-invariant collar neighborhood $\partial {N_1}$ so that the new boundary has ruling slope 0 and 
 dividing curves of slope $\infty$. 
That such an invariant neighborhood exists is implicit in \cite{Honda00a}, but as a proof does not seem to exist in the literature, we will prove it in the following lemma.  In fact, the following lemma shows that we could make the slope
of the ruling curves be any finite number while keeping the infinite slope of the dividing curves. To set up convenient coordinates,
observe that there is a diffeomorphism of $\partial N_1$ that takes the ruling curves of slope $-1$ and the dividing curves of slope $\infty$ to a torus $T_0$ with ruling curves of  slope $0$ and dividing curves of slope $\infty$.
As both $\partial N_1$ and $T_0$ will have $[-1,1]$-invariant neighborhoods that are related by a contact diffeomorphism, the desired collar of $\partial N_1$ will follow from the following statement.

\begin{lemma}
If $\xi$ is a $[-1,1]$-invariant contact structure on $T^2\times [-1,1]$  with dividing curves of slope $\infty$ and rulings of slope $0$ that is invariant under rotations in the $\infty$ direction, then in any neighborhood of $T^2\times \{0\}$ there is a torus $T'$ isotopic to $T^2\times \{0\}$ that is also invariant under rotation in the $\infty$ direction and has dividing curves of slope $\infty$ and rulings of any slope other than $\infty$. 
\end{lemma}

\begin{proof} 
We will use coordinates $(\theta,\phi, t)$ on $T^2\times [-1,1]$, and take our contact structure to be given by $\cos\theta \, dt + \sin \theta \, d\phi$. This contact structure is invariant in the $\phi$ and $t$ directions. Observe that any torus given by $t = \text{constant}$
has two Legendrian divides at $\theta = 0, \pi$ and has ruling curves of slope $0$, meaning in the $\theta$-direction in the $\theta\phi$-plane.
We will first build a piecewise smooth torus with the desired properties, and then show that we can find a smooth torus with the desired properties. 

Suppose we want to realize a torus $T_s$ with ruling curves of slope $s<0$; we later explain how the argument needs to be modified to handle $s>0$. 
Given any neighborhood of $T^2\times \{0\}$, there is some $\epsilon>0$ such that $T^2\times [-2\epsilon,2\epsilon]$ is contained in the neighborhood. For any fixed $\theta_0\in (0,\pi/2)$ consider the torus $T_{\theta_0}$ obtained from 
$T^2\times\{0\}$ by removing $A_{[\theta_{0}, \pi/2]}^{0} = \{(\theta, \phi,0): \theta_0 \leq \theta \leq \pi/2\}$ and replacing it with the union of three annuli 
$A_{\theta_{0}}^{[0,\epsilon]}=\{(\theta,\phi, t): \theta=\theta_0, 0\leq t\leq \epsilon\}$,
$A^{\epsilon}_{[\theta_{0}, \pi/2]} =\{(\theta, \phi,\epsilon): \theta_0< \theta < \pi/2\}$, 
 and 
$A_{\pi/2}^{[0,\epsilon]}=\{(\theta,\phi, t): \theta=\pi/2, 0\leq t\leq \epsilon\}$. Denote by $A_0$ the complement of $A_{[\theta_{0}, \pi/2]}^{0}$ in $T^{2} \times \{0\}$.   Then
$$T_{\theta_0} = A_{0} \cup A_{\theta_{0}}^{[0,\epsilon]} \cup A^{\epsilon}_{[\theta_{0}, \pi/2]} \cup A_{\pi/2}^{[0,\epsilon]}$$
is a piecewise smooth torus.  
The characteristic foliation on $A_0$ has ruling curves of slope $0$ (parallel to the $\theta$ direction in the $\theta\phi$-plane) and two dividing curves of slope $\infty$. In addition, foliations on the annuli $A_{\theta_{0}}^{[0,\epsilon]}$, $A^{\epsilon}_{[\theta_{0}, \pi/2]}$, and $A_{\pi/2}^{[0,\epsilon]}$ are non-singular and of slope $-\cot \theta_0$ (in the $t\phi$-plane), 
$0$ (parallel to the $\theta$-direction in the $\theta\phi$-plane), and $0$ (parallel to the $t$-direction in the $t\phi$-plane), respectively.  By choosing $\theta_0$ appropriately the slope on $A_{\theta_{0}}^{[0,\epsilon]}$ can be any negative number. 
In particular, if we choose $\theta_0$ so that the slope on $A_{\theta_{0}}^{[0,\epsilon]}$ is $s/\epsilon$ then the slope of the ruling curves on the piecewise smooth $T_{\theta_0}$ is $s$.

 Now to get a smooth torus, first notice that $T_{\theta_0} = \Gamma \times S^{1}_{\phi}$, where $\Gamma$ is a closed curve with four corners in the $\theta t$-plane.  By replacing each corner of $\Gamma$ with curves approximating quarter circles of radius $\delta$ where $0< \delta \ll \epsilon$, we can obtain a smooth curve $\gamma_{\delta}$ in the $\theta t$-plane and a smooth 
 torus $T_{\theta_0,\delta} = \gamma_{\delta} \times S^{1}$.  As $\delta$ goes to $0$, the ruling curves on $T_{\theta_0,\delta}$ approach those on $T_{\theta_0}$.  Choose $0 < \theta_{0}^{-} < \theta_{0} < \theta_{0}^{+} < \pi/2$.  
The construction of the previous paragraph shows that on the piecewise smooth tori $T_{\theta_0^{\pm}}$, the ruling curves will have slopes $s^{\pm}$ satisfying
 $s^{-} < s< s^{+}$.  By choosing $\delta$ sufficiently small, the 
smooth tori $T_{\theta_0^{\pm},\delta}$ will have ruling curves of slopes $s^{\pm}_{\delta}$ satisfying
$s^{-}_{\delta}<s < s^{+}_{\delta}$.  By continuity, the Intermediate Value Theorem tells us that there exists a $\theta_{s}$, with $\theta_{0}^{-} < \theta_{s} < \theta_{0}^{+}$
such that the smooth torus $T_{\theta_s,\delta}$ has ruling curves of slope $s < 0$ and dividing curves of infinite slope, as desired.

To realize a torus with ruling curves of slope $s>0$, we start by choosing $\theta_0\in (\pi/2,\pi)$ and removing the annulus $A_{[\pi/2, \theta_{0}]}^{0} = \{(\theta, \phi,0): \pi/2  \leq \theta \leq \theta_0 \}$.  Following a  parallel procedure produces a smooth torus
with ruling curves of slope $s>0$ and dividing curves of infinite slope, as desired. 
 \end{proof}

To complete the proof of Lemma~\ref{invtnbhd}, first observe that the desired neighborhoods $N_2$ and $N_3$ can be constructed similarly. Lastly, we need to verify that we can assume that $\psi_{t} = id$ on $N_{1}$.
 Recall that above we showed that our isotopy $\psi_t$ can be assumed to be the identity on a neighborhood  $N$ of $\leg_1$. Since the neighborhood $N_1$ constructed above can be assumed to be arbitrarily small we can assume that  $N_1 \subset N$, and thus our isotopy $\psi_t$ is the identity on $N_1$. 
 \end{proof}

 Step 3 will be concluded once we prove the following corollary.
  
 \begin{corollary}\label{remove-2-disks} The manifold $X_1:=\overline{(T^2\times[-1,2])-N_1}$ is diffeomorphic to $\Sigma_{1} \times S^1$, where $\Sigma_{1}$ is an annulus with a disk removed, and
the contact isotopy $\psi_t$ from Step 2 restricts to define a contact isotopy  $\psi_{t}^{1}$ of $(X_1, \xi'|_{X_{1}})$.
The space $X_{123} :=\overline{(T^2\times[-1,2])-(N_1 \cup N_{2} \cup N_{3})}$ is diffeomorphic to $\Sigma_{123} \times S^1$, where $\Sigma_{123}$ is $\Sigma_{1}$ with two disks removed,
and the contact diffeomorphism $\psi_{1}$ from Step 2 {gives rise to} a contact diffeomorphism of $\psi^{123}$ of $(X_{123}, \xi'|_{X_{123}})$.  \end{corollary}
 
Although we know that $\psi_1$ interchanges $\leg_{2}$ and $\leg_{3}$, we will need to argue that $\psi_1$ induces a contactomorphism that interchanges  the neighborhoods $N_{2}$ and $N_{3}$. To prove this, we will employ the following lemma that is well known, but does not seem to be in the literature.

\begin{lemma}\label{isotoptori}
Suppose that $T_1$ and $T_2$ are {disjoint} convex tori in standard form with the same ruling and dividing slopes in a contact manifold $(M,\xi)$ such that the region $R$ between $T_1$ and $T_2$ is non-rotative. Then there is a contact isotopy of $(M,\xi)$ that is supported in a neighborhood of $R$ and takes $T_1$ to $T_2$. 
\end{lemma} 

\begin{proof} 
We begin by building a model for $R$. 
Given a surface $\Sigma$ with a singular foliation $\mathcal{F}$ that admits dividing curves, there is an $\R$-invariant contact structure $\xi'$ on $\Sigma\times \R$ that induces $\mathcal{F}$ on each $\Sigma\times \{t\}$, see \cite[Proposition~3.4]{Giroux91}. We will show that given any $a<b$ there is a contact isotopy of $\Sigma\times \R$ taking $\Sigma\times \{a\}$ to $\Sigma\times \{b\}$ that is supported in an arbitrarily small neighborhood of $\Sigma\times[a,b]$.  The lemma will then follow this since the hypotheses on $R$ imply
that $R$  has a neighborhood contactomorphic to a region $T^2\times[a,b]$ with such an $\R$-invariant contact structure. 

Let $\alpha$ be an $\R$-invariant contact form for the $\R$-invariant contact structure $\xi'$ on $\Sigma \times \R$. The function $H=\alpha(\partial_t)$ is the contact Hamiltonian generating the contact vector field $\partial_t$. Now let $g(t)$ be a function that is $1$ on some interval $[a,b]$ and $0$ outside a slightly larger interval. Then $gH$ generates a new contact vector field $v$ that agrees with $\partial_t$ on $[a,b]$ and is zero where $g$ is zero. The flow of $v$ gives a contact isotopy of $\Sigma\times\R$ that will take $\Sigma\times \{a\}$ to $\Sigma\times \{b\}$ and has support near $\Sigma\times (a,b)$. 
\end{proof}

Now we can complete the proof of Corollary~\ref{remove-2-disks}, and thus complete Step 3.

\begin{proof}[Proof of Corollary~\ref{remove-2-disks}]
Since $N_1$ is not moved by $\psi_t$, we have an induced isotopy $\psi^1_t$ on $X_1$. 
The only thing left to see is that we may assume that $\psi^1_1$ interchanges $N_2$ and $N_3$.  We will use an ``intermediate'' torus and Lemma~\ref{isotoptori} to extend $\psi^1_1$ by an isotopy to guarantee this happens.
Notice that $\Lambda_2$ is contained in $N_2\cap \psi^1_1(N_3)$, and thus there is an $S^{1}$-invariant neighborhood $N_2'$ of $\Lambda_2$ that is contained in this intersection such that the dividing and ruling slopes on $\partial N_2'$ agree with those on $\partial N_2$ and $\partial \psi^1_1(N_3)$. Tightness implies that the regions between $\partial N_{2}'$ and $ \partial (\psi_1^1(N_3))$ and between $ \partial N_{2}'$ and $\partial N_{2}$ are non-rotative.
Thus, from Lemma~\ref{isotoptori}, we can find a contact isotopy extending $\psi^1_t$ that takes 
$\partial (\psi_1^1(N_3))$ to $\partial N_2'$, and then another isotopy taking $\partial N_2'$ to  $\partial N_2$.
 Renaming the new isotopy $\psi^1_t$ again, we have $\psi_1^1(N_3)=N_2$. A similar argument arranges that  $\psi_1^1(N_2)=N_3$. 
\end{proof}

We are now ready for Step 4, where we derive our contradiction. 
The contradiction will arise by studying dividing curves on convex surfaces. 
In $(T^{2} \times [-1, 2], \xi') = (\Sigma \times S^{1}, \xi')$,  observe that for all $\theta$, $\Sigma \times \{ \theta \}$ is a convex annulus with dividing set containing 
$(\Sigma\times \{ \theta \}) \cap T$; see Remark~\ref{rem:Sigma-dividing}.
Now consider the two convex surfaces $\Sigma_{123} \times \{ \theta \}, \psi^{123}(\Sigma_{123} \times \{ \theta \}) \subset X_{123}$.
We see that the dividing curves of $\Sigma_{123} \times \{ \theta \}$ and $\psi^{123}(\Sigma_{123} \times \{ \theta \})$ connect
 the boundary components  corresponding to $\Lambda_{1}, \Lambda_{2},$ and $\Lambda_{3}$ in topologically different ways:

\begin{lemma}\label{combinatorics}  
 On
  $\Sigma_{123} \times \{ \theta \}$ there is a component $\gamma$ of the dividing set that connects $\partial N_1$ and $\partial N_2$; letting $p = \gamma \cap \partial N_1$, the dividing curve $\gamma'$ of $\psi^{123}(\Sigma_{123} \times \{ \theta \})$ containing $p$ connects $\partial N_1$ and $\partial N_3$.
\end{lemma} 

\begin{proof} As the dividing set of $\Sigma_{123} \times \{ \theta \}$ contains $(\Sigma_{123} \times \{ \theta \} ) \cap T$, it is clear that there is a connected component
of the dividing set, $\gamma$, connecting $\partial N_1$ and $\partial N_2$.  
The end of $\gamma$ intersecting $\partial N_1$ never 
moves throughout the isotopy $\psi^1_t$ since the isotopy is supported away from $N_1$.  Let $p = \gamma \cap \partial N_1$, and let $\gamma'$ 
be the connected component  of the dividing set of $\psi^{123}(\Sigma_{123} \times \{ \theta \})$ that contains $p$.  
{Since $\psi_t(\Sigma_{123} \times \{ \theta \})$ is convex with dividing curves given as the image under $\psi_t$ of the dividing curves of $\Sigma_{123} \times \{ \theta \}$}, we know that
for all $t$, there will be a component of the dividing curve  on $\psi^1_t(\Sigma_1 \times \{ \theta \})$ that  connects $p$ to $\psi_t^1(\partial N_2)$. Thus $\gamma'$ connects $\partial N_1$ and $\partial N_3=\psi^1_1(\partial N_2)$.
\end{proof}
 
If we knew there was a topological isotopy relative to the boundary
 of 
 $\psi^{123}(\Sigma_{123} \times \{ \theta \})$
  to 
  $(\Sigma_{123} \times \{ \theta \})$, we would immediately have a contradiction to Honda's result mentioned in Propositon~\ref{KoS1invariant}.  Although $\psi^{123}(\Sigma_{123} \times \{ \theta \})$ may not satisfy this isotopy condition,
  we can guarantee the existence of a surface $\Sigma'$ that does.

\begin{lemma}  There exists  a convex surface $\Sigma_\psi'$ so that 
$\partial \Sigma'=\partial (\Sigma_{123} \times \{\theta\} )$, $\Sigma'_\psi$ is isotopic to $\Sigma_{123} \times \{\theta\}$, and the  dividing curves on $\Sigma'_\psi$  and  $\psi^{123}(\Sigma_{123} \times \{ \theta \})$  topologically connect the boundary components in the same way.
\end{lemma}

\begin{proof}   
To abbreviate notation, let  $\Sigma_{123}$ denote $\Sigma_{123} \times \{\theta\}$.
From $\psi^{123}(\Sigma_{123})$, we can construct a new surface $\Sigma'_\psi$  with the desired properties
by an isotopy we call ``sliding the boundary along the boundary''.    
To define this sliding, let  $A$ be an annulus in a (torus) boundary component $B$ of $X_{123}$ with one boundary component of $A$ on 
$\partial ( \psi^{123}(\Sigma_{123} )) \cap B$, the other boundary component a ruling curve on $B$, and the rest of $A$ disjoint from 
$\psi^{123}(\Sigma_{123})$.  Then we can glue $A$ and $\psi^{123}(\Sigma_{123})$ together, 
round the corner between the two pieces, and push the interior of the new surface slightly into the interior of $X_{123}$. 
We can think of this new surface as obtained by isotoping one of the boundary components of $\psi^{123}(\Sigma_{123})$ along a boundary component of $X_{123}$ guided by $A$; we say this is the result of {\it sliding the boundary}. 
This isotopy can also be done where $A$ is actually an entire (torus) boundary component of $X_{123}$ rather than an annulus. 
If one always wants to work with annuli, then this would be a two step process: write a boundary of component $B$ as the union of two annuli $A_1$ and $A_2$
 and perform two slides using $A_1$ then $A_2$. We call a slide of ${\psi^{123}}(\Sigma_{123})$ along an entire boundary component of $X_{123}$ a {\it complete boundary slide}.
The dividing curves on the annulus $A$ added to $\psi^{123}(\Sigma_{123} )$ during a slide just run from one boundary component of $A$ to the other, so when we slide the surface the combinatorics of how the dividing curves intersect the boundary   is unchanged. 

We can clearly slide the boundary components of $\psi^{123}(\Sigma_{123})$ along the boundary of $X_{123}$ to get a surface $\Sigma_\psi$ such that $\Sigma_\psi$ and $\Sigma_{123}$ 
have the same boundary, and  the dividing curves on $\Sigma_\psi$ and $\psi^{123}(\Sigma_{123})$ connect the boundary components the same way.  Also notice that, since $\psi^{123}$ is the identity near $\partial_1$, we do not need to slide $\psi^{123}(\Sigma_{123})$ along $\partial N_1$.

We now complete our argument by showing that 
we may isotop $\Sigma_\psi$   
via complete boundary slides to get a  convex surface $\Sigma'_\psi$  
that is isotopic to 
$\Sigma_{123}$ relative to the boundary. 
Let $a_1, a_2, a_{3}$, and $a_4$ be arcs   on $\Sigma_{123}$ that  cut $\Sigma_{123}$ into a disk. If we label the boundary components of $\Sigma_{123}$ by $C_{0}, C_1, C_2, C_3,$ and $C_4$, where $C_0\cup C_4$ is the  boundary of the original annulus then we can choose the $a_i$ such that $a_i$ connects $C_{i-1}$ to $C_{i}$. 
Let $A_i= a_i\times S^1$. We can isotop the $A_i$ so that they are transverse to $\Sigma_{\psi}$. The intersection of $A_i$ with $\Sigma_{\psi}$ will consist of a single arc going from one boundary component of $A_i$ to the other and possibly some simple closed curves. 
All such simple closed curves must  bound disks on $A_i$, since the arc prevents them from being essential, and each must bound a disk on $\Sigma_{\psi}$, 
since $\Sigma_{\psi}$ is incompressible. One may use a standard innermost disk argument to isotop $A_i$ to remove the circles of intersection. 
Thus each $A_i$ intersects $\Sigma_{\psi}$ in exactly one arc $\eta_i$. 
The arcs $a_i$ and $\eta_i$ have the same boundary, but $\eta_i$ might not be isotopic to $a_i$ on $A_i$; after complete boundary slides of $\Sigma_{\psi}$ along boundary 
components of $X_{123}$, we can 
assume $\eta_i$ is isotopic to $a_i$ on $A_i$. 
Since some of the $a_i$ have end points on the same boundary component, doing slides to ``fix" one of the $\eta_i$ can may mess up another. But we can fix all the $\eta_i$ if we do so in the correct order. For example, we can first fix $\eta_1$ and $\eta_2$ by boundary slides along $C_0$ and $C_2$, respectively. Then fix $\eta_3$ by boundary slides along $C_3$, and fix $\eta_4$ by slides along $C_4$. Observe that we never needed to move $\Sigma_{\psi}$ along the boundary component on $\partial N_1$. 
Thus we may assume, after compete boundary slides, that all the $\eta_i$ are isotopic to $a_i$ on $A_i$.   
We can extend the isotopies of the $\eta_i$ to an ambient isotopy that will take $\Sigma_{\psi}$ to a surface $\Sigma'_{\psi}$
whose boundary agrees with the boundary of $\Sigma_{\psi}$ (which agrees with the boundary of $\Sigma_{123}$), and $\Sigma'_{\psi}$ also agrees with $\Sigma_{123}$ along the curves $a_i$.
By cutting $X_{123}$ open along the $A_i$, $X_{123}$ is cut open to $D^2\times S^1$, and $\Sigma_{123}$ and $\Sigma'_{\psi}$ are cut open into meridional disks that have the same boundary; two such disks are isotopic relative to their boundary. This isotopy can be done back in $X_{123}$ so we see that $\Sigma'_{\psi}$ is isotopic relative to the boundary to $\Sigma_{123}$. Moreover, since $\Sigma'_{\psi}$ is obtained by boundary sliding $\psi^{123}(\Sigma_{123})$, but never moving the boundary component on $\partial N_1$,
 the combinatorics of the dividing curves on $\Sigma'_{\psi}$ agree with those of $\Sigma_{\psi}$, which agreed with those of $\psi^{123}(\Sigma_{123})$.  
  \end{proof}
This completes Step~4 and hence the proof of Theorem~\ref{thickenedtori}.
\end{proof}

 As a corollary to Theorem~\ref{thickenedtori}, we can say something about the intersections of pre-Lagrangian tori in a basic slice.

 \begin{corollary}\label{pre-lag-order}  
Let $(T^2\times[0,1], \xi)$ be a basic slice containing two boundary-parallel pre-Lagrangian tori $T, T'$ of the same slope.  Suppose a link $L$ can be realized as a collection of leaves in the foliations of both $T$ and $T'$.  Then with respect to both tori, $L$ has the same cyclic ordering.
 \end{corollary}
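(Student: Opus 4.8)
The plan is to deduce Corollary~\ref{pre-lag-order} from Theorem~\ref{thickenedtori} by a contradiction argument that builds a self-isotopy of $L$ realizing a non-cyclic permutation whenever the two cyclic orderings disagree. First I would fix a cyclic ordering convention: each boundary-parallel pre-Lagrangian torus $T$ in the basic slice carries a natural cyclic ordering of its leaves (the leaves of a non-singular linear foliation on $T^2$ come with a cyclic order, as the foliation is transverse to a closed transversal curve), so $L = (\Lambda_1,\dots,\Lambda_n)$ acquires a cyclic order from $T$ and another from $T'$. Suppose these two cyclic orders differ. Since $L$ lies in both foliations, I can slide the components of $L$ along the leaves of $T$ to put them in their $T$-order (this is a Legendrian isotopy supported near $T$), and similarly I can slide along $T'$. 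Composing ``slide to $T$-order, then slide to $T'$-order'' gives a Legendrian isotopy $\phi_t$ of the ambient basic slice with $\phi_1(L) = L$ as a set, where $\phi_1$ realizes the permutation $\sigma$ converting the $T$-ordering to the $T'$-ordering.

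Next I would observe that since the two cyclic orderings are genuinely different, $\sigma$ is not a cyclic permutation. Indeed, two cyclic orderings of the same $n$ points that are related by a cyclic permutation are the same cyclic ordering; so a non-matching of the orderings forces $\sigma$ to be non-cyclic. Now apply Theorem~\ref{thickenedtori}: $L$ is a link of leaves in the characteristic foliation of the boundary-parallel pre-Lagrangian torus $T$ inside the basic slice $(T^2 \times [0,1], \xi)$, and a non-cyclic permutation of its components cannot be achieved by a Legendrian isotopy. This contradicts the existence of $\phi_t$, and hence the cyclic orderings from $T$ and $T'$ must agree.

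One technical point to be careful about is that Theorem~\ref{thickenedtori} concerns the ordering coming from a \emph{single} pre-Lagrangian torus, while the isotopy I construct starts from the configuration on $T'$ and ends on $T$ — but since an ambient Legendrian isotopy can be conjugated so that it both starts and ends with $L$ sitting as leaves on $T$ (by first sliding along $T'$ into $T$'s leaves, applying $\phi_t$, then noting the permuted copy again lies on $T$'s leaves after a further slide), Theorem~\ref{thickenedtori} applies directly to the resulting self-isotopy of $L$ as a collection of leaves of $T$. I expect the main (mild) obstacle to be bookkeeping: making precise that ``sliding $L$ along leaves of a pre-Lagrangian torus to rearrange its cyclic order'' is a genuine ambient contact isotopy and that composing the $T'$-slide with the $T$-slide yields an honest self-map of $L$ realizing exactly $\sigma$; this is routine since a pre-Lagrangian torus has a neighborhood with an $S^1$-invariant (in the leaf direction) contact structure, so sliding along leaves is generated by a contact vector field. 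Once that is set up, the corollary is immediate from Theorem~\ref{thickenedtori}, and no further convex-surface machinery is needed.
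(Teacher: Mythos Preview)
There is a genuine gap in your construction of the isotopy $\phi_t$. The phrase ``slide the components of $L$ along the leaves of $T$ to put them in their $T$-order'' does not define a nontrivial isotopy: $L$ already sits as leaves of $T$, so its $T$-order is simply the order in which those leaves appear---there is nothing to slide to. What you \emph{can} do is realize any cyclic permutation in the $T$-order by moving all components around $T$, and likewise any cyclic permutation in the $T'$-order by moving around $T'$; each such move returns $L$ to itself as a set, so compositions make sense. But the group these generate need not contain a non-cyclic permutation. Already for $n=3$ (the case the paper reduces to) there are only two cyclic orderings on three points, and the cyclic subgroup of $S_3$ they determine is the \emph{same} subgroup $\langle (123)\rangle = \langle (132)\rangle$; composing $T$-cyclic shifts with $T'$-cyclic shifts therefore never produces a transposition. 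More generally, whenever the $T'$-ordering is the reverse of the $T$-ordering, the two cyclic subgroups coincide for every $n$, and your argument yields nothing. So the step ``$\phi_1$ realizes the permutation $\sigma$ converting the $T$-ordering to the $T'$-ordering'' is unsupported, and in the critical cases false.

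The paper's proof supplies exactly the missing geometric ingredient. After passing to a $3$-component sublink with $T$-order $(\Lambda_1,\Lambda_2,\Lambda_3)$ and $T'$-order $(\Lambda_1,\Lambda_3,\Lambda_2)$, it invokes Lemma~\ref{close} to isotope $T'$ (rel $L$) so that $T$ and $T'$ agree on an annular neighborhood of $\Lambda_1$. Then one slides $\Lambda_2$ and $\Lambda_3$ along $T$ \emph{into} this overlap region and back \emph{out along $T'$}; because the cyclic orders disagree, this exchanges $\Lambda_2$ and $\Lambda_3$ while fixing $\Lambda_1$, producing a genuine transposition and hence the contradiction with Theorem~\ref{thickenedtori}. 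The key point your argument lacks is this local ``in along $T$, out along $T'$'' move through a region where the two tori have been made to coincide---that is what manufactures a non-cyclic permutation rather than merely composing cyclic ones.
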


 \begin{proof}  For a contradiction, suppose there exists such a link $L$ whose components are ordered differently by the pre-Lagrangian tori $T$ and $T'$.
 By considering a sublink and using arguments as in Step~\ref{step 1} of the proof of Theorem~\ref{thickenedtori}, we can assume $L$ has  $3$ components that are ordered differently with respect to $T$ and $T'$.  Suppose on $T$, these components are cyclically ordered as $(\Lambda_1, \Lambda_2, \Lambda_3)$,
 and on $T'$ they are cyclically ordered as $(\Lambda_1, \Lambda_3, \Lambda_2)$.  As in Step~\ref{step 1} of the proof of Theorem~\ref{thickenedtori}, we can isotop $T'$ so that it agrees with $T$ near $\Lambda_1$ and then isotop $\Lambda_2$ and $\Lambda_3$ along $T$ into the region where $T$ and $T'$ agree near $\Lambda_1$ and then out along $T'$ to exchange them, which is a contradiction to Theorem~\ref{thickenedtori}.  
  \end{proof}
 
 \begin{remark} \label{pre-lag-questions} It would be interesting to know if in a basic slice, two pre-Lagrangian tori that contain common leaves in their intersection must in fact be isotopic. 
  We note that each leaf in the characteristic foliation of one of the tori must intersect the other torus. To see this, suppose we had two such tori $T$ and $T'$ in a basic slice $T^2\times [0,1]$ and a leaf $L$ of $T$ was disjoint from $T'$. Then let $T''$ be a convex torus disjoint from $T'$ that contains $L$. It will have to have dividing slope that agrees with the slope of the characteristic foliation on $T'$ (since the twisting of the contact structure along $L$  will be zero with respect to the torus framing and thus cannot intersect the dividing set). But since $T'$ is pre-Lagrangian we can use a local model for it to see that between $T'$ and $T''$ we have tori with dividing slopes different from that of $T''$. This implies that the region between $T'$ and $T''$ is not minimally twisting, contradicting the fact that this is all taking place in a basic slice. This observation together with the above corollary strongly indicate that two such pre-Lagrangian tori must indeed be isotopic. 
\end{remark}

 \subsection{Forbidding Non-cyclic Permutations in $S^3$} \label{ssec:non-cyclic}
Having establishing the fact that it is not possible to do a non-cyclic permutation of the leaves of a pre-Lagrangian in a basic slice, we now move to the setting of $S^3$.
The proof of Theorem~\ref{non-cyclic-perm-gen} will follow from the following proposition.

 \begin{proposition} \label{S3-basic-slice}  For $q \geq p \geq 1$, let $L$ be an ordered Legendrian $(np, -nq)$-torus link in $S^3$
 with each component having $tb = -pq$.  Let $T$ be a pre-Lagrangian torus containing $L$; assume the components of $L$ are given the cyclic ordering from $T$.
 If there exists a contact isotopy $\psi_t$ of $S^3$ such that $\psi_{1}(L)$ realizes a permutation of $L$, then there exists a basic slice containing  pre-Lagrangian tori $T$ and $T'$ such  that both $T$ and $T'$  contain $L$ as leaves of their foliations,
 $T$ induces the cyclic ordering given by $L$, and  $T'$ induces the cyclic ordering given by $\psi_{1}(L)$. 
 \end{proposition}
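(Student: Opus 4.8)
The plan is to take the isotopy $\psi_t$ of $S^3$ and "localize" it to a basic slice. The link $L$ sits on a pre-Lagrangian torus $T$, which (as in Definition~\ref{neg-pre-lag} and the constructions of Section~\ref{sec:p-q-torus}) sits inside a basic slice $B\subset S^3$ when $p>1$, and inside a universally tight union of two basic slices when $p=1$; for notational ease I will speak of the basic slice case, the other being identical after invoking Remark~\ref{prelagmodel-extended}. Applying $\psi_1$ we get a second pre-Lagrangian torus $T'=\psi_1(T)$ and a second basic slice $B'=\psi_1(B)$, both containing $\psi_1(L)$, which is a reordering of $L$. The goal is to arrange that $T$ and $T'$ live in a \emph{common} basic slice, after which the cyclic orderings they induce on the common set of leaves $L$ are governed by Corollary~\ref{pre-lag-order} (or directly by Theorem~\ref{thickenedtori}): a non-cyclic permutation is then impossible, and Theorem~\ref{non-cyclic-perm-gen} follows by contradiction.

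First I would reduce, exactly as in Step~\ref{step 1} of the proof of Theorem~\ref{thickenedtori}, to the case where $\psi_1(L)=L$ setwise, by composing $\psi_t$ with an isotopy that slides the components along the leaves of $T$; we may further assume $\psi_1$ fixes $\Lambda_1$ pointwise and interchanges two other components $\Lambda_2,\Lambda_3$ whose order has been reversed. Next, the key geometric input is that two pre-Lagrangian tori of the same slope sharing the leaves of $L$ can be made to lie in a common basic slice. To see this, I would use Lemma~\ref{close}: since $\Lambda_1\subset T\cap T'$ is a common Legendrian leaf of both pre-Lagrangian annuli, we may isotope $T'$ through pre-Lagrangian tori so that it agrees with $T$ on a neighborhood $U$ of $\Lambda_1$, fixing $\Lambda_1$ and $\partial$ (of the relevant annular pieces) throughout. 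Since all of this takes place inside $S^3$, and since a pre-Lagrangian torus of a given slope has a canonical local model $T^2\times(-\epsilon,\epsilon)$ with contact structure $\ker(\cos t\, d\phi+\sin t\, d\theta)$, we can enlarge a neighborhood of $T$ to a basic slice $B$ whose boundary slopes are chosen to bracket $s$ (one choice below $s$, one above, joined by an edge to $s$ in the Farey graph --- in the $p=1$ case use Lemma~\ref{integral-union-basic} instead). After the Lemma~\ref{close} isotopy, $T'$ coincides with $T$ near $\Lambda_1$; using the contact vector field transverse to $T$ in the local model, we can then push $T'$ entirely into an arbitrarily thin neighborhood of $T$, hence into $B$, without moving $L$. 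This produces a single basic slice $B$ containing both $T$ and $T'$ as boundary-parallel pre-Lagrangian tori, each containing $L$ as a union of leaves.

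With $T$ and $T'$ both boundary-parallel pre-Lagrangian tori of the same slope in the basic slice $B$, both containing $L$, Corollary~\ref{pre-lag-order} applies verbatim and tells us that $L$ has the same cyclic ordering with respect to $T$ as with respect to $T'$. But $\psi_1$ carries the $T$-ordering of $L$ to the $T'$-ordering of $\psi_1(L)$; since $\psi_1(L)=L$ as a set and $\psi_1$ interchanges $\Lambda_2$ and $\Lambda_3$ while fixing $\Lambda_1$, this forces $(\Lambda_1,\Lambda_2,\Lambda_3)$ and $(\Lambda_1,\Lambda_3,\Lambda_2)$ to be equal up to cyclic rotation, which is false. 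This contradiction establishes Proposition~\ref{S3-basic-slice} and hence Theorem~\ref{non-cyclic-perm-gen}.

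I expect the main obstacle to be the second paragraph: carefully producing the \emph{single} basic slice containing both $T$ and $T'$. One has to check that the Lemma~\ref{close} isotopy of $T'$ can be carried out inside $S^3$ (not just abstractly) while keeping $\psi_1(L)$ fixed, that the "push $T'$ into a thin neighborhood of $T$" step can also be done relative to $L$ (this uses that $L$ lies on $T$ itself, so a normal pushoff isotopy of $T'$ toward $T$ fixes nothing problematic), and --- in the case $p=1$ --- that the ambient region is a \emph{universally tight} union of two basic slices rather than a single basic slice, so that Remark~\ref{prelagmodel-extended} and the $p=1$ clause of Corollary~\ref{pre-lag-order}/Definition~\ref{neg-pre-lag} still apply. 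The rest is bookkeeping with the Farey graph and the local models already set up in Subsections~\ref{ssec:solidtori}--\ref{ssec:solid-tori}.
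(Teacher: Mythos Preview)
Your proposal has a genuine gap at the crucial step. The sentence ``using the contact vector field transverse to $T$ in the local model, we can then push $T'$ entirely into an arbitrarily thin neighborhood of $T$, hence into $B$, without moving $L$'' is precisely the content of the proposition, and it is not justified by anything you have written. Lemma~\ref{close} only makes $T'$ agree with $T$ on a small annular neighborhood of $\Lambda_1$; it does nothing to control the rest of $T'$, which could a priori wander arbitrarily through $S^3$ since the ambient isotopy $\psi_t$ is global. There is no contact vector field transverse to $T$ in the model $\ker(\cos t\,d\phi+\sin t\,d\theta)$ whose flow would compress $T'$ toward $T$ while fixing $L$: in that model $\partial_t$ is not a contact vector field, and any contact flow that moves points normally to $T$ will move the leaves $\Lambda_i\subset T$ as well. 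More conceptually, if one could always isotope $T'$ (rel $L$) through pre-Lagrangian tori into any prescribed neighborhood of $T$, this would amount to the uniqueness of pre-Lagrangian tori of a given slope containing $L$, which is exactly the open question discussed in Remark~\ref{pre-lag-questions}; the paper explicitly avoids assuming this.

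The paper's proof does not attempt to keep $T'$ pre-Lagrangian. Instead it (i) perturbs $T_1=\psi_1(T_0)$ to a \emph{convex} torus $T_1'$ and uses the Seifert fibration of $S^3$ by $(p,-q)$-curves to arrange $T_0$ and $T_1'$ in controlled position (Lemma~\ref{setup}); (ii) tracks the cyclic ordering from the pre-Lagrangian $T_1$ to $T_1'$ via a non-rotative neighborhood $R_1$ and a complementary annulus, using discretization of isotopy and Honda's disk-equivalence of outermost layers (Lemma~\ref{claim2}); and (iii) uses the annuli produced by the Seifert structure together with the Imbalance Principle to thicken neighborhoods of the Hopf cores until the complement is a basic slice $V$ containing both $T_0$ and $R_1$ (Lemma~\ref{claim3}). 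Only then does it invoke Lemma~\ref{prelagmodel} inside $V$ to manufacture a new pre-Lagrangian $P_1$ matching the $R_1$-ordering, and compare it to $T_0$ via Corollary~\ref{pre-lag-order}. The passage from a global isotopy of $S^3$ to a single basic slice is the substantive part; your proposal elides it.
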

 
 Before proving Proposition~\ref{S3-basic-slice}, we observe that Propositon~\ref{S3-basic-slice} leads to a short proof of Theorem~\ref{non-cyclic-perm-gen}.
 
 \begin{proof}[Proof of Theorem~\ref{non-cyclic-perm-gen}]  By Proposition~\ref{S3-basic-slice},  a non-cyclic permutation of the leaves of 
 a Legendrian $(np, -nq)$-torus link $L$  in $S^{3}$ where all the components have $tb=-pq$ implies the existence of two pre-Lagrangian tori in a basic slice that induce different cyclic orderings on the leaves of $L$, a contradiction
 to Corollary~\ref{pre-lag-order}.  
 \end{proof}

To prove Proposition~\ref{S3-basic-slice}, we will first establish three lemmas about the image of a pre-Lagrangian torus containing $L$ under a contact isotopy $\psi_{t}$ such that   $\psi_{1}(L)$ is a
permutation of $L$.

First we develop a dimension reduction set up.
In the proof of Theorem~\ref{thickenedtori},  Step~\ref{step 2} allowed a dimension reduction so that we could represent important objects in our 3-dimensional basic slice in a 
2-dimensional annulus. Similarly, the following lemma will allow us to represent relevant tori, solid tori, and annuli in $S^2$ rather than $S^3$.
The annuli constructed in this lemma will later be used to find a basic slice containing two pre-Lagrangian tori containing $L$.

\begin{lemma}\label{setup} 
Suppose $L \subset S^{3}$ is a Legendrian $(p,-q)$-torus link that arises as the leaves of a 
 pre-Lagrangian torus $T_{0}$ of slope $-q/p$, and 
$\psi_{t}$ is a contact isotopy of $S^{3}$ such that $\psi_{1}(L)$ is a permutation of $L$; let $T_1= \psi_1(T_0)$.  
There is a Seifert fiber structure on $S^3$ with regular fibers being $(p,-q)$-torus knots. The base of the fibration is $S^2$; if $p\neq 1$, there are two singular fibers $K_0, K_1$, while if $p =1$  only $K_{1}$ is singular.  The fibers $K_{0}, K_{1}$, given as the
pre-image of the poles of $S^{2}$, form a Hopf link.  With respect to this Seifert fiber structure, we can arrange the following. 
\begin{enumerate}
\item We can assume our link $L$ is contained in the pre-Lagrangian torus $T_{0}$, which is the pre-image of the equator.  The torus $T_0$ separates $S^3$ into two solid tori $S_0$ and $S_1$.
\item The pre-Lagrangian torus $T_1=\psi_1(T_0)$ can be isotoped relative to $L$ to be a convex torus $T_{1}'$ such that $T_{1}'$ is the pre-image of a curve $c \subset S^{2}$ that separates  the poles; {the torus $T_{1}'$ separates $S^3$ into two solid tori $S_0'$ and $S_1'$}.
 Thus the pre-image of the poles of $S^{2}$ give $K_{i}$, which are core-curves of both the solid tori $S_{i}$ and $S_{i}'$, $i = 0,1$.
\item The pre-image of curves in $S^{2}$ joining the poles to points on the equator in the complement of the curve $c$ define annuli $A_{i}$ 
with embedded interiors such that $\partial A_i$ is the union of a Legendrian $(p, -q)$-curve on $T_0$ and,  when $i = 0$, a curve that wraps $p$ times around $K_{0}$  and, 
when $i = 1$, a curve that wraps $-q$ times around  $K_1$.  Moreover, $A_i$ is disjoint from $T_1'$ and intersects $T_0$ only along its boundary, which is  a leaf of the characteristic foliation of $T_{0}$.
\end{enumerate}
\end{lemma}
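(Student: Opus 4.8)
The plan is to build the Seifert fibration explicitly and then carefully massage the image of the pre-Lagrangian torus $T_1 = \psi_1(T_0)$ into the required position relative to that fibration, keeping $L$ fixed throughout. First I would recall that the $(p,-q)$-torus knot lies on the standard Heegaard torus $T_0$; the complement $S^3 \setminus (K_0 \cup K_1)$ of the two cores of the Heegaard solid tori is $T^2 \times \mathbb R$, and the linear foliation of each $T^2 \times \{pt\}$ by curves of slope $-q/p$ extends over $K_0$ and $K_1$ to give a Seifert fibration of $S^3$ with generic fiber a $(p,-q)$-torus knot and (when $p \neq 1$) exactly two singular fibers $K_0$, $K_1$ of multiplicities $p$ and $q$ respectively; when $p=1$ only $K_1$ is singular (indeed this is just the Hopf fibration up to reparametrization). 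The base is $S^2$, the equator pulls back to $T_0$, and $T_0$ separates $S^3$ into the two solid tori $S_0, S_1$ neighborhoods of $K_0, K_1$. This gives item (1) for free, since $L$ consists of leaves of the characteristic foliation of $T_0$ and we may take those leaves to be fibers.

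For item (2), the key point is that $T_1 = \psi_1(T_0)$ is a pre-Lagrangian torus containing $L$ as leaves, hence isotopic to $T_0$ as an embedded torus; I would first apply Lemma~\ref{close} (pre-Lagrangian annuli sharing a Legendrian circle can be pushed together) to arrange that near one component of $L$ the torus $T_1$ agrees with $T_0$, and more generally to control $T_1$ near all of $L$. Then, using the classification of convex tori in the complement of $L$ together with the fact that $T_1$ is boundary-parallel in $S^3$ cut along a neighborhood of $L$, I would perturb $T_1$ rel $L$ to a convex torus $T_1'$ that is \emph{vertical} with respect to the Seifert fibration away from $L$ — that is, a union of fibers — so that $T_1'$ is the preimage of a curve $c \subset S^2$. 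The curve $c$ must separate the two poles of $S^2$ (otherwise $T_1'$ would be compressible or would fail to be isotopic to $T_0$), so $K_0, K_1$ are cores of the two complementary solid tori $S_0', S_1'$. The main obstacle is precisely this step: making the image torus simultaneously convex, fiber-compatible, and fixed on $L$ requires combining Giroux flexibility, the uniqueness of tight contact structures on the relevant pieces (Theorems~\ref{thm:solid-tori} and the thickened-torus classification), and an isotopy extension argument that does not disturb $L$; one has to check that the natural framing of $L$ coming from $T_0$ and the one coming from $T_1'$ agree, which they do because both equal the contact framing shifted by the fixed integer $-pq$, so the twisting of each leaf with respect to $T_1'$ vanishes and $T_1'$ can indeed be taken vertical.

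Finally, for item (3), I would choose arcs in $S^2$ running from each pole to a point of the equator, chosen to lie in the complement of $c$ (possible since $c$ is a separating circle and the poles lie in different components of $S^2 \setminus c$, but each pole can be connected to the equator avoiding $c$ by going ``outward''). The preimage of such an arc through the $i$-th pole is an annulus $A_i$ whose boundary consists of one fiber on $T_0$ — a leaf of the characteristic foliation, i.e.\ a $(p,-q)$-curve — and one curve on the singular fiber $K_i$; the multiplicity of the singular fiber ($p$ for $K_0$, $q$ for $K_1$) forces this second boundary curve to wrap $p$ times around $K_0$ in the first case and $q$ times (with sign $-q$ from orientations) around $K_1$ in the second. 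Since the arcs avoid $c$, the annuli $A_i$ are disjoint from $T_1'$, and since the arcs meet the equator only at their endpoints, $A_i$ meets $T_0$ only in $\partial A_i$. I would then observe $A_i$ has embedded interior because the corresponding arc in $S^2$ is embedded and avoids the base points of the other singular fiber. I expect the bookkeeping of orientations and wrapping numbers here to be routine once the fibration is set up correctly; the genuine difficulty remains concentrated in producing the vertical convex torus $T_1'$ rel $L$ in item (2).
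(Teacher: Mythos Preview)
Your setup of the Seifert fibration and your arguments for items~(1) and~(3) are essentially the same as the paper's and are fine. The gap is in item~(2): you correctly identify that one must isotop $T_1$ relative to $L$ to a \emph{vertical} convex torus $T_1'$, but the tools you propose (Lemma~\ref{close}, Giroux flexibility, uniqueness of tight structures on solid tori, framing checks) do not by themselves produce verticality. Giroux flexibility adjusts the characteristic foliation on a fixed surface; the classification theorems tell you two contact structures are isotopic but do not move a given torus onto a union of Seifert fibers.

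The paper's argument for verticality is topological, not contact-geometric. After perturbing $T_1$ to a convex torus transverse to $T_0$ and removing inessential intersection circles by the innermost-disk trick, one has $T_0\cap T_1'$ consisting of curves parallel to $L$. These cut $T_1'$ into annuli, each lying in one of the Seifert-fibered solid tori $S_0$ or $S_1$ with boundary a pair of fibers on $T_0$. Now one invokes the standard fact (Jaco--Shalen, Johannson) that an incompressible, $\partial$-incompressible annulus in a Seifert fibered space with fibered boundary is isotopic to a vertical or horizontal surface; the boundary slope rules out horizontal, and a boundary-parallel annulus with fibered boundary can also be made vertical. Assembling these vertical annuli makes $T_1'$ the preimage of a curve $c\subset S^2$, and a short argument (using that the preimage of a disk containing both poles is not a solid torus when $p\neq 1$, or a further push past the regular fiber $K_0$ when $p=1$) shows $c$ separates the poles. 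This incompressible-annulus step is the missing idea in your proposal.
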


\begin{proof}[Proof of Lemma~\ref{setup}] {We will view $S^3$ as the unit sphere in $\C^2$; the standard contact structure on $S^3$ is then given by   the kernel of $\alpha|_{TS^{3}} = (r_1 d\theta_1 + r_2 d\theta_2)|_{TS^{3}}$, where $z_j = r_j e^{i\theta_j}$.}
The Seifert fiber structure is well known. We first view $S^3$ as the join of two circles: consider the map
\[
\Psi: S^1\times S^1\times [0,1]\to S^3: (\theta_1,\theta_2, t)\mapsto \left( \cos\left(\frac\pi 2 t\right)e^{i\theta_1}, \sin\left(\frac\pi 2 t\right)e^{i\theta_2} \right).
\]
Then $\Psi$ restricted to $S^1\times S^1\times(0,1)$ parametrizes  $S^3-H$, where $H = (S^1 \times \{0\}) \cup (\{0\} \times S^1)$ is the Hopf link, and $S^1\times S^1\times\{0,1\}$ is ``collapsed" onto $H$.   In these coordinates the vector field $p \partial_{\theta_1} - q \partial_{\theta_2}$ generates the orbits of the Seifert fiber structure on $S^3$. All the claimed properties about the Seifert fiber structure of $S^3$ are seen in this model. 
 
To verify (1), {first observe that in the coordinates given by $\Psi$ the contact planes are always tangent to the $[0,1]$-factor, have slope $-\infty$ as $t$ limits to $0$ and then rotation through negative numbers and limit to  slope $0$ as $t$ approaches $1$. In our identification of the  orbit quotient of $S^3$ to $S^2$, we can assume that the preimage of the equator is the pre-Lagrangian $T_0$ with characteristic foliation having slope $-q/p$.}
From our {unordered} classification  of negative torus links, we can assume that $L$ is a subset of the leaves of the foliation on $T_0$. We can now define $S_0$ and $S_1$ as the pre-image of the upper and lower hemispheres of $S^2$. The $K_i$, defined as the preimages of the poles, are  the cores of $S_i$. This completes Item~(1).

For Item~(2), we first isotop $T_1= \psi_1(T_0)$ relative to $L$ to a convex torus $T_1'$ such that $T_0$ and  $T_1'$ intersect transversally, and $T_0 \cap T_1'$  consists of $L$ and potentially other curves on both tori. 

\noindent
{\bf Claim:} We can choose the isotopy so  that $T_1'$ is
the pre-image of a curve $c \subset S^2$, where the curve $c$ separates  
 the poles of $S^2$. 

\begin{proof}[Proof of Claim]
The intersection $T_0\cap T_1'$ consists of simple closed curves containing $L$: $2n$ of them are $(p, -q)$ curves on both tori, and $m$ of them are null-homotopic curves on both tori. Using a standard innermost disk argument,  $T_1'$ can be isotoped, relative to $L$, to remove the null-homotopic curves. 

Now each component of $T_1' \setminus (T_0\cap T_1')$ is an annulus in $S_0$ or $S_1$ with boundary consisting of $(p, -q)$ curves in $T_{0}$. Thus the annuli are incompressible in $S_{0}$ or in $S_{1}$, since inclusion induces an injection on their fundamental groups. {We will have our desired existence of the curve $c$  if we can show that we can isotop $T_1'$ relative to $T_0\cap T_1'$ so that
each of these annuli  become a union of  fibers.} 
 An {incompressible} annulus in a Seifert fiber space is either boundary parallel or boundary incompressible, see \cite[Section~7]{JacoShalen79} and \cite[Section~5]{Johannson79}. Moreover a boundary incompressible surface is isotopic to either a vertical (union of regular fibers) or horizontal (transverse to each fiber) surface. Since the {boundary condition on the annuli imply that the} annuli cannot be horizontal, the annuli must be either boundary parallel annuli or vertical surfaces, which in this case are also boundary parallel. Notice that a boundary parallel annulus with boundary a union of fibers can be isotoped to be a vertical surface, since there is a vertical surface with the same boundary, and any two annuli with the same boundary will be isotopic.  Thus after isotopy relative to $T_0\cap T_1'$, we can assume $T_1'$ is also a vertical surface. In particular, there is a simple closed curve $c$ in $S^2$ such that $T_1'$ is the pre-image of $c$ under the fibration.  
 
 Furthermore, we can argue that $c$  separates the poles of $S^{2}$ as follows.  If not and $p \neq 1$
 then there is a disk $D$ bounded by $c$ that contains both singular points; the pre-image of $D$ is not a solid torus (in fact it is the complement of a $(p,-q)$-torus knot), which contradicts $T_1'$ being isotopic to $T_0$.  When $p =1$, if $c$ does not separate the poles, then since $K_{0}$ is a regular fiber, 
 and we can further isotop $T_1'$ past  $K_0$.
\end{proof}

\smallskip
Now that we have established Item~(2), we move on to the last item.   If one takes arcs $a_0$ from the north pole to the equator and $a_1$ from the south pole to the equator, both avoiding $c$, then their pre-images will be the annuli $A_0$ and $A_1$ satisfying the properties claimed in Item~(3).  
  \end{proof}

Moving on, it will be helpful to keep in mind that the convex $T_{1}'$, which has the nice description as the pre-image of the curve $c \subset S^{2}$, was obtained from a potentially large isotopy of the pre-Lagrangian $T_{1} = \psi_{1}(T_{0})$.  We now show that the cyclic ordering of the components of $L$ on $T_{1}'$ agrees with the cyclic ordering on  $T_{1}$.
To do this, we will find a non-rotative neighborhood for the convex torus $T'_1$;
recall that in Definition~\ref{defn:complementary},
 we defined a complementary annulus for a non-rotative
contact structure, and in Lemma~\ref{prelagmodel} we discussed how such a complementary annulus can be used to define a cyclic ordering of the Legendrian divides of a convex torus.

\begin{lemma}\label{claim2}
Suppose $L$, $T_{0}$, $T_{1}$, and $T_{1}'$ are as in the statement of Lemma~\ref{setup}.
The convex torus $T_1'$ has a neighborhood $R_{1}=T_1'\times[-\epsilon,\epsilon]$ such that $\xi$ restricted to $R_{1}$ is $[-\epsilon,\epsilon]$-invariant,
 each boundary component of $R_{1}$ has two dividing curves, and the cyclic ordering of $L$ induced by a complementary annulus in $R_{1}$ agrees with the one induced by the pre-Lagrangian torus $T_1$.  
Moreover, the annuli $A_i$ constructed in Lemma~\ref{setup} can be constructed to be disjoint from $R_{1}$.
\end{lemma}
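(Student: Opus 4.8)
The plan is to use the fact that $T_1' = \psi_1(T_0)$ results from a contact isotopy that starts at the pre-Lagrangian $T_0$, so the cyclic ordering carried along by $\psi_t$ never changes, combined with Lemma~\ref{prelagmodel2} (or rather its ``solid torus'' content) applied to one of the two solid tori bounded by $T_1'$. First I would record that $T_1'$ is convex and bounds, on one side, a solid torus $S_0'$ whose core is a regular fiber $K_0$ (from Lemma~\ref{setup}(2)); by the classification of tight contact structures on solid tori, $S_0'$ contains a universally tight $T^2\times[0,\delta]$ collar of $T_1'$, and inside that collar I can find two convex tori of slope $s=-q/p$ with exactly two dividing curves each, cobounding a non-rotative region $R_1 = T_1'\times[-\epsilon,\epsilon]$ that contains $T_1'$ and is $[-\epsilon,\epsilon]$-invariant. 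Giroux flexibility lets me put the boundary tori in standard form so each has precisely two dividing curves, as claimed. This is exactly the hypothesis of Lemma~\ref{prelagmodel}/Remark~\ref{prelagmodel-extended}: a convex torus of slope $s$ sitting in a universally tight thickened torus, so we get a complementary annulus $A'$ with two dividing arcs running boundary-to-boundary, and this $A'$ induces a cyclic ordering on the Legendrian divides of $T_1'$.

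The key step is to identify this induced ordering with the one carried by the pre-Lagrangian $T_1$. Here I would argue as follows: the isotopy $\psi_t$ is a contact isotopy, hence $\psi_t(T_0)$ is a pre-Lagrangian torus for all $t$ with $L_t := \psi_t(L)$ among its leaves, and the cyclic ordering of the leaves of a pre-Lagrangian torus is a manifest continuous invariant of the pair (torus, finite set of leaves) — it cannot jump along the path $t\mapsto \psi_t(T_0)$. So the ordering on $T_1 = \psi_1(T_0)$ equals the ordering on $T_0$ transported by $\psi_1$. Then I must compare the ordering on the convex torus $T_1'$ (obtained by a \emph{further, non-contact-invariant-looking} isotopy of $T_1$ rel $L$) with the ordering on $T_1$. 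For this I invoke Lemma~\ref{prelagmodel2}: since $T_1$ is pre-Lagrangian and $T_1'$ is a convex torus in standard form isotopic to it with $T_1'\cap T_1$ exactly the union of the Legendrian divides of $T_1'$ — which happens once I arrange, by a small perturbation of $T_1'$ rel $L$ using the local model of a pre-Lagrangian collar of $T_1$, that $T_1'$ sits inside such a collar — the lemma says the complementary-annulus ordering on $T_1'$ agrees with the $T_1$-ordering. Combining the two comparisons gives the statement.

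I expect the \textbf{main obstacle} to be bookkeeping the two distinct isotopies: the possibly large ambient contact isotopy $\psi_t$ taking $T_0$ to $T_1$, and the subsequent isotopy rel $L$ that straightens $T_1$ into the convex, Seifert-vertical torus $T_1'$ from Lemma~\ref{setup}(2). The subtlety is that $T_1$ and $T_1'$ need not be related by a contact isotopy, so I cannot simply say ``push $A'$ back along $\psi_t$''; I genuinely need Lemma~\ref{prelagmodel2} to control the ordering under the straightening step, and to apply it I need $T_1'$ to lie in a universally tight (non-rotative is enough) neighborhood of $T_1$ with $T_1\cap T_1'$ equal to the Legendrian divides of $T_1'$. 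Achieving this configuration is possible because a pre-Lagrangian torus has a standard $\R$-invariant neighborhood $\ker(\cos t\,d\phi+\sin t\,d\theta)$ in which any nearby convex torus of the same slope sits as in the local model of Lemma~\ref{local-model}; the straightened $T_1'$ can be taken $C^0$-close to $T_1$ away from $L$ and equal to $T_1$ near $L$, so it does lie in such a neighborhood.

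Finally, for the last sentence of the lemma, I would note that the annuli $A_0, A_1$ of Lemma~\ref{setup}(3) are pre-images of arcs in $S^2$ from the poles to the equator avoiding $c$; since $R_1$ is a pre-image of a thin annular neighborhood of $c$ in $S^2$ (after the identifications of Lemma~\ref{setup}), and the defining arcs for $A_i$ avoid $c$, we can take those arcs to avoid the thin neighborhood of $c$ as well, so $A_i\cap R_1=\emptyset$; at worst one shrinks $\epsilon$. This is a routine transversality/genericity observation and should not require more than a sentence.
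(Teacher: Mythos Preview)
There is a genuine gap in the ``key step'' where you compare the ordering on $T_1'$ with the ordering on $T_1$. You assert that ``the straightened $T_1'$ can be taken $C^0$-close to $T_1$ away from $L$ and equal to $T_1$ near $L$, so it does lie in such a neighborhood.'' This is not justified and in general is false. In Lemma~\ref{setup}, $T_1'$ is produced by a potentially \emph{large} smooth isotopy of $T_1$ relative to $L$: one first removes null-homotopic intersection curves with $T_0$ and then isotopes each complementary annulus of $T_1$ to be vertical in the Seifert fibration. Nothing forces the result to lie in any prescribed tubular neighborhood of $T_1$, and nothing forces $T_1\cap T_1'$ to equal the Legendrian divides of $T_1'$ (you only know $L\subset T_1\cap T_1'$). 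So the hypotheses of Lemma~\ref{prelagmodel}/\ref{prelagmodel2} are not available, and your comparison collapses.

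The paper handles exactly this difficulty by a different mechanism. It first builds, via Lemma~\ref{local-model}, a convex torus $T_1''$ genuinely close to $T_1$ with a good non-rotative neighborhood $R''$ (so the ordering there does agree with $T_1$). It then connects $T_1''$ to $T_1'$ not by a single isotopy but by Honda's \emph{discretization of isotopy}: a finite sequence of convex tori $F_0=T_1'',\ldots,F_l=T_1'$ with each consecutive pair cobounding a thickened torus. At each step it rebuilds a non-rotative neighborhood $R_{k+1}$ of $F_{k+1}$ and uses Honda's theorem that any two non-rotative outermost layers are \emph{disk equivalent} (\cite[Theorem~1.3]{Honda01}) to show the complementary-annulus ordering is unchanged from $R_k$ to $R_{k+1}$. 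This inductive disk-equivalence argument is the missing idea in your proposal; it is what replaces the unavailable ``$T_1'$ is close to $T_1$'' assumption.

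Your treatment of the last sentence has a related problem: your $R_1$ is built as an abstract non-rotative neighborhood of $T_1'$, not as the preimage of a neighborhood of $c$ in $S^2$, so you cannot conclude $A_i\cap R_1=\emptyset$ just by choosing the arcs $a_i$ to avoid a neighborhood of $c$. The paper instead constructs a \emph{second} non-rotative neighborhood $R_2$ that \emph{is} Seifert-vertical (preimage of two curves $\gamma_\pm$ on $S^2$ enclosing $c$), hence visibly disjoint from the $A_i$, and then invokes disk equivalence once more to show $R_2$ induces the same ordering as $R_1$, allowing $R_1$ to be replaced by $R_2$.
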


\begin{proof}  
We will start by constructing a  convex torus $T_1''$ that has a neighborhood with the desired properties, and then use a Discretization of Isotopy technique to show that $T_1''$ can be moved to our $T_1'$ in such a way that the desired neighborhood properties persist.

By Lemma~\ref{local-model}, we know that from the pre-Lagrangian $T_1$, we can find a  convex torus $T_1''$ that has  a neighborhood $R''$ with the desired properties. 
Since both $T_1'$ and $T_1''$ are isotopic to $T_1$ fixing $L$, there is a smooth isotopy of $T_1''$ to $T_1'$ fixing $L$ . By the Discretization of Isotopy technique, \cite[Section~2.2.3]{Honda02}, we can find a sequence of convex surfaces $F_0$,\ldots, $F_l$, such that $F_0=T_1''$, $F_l=T_1'$, and each pair $F_i\cup F_{i+1}$ cobound a thickened torus.  
 As the $F_{i}$ are disjoint, they cannot all contain $L$; however each convex surface $F_{i}$ will contain an ordered link among its Legendrian divides that can be canonically identified with $L$ since
we can assume that all the $F_i$ intersect $T_1$ near $L$ in leaves isotopic to $L$. 
We will abuse terminology and say $L$ is on all the $F_i$ using this identification.

We will  inductively prove that each torus $F_i$ contains  $L$ as a link  with the same ordering as that given by $T_{1}$.  Namely we will argue that 
 each $F_i$  
contains $L$ as 
a subset of its Legendrian divides and is contained in an $I$-invariant thickened torus $R_i$ whose boundary components have two dividing curves each of slope $-q/p$, and the cyclic order of $L$
on $F_i$ induced from a complementary annulus in $R_i$ agrees with the order coming from $T_1$.

The base case is immediate from construction with $R_0=R''$. 
Now we inductively assume the result is true for $F_k$. 
Notice that $F_k$ splits $R_k$ into two pieces $R_k^\pm$.  
Now $F_{k+1}$ is either on the positive or negative side of $F_k$. We assume the positive side; the argument for the negative side is analogous.  {We will find a thickened torus neighborhood $R_{k+1}$ of $F_{k+1}$ where $R_{k+1}^{-}$ contains $R_{k}^{-}$.}
Consider $S^3\setminus F_{k+1}$: there will be two copies of $F^+_{k+1}$ and $F_{k+1}^-$ of $F_{k+1}$ in the cut open manifold, and $F_{k+1}^\pm$ will bound a solid torus $S_{k+1}^\pm$, {with $S_{k+1}^{-}$ containing $R_{k}^{-}$.} 
As the slope of convex tori in $S_{k+1}^+$ parallel to the boundary is not fixed, we know that there is a thickened torus $R_{k+1}^+$ in $S_{k+1}^+$ with one boundary component $F_{k+1}^+$ and the other boundary component being a convex torus with two dividing curves of slope $-q/p$. Let $R_{k+1}^m$ be the  region between $F_k$ and $F_{k+1}$, and set $R_{k+1}^-=R_{k}^-\cup R_{k+1}^m$. Then $R_{k+1}=R_{k+1}^+\cup R_{k+1}^-$ is an $I$-invariant thickened torus containing $F_{k+1}$.
 We now need to check the statement about the ordering on $L$ induced by a complementary annulus in $R_{k+1}$. To this end, notice that $R_k^+$ and $R_{k+1}^m\cup R_{k+1}^+$ are both non-rotative outermost layers for $F_k^+$ in $S^3\setminus F_k$ as described in \cite{Honda01}. So if  $A^+_{k}$ is any complementary annulus in $R_k^+$, and $A_k'$ is a complementary annulus in $R_{k+1}^m\cup R_{k+1}^+$, then the dividing curves on these annuli are disk equivalent \cite[Theorem~1.3]{Honda01}. This means that if we add a disk to the outermost boundary of these annuli and extend the dividing set by an arc in the new disk then the resulting multi-curves in the disk are isotopic. This implies that the ordering on the components of $L$ induced by $A_k^-\cup A_k^+$ and $A_{k}^-\cup A_k'$ are the same. But we can write the annulus $A_k'$ as $A_{k+1}^+\cup A_{k+1}^m$ by splitting it along a ruling curve in $F_{k+1}$ and then $A_{k}^-\cup A_k'=A_k^-\cup A_{k+1}^m\cup A_{k+1}^+=A_{k+1}^-\cup A_{k+1}^+$. And thus the orders on $L$ induced by the complementary annuli in $R_k$ and $R_{k+1}$ are the same, and the induction argument is complete.  The desired neighborhood $R_{1}$ of $T_{1}'$ is the $R_{l}$ constructed in the induction argument.
 
We must now see that the $A_i$ can be chosen to be disjoint from $R_1$. We first construct a thickened torus $R_2$ that contains $T_1'$ is non-rotative with two dividing curves on each side and is disjoint from the $A_i$. If we achieve this then notice that $T_1'$ breaks $R_1$ into two pieces $R_1^1\cup R_1^+$ and breaks $R_2$ into two pieces $R_2^-\cup R_2^+$. Both $R_1^-$ and $R_2^-$ are non-rotative outermost layers for a component of $S^3\setminus T_1'$ and so are disk equivalent, and similarly for $R_1^+$ and $R_2^+$. Thus as discussed above, the ordering of the Legendrian divides on $T_1'$ coming from $R_1$ and $R_2$ is the same. Thus we may replace $R_1$ with $R_2$ to complete the proof. 

To construct $R_2$ we will use the notation of Lemma~\ref{setup}. Notice that we can draw two circles $\gamma_\pm$ on $S^2$ that enclose the curve $c$ from Lemma~\ref{setup} that defines $T_1'$. These curves can be chosen to each intersect the equator transversely in two points and separate the poles. Moreover, let $T_\pm$ be the pre-image tori of $\gamma_\pm$ under the projection $S^3\to S^2$ given in Lemma~\ref{setup}. Notice $T_\pm$ has two Legendrian $-q/p$ curves coming from its intersection with the equator. We claim that $T_\pm$ has just two dividing curves. If it had more than two dividing curves, then one region in the complement of the curves would have to be disjoint from $T_0$ (the pre-image of $c$). Thus we could Legendrian realize a $(p,-q)$-torus knot on it. This knot would have $tb=-pq$ and be disjoint from $T_0$. However, arguing as in Remark~\ref{pre-lag-questions} this would imply that the contact structure was overtwisted. Thus the region $R_2$ is simply the thickened torus bounded by $T_-$ and $T_+$.  
  \end{proof}

 Next we show that $T_{0}$ and the neighborhood $R_{1}$ of $T_{1}'$ are contained in a basic slice.
 
\begin{lemma}\label{claim3} 
Suppose $L$, $T_{0}$, $T_{1}$, and $T_{1}'$ are as in the statement of Lemma~\ref{setup}, and 
$R_{1}$ is the non-rotative neighborhood of $T_{1}'$ as in the statement of Lemma~\ref{claim2}. Then
there is a thickened torus $V$ in $S^3$ that contains $T_0$ and $R_{1}$ such that 
$\xi|_{V}$ is a basic slice.
\end{lemma}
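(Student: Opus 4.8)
The goal is to fatten the convex torus $T_1'$ into a basic slice that also swallows the pre-Lagrangian $T_0$. The main point is that $T_0$ and $T_1'$ are parallel in $S^3$, so they cobound \emph{two} regions $W_0$ and $W_1$ (the two ``sides''), and $R_1$ lives inside one of these, say $R_1 \subset W_1$. The plan is to choose a convex torus $T_\ast$ very close to $T_0$ on the side facing $W_1$, with two dividing curves of slope $-q/p$, so that $T_\ast$ cuts off a thin $I$-invariant collar of $T_0$ and the region $V$ between $T_\ast$ and the outer boundary $\partial_+ R_1$ of $R_1$ is a candidate thickened torus. By construction $V \supset T_0$ (after absorbing the thin collar) and $V \supset R_1$. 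It remains to show $\xi|_V$ is a basic slice.

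\textbf{Key steps.}
First I would set up coordinates via the Seifert fibration of Lemma~\ref{setup}, so that $T_0$ is the preimage of the equator and $T_1'$ the preimage of the separating curve $c$; then $V$ is the preimage of the annular region in $S^2$ between (a pushoff of) the equator and the two curves $\gamma_\pm$ bounding $R_1$, which is indeed a thickened torus. Second, I would verify that $V$ is minimally twisting: any convex torus parallel to the boundary of $V$ can be made the preimage of a circle in $S^2$ separating the poles (arguing as in the Claim inside the proof of Lemma~\ref{setup}, using incompressibility of annuli in Seifert fibered spaces), hence has dividing slope in the interval between $-q/p$ (the slope on $T_0$-side) and the slope on $\partial_+ R_1$; but $R_1$ was built to also have slope $-q/p$ on its outer boundary in Lemma~\ref{claim2}, so in fact both boundary slopes of $V$ equal $-q/p$. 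Wait — that would make $V$ \emph{non-rotative}, not a basic slice. So the correct choice is to take the \emph{outer} boundary of $V$ to be $\partial_+ R_1$ only if I instead enlarge on the other end: I would take $V$ to run from a convex torus $T_\ast$ of slope $s$ (for some $s$ adjacent to $-q/p$ in the Farey graph, realizable since the solid torus on the $T_0$-side contains tori of all slopes in a Farey-neighborhood of $-q/p$) across $T_0$, across $R_1$, to $\partial_+ R_1$ of slope $-q/p$. Then $V$ has boundary slopes $s$ and $-q/p$, which are joined by an edge in the Farey graph, and every parallel convex torus in $V$ (being a preimage of a pole-separating curve) has dividing slope in the Farey interval $[s,-q/p]$ — so $V$ is minimally twisting with boundary slopes connected by a single edge, hence a single basic slice by Honda--Giroux.

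\textbf{Main obstacle.}
The delicate point is controlling the dividing slopes of \emph{all} convex tori parallel to $\partial V$ inside $V$, i.e.\ confirming minimal twisting. This is where the Seifert-fibered structure does the work: any incompressible annulus (equivalently any such parallel convex torus) is isotopic to a vertical or horizontal surface, and the horizontal case is excluded by the boundary homology class, exactly as in the Claim in the proof of Lemma~\ref{setup}; so the parallel tori are all preimages of pole-separating curves in $S^2$, and their slopes interpolate monotonically between the two boundary slopes, which are Farey-adjacent. One must also check that the thin collar of $T_0$ glued on the $T_\ast$-side and the region $R_1$ on the other side are compatible — that $V$ is genuinely embedded and $T_0, R_1 \subset \mathrm{int}(V)$ — which follows since $T_\ast$ can be taken arbitrarily close to $T_0$ and $R_1$ is, by Lemma~\ref{claim2}, a genuine collar neighborhood of $T_1'$ disjoint from $T_0$ on the correct side. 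Once minimal twisting and Farey-adjacency of boundary slopes are in hand, Honda's classification (\cite{Honda00a}, \cite{Giroux00}) immediately gives that $\xi|_V$ is a basic slice, completing the proof.
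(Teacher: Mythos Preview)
Your proposal rests on a geometric picture that is false from the start: $T_0$ and $T_1'$ are \emph{not} disjoint --- by construction in Lemma~\ref{setup}, $T_1'$ is transverse to $T_0$ and their intersection contains $L$ (and typically further $(p,-q)$-curves). So they do not ``cobound two regions $W_0, W_1$'', and the collar $R_1$ of $T_1'$ is not contained on one side of $T_0$ but rather weaves back and forth across it. Consequently your region $V$, described as running from a pushoff $T_\ast$ of $T_0$ across to $\partial_+ R_1$, is not well-defined as an embedded thickened torus. Even if it were, your minimal-twisting argument is invalid: knowing that a convex torus is topologically the preimage of a pole-separating curve in the Seifert fibration says nothing about its dividing slope, which is contact-geometric data; the ``hence has dividing slope in the interval'' step is an unjustified leap.

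The paper's proof is quite different, and you have missed its main tool. Rather than building $V$ from $T_0$ and $R_1$ directly, it takes $V = S^3 \setminus (U_0 \cup U_1)$ where the $U_i$ are neighborhoods of the Hopf-link cores $K_i$. The annuli $A_0, A_1$ from Lemma~\ref{setup} --- which Lemma~\ref{claim2} carefully arranged to be disjoint from $R_1$, and which you never use --- connect $T_0$ to the $K_i$ and supply bypasses (via the Imbalance Principle) that thicken the $U_i$, disjointly from $T_0 \cup R_1$, until their boundary slopes become the Farey-neighbors $-m-1$ and $-m$ of $-q/p$. Then $V$ is by construction the standard basic slice in $S^3$ between neighborhoods of Legendrian unknots (as in Section~\ref{sec:p-q-torus}), so the basic-slice property is automatic, and $T_0 \cup R_1 \subset V$ because the thickening avoided them. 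You also omit the case $p=1$, which needs separate treatment: there the desired basic slice cannot be realized directly in $S^3$, and one must instead produce a union of two same-sign basic slices (the signs match by Lemma~\ref{notut}) and embed it into an abstract basic slice.
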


\begin{proof} 
We first consider the case where $p \neq 1$, and thus $-q/p$ is not an integer. The strategy here will be to start with standard neighborhoods of Legendrian realizations of the Hopf Link $K_0\cup K_1$ that are disjoint from $T_0$ and $R_1$, and use the annuli $A_{i}$ from Lemma~\ref{setup} to show that these neighborhoods can be thickened so that their complement is a basic slice $V$ that contains  $T_{0}$ and $R_{1}$.

Let $U_i$ be a small neighborhood of $K_i$ that is disjoint from $R_{1}\cup T_0$. To describe slopes on the torus boundary neighborhoods of $K_i$,  
we will always  use longitude-meridian coordinates coming from $T_0$, whose orientation coincides with the orientation as the boundary of the solid torus $S_0$ with core $K_0$.  We can assume that $\partial U_i$ is convex and that the dividing slope for $\partial U_0$ is $-l$ for some large integer $l$, and the slope for $\partial U_1$ is $-1/k$ for some large integer $k$\footnote{The meridian and longitude of $U_1$ is reversed from those of $U_0$, which explains the fraction}. We can use the annuli $A_i$ to create embedded annuli $A_i'$ with boundary consisting of a ruling curve {of slope $-q/p$} on $\partial U_i$ and a leaf  $\gamma_i$ of the characteristic foliation on $T_0$. Moreover the interiors of the $A_i'$ are disjoint from $R_{1}\cup T_0$.
Then $A_i$ intersects the dividing curves on $\partial U_i$ exactly {$2| lp-q|$},
 respectively 
 {$2 |p -kq|$} times. Thus, since the dividing curves of $A_i'$ do not
 intersect $\gamma_i$,  the Imbalance Principle \cite{Honda00a} says there is a bypass for $\partial U_i$ along $A_i'$.  Assume $-m-1 < -q/p < -m$, for $m \in \mathbb Z$; we can find such bypasses to raise the slope of $U_0$ to $-m-1$ and $U_1$ to $-m$. Notice that these solid tori are still disjoint from $R_{1} \cup T_0$. 
Now let $V=S^3\setminus(U_0\cup U_1)$. By construction $V$ is a basic slice and contains $T_0$ and $R_{1}$. 

In the case where $p = 1$, arguing as in the case when $p \neq 1$,  
we can find a thickened torus $T^2\times [0,1]$ in $S^3$ that contains $T_0$ and  the non-rotative neighborhood $R_{1}$ of $T_{1}'$ and has dividing curves of slope $s_0=-q-1$ and $s_1=-q+1$ on $T^2\times \{0\}$ and $T^2\times \{1\}$, respectively, if $-q\not=-1$ and $s_0=-1/2$ and $s_1=-2$ if $-q=-1$. Notice that $\xi$ restricted to $T^2\times [0,1]$ is the union of two basic slices one with boundary slopes $s_0$ and $-q$ and the other with slopes $-q$ and $s_1$. Since there is a pre-Lagrangian torus of slope $-q$ in this contact manifold, Lemma~\ref{notut} says the signs of the basic slices must agree; say they are both positive. We can now glue the positive basic slice with boundary slopes $0$ and $s_0$ to the back of $T^2\times [0,1]$ and the positive basic slice with slopes $s_1$ and $-\infty$ to the front of  $T^2\times [0,1]$. The resulting thickened torus $T_2\times [-1,2]$ is a basic slice by \cite[Theorem~4.25]{Honda00a}, and it contains $T_0$ and the non-rotative neighborhood $R_{1}$ of $T_{1}'$.
\end{proof}

We now complete the proof of Proposition~\ref{S3-basic-slice}. 

\begin{proof}[Proof of Proposition~\ref{S3-basic-slice}]   
Suppose that $L$ is a Legendrian $(np,-nq)$-torus link where each component has $tb = -pq$.
 Assume there exists a Legendrian isotopy $\psi_t$ of $S^{3}$ such that $\psi_1(L)$ is a permutation of $L$.  Let $T_0, T_1= \psi_1(T_0)$ be the pre-Lagrangian tori containing $L$ and $\psi_1(L)$, let $T_{1}'$ be the convex torus that is isotopic  to $T_{1}$ relative to $L$ guaranteed by Lemma~\ref{setup}, let $R_{1}$ be the non-rotative 
neighborhood of $T_{1}'$ guaranteed by Lemma~\ref{claim2}, and let $V$ be the basic slice containing $T_{0}$ and $R_{1}$ guaranteed by Lemma~\ref{claim3}. If we knew that
the pre-Lagrangian $T_{1}$ was also in this basic slice $V$, we would be done by Corollary~\ref{pre-lag-order}. As we cannot guarantee this inclusion of $T_{1}$, we  
will show that there is a contactomorphism $\kappa$ from the basic slice $V$  
 to a basic slice $(T^2\times [0,1],\xi)$ such  that $\kappa(T_1')$ is a convex torus  
 whose Legendrian divides are given by $\kappa(T_{1}') \cap P_{1}$, where $P_{1}$ is a pre-Lagrangian torus, and the ordering of $\kappa(L)$ on $\kappa(T_1')$ coming from $P_{1}$ and from 
  $\kappa(R_{1})$ are the same.  
 Observe that $\kappa(T_0)$  will be a pre-Lagrangian torus containing $\kappa(L)$. By Corollary~\ref{pre-lag-order}, we know that the cyclic ordering of $\kappa(L)$ from 
 $\kappa(T_{0})$ and $P_{1}$ (and thus $T_{1}$) must agree.  This means that the cyclic orderings of $L$ through $T_{0}$ and $T_{1}$ agree, thus establishing Proposition~\ref{S3-basic-slice}.

To see that the claimed contactomorphism exists notice that we can start with our non-rotative $R_{1}$ and attach a thickened torus to the front and back faces of $R_{1}$ to obtain a basic slice $(T^2\times [0,1],\xi)$ and thus a contactomorphism $\kappa: V \to T^2\times [0,1]$.    
Now  Lemma~\ref{prelagmodel} says there is a pre-Lagrangian torus $P_{1}$ inside of $T^2\times [0,1]$ such that the Legendrian divides of $\kappa(T_{1}')$ 
are given by  $\kappa(T_{1}') \cap P_{1}$,  and there is a non-rotative thickened torus $R_{P_{1}}$ containing $\kappa(T_{1}')$ and $P_{1}$ that orders the divides of $\kappa(T_{1}')$ in the same way as $P_{1}$. We claim that $R_{P_{1}}$ and $\kappa(R_{1})$ order the divides of $\kappa(T_{1}')$ in the same way, which will complete the proof. To see this notice that $\kappa(T_{1}')$ splits $\kappa(R_{1})$ and $R_{P_{1}}$ into two pieces $\kappa(R_{1})^{\pm}$ and $R_{P_{1}}^{\pm}$, respectively. Now $\kappa(R_{1})^{+}$ and $R_{P_{1}}^{+}$ are non-rotative outer layers for $(T^2\times [0,1])\setminus \kappa(T_{1}')$ and thus are disk equivalent (as discussed in the proof of Lemma~\ref{claim2}), and thus their complementary annuli are disk equivalent. Similarly for $\kappa(R_{1})^{-}$ and $R_{P_{1}}^{-}$. Thus, as in the proof of Lemma~\ref{claim2}, we see that $R_{P_{1}}$ and $\kappa(R_{1})$ define the same cylic ordering of the divides of $\kappa(T_{1}')$, as desired.
\end{proof}

\section{Cable links}\label{sec:cable}
In this section, we will always assume that $K$ is an oriented smooth knot type.  As described in Section~\ref{cable-links}, for $n \geq 1$ and  $p, q \in \mathbb Z$ with $p \geq 1$ and $\gcd(p,q) = 1$,  $K_{(np, nq)}$ will denote the $n$-component, slope $q/p$-cable link for the knot type $K$. 

We begin the section by describing the non-destabilizable Legendrian representatives of $K_{(np,nq)}$: these will  be ``standard Legendrian cables'' and, for some integral slope values, ``twisted $n$-copies''.  The standard Legendrian cables
will always have max $tb$, and the twisted $n$-copies will have max $tb$ if and only if $n = 2$. This leads to the unordered classification of Legendrian cables in  Section~\ref{cables-unordered};
see Theorem~\ref{unorderedcable}.
We then move on to understand the ordered classification.   All components of the standard Legendrian cables are Legendrian isotopic, and in Section~\ref{standard-perms} we examine which permutations can be realized.   This leads to the proof of the ordered classification of Legendrian cables stated in Theorem~\ref{orderedcable}.

\subsection{Non-destabilizable Legendrian Cables} \label{cables-non-destab}

We begin by defining ``standard Legendrian cables,'' which are Legendrian representatives of the cable links $K_{(np, nq)}$; their construction
 will depend on the slope $q/p$ and will use a standard neighborhood  of a Legendrian representative $\Lambda$ 
of  $K$ with specified $tb$ value, as defined in Definition~\ref{std-nbhd}.   We will use the notation $\lceil  q/p \rceil$ to denote the least integer greater than or  equal to   $q/p$.

\begin{definition}\label{cable-defn} Given a knot type $K$ and $p, q \in \mathbb Z$ such that  $p \geq 1$ and $\gcd(p, q) = 1$, fix a Legendrian representative $\Lambda$ of $K$ such that 
$$tb(\Lambda) = \begin{cases}
 \overline{tb}(K), & q/p \geq  \overline{tb}({K}) \\
\lceil  q/p \rceil, & q/p <  \overline{tb}({K}).
 \end{cases}
 $$
Fix $n \geq 1$. Then from $\leg$, when $q/p \geq \overline{tb}(K)$ (or when $p = 1$ and $q/p < \overline{tb}(K)$), we define the \dfn{standard Legendrian $(np, nq)$-cable of $K$}, denoted $\Lambda_{(np, nq)}$ (respectively, $\Lambda_{(n, nq)}$), 
and when $q/p < \overline{tb}(K)$ {and $p > 1$}, we define two  \dfn{standard Legendrian $(np, nq)$-cables of $K$}, denoted $\Lambda^{\pm}_{(np,  nq)}$, 
as follows.

\begin{description}[align=left]
\item[greater-slope cables] Suppose $q/p>\overline{tb}(K)$.   Let $N$ be a standard neighborhood of $\Lambda$ with ruling curves of slope 
$q/p$. Then $\Lambda_{(np,nq)}$ is defined by taking $n$ ruling curves on $\partial N$.
\item[$\overline{tb}(K)$-slope cables] When $p = 1$ and  $q/p=\overline{tb}(K)$,  
$\Lambda_{(n,nq)} $ is defined to be the $n$-copy of $\Lambda$. {Recall this involves taking leaves of a pre-Lagrangian annular neighborhood inside a standard neighborhood of $\Lambda$;}
see
Definition~\ref{n-copy}. 
\item[integral and lesser-slope cables] { 
When $p = 1$ and  $q/p<\overline{tb}(K)$,  $\Lambda_{(n,nq)} $ is defined to be the $n$-copy of $\Lambda$. }
\item[nonintegral and lesser-slope cables] Suppose $p > 1$ and $q/p<\overline{tb}(K)$.  Let $N$ be a standard neighborhood of $\Lambda$; inside $N$ are 
   standard neighborhoods $N^{\pm}$ of $\Lambda^{\pm}$, the $\pm$-stabilizations of $\Lambda$.  There exists 
   a pre-Lagrangian torus $T^{\pm}$ of slope
$q/p$ in the basic slice  $N\setminus N^{\pm}$, see Remark \ref{stab-basic-slice}, and
 $\Lambda^{\pm}_{(np,nq)}$ is defined by taking $n$ leaves in the foliation of $T^{\pm}$.
\end{description}
\end{definition} 

\begin{remark} {These definitions of the standard $(np,nq)$-cables generalize the construction of our max-$tb$ Legendrian torus links.  For suppose that $K$ is the unknot and take, as usual, $ |q| \geq p \geq 1$ with $\gcd(p,q) = 1$.
   Then observe:
 \begin{itemize}
\item $q/p > \overline{tb}(K) = -1$ implies $q \in \mathbb Z^{+}$, and  the construction of the standard Legendrian greater-slope cables of the unknot agrees with the construction of the max-$tb$ positive torus links;
\item $q/p = \overline{tb}(K) = -1$ implies $q = -1$,   and the construction of the  standard Legendrian $\overline{tb}(K)$-slope cables of the unknot agrees  with the construction of the max-$tb$ representatives of the $(n,-n)$-torus link.
\item   $q/p < \overline{tb}(K) = -1$ and $p = 1$ implies $q \in \mathbb Z^{-}$, and the construction of the standard Legendrian lesser-slope cables of the unknot agrees  with the construction of the symmetric max-$tb$ representatives of the $(n,-nq)$-torus links, which  are obtained as $n$-copies.  
\item $q/p < \overline{tb}(K) = -1$ and $p > 1$ implies $q \in \mathbb Z^{-}$, and the construction of the standard Legendrian lesser-slope cables of the unknot agrees  with the construction of the max-$tb$ representatives of the negative torus links with non-trivial components.
\end{itemize}}

\end{remark}

\begin{remark} \label{std-cable-tb-calc} All components of any of the standard Legendrian cables are Legendrian isotopic.  Moreover, in Lemma~2.1 of \cite{EtnyreHonda05} it is shown how to compute the Thurston-Bennequin invariant  of cables, while in Lemmas~2.2 and~3.8 of \cite{EtnyreHonda05} it is shown how to compute the rotation number of cables (but note that the slope conventions in \cite{EtnyreHonda05} are reversed to the conventions in this paper).  This leads to:  
\begin{enumerate}
\item In the standard Legendrian greater-slope cables,   each component $\leg_{i}$ of $\leg_{(np,nq)}$  has 
$$tb(\leg_{i}) =  pq - |p \cdot  \overline{tb}(K) - q| = pq - q + p\cdot \maxtb(K), \quad \text{ and } \quad r(\leg_{i}) = p r(\leg).$$
\item In the standard Legendrian $\overline{tb}(K)$-slope cables, each component $\leg_{i}$ of $\leg_{(n,nq)}$ has 
$$tb(\leg_{i}) = \overline{tb}(K), \quad \text{ and } \quad r(\leg_{i}) = r(\leg). $$
\item {
In the standard Legendrian {{integral and }lesser-slope cables}, if ${q} =    \overline{tb}(K)  -  {s}$, for  $s> 0$, each component $\leg_{i}$ of $\leg_{(n,nq)}$has 
$$tb(\leg_{i})  = \overline{tb}(K)  -  {s} = q , \quad \text{ and } \quad r(\leg_{i}) = r(\leg).$$}
\item In the standard Legendrian {{nonintegral and }lesser-slope cables}, if $\frac{q}{p} =   \lceil \frac{q}{p} \rceil  -  \frac{s}{p}$, for  $0 < s < p$, each component $\leg_{i}^{\pm}$ of $\leg^{\pm}_{(np,nq)}$has 
$$tb(\leg_{i}^{\pm})  =  pq, \quad \text{ and } \quad r(\leg_{i}^{\pm}) = p\, r(\leg) \pm s = p\, r(\leg) \pm (p \lceil q/p \rceil - q).$$
\end{enumerate}
\end{remark}

When $n \geq 2$, the standard Legendrian cables of uniformly thick knot types   
will always have maximal \tb invariant.

\begin{lemma} \label{std-cables-max-tb} If $K$ is a uniformly thick knot type, then, for $n \geq 2$,
the standard Legendrian $(np,nq)$-cable of $K$ realizes 
$\overline{tb}(K_{(np, nq)})$, for all $p, q \in \mathbb Z$, $p \geq 1$, and $\gcd(p,q) = 1$.
\end{lemma}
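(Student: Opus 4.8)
The plan is to prove Lemma~\ref{std-cables-max-tb} by combining the explicit $tb$ computations in Remark~\ref{std-cable-tb-calc} with an upper bound on $\overline{tb}(K_{(np,nq)})$ obtained from the uniform thickness hypothesis. First I would recall the standard dichotomy: given any Legendrian representative $L$ of $K_{(np,nq)}$, by the Relative Convex Realization Principle we may place $L$ on the convex boundary $T$ of a tubular neighborhood $S$ of a representative of $K$ (after possibly destabilizing, so that $L$ meets the dividing set of $T$ minimally); here $S$ is a solid torus whose core is in the knot type $K$. Uniform thickness then gives that $S$ sits inside a standard neighborhood $S'$ of a max-$tb$ Legendrian representative of $K$, so the dividing slope of $\partial S$, measured in the framing coming from the Seifert longitude, is bounded above by $\overline{tb}(K)$.

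The key step is then a case analysis on $q/p$ versus $\overline{tb}(K)$, exactly paralleling the three cases in Definition~\ref{cable-defn}. In each case one must compare the twisting $tw(\Lambda_i, T)$ of a component with the framing defect, and bound $tb(\Lambda_i) = tw(\Lambda_i,T) + pq$ (the $pq$ being the difference between the torus framing on a $(p,q)$-curve and the Seifert framing, as in Equation~\eqref{tb-tw}); the number of intersections of the $(p,q)$-curve with the dividing curves of $T$ is $|p\cdot(\text{dividing slope}) - q|$ times half the number of dividing curves, which is minimized (given the thickness bound on the slope) precisely when the dividing slope equals $\min(q/p,\overline{tb}(K))$ realized with exactly two dividing curves. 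Carrying this out shows $tb(\Lambda_1) + \dots + tb(\Lambda_n)$ is at most the value achieved by the standard cable, and then by the link $tb$ formula (the analogue of Equation~\eqref{totaltb}, with $lk(\Lambda_i,\Lambda_j) = pq$) the total $tb(L)$ is bounded by $tb$ of the standard cable. Since Remark~\ref{std-cable-tb-calc} records that each standard-cable component realizes exactly the corresponding slope-dependent value, the standard cable attains this maximum, so $\overline{tb}(K_{(np,nq)})$ equals its $tb$, which proves the lemma.

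For the case $q/p > \overline{tb}(K)$ the point is that the ruling curves on $\partial N$ for $N$ a max-$tb$ neighborhood of $K$ give the optimal count, and any larger neighborhood only increases $|p\cdot(\text{slope}) - q|$; I would invoke the Imbalance Principle (as in the proof of Lemma~\ref{posdestab}) on an annulus from a component of $L$ to a ruling curve of such an $N'$ inside $S'$ to show any non-optimal representative destabilizes. For the cases $q/p = \overline{tb}(K)$ and $q/p < \overline{tb}(K)$ the relevant model tori are, respectively, a convex torus with two dividing curves of slope $\overline{tb}(K)$ (on which $L$ sits as Legendrian divides, hence as leaves of a pre-Lagrangian torus by Lemma~\ref{prelagmodel}, recovering the $n$-copy / twisted-$n$-copy picture) and a pre-Lagrangian torus of slope $q/p$ inside the basic slice $N\setminus N^\pm$; in each situation the twisting of each component is pinned down and the $tb$ matches Remark~\ref{std-cable-tb-calc}.

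The main obstacle I anticipate is bookkeeping in the lesser-slope non-integral case ($p>1$, $q/p < \overline{tb}(K)$): one must check carefully that among all convex tori $T$ compatible with the uniform-thickness bound, the sum $\sum tb(\Lambda_i)$ is genuinely maximized by the configuration with dividing slope $\lceil q/p\rceil$ and exactly two dividing curves — i.e.\ that raising the dividing slope all the way up to (but not past) $\overline{tb}(K)$ and having $L$ sit as leaves of a pre-Lagrangian torus of slope $q/p$ in the resulting basic slice beats every other option, including tori with more dividing curves. This is essentially the link analogue of the max-$tb$ computation for negative torus knots in \cite[Section~4]{EtnyreHonda01b}, and I expect the argument there, combined with uniform thickness to rule out thickening past a max-$tb$ neighborhood of $K$, to go through; the subtlety is simply ensuring the thickness hypothesis is used in exactly the right place to forbid dividing slopes exceeding $\overline{tb}(K)$.
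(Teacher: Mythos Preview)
Your overall framework is sound and matches the paper's approach for most cases: reduce to bounding $\sum tb(\Lambda_i)$, use uniform thickness to control the dividing slope of the ambient torus, and compare with Remark~\ref{std-cable-tb-calc}. For $q/p > \overline{tb}(K)$ and for the non-integral lesser-slope case $q/p < \overline{tb}(K)$ with $p>1$, the paper simply cites \cite[Theorems~3.2 and~3.6]{EtnyreHonda05} for the \emph{knot} bound $\overline{tb}(K_{(p,q)})$ and observes that each component of the standard cable attains it; your hands-on computation via $tw = -\tfrac12\#(\Lambda_i\cap\Gamma_T)$ would reach the same conclusion, and your worry about ``bookkeeping'' in the non-integral lesser-slope case is misplaced --- that case is in fact immediate from the knot result.

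The genuine gap is in the case you treat as routine: the \emph{integral} lesser-slope case $p=1$, $q<\overline{tb}(K)$. Here each component $\Lambda_i$ is smoothly just a copy of $K$, so the componentwise bound is $\overline{tb}(K)>q$, and nothing prevents a single component from having $tb(\Lambda_i)>q$, i.e.\ $tw(\Lambda_i,T)>0$. But then the Relative Convex Realization Principle does \emph{not} apply --- you cannot place $L$ on a convex torus when some component has positive twisting --- so your first step fails exactly here. (Destabilizing does not help: it only raises $tb$ further.) This is the only case where a genuinely link-level argument is needed to get $\sum tb(\Lambda_i)\le nq$ rather than the weaker $\sum tb(\Lambda_i)\le n\,\overline{tb}(K)$.

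The paper's fix is not convex-surface bookkeeping but a reduction to the torus-link bound: use uniform thickness to put $L$ inside a standard neighborhood $S$ of a max-$tb$ representative of $K$, apply the contactomorphism of $S$ to a solid torus with dividing slope $-1$ (cut along a meridian disk and reglue with $-\overline{tb}(K)-1$ twists), and embed this as a neighborhood of the max-$tb$ unknot in $S^3$. Under this identification $L$ becomes an $(n,nk)$-torus link with $k<-1$ and the same total twisting, and Proposition~\ref{maxtb} (ultimately Epstein's Kauffman-polynomial bound, Theorem~\ref{Epstein}) forces $\sum tw \le 0$, hence $\sum tb(\Lambda_i)\le nq$. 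So the missing ingredient in your plan is this reduction to Proposition~\ref{maxtb}; the obstacle lies in the integral case, not the non-integral one.
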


\begin{proof}  As described in Section~\ref{tb-torus-links}, for a link $L = (\leg_1, \dots, \leg_n)$, 
$$tb(L) = tb(\Lambda_1) + tb(\Lambda_2) + \dots + tb(\Lambda_n) + 2 \sum_{i< j} lk(\Lambda_i, \Lambda_j).$$
As the linking number contribution is a topological invariant, we see that $tb(L)$   is maximized when
 $tb(\Lambda_1) + tb(\Lambda_2) + \dots + tb(\Lambda_n)$ is maximized. Since each component of a $K_{(np,nq)}$ cable link
 is a $K_{(p,q)}$ cable knot, it is important to understand the max $tb$ values that can be obtained for cable knots and how those values compare
 to the Remark~\ref{std-cable-tb-calc} calculations of the $tb$ values  in the components of our standard Legendrian cable.  

In fact, the maximum value of $tb$ is known for  cable knots of uniformly thick knot types,   \cite{EtnyreHonda05}.  
When $p = 1$,  the $q/p$ cable is topologically $K$. It follows that when $q/p\in \mathbb Z$ we have 
 $$\overline{tb}(K_{p,q}) = \overline{tb}(K).$$
The max $tb$ of a cable knot when $q/p \notin \mathbb Z$ and $K$ is uniformly thick  was established  in  \cite[Theorem~3.2 and~3.6]{EtnyreHonda05}:
\begin{enumerate}
\item If $q/ p  \notin \mathbb Z$ and $q/ p > \overline{tb}(K)$, $\overline{tb}(K_{p, q}) =  pq - |p \cdot  \overline{tb}(K) - q|;$
\item If $q/p \notin \mathbb Z$ and $q/ p < \overline{tb}(K)$, $\overline{tb}(K_{p, q}) = pq.$
\end{enumerate}
Observe that when $q/p > \overline{tb}(K_{(p, q)})$, the integral and nonintegral formulas for $\overline{tb}(K_{(p, q)})$ agree: when $p = 1$,
 $$pq - |p \cdot  \overline{tb}(K) - q| = q - |\overline{tb}(K) - q| = q - (q - \overline{tb}(K)) = \overline{tb}(K).$$
 Thus the formula given in (1) applies for all slopes $q/p > \overline{tb}(K)$.
 However, if $q/p < \overline{tb}(K)$, then  the formula  for $\overline{tb}(K_{p, q}) $ is more restrictive in the nonintegral case: when $p = 1$, 
 $$pq = q < \overline{tb}(K).$$
   
In the construction of the standard Legendrian cables, we see
 that if either  $q/p \geq \overline{tb}(K)$ or $q/p < \overline{tb}(K)$ and $q/p$ is not an integer, then
each component of the standard Legendrian $(np,nq)$-cable has $tb$ equal to the maximum possible $tb$ for a $(p,q)$-cable of $K$; see Remark~\ref{std-cable-tb-calc}.
 It remains to show that
when $q/p < \overline{tb}(K)$ and $q/p$ is an integer, then  $\sum tb(\leg_{i})$ is bounded above by $nq$ rather than by the larger quantity $n\, \overline{tb}(K)$.

{By Equation~(\ref{tb-tw}), the inequality}
\begin{equation} \label{lower-tb-sum} 
tb(\Lambda_1)+\ldots + tb(\Lambda_n)\leq nq.
\end{equation}
is the same as saying that the sum of the contact twisting along the $\Lambda_i$, relative to a convex torus $T$ they sit on as $(1,q)$-curves, is less than or equal to $0$. 
For a contradiction, suppose $L$ violates Inequality~\eqref{lower-tb-sum}, and so the sum of the twistings is positive.   
By the uniform thickness of $K$ we can assume that $L$ is contained in a solid torus $S$ that is a standard neighborhood of a Legendrian representative of $K$. So the dividing curves on $\partial S$ have slope $\overline{tb}(K)$ and $L$ will be a collection of $(1,q)$-curves inside of $S$. The torus is contactomorphic to a solid torus with dividing slope $-1$ (given by cutting along a meridional disk and re-glueing after $-\overline{tb}(K)-1$ full twists) and under this contactomorphism $L$ will be sent to a collection of Legendrian $(1,q-\overline{tb}(K)-1)$-curves. Now this solid torus can  be identified with a neighborhood of the maximal \tb invariant unknot $U$ in $S^3$. When this is done $L$ will be a $(1,k)$-torus link for some $k<-1$ with total twisting relative to the torus it sits on greater than $0$, a contradiction to Proposition~\ref{maxtb}. Inequality~(\ref{lower-tb-sum}) follows.
\end{proof}

In parallel to what was seen for negative torus links with unknotted components, for the  { integral} and lesser-slope cables, there will be additional non-standard Legendrian representatives of  $K_{(np,nq)}$ formed by twisted $n$-copies, as defined in  Definition~\ref{t-twist}. When $n = 2$, these 
twisted versions will have max $tb$; for larger $n$, these will not have max $tb$ yet will not destabilize to one with max $tb$.

\begin{lemma}\label{max-tb-integral}  Suppose $p = 1$ , $q \in \mathbb Z$, and $q/p \leq \overline{tb}(K)$. 
Let $k$ be the number of lattice points in the Legendrian mountain range of $K$ on or above the line $tb= q$. Then consider the following set of  Legendrian representatives of $K_{(n,nq)}$
consisting of  $n$-copies  and $t$-twisted $n$-copies of Legendrian representatives $\leg$ and $\leg_{t}$ of $K$:
$$\mathcal A = \{ n\leg : tb(\leg) = q \} \cup \left\{ T^{t}(n\leg_{t}) : tb(\leg_{t}) = q + t, t > 0 \right\}.$$
Then all $k$ elements of $\mathcal A$ are non-destabilizable, and the Legendrian twist versions will have max $tb$ if and only if $n = 2$.
\end{lemma}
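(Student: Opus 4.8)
\textbf{Proof proposal for Lemma~\ref{max-tb-integral}.}

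The plan is to mirror, in the solid-torus setting, the arguments already established for negative torus links with unknotted components (Lemmas~\ref{2maxtb},~\ref{non-destab-twists},~\ref{nondestab}, and Proposition~\ref{maxntb}), replacing the role of the meridional unknot by the knot type $K$ and using uniform thickness to localize everything inside a standard neighborhood of a max-$tb$ representative of $K$. First I would count: the lattice points of the mountain range of $K$ lying on or above $tb=q$ are exactly one point with $tb=q$ for each admissible rotation number together with, for each $t>0$, the points with $tb=q+t$; the $n$-copies $n\leg$ with $tb(\leg)=q$ and the $t$-twisted $n$-copies $T^t(n\leg_t)$ with $tb(\leg_t)=q+t$ are therefore in bijection with those $k$ lattice points. (Here I am implicitly using Remark~\ref{std-cable-tb-calc} and Lemma~\ref{n-copy-tb-r} to record that all the components have the stated $tb$ and $r$, so distinct lattice points give genuinely distinct Legendrian links.) That these $k$ links are pairwise non-isotopic is immediate from their classical invariants.

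Next I would prove non-destabilizability. For $T^t(n\leg_t)$ with $t\ge 1$: one component has $tb=q+t$ and the other $n-1$ have $tb=q-t$ by Lemma~\ref{n-copy-tb-r}. If one of these destabilized, then pairing the destabilizable component with an appropriately chosen other component (one whose $tb$ is not already forced to be maximal in the pair) would produce a two-component sublink; but that sublink is a $(2,2q)$-cable of $K$, whose components sit as $(1,q)$-curves on a convex torus, and exactly as in the proof of Lemma~\ref{std-cables-max-tb} — passing to a standard neighborhood of a max-$tb$ representative of $K$, changing coordinates so the torus has dividing slope $-1$, and then identifying with a neighborhood of the max-$tb$ unknot in $S^3$ — we would violate Proposition~\ref{maxtb}'s bound for negative torus links. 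For the $n$-copies $n\leg$ with $tb(\leg)=q$: a destabilization of a component would lower its $tb$ below $q$, and the same sublink argument gives a $(2,2q)$-cable with $tb$-sum exceeding $2q$, again contradicting Proposition~\ref{maxtb} after localization. So every element of $\mathcal{A}$ is non-destabilizable.

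Finally I would compute $tb$ of each link and compare with $\overline{tb}(K_{(n,nq)})$, which equals $n q + n(n-1)\,(\text{linking contribution})$ by Lemma~\ref{std-cables-max-tb} (the sum of the component $tb$'s is maximized at $nq$). For $n\leg$ the component $tb$'s sum to $nq$, so it has max $tb$. For $T^t(n\leg_t)$ the component $tb$'s sum to $(q+t)+(n-1)(q-t)=nq-(n-2)t$; this equals $nq$ precisely when $n=2$ (since $t\ge 1$), and is strictly less when $n\ge 3$. Hence the Legendrian twist versions have maximal $tb$ if and only if $n=2$. The main obstacle is the localization step in the non-destabilizability argument: one must be careful that the bypass/destabilization taking place in $S^3$ can be pushed inside a standard neighborhood of a max-$tb$ representative of $K$ — this is exactly where uniform thickness of $K$ is used, and it is the crux that makes the reduction to Proposition~\ref{maxtb} legitimate; the argument should be written in close parallel to the corresponding step in the proof of Lemma~\ref{std-cables-max-tb} and to Lemma~\ref{nondestab}.
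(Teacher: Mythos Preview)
Your approach is essentially the same as the paper's: the paper simply says the argument parallels Lemmas~\ref{nondestab} and~\ref{destab}, with uniform thickness of $K$ supplying a standard neighborhood (with boundary dividing slope $\overline{tb}(K)$) to play the role the Heegaard torus played for torus links. Your write-up spells this out in the right way, invoking the two-component sublink bound (Inequality~\eqref{lower-tb-sum} from Lemma~\ref{std-cables-max-tb}) and the $tb$-sum computation $(q+t)+(n-1)(q-t)=nq-(n-2)t$.

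There is one slip you should fix. In the non-destabilizability argument for the $n$-copy $n\Lambda$ with $tb(\Lambda)=q$, you write that ``a destabilization of a component would lower its $tb$ below $q$.'' Destabilization \emph{raises} $tb$: if $n\Lambda$ destabilized, one component would have $tb=q+1$, and pairing it with any other component (still at $tb=q$) yields a $(2,2q)$-cable with component $tb$-sum $2q+1>2q$, contradicting Inequality~\eqref{lower-tb-sum}. Your conclusion (``$tb$-sum exceeding $2q$'') is right, but the sentence leading to it has the direction reversed. The twist case is stated correctly.
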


\begin{proof}
The argument parallels the proofs of Lemma~\ref{nondestab} and~\ref{destab}.  Here we will use the uniform thickness property of $K$: the standard neighborhood with two dividing curves of slope $\overline{tb}(K)$ replaces the role played by the Heegaard torus for the torus knots.  
\end{proof}

\subsection{Unordered Classification of Legendrian Cables} \label{cables-unordered}

Now that we understand all of the non-destabilizable Legendrian $(np,nq)$-cables, we can state the main unordered classification result for 
 cable links of knot types $K$ that are uniformly thick and Legendrian simple.  In this statement, the standard Legendrian cables of $K_{(np,nq)}$ are  defined in Definition~\ref{cable-defn},
 and the $t$-twisted $n$-copy of $\leg$ is defined in Definition~\ref{t-twist}.

\begin{theorem} \emph{(Unordered Cable Link Classification)}\label{unorderedcable}
Let $K$ be a uniformly thick and Legendrian simple knot type. 
Consider two oriented Legendrian links $L$ and $L'$ that are topologically
equivalent to $K_{(np, nq)}$,
where $n \geq 2$, $p \geq 1$, and $\gcd(p,q) = 1$. 
 If we can write $L = \amalg_{i=1}^{n} \Lambda_{i}$, $L' = \amalg_{i=1}^{n} \Lambda_{i}'$ such that $tb(\Lambda_i) = tb(\Lambda'_i)$ and 
$r(\Lambda_i) = r(\Lambda'_i)$, $i = 1, \dots, n$, then there exists a contact isotopy
taking $L$ to $L'$ (but not necessarily $\Lambda_i$ to $\Lambda'_i$). Moreover, the precise
range of the classical invariants is given as follows:
\begin{description}[align=left]
\item[greater-slope cables] Suppose ${q}/p>\overline{tb}(K)$.  For each max $tb$  Legendrian representative $\leg$ of $K$, 
there exists a unique max-$tb$  Legendrian representative of $K_{(np,nq)}$ given by $\leg_{(np,nq)}$, the standard Legendrian $(np,nq)$-cable of $K_{(np,nq)}$. Each component  of this representative associated to $\leg$ will  satisfy
   $$tb = pq- |p \cdot \maxtb(K) -  q     | = pq  - q +  p\cdot \maxtb(K), 
\quad \text{ and }\quad  r = p\, r(\leg).  $$ 
Any non-maximal \tb invariant representative of $K_{(np, \pm nq)}$ can be destabilized to one in this set of max $tb$ representatives, $\{ \leg_{(np,nq)} : tb(\leg) = \maxtb(K) \}$.

\item[nonintegral and lesser-slope cables]
Suppose ${q}/p<\overline{tb}(K)$ and ${q}/p \notin \mathbb Z$.   
For every Legendrian representative $\Lambda$ of $K$ with $tb(\Lambda) = \lceil {q}/p \rceil$, there exist two Legendrian representatives of  $K_{(np,nq)}$ with max $tb$ given by
$\leg^{\pm}_{(np,nq)}$, the standard Legendrian $(np,nq)$-cables of $K_{(np,nq)}$. 
Each component of
$\Lambda^{\pm}_{(np, \pm nq)}$  will have equal $tb$ and $r$ values given by 
$$tb = pq, \qquad r =   pr(\leg) \pm (p \lceil q/p \rceil - q).$$
 Any non-maximal $tb$  Legendrian representative of $K_{(np, \pm nq)}$ can be destabilized to one  in the set $\{ \leg^{\pm}_{(np,nq)} : tb(\leg) = \lceil q/p \rceil\}$.

 \item[$\maxtb(K)$- or integral and lesser-slope cables]
Suppose $p = 1$ and  $q/p  \leq \overline{tb}(K)$.  Let $k$ be the number of lattice points in the Legendrian mountain range of $K$ on or above above the line $tb= q$. 
Then there is a set of $ k$ non-destabilizable Legendrian realizations of $K_{(n,nq)}$
  consisting of:
  \begin{itemize}
  \item $n\leg$, the $n$-copy of a Legendrian representative $\leg$ of $K$ with $tb(\leg) = q$, and 
  \item for every Legendrian representative $\leg_{t}$ of $K$ with $tb(\leg_{t}) = q + t$ with $t >0$,   the  Legendrian $t$-twisted $n$-copy
  of $\leg_{t}$, $T^{t}(n\leg_{t})$.
  \end{itemize}
  The $n$-copy will have max $tb$ while
  the Legendrian twist version will have max $tb$ if and only if $n = 2$. Any other
  Legendrian representative of $K_{(np,nq)}$ will destabilize to one in this non-destabilizable set, $\{ n\leg : tb(\leg) = q \} \cup \{ T^{t}(n\leg_{t}) : tb(\leg_{t}) = q + t, t > 0 \}.$
\end{description}
\end{theorem}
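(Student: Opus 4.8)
The plan is to prove Theorem~\ref{unorderedcable} by following the same three-part architecture that worked for torus links: first pin down the non-destabilizable representatives in each slope regime, then show every representative destabilizes to one of these, and finally show that the invariants $tb$ and $r$ of the components determine which non-destabilizable link one lands on. Much of the work has already been packaged into the lemmas preceding this statement, so the proof will largely be an orchestration.

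First I would treat the \textbf{greater-slope case}. The existence of the standard cable $\leg_{(np,nq)}$ and the computation of its component invariants is Remark~\ref{std-cable-tb-calc}(1), and Lemma~\ref{std-cables-max-tb} shows it has maximal $tb$. For the destabilization claim, invoke Proposition~\ref{destab-non-integer} (referenced in the introduction) to conclude that any Legendrian representative of $K_{(np,nq)}$ destabilizes to a standard cable; since $K$ is Legendrian simple, two standard cables $\leg_{(np,nq)}$ and $\leg'_{(np,nq)}$ built from Legendrian representatives $\leg,\leg'$ of $K$ with $tb(\leg)=tb(\leg')$ and $r(\leg)=r(\leg')$ have $\leg$ Legendrian isotopic to $\leg'$, and this isotopy can be taken to carry a standard neighborhood of $\leg$ to one of $\leg'$, hence the ruling curves on their boundaries (which define the cables) are Legendrian isotopic as unordered links. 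This establishes uniqueness given the component invariants and completes the greater-slope case. Note this is the one regime where, per the Remark after Theorem~\ref{cableperms}, one does not even need uniform thickness for the existence/destabilization parts, but uniform thickness is what makes the $tb$ formula the genuine maximum.

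Next the \textbf{nonintegral lesser-slope case}. Here the non-destabilizable representatives are the two standard cables $\leg^\pm_{(np,nq)}$, sitting on pre-Lagrangian tori of slope $q/p$ inside the basic slices $N\setminus N^\pm$; their component invariants are Remark~\ref{std-cable-tb-calc}(4), and maximality of $tb$ is again Lemma~\ref{std-cables-max-tb} (using the more restrictive bound $pq$ for non-integral slopes below $\overline{tb}(K)$). Destabilization of an arbitrary representative down to a standard cable is Proposition~\ref{destab-non-integer}. The final uniqueness step mirrors the torus-knot argument of Lemma~\ref{negmaxtb} and Lemma~\ref{negstabtoiso}: a max-$tb$ representative sits as Legendrian divides on a convex torus in standard form of slope $q/p$ inside a basic slice, hence by Lemma~\ref{prelagmodel} as leaves of a pre-Lagrangian torus, and by uniform thickness this basic slice is the one cut out inside a standard neighborhood of a max-$tb$ representative of $K$; the two sign choices of basic slice give exactly the two families $\leg^\pm$, and within each family Legendrian simplicity of $K$ plus the classification of tight contact structures on solid tori gives isotopy. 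One must also check, as in the torus case, that any two non-destabilizable representatives whose component invariants agree \emph{after} stabilization become isotopic — this follows by the argument of Lemma~\ref{negstabtoiso}, connecting both to ruling curves on a common convex torus.

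Finally the \textbf{$\overline{tb}(K)$-slope and integral lesser-slope case} ($p=1$). The set of $k$ non-destabilizable representatives — the $n$-copy $n\leg$ with $tb(\leg)=q$ and the twisted copies $T^t(n\leg_t)$ with $tb(\leg_t)=q+t$ — is exactly the content of Lemma~\ref{max-tb-integral}, which gives non-destabilizability and the fact that the twisted versions achieve max $tb$ iff $n=2$; Proposition~\ref{destab-integer} then says any representative destabilizes into this set. The uniqueness and the stabilization-relations among these representatives are handled by arguments parallel to Lemma~\ref{2maxtb}, Lemma~\ref{nondestab}, and Lemma~\ref{integer-simple}: the $n$-copy pieces come from leaves of a pre-Lagrangian torus of slope $q$ inside a standard neighborhood of $\leg$ (Lemma~\ref{prelagmodel2}), the twisted pieces from the model in the proof of Lemma~\ref{destab}, Legendrian simplicity of $K$ lets one normalize the ``extra'' component $\leg_1$, and the contact-structure classification on the complementary solid torus forces the rest. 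The main obstacle, as in the torus-link story, is this last regime: one has to carefully reproduce the delicate convex-surface bypass analysis of Section~\ref{negunknotedcpts} in the presence of a genuinely knotted companion $K$, keeping track of how the standard neighborhood of a max-$tb$ representative of $K$ (guaranteed by uniform thickness) substitutes for the Heegaard torus of $S^3$ used there — everything else is bookkeeping built on results already in hand.
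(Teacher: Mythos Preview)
Your proposal is correct and follows essentially the same approach as the paper: the proof there is a short orchestration that cites Lemmas~\ref{std-cables-max-tb} and~\ref{max-tb-integral} for the non-destabilizable representatives and their invariants (via Remark~\ref{std-cable-tb-calc}), and Propositions~\ref{destab-non-integer} and~\ref{destab-integer} for the destabilization and Legendrian simplicity statements. Your write-up is more explicit about how each ingredient is used in each slope regime, but the architecture and the lemmas invoked are the same.
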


The claimed max $tb$ representatives of $K_{(np,nq)}$ was established in Lemmas~\ref{std-cables-max-tb} and \ref{max-tb-integral}.  
So the unordered classification of Legendrian cables of a simple and uniformly thick knot type given by Theorem~\ref{unorderedcable} will follow easily from the next two propositions.

\begin{proposition}  \label{destab-non-integer} Suppose $K$ is a uniformly thick knot type, $q/p \notin \mathbb Z$, and $n \geq 2$. Then $K_{(np,nq)}$ is Legendrian simple and every Legendrian representative of $K_{(np,nq)}$  will destabilize to a standard Legendrian $(np,nq)$-cable of $K$.
\end{proposition}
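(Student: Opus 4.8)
The plan is to mimic the proof structure used for torus links (Lemmas~\ref{posdestab}, \ref{negtorusstab}, and \ref{destab}) but to replace the Heegaard torus of $S^3$ with the boundary torus of a standard neighborhood of a max-$tb$ Legendrian representative of $K$, using uniform thickness to reduce to that setting. First I would take an arbitrary Legendrian representative $L = \amalg_{i=1}^n \Lambda_i$ of $K_{(np,nq)}$. By definition each $\Lambda_i$ is a $(p,q)$-cable of $K$, so $\Lambda_i$ sits on the boundary of a solid torus $S_i$ whose core is a representative of $K$. The key point is to get all the components onto a single torus: since the components are pairwise disjoint cables of isotopic knots, a standard innermost-disk / incompressibility argument (using that $S^3$ minus a neighborhood of $K$ is irreducible and the components are all $(p,q)$-curves) lets me isotope so that $L$ lies on the boundary $T=\partial S$ of a single solid torus $S$ with core in the knot type $K$. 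Then uniform thickness of $K$ says $S$ is contained in a solid torus $S'$ that is a standard neighborhood of a max-$tb$ representative of $K$; so I may assume $L$ lies inside $S'$, as a collection of $(p,q)$-cables, and I can perturb the torus $T$ on which they sit to be convex.

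Next I would run the bypass/destabilization machinery. If the components of $L$ do not intersect the dividing set $\Gamma_T$ minimally, then there is a bigon between a component of $L$ and $\Gamma_T$; this gives a bypass that destabilizes that component (keeping the others fixed). So after some destabilizations I may assume every $\Lambda_i$ is a ruling curve of the convex torus $T$, whose dividing slope I call $s$. Now I compare $s$ with $q/p$. If $s$ already makes $L$ a standard Legendrian cable --- namely when $q/p > \overline{tb}(K)$ and $T$ is (isotopic to) the boundary of a standard neighborhood with dividing slope $\overline{tb}(K)$ and exactly two dividing curves, or when $q/p < \overline{tb}(K)$ and $T$ has dividing slope $q/p$ with exactly two dividing curves --- then by Honda's classification of tight contact structures on solid tori together with Eliashberg's isotopy extension, $L$ is Legendrian isotopic to the corresponding standard Legendrian $(np,nq)$-cable $\Lambda^{\pm}_{(np,nq)}$, and simpleness follows by comparing the invariants computed in Remark~\ref{std-cable-tb-calc} (using Legendrian simpleness of $K$ to pin down which $\Lambda$ and which sign). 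Otherwise, $T$ has ``the wrong'' dividing slope or too many dividing curves, and I claim $L$ destabilizes. Using the convex torus classification inside $S'$, there is a convex torus $T'$ disjoint from and parallel to $T$ with dividing slope the appropriate target slope (either $\overline{tb}(K)$ or $q/p$, depending on whether $q/p$ is above or below $\overline{tb}(K)$) and ruling slope $q/p$; applying the Imbalance Principle \cite[Proposition~3.11]{EtnyreHonda01b} to an annulus from a component of $L$ on $T$ to a ruling curve (or a Legendrian divide) of $T'$, otherwise disjoint from $L$, produces a bypass and hence a destabilization of that component. Iterating, the process terminates at a representative $T$ with the right slope and two dividing curves, hence at a standard Legendrian cable; this establishes both that every representative destabilizes to a standard cable and, via the uniqueness of the standard cable in each invariant class, Legendrian simpleness.

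The main obstacle I expect is the step of getting all the components of $L$ onto one torus, i.e.\ showing that the ``outer'' solid torus $S$ can be chosen to contain all the $\Lambda_i$. This is where the hypothesis $q/p\notin\mathbb Z$ genuinely enters: when the cabling slope is non-integral the cable $K_{(p,q)}$ is knotted, the complement of $S$ in $S^3$ is a nontrivial knot complement (in fact the complement of $K_{(p,q)}$), and a Seifert/incompressible-annulus argument à la Lemma~\ref{setup} shows any annulus between two of the $(p,q)$-curves must be vertical, so the components can be pushed onto a common vertical torus; this would fail or require modification in the integral case, which is handled separately in Proposition~\ref{destab-integer}. The other delicate point is the careful bookkeeping of dividing-curve counts versus the value of $tb(\Lambda_i)$ when $q/p < \overline{tb}(K)$, exactly paralleling the computation in the proof of Lemma~\ref{2maxtb}, to conclude that the target torus $T'$ really has only two dividing curves once the destabilization process stabilizes. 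I would isolate these two points and treat the rest as routine adaptations of the torus-link arguments, citing \cite{EtnyreHonda01b, EtnyreHonda05, Honda00a, Kanda97} for the convex-surface ingredients.
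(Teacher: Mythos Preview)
Your overall strategy---put $L$ on a convex torus, destabilize via bypasses and the Imbalance Principle until the torus has the target dividing slope with two dividing curves, then invoke the classification of tight contact structures on solid tori---is exactly the approach the paper takes, and the parallel with Lemmas~\ref{posdestab}, \ref{negtorusstab}, and~\ref{negstabtoiso} is the right one. However, two points in your write-up need correction.

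First, your ``main obstacle'' is not an obstacle at all, and you have misidentified where the hypothesis $q/p\notin\mathbb Z$ enters. The link $L$ is \emph{by definition} an $(np,nq)$-cable link, so it already topologically sits on a single torus $T$ bounding a solid torus with core in the knot type $K$; there is nothing to combine. The real role of $q/p\notin\mathbb Z$ is to guarantee that every component $\Lambda_i$ has \emph{non-positive} contact twisting relative to $T$ (this follows from the max-$tb$ computations for $K_{(p,q)}$ in \cite[Section~3]{EtnyreHonda05} when $K$ is uniformly thick), which is precisely the hypothesis needed to apply the Relative Convex Realization Principle and make $T$ convex relative to $L$. In the integral lesser-slope case $p=1$, $q<\overline{tb}(K)$, a component can have $tb>q$ and hence positive twisting, which obstructs this step and leads to the twisted $n$-copy phenomenon of Proposition~\ref{destab-integer}. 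So your proposed Seifert/incompressible-annulus detour is unnecessary, and the paragraph identifying it as the delicate point should be replaced by the one-line twisting observation.

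Second, your treatment of Legendrian simpleness is too thin. Knowing that every representative destabilizes to a standard cable, and that the standard cables are distinguished by their invariants, is not yet enough: you must also show that when stabilizations of two \emph{different} standard cables have the same classical invariants, they are Legendrian isotopic. This is the analogue of Lemma~\ref{negstabtoiso} and is handled in \cite[Section~3]{EtnyreHonda05}; you should cite or reproduce that step explicitly rather than fold it into ``comparing invariants.''
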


\begin{proof} 
This proof closely follows the proof of the unordered classification of torus links and so we only sketch the proof. 

Let $K$ be a uniformly thick, Legendrian simple knot type. Assume that $q/p$ is not an integer. We begin by noticing that, according to \cite[Section~3]{EtnyreHonda05}, if $L$ is any Legendrian link in the link type of $K_{(np,nq)}$ then it can be put on a convex torus $T$ bounding a solid torus realizing the knot type $K$. This is because each of the components of $L$ has non-positive contact twisting with respect to $T$.  

If $q/p>\overline{tb}(K)$, then as in the proof of Lemma~\ref{posdestab} we can destabilize the components of $L$ until they become ruling curves on a convex torus isotopic to $T$ with two dividing curves of slope $\overline{tb}(K)$. The result in this case follows as in the proof of Lemma~\ref{uniquemaxpos} coupled with the argument in \cite[Section~3]{EtnyreHonda05} that maximal $tb$ invariant $(p,q)$-cables are distinguished by their rotation numbers and become Legendrian isotopic after stabilization as soon as their invariants become the same. 

If $q/p<\overline{tb}(K)$ is not an integer, then as in the proof of Lemma~\ref{negtorusstab} the components of $L$ can be destabilized to become Legendrian divides on a convex torus isotopic to $T$. Then as in the proof of Lemma~\ref{negmaxtb} such a destabilized $L$ has maximal \tb invariant and is an $n$-copy of a maximal \tb $(p,q)$-cable of $K$. 
Finally, the rest of the classification follows as in the proof of Lemma~\ref{negstabtoiso}.
\end{proof}

\begin{proposition}\label{destab-integer} \label{destab-integer} Suppose $K$ is a uniformly thick knot type, $p = 1$, and $q/p  \leq \overline{tb}(K)$. 
Let $\mathcal A$ be the set of  $k$ non-destabilizable
Legendrian representatives of $K_{(n,nq)}$ as described in Lemma~\ref{max-tb-integral}. 
  Any other
  Legendrian representative of $K_{(n,nq)}$ will destabilize to one in this non-destabilizable set; moreover, $K_{(n,nq)}$ is Legendrian simple.
\end{proposition}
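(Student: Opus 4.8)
The plan is to mirror closely the proof strategy used for the negative torus links with unknotted components, since the $p=1$, $q/p\le\overline{tb}(K)$ cable case is the direct generalization of the $(n,-nq)$-torus link case, with the standard neighborhood of a max-$tb$ Legendrian representative of $K$ playing the role that the Heegaard torus played for the unknot. First I would invoke uniform thickness: any Legendrian representative $L$ of $K_{(n,nq)}$ lies in some solid torus $S$ whose core is in the knot type $K$, and by uniform thickness $S$ is contained in a standard neighborhood $N$ of a max-$tb$ Legendrian representative of $K$, so $\partial N$ is convex with two dividing curves of slope $\overline{tb}(K)$. Since each component of $L$ is topologically a $(1,q)$-curve with $q\le\overline{tb}(K)$, each component has non-positive contact twisting relative to a convex torus on which it sits, so by the Relative Convex Realization Principle we may assume all components of $L$ sit as $(1,q)$-curves on a convex torus $T$ parallel to $\partial N$ inside $N$.

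The heart of the argument is then a destabilization statement exactly analogous to Lemma~\ref{destab}: if $L=\Lambda_1\amalg\cdots\amalg\Lambda_n$ with $tb(\Lambda_1)\ge\cdots\ge tb(\Lambda_n)$, and either (i) $tb(\Lambda_1)\le q$ and $\sum tb(\Lambda_i)<nq$, or (ii) $tb(\Lambda_1)=q+t$ for some $t\ge1$ with $tb(\Lambda_2),\dots,tb(\Lambda_n)$ not all equal to $q-t$, then $L$ destabilizes. This is precisely what Lemma~\ref{max-tb-integral} already asserts in outline (it says the argument ``parallels the proofs of Lemma~\ref{nondestab} and~\ref{destab}'' with the uniform-thickness neighborhood replacing the Heegaard torus). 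I would carry out the three cases $tb(\Lambda_1)=q$, $tb(\Lambda_1)<q$, $tb(\Lambda_1)>q$ as in the proof of Lemma~\ref{destab}: in the first two cases one uses a Whitney/bigon disk or the Imbalance Principle applied to an annulus from a component of $L$ to a Legendrian divide of a nearby convex torus (here, a convex torus parallel to $\partial N$ with two dividing curves of slope $q$, which exists because convex tori parallel to the boundary of $N$ achieve every slope in $(-\infty,\overline{tb}(K)]$ by the standard model of Remark~\ref{ex:n-copy-model}); in the third case one works in a standard neighborhood of $\Lambda_1$ and uses the Imbalance Principle on an annulus to $T$. Iterating, every $L$ destabilizes down to one of the non-destabilizable representatives in $\mathcal{A}$, namely $n\Lambda$ with $tb(\Lambda)=q$ or $T^t(n\Lambda_t)$ with $tb(\Lambda_t)=q+t$; that these are exactly the non-destabilizable ones, and that they realize the claimed $tb$ and $r$ values, is Lemma~\ref{max-tb-integral} together with Lemma~\ref{n-copy-tb-r}.

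For Legendrian simplicity, I would argue as in Lemma~\ref{negstabtoiso} and Lemma~\ref{integer-simple}: two Legendrian representatives of $K_{(n,nq)}$ with the same (unordered) $tb$ and $r$ data of their components destabilize to the same member of $\mathcal{A}$ up to isotopy, because (a) within a standard neighborhood of a fixed Legendrian representative of $K$ the non-destabilizable $n$-copies and twisted $n$-copies are classified by the $tb$ and $r$ of the underlying knot, and $K$ is Legendrian simple, so the underlying knot is determined; (b) the stabilization relations among the members of $\mathcal{A}$ are exactly the analogues of the relations $S_{+,\mathrm{all}}(L^j)=S_{-,\mathrm{all}}(L^{j+2})$ and $S_{\pm,1}(L^j_k)=S_{\mp,2}\circ\cdots\circ S_{\mp,n}(L^{j\pm1}_{k-1})$ from Lemma~\ref{integer-simple}, proved the same way by putting both links as ruling curves on the boundary of a common standard neighborhood of a Legendrian representative of $K$ with $tb=q-1$, using uniform thickness to find that neighborhood. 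Hence if the invariants agree after enough stabilizations, the links become Legendrian isotopic, which is exactly Legendrian simplicity.

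The main obstacle I anticipate is case (iii) of the destabilization argument, $tb(\Lambda_1)>q$: one must control the dividing slope of the convex torus $T$ carrying $\Lambda_2,\dots,\Lambda_n$ in the complement of a standard neighborhood $N_1$ of $\Lambda_1$, showing that either the slope is $q+t$ with more than two dividing curves (so a bypass reduces the number of dividing curves and one of the $\Lambda_i$ no longer meets the dividing set minimally) or the slope exceeds $q+t$ (so $\#(\Lambda_n\cap\Gamma_T)>2t$ by a Farey-graph counting argument exactly like the footnote in the proof of Lemma~\ref{2maxtb}), and in both sub-cases producing the bypass via the Imbalance Principle on an annulus between $\partial N_1$ and $T$. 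The uniform thickness hypothesis is essential precisely here, since without it the slope of $T$ could be forced to be larger than $\overline{tb}(K)$ and the complement of $N_1$ need not behave like the complement of a standard unknot neighborhood; this is the same place uniform thickness was used in Lemma~\ref{std-cables-max-tb} via the contactomorphism to a neighborhood of a max-$tb$ unknot and an appeal to Proposition~\ref{maxtb}. Everything else is a routine transcription of the torus-link arguments with ``Heegaard torus'' replaced by ``standard neighborhood of a max-$tb$ representative of $K$.''
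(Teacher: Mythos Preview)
Your proposal is correct and follows essentially the same approach as the paper: both argue by the three-case analysis on $tb(\Lambda_1)$ paralleling Lemma~\ref{destab}, use uniform thickness to work inside a standard neighborhood of a max-$tb$ representative of $K$, and defer Legendrian simplicity to the argument of Lemma~\ref{integer-simple}. One small expository slip: your opening claim that every component has non-positive contact twisting relative to $T$ fails precisely in case~(iii) when $tb(\Lambda_1)>q$, but you correctly handle this in the case analysis by working instead in a standard neighborhood of $\Lambda_1$, just as the paper does.
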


\begin{proof} 
The proof is completely analogous to the proofs of Lemma~\ref{destab} and~\ref{integer-simple}. So we only sketch the ideas. 

First number the components of the link $L$ so that $tb(K_1) \geq tb(K_2) \geq \dots \geq tb(K_n)$. If $tb(K_1)=q$ then we can put the link on a convex torus $T$ as $n$ curves of slope $q$. Since the twisting of $K_1$ is $0$ we see that it will be parallel and disjoint from the dividing curves. We can destabilize the other $K_i$ until they are also dividing curves. Now using uniform thickness we can assume $T$ is contained in a standard neighborhood $N$ of a maximal Thurston-Bennequin representative of $K$. Lemma~\ref{prelagmodel2} says that there is a pre-Lagrangian torus in $N$ that contains the destabilized $L$ as a union of its leaves. 

Now if $tb(K_1)<q$ then we can again put $L$ on a convex torus $T$. If the slope of the dividing curves on $T$ is equal to $-q$ then we can proceed as above. If not, we can destabilize the components of $L$ until they become ruling curves on $T$. Now since $K$ is uniformly thick we can find a solid torus $S$ in the knot type $K$ that contains $T$ and has dividing slope $q$ and at least $n$ dividing curves. We can then use convex annuli with one boundary component on a component of $L$ and the other a dividing curves on $\partial S$ to destabilize $L$ further to be dividing curves on $\partial S$. So we are again finished as above. 

Finally if $tb(K_1)=q+t$ for some $0<t< \overline{tb}(K)+1-q$, then by the bound on the Thurston-Bennequin invariant in Lemma~\ref{std-cables-max-tb} we must have all the other components of $L$ have $tb\leq q-t$ (since taking any component together with $K_1$ we will have to have the sum of the \tb bounded above by $2q$). Thus we can put $K_2\cup \ldots \cup K_n$ on a convex torus around $K_1$. Now as in the proof of Lemma~\ref{destab} we can destabilize all the $K_i$ for $i>1$ until they are ruling curves on a standard neighborhood of $K_1$.  

The proof is complete by showing $K_{(n,nq)}$ is Legendrian simple. This follows exactly as in the proof of Lemma~\ref{integer-simple}.
\end{proof}

\begin{proof}[Proof of Theorem~\ref{unorderedcable}] Suppose $K$ is a uniformly thick and Legendrian simple knot type.  
Lemmas~\ref{std-cables-max-tb} and \ref{max-tb-integral} established the non-destabilizable  representatives of $K_{(np,nq)}$, and Propositions~\ref{destab-non-integer} and
\ref{destab-integer} show that every Legendrian representative of $K_{(np,nq)}$ will destabilize to one of these and are determined by their classical invariants.   The values of $tb$ and $r$ that can be obtained for those
in the non-destabilizable sets was established in Remark~\ref{std-cable-tb-calc}. 
\end{proof}

\subsection{Symmetries of Legendrian Cables} \label{standard-perms} 
We now move on to study the ordered classification of  Legendrian representatives of the cable link $K_{(np,nq)}$.  All the rigidity will appear in the max $tb$ representatives
formed as  standard Legendrian cables.  We now recall our main result from the Introduction.
\smallskip

\noindent
{\bf Theorem~\ref{cableperms}}.
{\em Let $K$ be a uniformly thick knot type. If $L=(\Lambda_1,\ldots, \Lambda_n)$ is a  {standard }Legendrian $(np,nq)$-cable of $K$, where the $\Lambda_i$ are ordered as they appear on the torus or annulus  
 used in the definition of the standard Legendrian cables, then  the following permutations of the components are possible via a Legendrian isotopy.
\begin{description}[align=left]
\item[greater-slope cables] If $q/p>\overline{tb}(K)$, then  any permutation of the $\Lambda_i$ is possible. 
\item[$\overline{tb}(K)$-slope cables]{ If  $q/p=\overline{tb}(K)$ and $K$ is not a cable knot or $K$ is an $(r,s)$-cable and $q/p\not=rs$, then no permutation of the $\Lambda_i$ can be realized by a Legendrian isotopy. }
\item[integral and nonintegral lesser-slope cables ] If $q/p<\overline{tb}(K)$ { and  $K$ is not a cable knot or $K$ is an $(r,s)$-cable and $q/p\not=rs$,
  then only cyclic permutations  of the $\Lambda_i$ can be realized.}
\end{description}
}

This theorem will be proven in Section~\ref{legcables}.  The proof that arbitrary permutations are possible for the greater-slope  standard Legendrian cables will 
parallel the proof that one can  
arbitrarily permute the components in a max-$tb$ Legendrian  positive torus link.    The  strategy to forbid arbitrary permutations of $\overline{tb}(K)$- and lesser-slope standard
Legendrian cables will
parallel the strategy used to forbid arbitrary permutations of the components in a max $tb$ Legendrian  negative torus link. This time, instead of initially restricting to a basic slice, we will
first work in a solid torus with convex boundary having two dividing curves and show that it is not possible to perform any permutation of  the leaves of a pre-Lagrangian annulus,  and it is only possible to do cyclic permutations of the leaves of a 
pre-Lagrangian torus.  We then show that the existence of permutations of the components of a  $\overline{tb}(K)$-slope (or lesser-slope) standard Legendrian cable implies the existence of permutations of components  
of the pre-Lagrangian annulus (or pre-Lagrangian torus) of the solid torus.  After establishing Theorem~\ref{cableperms}, we will easily be able to give the ordered classification of
Legendrian cables.

\subsubsection{Links in solid tori}\label{list}

In Theorem~\ref{thickenedtori}, we showed that it is not possible to do a non-cyclic permutation of the leaves of a boundary-parallel, pre-Lagrangian torus in a basic slice.  Now we will
show that when a solid torus has a    convex boundary with two longitudinal curves, it is not possible to do {\it any} permutations of the components of  a link $L$ that is formed from 
leaves in a pre-Lagrangian annular slice of the solid torus. This will later be important for studying forbidden permutations in the $\overline{tb}(K)$-slope cables.

\begin{theorem}\label{torus0perms}  
Let $(S^1\times D^2, \xi)$ be a solid torus with  convex boundary having two dividing curves of slope $0$, meaning they are parallel to   $S^1\times \{p\}$, and ruling curves of slope $\infty$.  
Inside the solid torus, there is a pre-Lagrangian annulus $A=S^1\times \gamma$ for some properly embedded $\gamma\subset D^2$; let $L=(\Lambda_1,\ldots, \Lambda_n)$ be the link consisting of distinct leaves of $A$. Then no non-trivial permutation of the components of $L$  can be realized by a Legendrian isotopy. \end{theorem}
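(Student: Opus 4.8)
The plan is to mimic the structure of the proof of Theorem~\ref{thickenedtori} (forbidding non-cyclic permutations of leaves of a pre-Lagrangian torus in a basic slice) but carry it out in a solid torus with two longitudinal dividing curves, where the conclusion is stronger: \emph{no} nontrivial permutation is possible, not just no non-cyclic one. First I would reduce, exactly as in Step~\ref{step 1}/Corollary~\ref{fix1}, to a $2$-component situation: if some nontrivial permutation of the components of $L$ can be realized by a contact isotopy $\phi_t$ of $S^1\times D^2$ with $\phi_1(L)=L$, then by discarding all but two components whose positions are interchanged, and using Lemma~\ref{close} applied to the pre-Lagrangian annuli $A$ and $\phi_1(A)$ along a common leaf, we may assume there is a contact isotopy $\psi_t$ of $(S^1\times D^2,\xi)$, fixed near the boundary and equal to the identity on a neighborhood $N_1$ of one component $\Lambda_1$, with $\psi_1(\Lambda_2)=\Lambda_3$ and $\psi_1(\Lambda_3)=\Lambda_2$ for a third auxiliary leaf $\Lambda_3$ --- here the extra rigidity comes from the fact that in the annular (rather than toroidal) setting there is \emph{no} cyclic freedom to slide leaves around, so any nontrivial permutation already produces an honest transposition after discarding components. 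Actually, to get a transposition from an arbitrary nontrivial permutation I only need two components whose cyclic order (along $\gamma\subset D^2$, which is an interval, not a circle) is reversed; since $\gamma$ is an arc this is automatic for any nontrivial permutation.

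Next I would set up the dimension reduction. By Lemma~\ref{prelagmodel2}, $A$ sits inside a non-rotative thickened torus neighborhood $R$ of a convex torus $T'$ of slope $s$ (here $s$ is the slope of $\gamma$, which lies in $(-\infty,0)$), and the contact structure on $S^1\times D^2$ is, up to the appropriate model, $S^1$-invariant in the direction of the dividing curves; more directly, I would pass to an $S^1$-invariant model for the solid torus as in Remark~\ref{ex:n-copy-model}, writing $S^1\times D^2=\Sigma\times S^1$ with $\Sigma=D^2$ a disk and $\Sigma\times\{\theta\}$ a convex disk whose dividing set contains $(\Sigma\times\{\theta\})\cap T_0$, where $T_0$ is the pre-Lagrangian torus containing $A$ (or rather containing $L$). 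Then, as in Step~\ref{step 3} and Lemma~\ref{invtnbhd}, I would remove $S^1$-invariant tubular neighborhoods $N_1,N_2,N_3$ of $\Lambda_1,\Lambda_2,\Lambda_3$ with convex boundary of slope $\infty$ and horizontal rulings, obtaining $X_{123}=\Sigma_{123}\times S^1$ where $\Sigma_{123}$ is a disk with three open subdisks removed, on which $\psi_1$ induces a contactomorphism $\psi^{123}$ (interchanging $N_2$ and $N_3$ via Lemma~\ref{isotoptori}) that is the identity near $\partial N_1$ and near $\partial(\Sigma\times S^1)$. Note $X_{123}$ is now a genuinely $S^1$-invariant piece over a planar surface with boundary, so Honda's minimality Proposition~\ref{KoS1invariant} applies verbatim.

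For the final contradiction (the analogue of Step~\ref{step 4}), I would observe that the dividing set of $\Sigma_{123}\times\{\theta\}$ contains $(\Sigma_{123}\times\{\theta\})\cap T_0$, so there is a dividing arc $\gamma$ joining $\partial N_1$ to $\partial N_2$ with endpoint $p$ on $\partial N_1$, while on $\psi^{123}(\Sigma_{123}\times\{\theta\})$ the arc through $p$ joins $\partial N_1$ to $\partial N_3$ (Lemma~\ref{combinatorics}, whose proof used only that the isotopy is supported away from $N_1$ and carries $N_2$ to $N_3$). Then, exactly as in the ``sliding the boundary along the boundary'' argument, I would produce a convex surface $\Sigma'_\psi$ isotopic rel boundary to $\Sigma_{123}\times\{\theta\}$ whose dividing curves connect the boundary components the same way as $\psi^{123}(\Sigma_{123}\times\{\theta\})$ does; complete boundary slides along the outer boundary components (never moving $\partial N_1$) suffice because $\Sigma_{123}$ is planar, so cutting along vertical annuli $a_i\times S^1$ reduces $X_{123}$ to $D^2\times S^1$ and two meridional disks with the same boundary are isotopic rel boundary. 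This contradicts Proposition~\ref{KoS1invariant}. The main obstacle I anticipate is purely bookkeeping: verifying that the $S^1$-invariant neighborhoods $N_i$ of the $\Lambda_i$ can be chosen simultaneously compatible (convex boundary of slope $\infty$, horizontal rulings, $N_1$ inside the region where $\psi_t=\mathrm{id}$, $N_2$ and $N_3$ swapped by $\psi_1^{1}$ after the Lemma~\ref{isotoptori} adjustment), which is exactly the content already handled in Lemmas~\ref{invtnbhd}, \ref{isotoptori}, \ref{remove-2-disks}; essentially nothing new is needed beyond replacing ``annulus $\Sigma$'' by ``disk $D^2$'' throughout and noting that the conclusion strengthens to forbidding all permutations because the leaf space of $\gamma$ is an interval.
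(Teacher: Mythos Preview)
Your overall strategy is correct and matches the paper's: build an $S^1$-invariant model for $(S^1\times D^2,\xi)$, remove $S^1$-invariant neighborhoods of the components of $L$, and derive a contradiction from Proposition~\ref{KoS1invariant} by comparing dividing curves on $\{\theta\}\times\Sigma$ and on its image under $\psi_1$.

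However, your Step~1 reduction is both unnecessary and somewhat garbled. You write ``reduce to a $2$-component situation'' but then introduce three components $\Lambda_1,\Lambda_2,\Lambda_3$ with $\Lambda_1$ fixed; and the reduction you sketch (discard all but two components, then use Lemma~\ref{close} to fix an auxiliary third leaf) does not quite parallel Corollary~\ref{fix1}, because an arbitrary nontrivial permutation need not contain a $2$-cycle, so there may be no pair literally interchanged by $\phi_1$, only a pair whose relative order along $A$ versus $\phi_1(A)$ is reversed. One can repair this, but the paper avoids the issue entirely: it performs \emph{no} reduction and keeps all $n$ components. The point is that in the solid torus the contact isotopy is already the identity near $\partial(S^1\times D^2)$, so the outer boundary component of $\Sigma$ plays the anchoring role that $\partial N_1$ played in Theorem~\ref{thickenedtori}. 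Concretely, $\Sigma$ is $D^2$ with $n$ sub-disks removed, the dividing set of $\{\theta\}\times\Sigma$ is $\gamma\cap\Sigma$ (a chain of arcs connecting $\partial D^2$ to $\partial N_1$ to $\partial N_2$ to $\cdots$ to $\partial N_n$ to $\partial D^2$), and any nontrivial permutation of the $N_i$ changes which $\partial N_i$ is joined to $\partial D^2$ by the dividing arc emanating from the fixed point on $\partial D^2$. This immediately contradicts Proposition~\ref{KoS1invariant} after the boundary-slide argument, with no need for Lemma~\ref{close}, no need to fix a component, and no need for the extension of Step~\ref{step 2}. So your proof would work after straightening out the reduction, but the paper's version is shorter precisely because the disk boundary supplies the anchor for free.
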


The proof of Theorem~\ref{torus0perms} is very similar to the proof of Theorem~\ref{thickenedtori}.

\begin{proof}  By Kanda's classification result, see Theorem~\ref{thm:solid-tori}, 
it suffices to show the desired statement in a standard model: we will build  an $S^1$-invariant model  for $(D^2\times S^1, \xi)$   such that
 the meridional disks $\{\theta\}\times D^2$ are convex and have Legendrian boundary. 

To build the model, let $D^2$ be the unit disk in $\R^2$. We may find a map of $D^2$ into $(\R^3,\xi_{std})$ so that $\partial D$ is sent to a Legendrian unknot with Thurston-Bennequin invariant $-1$. 
This will induce a characteristic foliation on $D^2$  that can be divided by a single arc $\gamma$. Since the foliation on $D^2$ is divided by 
$\gamma$, there is an $\R$-invariant contact structure on $\R\times D^2$ such that each $\{p\}\times D^2$ has the given foliation. Let $S^1\times D^2$ be the $S^{1}$-invariant contact manifold
formed by the quotient of this manifold by the $\Z$-action 
generated by $(t,p)\mapsto (t+1,p)$.  Since the boundaries of the meridional disks are Legendrian and the contact planes are 
tangent to $\partial (S^1\times D^2)$ at exactly two points on each meridional disk, we see that there are two lines of singularities with slope $0$ in the characteristic foliation of $\partial (S^1\times D^2)$.  
Thus  $\partial (S^1\times D^2)$ has two Legendrian divides of slope $0$, and the rest of the foliation consists of non-singular leaves of slope $\infty$, as desired.

In this model,  $S^1 \times \gamma$ is a pre-Lagrangian annulus, $A$.  Choose $n$ distinct points on $\gamma$, and let 
$L=(\Lambda_1,\ldots, \Lambda_n)$ be the corresponding distinct leaves on $A$.  Suppose $\phi_t$ 
 is a Legendrian isotopy that realizes a   non-trivial permutation of the components of $L$. We can extend this to an ambient contact isotopy $\psi_t$ of $S^1 \times D^2$
that is the identity near the boundary.

Now as in the proof of Theorem~\ref{thickenedtori}, Step~\ref{step 3}, we can choose $S^1$-invariant neighborhoods $N_i$ of the Legendrian knots $\Lambda_i$ such that $\partial N_i $ is convex and the contact isotopy $\phi_t$ induces a contact diffeomorphism from $(S^1\times D^2)\setminus \cup N_i$. Moreover, $(S^1\times D^2)\setminus \cup N_i$ is diffeomorphic to $S^1\times \Sigma$ where $\Sigma$ is $D^2$ with $n$ sub-disks removed from the interior. The boundary of $S^1\times \Sigma$ is convex with dividing curves of slope 0 and ruling curves of slope $\infty$, and the contact structure on $S^1\times \Sigma$  is $S^1$-invariant. For a fixed $\theta$, $\{\theta\}\times \Sigma$ and $\psi_1(\{\theta\}\times \Sigma)$ are convex surfaces, and, after doing ``boundary slides'' as in the proof of Theorem~\ref{thickenedtori}, Step~\ref{step 4}, we can assume that their boundaries are the same and that these surfaces
are isotopic relative their boundaries. The dividing curves on $\{\theta\}\times \Sigma$ are simply the intersection of $\gamma$ with $\Sigma$ and the dividing curves on $\psi_1(\{\theta\}\times \Sigma)$ are the image of these curves under the map $\psi_1$. Thus any non-trivial permutation gives a contradiction to Proposition~\ref{KoS1invariant}, since the dividing curves connect the boundary components differently.
\end{proof}

Next we study allowed permutations of the leaves of a pre-Lagrangian torus inside a solid torus.  This will be used later to study permutations of the lesser-slope cables.

\begin{theorem}\label{torusnegperms} 
Let $(S^1\times D^2, \xi)$ be a solid torus with  a convex boundary having two dividing curves of slope 0,
meaning they are parallel to $S^1\times \{p\}$. For any slope $q/p <0$, there is a 
boundary-parallel, pre-Lagrangian torus $T$  with  characteristic foliation having slope $q/p$.   Let $L=(\Lambda_1,\ldots, \Lambda_n)$ be $n$ distinct leaves on $T$ labeled cyclically as they
appear  along $T$. Then via a Legendrian isotopy, cyclic permutations of the components of  $L$ are possible  but  non-cyclic permutations of the components of $L$ cannot be attained. 
\end{theorem}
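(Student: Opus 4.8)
The plan is to mirror the structure of the proof of Theorem~\ref{thickenedtori}, but with the solid torus $S^1 \times D^2$ (with convex boundary having two dividing curves of slope $0$) playing the role of the basic slice, and the pre-Lagrangian torus $T$ of slope $q/p<0$ playing the role of the boundary-parallel pre-Lagrangian torus. The positive direction — that cyclic permutations are realizable — follows immediately by sliding the components $\Lambda_1,\ldots,\Lambda_n$ along the leaves of the characteristic foliation of $T$, exactly as in the proof of Lemma~\ref{cyclic-perms}. So the content is in forbidding non-cyclic permutations.

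First I would reduce, exactly as in Step~\ref{step 1} of the proof of Theorem~\ref{thickenedtori}, to the case of a three-component link $(\Lambda_1,\Lambda_2,\Lambda_3)$ sitting as leaves of $T$, together with a contact isotopy $\psi_t$ of $S^1\times D^2$ that is the identity near $\partial(S^1\times D^2)$ and on a neighborhood $N$ of $\Lambda_1$, and which interchanges $\Lambda_2$ and $\Lambda_3$. This uses Lemma~\ref{close} and Corollary~\ref{fix1} verbatim (sliding the pre-Lagrangian $T' = \psi_1(T)$ to agree with $T$ near $\Lambda_1$). Next, in place of Lemma~\ref{basic-slice-extension}, I would invoke Lemma~\ref{prelagmodel2}: the convex torus obtained from $T$ sits inside a non-rotative (indeed $[-1,1]$-invariant) thickened torus neighborhood, and using the model for a solid torus with boundary slope $0$ from Remark~\ref{ex:n-copy-model}, together with the $S^1$-invariance in the slope $q/p$-direction, I would embed an appropriate piece of $S^1\times D^2$ into an $S^1$-invariant $(T^2\times[-1,2],\xi')$ (or keep working inside $S^1\times D^2$ itself, whose standard model is already $S^1$-invariant with convex meridional disks — this is the more direct route, analogous to the setup in the proof of Theorem~\ref{torus0perms}). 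Then, following Step~\ref{step 3} and Corollary~\ref{remove-2-disks}, I would choose $S^1$-invariant tubular neighborhoods $N_i$ of $\Lambda_i$ with $N_1\subset N$ so that the complement $X_{123}$ is $\Sigma_{123}\times S^1$ with $\Sigma_{123}$ a planar surface (a disk or annulus with three subdisks removed), with convex torus boundary of slope $\infty$ and horizontal Legendrian rulings, and $\psi_1$ induces a contact diffeomorphism $\psi^{123}$ of $X_{123}$ interchanging $N_2$ and $N_3$; this last point uses Lemma~\ref{isotoptori} just as before.

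The final step is Step~\ref{step 4}: for each $\theta$, $\Sigma_{123}\times\{\theta\}$ is convex with dividing curves containing the traces of the leaves of $T$ (equivalently the traces of $A$), and these dividing curves connect the boundary components $\partial N_1,\partial N_2,\partial N_3$ in a way that is topologically incompatible with the way $\psi^{123}(\Sigma_{123}\times\{\theta\})$ connects them — precisely the combinatorial obstruction of Lemma~\ref{combinatorics}. After ``sliding the boundary along the boundary'' to produce a surface $\Sigma'_\psi$ isotopic rel boundary to $\Sigma_{123}\times\{\theta\}$ but with $\psi^{123}$'s dividing-curve combinatorics, we contradict Honda's minimality Proposition~\ref{KoS1invariant}. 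The only genuinely new point compared to Theorem~\ref{thickenedtori} is that here $\Sigma_{123}$ has a boundary component coming from $\partial(S^1\times D^2)$ which is a torus rather than two circles bounding an annulus; but since that boundary is never moved (the isotopy $\psi_t$ is the identity near it) the boundary-slide and planar-surface-uniqueness arguments of Step~\ref{step 4} go through unchanged — indeed this is exactly the situation handled in the proof of Theorem~\ref{torus0perms}, so I expect the main obstacle to be merely bookkeeping: verifying that the relevant tori can be taken $S^1$-invariant of the correct slopes inside $S^1\times D^2$, and that the complement $X_{123}$ really has the product form with the stated boundary characteristic foliation. I would therefore state the theorem's proof as: cyclic permutations are realized by sliding along $T$ (as in Lemma~\ref{cyclic-perms}); for the obstruction, repeat the proof of Theorem~\ref{thickenedtori} with $S^1\times D^2$ and Lemma~\ref{prelagmodel2} replacing the basic slice and Lemma~\ref{prelagmodel}, noting that the argument is in fact the one already carried out in the proof of Theorem~\ref{torus0perms} with ``pre-Lagrangian torus of slope $q/p$'' in place of ``pre-Lagrangian annulus.''
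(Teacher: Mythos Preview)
Your reduction to a three-component link and the use of Corollary~\ref{fix1} are fine, and the cyclic permutations are indeed realized by sliding along $T$. The gap is in your Step~2 substitute. You propose to carry out the $S^1$-invariant argument of Theorem~\ref{thickenedtori} either by embedding into an $S^1$-invariant $T^2\times[-1,2]$ or by working directly in the solid torus model of Remark~\ref{ex:n-copy-model}. Neither works as stated. The standard model for $S^1\times D^2$ is $S^1$-invariant in the \emph{longitudinal} (slope $0$) direction, and the convex meridional disks used in Theorem~\ref{torus0perms} are transverse to that action; but your leaves $\Lambda_i$ are $(p,q)$-curves of slope $q/p\neq 0$, so they are not orbits, and you cannot choose $S^1$-invariant neighborhoods $N_i$ --- the complement $X_{123}$ is not of the form $\Sigma_{123}\times S^1$ with the $\Lambda_i$ removed as fibers. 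If instead you try to use an $S^1$-action in the slope $q/p$ direction, then for $p>1$ this is a genuine Seifert fibration with a singular fiber at the core, so the quotient is an orbifold disk and Proposition~\ref{KoS1invariant} (which requires a product $\Sigma\times S^1$) does not apply. The analogy with Theorem~\ref{torus0perms} is misleading precisely because there the leaves are longitudinal and hence coincide with the $S^1$-orbits.

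The paper's proof takes a different route that circumvents this: rather than redo Steps~2--4, it \emph{localizes to a basic slice} and then invokes the already-proven Theorem~\ref{thickenedtori} (via Corollary~\ref{pre-lag-order}). Concretely, one uses the Seifert fibration of $S^1\times D^2$ by $(p,q)$-curves over a disk (with one singular fiber if $p>1$) to arrange, exactly as in Lemmas~\ref{setup}--\ref{claim3}, that both the pre-Lagrangian $T_0=T$ and a convex perturbation $T_1'$ of $T_1=\psi_1(T_0)$ (equipped with a non-rotative neighborhood $R_1$ encoding the $T_1$-ordering, via the state-transition argument of Lemma~\ref{claim2}) sit inside a single basic slice $V\subset S^1\times D^2$. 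For $q/p\in\Z$ one instead lands in a union of two basic slices of the same sign, which embeds in a basic slice. One then applies Corollary~\ref{pre-lag-order} to $T_0$ and the pre-Lagrangian $P_1$ furnished by Lemma~\ref{prelagmodel} inside $V$. The point is that the isotopy $\psi_t$ itself is never confined to a thickened torus --- only the two orderings are, and that is enough.
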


\begin{proof} By moving along the leaves of $T$, it is clear that one can do cyclic permutations of $L$.

Now assume, for a contradiction, that it is possible to do a non-cyclic permutation of the leaves of $L$.  As argued in Corollary~\ref{fix1},  
 we can assume  there
is a $3$-component link $L=(\Lambda_1, \Lambda_2, \Lambda_3)$ consisting of leaves of the characteristic foliation of $T$ and a contact isotopy $\psi_t$ of $(S^1\times D^2, \xi)$ such that   
 $\psi_{t} = id$ in a neighborhood of the boundary, $\psi_1(\Lambda_1) = \Lambda_1$, $\psi(\Lambda_2) = \Lambda_3$, and $\psi_1(\Lambda_3) = \Lambda_2$.  
We will now argue that this non-cyclic isotopy implies the existence of a non-cyclic isotopy among the leaves of a pre-Lagrangian torus in a basic slice, thus contradicting Theorem~\ref{thickenedtori}. The argument is very similar to that in the proof of Proposition~\ref{S3-basic-slice}, so we only sketch it here. 

First assume that $q/p \notin \mathbb Z$.  
As in the proof of Lemma~\ref{setup}, we know $S^1\times D^2$ has the structure of a Seifert fiber space over a disk $\mathcal D \subset S^2$ with 
one singular fiber and the regular fibers being $q/p$ curves. 
 We can assume that the pre-Lagrangian torus $T_0=T$ containing $L$ is the pre-image of a curve $c_0 \subset \mathcal D$ that bounds a disk containing the singular point. Let $T_1=\psi_1(T_0)$; 
 we can isotop $T_1$ relative to $L$ to be a convex torus $T_1'$ that is the pre-image of a curve $c_1$ that bounds a disk containing the singular point and intersects $c_0$ transversely.  
 We can choose arcs $a_0$ and $a_1$ that are disjoint from $c_1$, the first of which connects the singular point to a point on $c_0$ and the second connecting a point on the  boundary of 
 $\mathcal D$ to a point on $c_0$, and are otherwise disjoint from $c_0$. The pre-image 
of these arcs are annuli $A_0$ and $A_1$.  Then, as argued in the proof of Lemma~\ref{claim2}, the torus $T_1'$ has a neighborhood $R_1=T_1'\times[-\epsilon,\epsilon]$, 
disjoint from the annuli $A_{0}, A_{1}$, such that $\xi$ restricted to $R_1$ is $[-\epsilon,\epsilon]$-invariant,
 each boundary component of $R_{1}$ has two dividing curves, and the cyclic ordering of $L$ induced by a complementary annulus in $R_{1}$ agrees with the one induced by the pre-Lagrangian torus $T_1$. 
 Lastly, as in the proof of Lemma~\ref{claim3}, we can further thicken $R_{1}$ to a basic slice $V$ that contains $T_0$ and $R_{1}$.  Thus, the existence of our non-cyclic permutation of our link $L$ implies
the existence of two pre-Lagrangian tori in a basic slice that induce different cyclic orderings on the leaves of $L$, a contradiction to Corollary~\ref{pre-lag-order}.

Now consider the case where $p = 1$ so $q/p = q \in \mathbb Z$:  now $S^1\times D^2$ will fiber over a disk $\mathcal D \subset S^{2}$ where all fibers are $q$-curves, and the center core curve of the solid torus corresponds to the center $\mathcal O \in \mathcal D$ is not a singular fiber. We can now again arrange $T_0$ to be the pre-image of a curve about $\mathcal O$. When trying to do the same for $T_1$, the initial curves whose pre-image is $T_1$ might not contain $\mathcal O$ (when $q/p\not\in \Z$ this is prevented for topological reasons, but since the center curve is a regular fiber now it is not), but we can further isotop $T_1$, relative to $L$, so that it does contain $\mathcal O$. Now we cannot argue as above to find a basic slice in the solid torus that contains $T_0\cup R_1$, but we can argue as in Lemma~\ref{claim3} to show that it is in the union of two basic slices that have the same sign that can be embedded  into a basic slice. 
\end{proof}

\subsubsection{Isotopies of the Standard Legendrian Cables}\label{legcables}
In this section, we will establish Theorem~\ref{cableperms}.  
We begin with the greater-slope cables of $K$,  that is  $q/p > \overline{tb}(K)$. 
\begin{lemma}\label{poscableperms}
Let  $K$ be a uniformly thick knot type.  Suppose $q/p>\overline{tb}(K)$ and $L=(\Lambda_1,\ldots, \Lambda_n)$ is a standard  Legendrian $(np,nq)$-cable of $K$.  Then any permutation of the $\Lambda_i$ can be achieved via a Legendrian isotopy. 
\end{lemma}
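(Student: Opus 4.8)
The plan is to treat this exactly as the flexible counterpart of Theorem~\ref{oposlink}, where one permutes the components of a max-$tb$ positive torus link. As in that case—and as anticipated in the Remark following Theorem~\ref{cableperms}—the hypothesis that $K$ be uniformly thick plays no role here: the only input is that, since $q/p>\overline{tb}(K)$, the link $L$ is by construction (Definition~\ref{cable-defn}) a collection of $n$ distinct Legendrian ruling curves of slope $q/p$ on the convex boundary torus $T=\partial N$ of a standard neighborhood $N$ of a Legendrian representative $\Lambda$ of $K$ with $tb(\Lambda)=\overline{tb}(K)$; recall $T$ has two dividing curves of slope $\overline{tb}(K)$.

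The engine of the argument is a product collar of $T$. As in the proof of Theorem~\ref{oposlink}, $T$ has a neighborhood $T^2\times[-1,1]$, with $T^2\times\{0\}=T$, on which $\xi$ is invariant in the $[-1,1]$-direction (\cite{Giroux91,Honda00a}); by Giroux Flexibility each slice $T^2\times\{t\}$ may be assumed in standard form with two dividing curves of slope $\overline{tb}(K)$ and Legendrian ruling curves of slope $q/p$, and for distinct levels the region between two slices is non-rotative, so by Lemma~\ref{isotoptori} there is a compactly supported ambient contact isotopy carrying one slice onto another and (after a Legendrian isotopy within the target torus) ruling curves to ruling curves. I would then: (i) choose distinct levels $t_1<\dots<t_n$ in $(-1,1)$ and, applying such isotopies one component at a time, move $\Lambda_i$ onto $T^2\times\{t_i\}$, keeping the components pairwise disjoint; (ii) on each level independently, slide $\Lambda_i$ among the ruling curves of $T^2\times\{t_i\}$ to any prescribed angular position—this is collision-free since the levels are disjoint—so that the $n$ curves occupy $n$ distinct angular positions realizing any preassigned arrangement; (iii) flow every component back down to $T^2\times\{0\}$ via the collar isotopies, which do not disturb angular positions, producing $L$ with its components in the desired order. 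The composite contact isotopy realizes the prescribed permutation.

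Because the components are parked on genuinely distinct parallel tori during step (ii)—in contrast to the single pre-Lagrangian torus that forces rigidity in the $\overline{tb}(K)$- and lesser-slope cases of Section~\ref{standard-perms}—there is no cyclic constraint, and one obtains every element of $S_n$, not merely the cyclic permutations. This being the flexible case, there is no serious obstacle; the only points needing care are arranging the collar isotopies to be supported away from the components not currently being moved, so that the moves in step (i) can be performed sequentially, and checking that the independent repositioning in step (ii) indeed yields an arbitrary final ordering without intermediate collisions—both handled simply by spacing the $t_i$ apart. One could equally carry out the whole argument in the front projection, but the convex-surface version above is cleaner and makes the parallel with Theorem~\ref{oposlink} transparent.
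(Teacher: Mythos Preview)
Your proof is correct and follows essentially the same approach as the paper: use an $I$-invariant neighborhood of the convex torus $T$ to separate the ruling-curve components onto distinct parallel levels, slide them independently through ruling curves on their respective levels, and then return them to $T$. The paper's proof is terser and does not invoke Lemma~\ref{isotoptori} explicitly, but the idea is identical.
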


The proof of this lemma parallels the proof of Theorem~\ref{oposlink}, which states that components of a max-$tb$ Legendrian $(np, +nq)$-torus link can be arbitrarily permuted.
\begin{proof}
By definition, $L$ can be realized as the ruling curves on the boundary torus $T$ of a standard neighborhood of max-$tb$ Legendrian representative of $K$. Since $T$ is convex, there is a neighborhood $T\times I$ on which the contact structure is $I$-invariant. Thus, for all $a \in I$,   $T\times \{a\}$ is foliated by ruling curves of slope $q/p$. By a Legendrian isotopy, we may move   the components of $L$ to sit on different tori, then by another Legendrian isotopy we can independently move the components through ruling curves on these different tori, and lastly move the components back through a Legendrian
isotopy to the original torus.  In this way, we can achieve any permutation of the components of $L$ by a Legendrian isotopy. 
\end{proof}

To forbid non-cyclic permutations in the lesser-slope cables and any permutations in $\overline{tb}(K)$-slope cables of non-cable knots $K$, we will use the 
 following two lemmas that allow us to ``localize" isotopies into solid tori.
 
\begin{lemma}\label{negcableperms}
Let $K$ be a uniformly thick knot type, and suppose $q/p<\overline{tb}(K)$. We assume that $K$ is not a cable or if $K$ is a cable, $K = K'_{(r,s)}$, we additionally suppose that $q/p\not= rs$.   Let  $L=(\Lambda_1,\ldots, \Lambda_n)$  be a  standard  Legendrian $(np,nq)$-cable of $K$, with
the components ordered cyclically as they appear in the pre-Lagrangian torus used in the definition of the standard Legendrian cable. 
 If there is a non-cyclic permutation of the components of $L$, then there is also a non-cyclic permutation of the leaves of a boundary-parallel, pre-Lagrangian torus in a solid torus with  convex boundary having two Legendrian  divides. 
\end{lemma}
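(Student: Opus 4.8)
The plan is to mimic the structure of the $S^3$ argument from Section~\ref{ssec:non-cyclic}, but now with the ambient manifold $S^3$ replaced by a solid torus $S = S^1 \times D^2$ with convex boundary having two Legendrian divides (of slope $\maxtb(K)$ after suitable coordinates). First I would invoke the uniform thickness hypothesis: since $q/p < \maxtb(K)$, any Legendrian representative $L$ of $K_{(np,nq)}$ sits on a convex torus bounding a solid torus in the knot type $K$, and by uniform thickness this solid torus can be taken inside a standard neighborhood $S$ of a $\maxtb(K)$-representative of $K$. So from the start we may assume $L$ is a collection of leaves of a boundary-parallel pre-Lagrangian torus $T_0$ of slope $q/p$ inside such an $S$, with the cyclic order on $L$ induced by $T_0$. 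Given a Legendrian isotopy $\psi_t$ of $S^3$ realizing a non-cyclic permutation of the components of $L$, the key point is that this isotopy can be ``localized'' to $S$: since the components of $L$ have negative twisting relative to any convex torus they lie on, and $K$ is uniformly thick, one can find a solid torus $S$ in the knot type $K$ that contains $L$ \emph{and} its image under the isotopy throughout — more precisely, one argues as in the proof of Proposition~\ref{S3-basic-slice}, but with the Seifert fibered structure on $S^3$ replaced by the Seifert fibered structure on the solid torus $S$ over a disk $\mathcal{D} \subset S^2$ with one singular fiber (when $q/p \notin \mathbb Z$) or no singular fibers (when $p=1$) and regular fibers being $(p,q)$-curves.

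The heart of the argument is thus a solid-torus analogue of Lemma~\ref{setup}, Lemma~\ref{claim2}, and Lemma~\ref{claim3}. First I would set up the Seifert fibration of $S = S^1 \times D^2$ over the disk $\mathcal{D}$: $T_0$ is the preimage of a curve $c_0 \subset \mathcal{D}$ bounding a subdisk containing the singular point (or the core, if $p=1$), and $T_1 = \psi_1(T_0)$ can be isotoped relative to $L$ to a convex torus $T_1'$ that is the preimage of a curve $c_1$ also bounding a subdisk containing the singular point. The incompressible-annulus argument (using \cite{JacoShalen79, Johannson79}) goes through exactly as before once one checks that the components of $T_0 \cap T_1'$ are $(p,q)$-curves together with null-homotopic curves that can be removed by innermost-disk arguments — and here the hypothesis that $K$ is not a cable (or if $K = K'_{(r,s)}$ then $q/p \neq rs$) is precisely what guarantees the relevant annuli cannot be isotoped to be horizontal, i.e.\ the slope $q/p$ is not the ``exceptional'' cabling slope where the solid torus $S$ itself could be re-cabled. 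Then, using the annuli $A_i$ coming from preimages of arcs in $\mathcal{D}$ from the singular point (resp.\ a boundary point of $\mathcal{D}$) to $c_0$ and avoiding $c_1$, I would construct an $I$-invariant non-rotative neighborhood $R_1$ of $T_1'$ with two dividing curves on each boundary component, disjoint from the $A_i$, such that the cyclic ordering of $L$ on $T_1'$ induced by a complementary annulus in $R_1$ agrees with the one induced by the pre-Lagrangian $T_1$ — this is the direct analogue of Lemma~\ref{claim2} and uses the Discretization of Isotopy technique of \cite{Honda02} together with the disk-equivalence results of \cite{Honda01}. Finally, as in Lemma~\ref{claim3}, I would thicken the neighborhoods of the exceptional fiber(s) using the Imbalance Principle applied along the $A_i$ to produce a solid torus $S' \subset S$ with convex boundary having two longitudinal divides that contains both $T_0$ and $R_1$; when $p=1$, I would instead produce a union of two basic slices of the same sign (using Lemma~\ref{notut}) embeddable as in Lemma~\ref{claim3}, or a solid torus with two longitudinal divides directly.

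I expect the main obstacle to be precisely this last localization step and, relatedly, pinning down the role of the non-cable hypothesis. In the $S^3$ case (Lemma~\ref{claim3}) the target is a basic slice, obtained by drilling out neighborhoods of a Hopf link of exceptional fibers; in the solid-torus case one exceptional fiber has ``become'' the boundary of $S$, so the natural target is a solid torus with two longitudinal dividing curves rather than a basic slice, and one must verify that the drilling/thickening can be carried out staying inside the original uniformly-thick solid torus $S$ and disjoint from $R_1$. The subtle point is the incompressibility/non-horizontality of the annuli $T_0 \cap T_1'$-complementary annuli inside the Seifert fibered solid torus: if $K$ were itself an $(r,s)$-cable and $q/p = rs$, there would be a competing fibration and the annuli could be horizontal, destroying the argument — so the hypothesis is used exactly to rule this out, and I would state this carefully as a topological lemma about incompressible annuli in Seifert fibered solid tori. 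Once $S'$ (or the appropriate union of basic slices) is produced containing $T_0$ and $R_1$, the conclusion is immediate: $R_1$ orders the Legendrian divides of $T_1'$ the same way $T_1$ does, so a non-cyclic permutation would give two boundary-parallel pre-Lagrangian tori in $S'$ inducing different cyclic orders on $L$ — but by the solid-torus analogue of Corollary~\ref{pre-lag-order} (which in turn will follow from Theorem~\ref{torus0perms} and Theorem~\ref{torusnegperms} applied inside $S'$, exactly as Corollary~\ref{pre-lag-order} follows from Theorem~\ref{thickenedtori}), this is impossible. Hence the non-cyclic permutation of $L$ descends to a non-cyclic permutation of the leaves of a boundary-parallel pre-Lagrangian torus in a solid torus with two Legendrian divides, as claimed.
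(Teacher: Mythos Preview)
Your overall strategy (reduce to a three–component link, compare the pre-Lagrangian tori $T_0$ and $T_1=\psi_1(T_0)$, produce a non-rotative neighborhood $R_1$ of a convex perturbation $T_1'$, and finish with Lemma~\ref{prelagmodel2}) matches the paper, and your endgame is essentially correct. The gap is in the localization step, and it is precisely where you say the main obstacle lies.

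You propose to work from the start inside a fixed solid torus $S$ (a standard neighborhood of a $\maxtb(K)$-representative) and to run the Seifert-fibration argument of Proposition~\ref{S3-basic-slice} there. But the isotopy $\psi_t$ is an isotopy of $S^3$, and the torus $T_1=\psi_1(T_0)$ has no reason to lie in $S$; the Seifert fibration of $S$ tells you nothing about the pieces of $T_1'$ that sit in $S^3\setminus S_0$, the complement of the solid torus $S_0$ bounded by $T_0$. So invoking incompressible-annulus results for Seifert fibered \emph{solid tori} is circular: you need to already know $T_1'\subset S$ to use them. This is also why your reading of the non-cable hypothesis (ruling out horizontal annuli in a Seifert fibered solid torus) is not the right one.

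What the paper actually does is work in the knot exterior. After perturbing $T_1$ to a convex $T_1'$ transverse to $T_0$ and removing null-homotopic intersections, the curves $T_0\cap T_1'$ are all of slope $q/p$, so the components of $T_1'\setminus T_0$ that lie \emph{outside} $S_0$ are properly embedded annuli in $X_K=S^3\setminus S_0$ with boundary of slope $q/p$ on $\partial X_K$. Now Theorem~\ref{cable-prop} is invoked: a non-boundary-parallel annulus in $X_K$ has boundary slope $\infty$ (composite case) or $rs$ (if $K=K'_{(r,s)}$). The hypothesis that $K$ is not a cable, or $q/p\neq rs$ if it is, forces each exterior annulus to be boundary-parallel, so together with a sub-annulus of $T_0$ it bounds a solid torus. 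This is what lets you engulf $T_0\cup T_1'$ in a single solid torus in the knot type $K$, which uniform thickness then upgrades to the standard neighborhood $S$. No Seifert fibration is used in this lemma at all; the Seifert-fibration machinery appears only later, in Theorem~\ref{torusnegperms}, once the problem has already been moved into a solid torus. Replace your paragraph on horizontal annuli with this knot-exterior argument and the rest of your outline (the state-transition construction of $R_1$ via Discretization of Isotopy and disk-equivalence, then Lemma~\ref{prelagmodel2}) goes through.
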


\begin{lemma}\label{0cableperms}
Let $K$ be a uniformly thick knot type, and suppose $q/p=\overline{tb}(K)$. We assume that $K$ is not a cable or if $K$ is a cable, $K = K'_{(r,s)}$, we additionally suppose that $q/p\not= rs$.  
 Let  $L=(\Lambda_1,\ldots, \Lambda_n)$ be a standard Legendrian$(np,nq)$-cable of $K$. 
If there is a non-trivial permutation of the components of $L$, then there is also a non-trivial permutation of the leaves of a properly embedded, pre-Lagrangian annulus in a solid torus with a tight contact
structure and a convex boundary having two Legendrian divides. 
\end{lemma}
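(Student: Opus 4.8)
The plan is to imitate closely the proof of Proposition~\ref{S3-basic-slice}, but with the ambient manifold $S^3$ replaced by the solid torus $S^1\times D^2$ that is a standard neighborhood of a max-$tb$ Legendrian representative of $K$. First I would use uniform thickness and the construction of the $\maxtb(K)$-slope standard cable: $L=(\Lambda_1,\ldots,\Lambda_n)$ is the $n$-copy of a max-$tb$ Legendrian representative $\leg$ of $K$, so $L$ lies as $n$ leaves of a pre-Lagrangian annulus $A_0$ inside a standard neighborhood $S$ of $\leg$; here $\partial S$ is convex with two dividing curves of slope $\overline{tb}(K)$. (It is convenient to change coordinates on $\partial S$ so that the slope $\overline{tb}(K)$ becomes $0$, matching the hypotheses of Theorem~\ref{torus0perms}.) Given a contact isotopy $\psi_t$ of $S^3$ realizing a non-trivial permutation of the $\Lambda_i$, I would first reduce, exactly as in Step~\ref{step 1}/Corollary~\ref{fix1}, to a $3$-component sublink on $A_0$ and an isotopy fixing a neighborhood $N$ of $\Lambda_1$ pointwise and interchanging $\Lambda_2,\Lambda_3$ (here there is no cyclic freedom as in the torus case — the annulus has two ends — but that only helps, since \emph{any} transposition already gives the contradiction we want).

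Next I would cut $S$ down to a genuinely confined piece. The image $A_1=\psi_1(A_0)$ is again a pre-Lagrangian annulus containing $L$. The key geometric step, parallel to Lemma~\ref{setup}, is to put $S^1\times D^2$ (a neighborhood of $\leg$) in the $\maxtb(K)$-slope Seifert fibration: regular fibers are the cable curves of slope $q/p=\overline{tb}(K)$, the base is a disk $\mathcal{D}\subset S^2$, and the only subtlety is that $K$ being a \emph{cable} would introduce an extra exceptional fiber — this is precisely why the hypothesis ``$K$ is not a cable, or $K=K'_{(r,s)}$ with $q/p\neq rs$'' is imposed, so that the cabling annulus of $L$ is vertical (isotopic to a union of fibers) rather than exceptional. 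Using the incompressible-surface-in-Seifert-space argument (JacoShalen/Johannson, as in the proof of the Claim inside Lemma~\ref{setup}), I would isotop $A_1$ rel $L$ to a convex pre-Lagrangian annulus $A_1'$ that is the preimage of a properly embedded arc $c\subset\mathcal{D}$, together with the complementary ``vertical'' annuli $A_i$ analogous to those of Lemma~\ref{setup}. The analogue of Lemma~\ref{claim2} then produces an $I$-invariant non-rotative neighborhood $R_1$ of $A_1'$ whose complementary annulus orders $L$ the same way as the pre-Lagrangian $A_1$ (via a Discretization-of-Isotopy argument and disk-equivalence of outermost non-rotative layers, \cite{Honda01}).

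Finally, I would run the analogue of Lemma~\ref{claim3}: using the vertical annuli $A_i$ and the Imbalance Principle to thicken the complement of $R_1$ and of a standard neighborhood of the core $\leg$, I would produce a solid torus $S'\subset S$ with convex boundary having \emph{two} Legendrian divides (slope $0$ in our coordinates) that contains both $A_0$ and $R_1$. Inside $S'$, $A_0$ is a properly embedded pre-Lagrangian annulus whose leaves include $L$, ordered one way, while $A_1'$ (equivalently $R_1$, equivalently the pre-Lagrangian $A_1=\psi_1(A_0)$) is another such object ordering $L$ a permuted way — and since $A_0$ and $A_1'$ both sit in the same tight solid torus $S'$ with two Legendrian divides, Theorem~\ref{torus0perms} (applied to an isotopy localized in $S'$, exactly as Corollary~\ref{pre-lag-order} was used in the $S^3$ argument, after attaching collars to realize $S'$ as the standard model) forbids this. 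Thus the claimed non-trivial permutation of the leaves of a properly embedded pre-Lagrangian annulus in a tight solid torus with convex boundary having two Legendrian divides is produced, proving the lemma.

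The main obstacle I expect is the Seifert-fibration/incompressibility bookkeeping in the analogue of Lemma~\ref{setup}: in $S^3$ the relevant surfaces were honestly incompressible and the only exceptional fiber came from the cable slope, whereas here we work inside a solid torus and must be careful that (i) the cabling annulus of $L$ (coming from $K$ possibly being a cable) is not itself exceptional — handled by the hypothesis $q/p\neq rs$ — and (ii) $A_1'$ can be isotoped rel $L$ to a vertical surface rather than merely a boundary-parallel one, and that this isotopy, together with the later Discretization-of-Isotopy step, really does preserve the cyclic (here: linear) ordering of the components of $L$. Everything else is a direct transcription of the $S^3$ argument with ``pre-Lagrangian torus in a basic slice'' replaced by ``pre-Lagrangian annulus in a solid torus with two Legendrian divides'' and Corollary~\ref{pre-lag-order} replaced by Theorem~\ref{torus0perms}.
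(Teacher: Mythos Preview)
Your plan has a real gap at the localization step. You propose to Seifert-fiber the solid torus $S$ (a standard neighborhood of $\leg$) and then isotop $A_1=\psi_1(A_0)$ rel $L$ to a vertical annulus over an arc in the base disk. But $\psi_t$ is a contact isotopy of $S^3$, so $A_1$ lives in $S^3$ and there is no reason at all for it to lie in $S$, or to be isotopic rel $L$ into $S$. In the torus-link case (Lemma~\ref{setup}) this issue does not arise because the Seifert fibration is on all of $S^3$; here there is no Seifert fibration of $S^3$ whose regular fibers are $(p,q)$-cables of a general $K$, so the JacoShalen/Johannson vertical-surface argument cannot even be set up until you have already confined $A_1$ to a solid torus in the knot type $K$. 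Your later sentence ``produce a solid torus $S'\subset S$'' goes the wrong direction: what is needed is a solid torus \emph{containing} both $A_0$ and (an isotoped copy of) $A_1$.

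Relatedly, your explanation of the non-cable hypothesis is off. It is not that ``$K$ being a cable would introduce an extra exceptional fiber'' in some Seifert fibration of $S$; rather, the hypothesis is exactly what is needed to apply Theorem~\ref{cable-prop}: an incompressible annulus in $S^3\setminus N(K)$ with boundary of slope $q/p=\overline{tb}(K)$ must be boundary-parallel unless $K=K'_{(r,s)}$ and $q/p=rs$. The paper's argument runs through this: one takes $N'=\psi_1(N)$, makes $T'=\partial N'$ convex and transverse to $T=\partial N$, removes null-homotopic intersections, and observes the remaining intersection curves are parallel to the dividing curves (slope $\overline{tb}(K)$). The pieces of $T'$ lying \emph{outside} $N$ are then annuli in $X_K$ with boundary of slope $\overline{tb}(K)$, hence boundary-parallel by Theorem~\ref{cable-prop}, so $N\cup N'$ is engulfed in a solid torus in the knot type $K$; uniform thickness upgrades this to a standard neighborhood $S$, and now both pre-Lagrangian annuli $A,A'$ sit in $S$ and guide the exchange there. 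A Discretization-of-Isotopy step is still needed, but it tracks the solid tori $N,N'$ rather than a non-rotative $T^2\times I$. (Minor point: since the ordering on an annulus is linear, the paper reduces to two components rather than three.)
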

Our main results about isotopies of the standard Legendrian cable links, Theorem~\ref{cableperms}, now easily follows.
\begin{proof}[Proof of Theorem~\ref{cableperms}]
The fact that all permutations of the components of the greater-slope standard Legendrian $(np,nq)$-cable link of $K$ are possible is the content of Lemma~\ref{poscableperms}. The statements about the
$\overline{tb}(K)$-slope and lesser-slope  standard Legendrian cables now follow from Lemma~\ref{negcableperms} coupled with Theorem~\ref{torusnegperms}, and Lemma~\ref{0cableperms} coupled with Theorem~\ref{torus0perms}. 
 \end{proof}
 
 Now we move on to proving our localization results for cables of uniformly thick, non-cable knot types.  We will first highlight a useful property we have for knots with essential annuli in their complements. This result seems to be well-known, see \cite[Lemma~15.26]{BurdeZieschang03}.

 \begin{theorem} \label{cable-prop} If $K \subset S^{3}$  and $A$ is an annulus in $X_{K}: = S^{3} \setminus N(K)$ with boundary on $\partial X_{K}$ that is not boundary-parallel, then either
 \begin{enumerate}
 \item $K = K_{1} \# K_{2}$ and $\partial A$  has slope $\infty$  on $\partial X_{K}$, or
 \item $K = K'_{(r,s)}$  and $\partial A$ has slope $rs$ on $\partial X_{K}$.
 \end{enumerate}
 \end{theorem}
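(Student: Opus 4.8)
The plan is to prove Theorem~\ref{cable-prop} by passing to the JSJ/Seifert-fibered structure of the knot complement and applying the classical theory of incompressible and $\partial$-incompressible surfaces, exactly in the style already used in the proof of Lemma~\ref{setup}. First I would observe that we may assume $A$ is incompressible in $X_K$: if $A$ were compressible, a compressing disk would show (since $A$ is an annulus) that one boundary component of $A$ bounds a disk in $X_K$, forcing $K$ to be the unknot, in which case $X_K$ is a solid torus, every properly embedded annulus with essential boundary is $\partial$-parallel, and the hypothesis is vacuous. So from now on $A$ is incompressible. The next dichotomy is whether $A$ is $\partial$-incompressible. If $A$ is $\partial$-compressible, then a standard argument (compress along the $\partial$-compressing disk) converts $A$ into a disk $D$ with $\partial D \subset \partial X_K$; that disk is either inessential—giving a $\partial$-parallel $A$, excluded by hypothesis—or essential, in which case $\partial D$ is a meridian of some knot summand, $K$ is a connected sum $K_1 \# K_2$, and one checks the original $\partial A$ has slope $\infty$ (the meridional slope) on $\partial X_K$, which is conclusion~(1).

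The heart of the argument is the remaining case: $A$ is incompressible and $\partial$-incompressible. Here I would invoke the standard structure theorem for essential annuli (e.g.\ \cite{JacoShalen79, Johannson79}, exactly the references cited in the proof of Lemma~\ref{setup}): an incompressible, $\partial$-incompressible annulus in an irreducible $3$-manifold with torus boundary is either contained in the characteristic Seifert-fibered piece of the JSJ decomposition, or the manifold itself is Seifert fibered and $A$ is isotopic to a vertical or horizontal surface. A horizontal annulus would fiber $X_K$ over an interval or circle with annular fibers, which is impossible for a nontrivial knot complement (the fiber of a fibered knot complement has genus $\ge 1$ unless $K$ is unknotted). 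So $A$ is vertical in a Seifert-fibered submanifold of $X_K$. Now I use the classification of knots whose complements contain an essential annulus, which is the content of \cite[Lemma~15.26]{BurdeZieschang03}: such $K$ is either composite or a cable knot $K = K'_{(r,s)}$. In the composite case we are back to conclusion~(1); in the cable case, the essential annulus in $X_{K'_{(r,s)}}$ is (up to isotopy) the ``cabling annulus''—the annulus swept out by the cabling slope—and its boundary slope on $\partial X_K$ is precisely $rs$, giving conclusion~(2). The slope computation here is the one elementary calculation I would actually carry out: the cabling annulus has one boundary component that is the core $K$ and one that is a $(r,s)$-curve on the cabling torus, and standard framing bookkeeping (using that the Seifert framing of $K'_{(r,s)}$ comes from $rs$ times the meridian relative to the cabling torus framing) shows the slope is $rs$.

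I expect the main obstacle to be stating the $\partial$-incompressible case cleanly without re-deriving the Seifert-fibered classification: one must be careful that the essential annulus is unique up to isotopy (for a cable knot, the cabling annulus is essentially the only essential annulus, up to the ambient symmetries), so that its boundary slope is well-defined and equals $rs$. This is exactly where \cite[Lemma~15.26]{BurdeZieschang03} does the work for us, so the cleanest writeup simply quotes that lemma for the classification of $K$ and then pins down the slope by the explicit cabling-annulus model; the rest is the routine compression/$\partial$-compression reduction sketched above. A secondary point to handle carefully is the degenerate cases (unknot, and the cable slope $q/p = rs$ itself, which is the ``cabling slope'' where the annulus \emph{is} essentially $\partial$-parallel-ish and explains the hypothesis $q/p \ne rs$ appearing in Lemmas~\ref{negcableperms} and~\ref{0cableperms})—but these are handled by the initial reductions rather than being genuine obstacles.
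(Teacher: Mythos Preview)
The paper does not prove Theorem~\ref{cable-prop} at all: it is stated as a well-known fact with a citation to \cite[Lemma~15.26]{BurdeZieschang03}, and then immediately used in the proofs of Lemmas~\ref{negcableperms} and~\ref{0cableperms}. Your proposal therefore goes well beyond what the paper does, and your overall strategy---reduce to incompressible, split into $\partial$-compressible versus $\partial$-incompressible, and in the latter case invoke the Jaco--Shalen/Johannson structure theory to force $A$ to be vertical in a Seifert piece---is the standard one and is essentially what Burde--Zieschang do.

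There is one genuine slip in your case analysis. In the $\partial$-compressible case you say that compressing yields a disk $D$ with $\partial D\subset\partial X_K$, and that if $\partial D$ is essential then ``$\partial D$ is a meridian of some knot summand'' and $K$ is composite. That is not right: an essential compressing disk for $\partial X_K$ inside $X_K$ forces $X_K$ to be a solid torus, i.e.\ $K$ is the unknot, and the hypothesis is again vacuous (as you already argued in the compressible case). The composite case does \emph{not} arise from $\partial$-compressibility; the meridional annulus coming from a decomposing sphere of $K=K_1\# K_2$ is incompressible \emph{and} $\partial$-incompressible, so conclusion~(1) belongs entirely to your $\partial$-incompressible case alongside conclusion~(2). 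You actually acknowledge this later (``In the composite case we are back to conclusion~(1)''), so the fix is simply to delete the erroneous connected-sum claim from the $\partial$-compressible paragraph and let both conclusions~(1) and~(2) emerge from the vertical-annulus analysis in the $\partial$-incompressible case. With that correction, your sketch is sound and more detailed than what the paper provides.
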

 
{ In the following proof, we will apply Theorem~\ref{cable-prop} to deduce that an annulus is boundary parallel. }
 
\begin{proof}[Proof of Lemma~\ref{negcableperms}]
Assume, for a contradiction, that it is possible to do a non-cyclic permutation of the leaves of $L$.  As argued in Corollary~\ref{fix1},  
 we can assume  there
is a $3$-component link $L=(\Lambda_1, \Lambda_2, \Lambda_3)$ consisting of leaves of the characteristic foliation of $T$ and a contact isotopy $\psi_t$ of $S^{3}$ such that   
  $\psi_1(\Lambda_1) = \Lambda_1$, $\psi(\Lambda_2) = \Lambda_3$, and $\psi_1(\Lambda_3) = \Lambda_2$.

By construction, there is a pre-Lagrangian torus $T_0$ that contains $L$ as a subset of the leaves in its characteristic foliation. Let $T_1=\psi_1(T_0)$. Let $S_0$ be the solid torus with core in the knot type $K$ that $T_0$ bounds. 
\smallskip

\noindent
{\bf Claim:} We can perturb $T_1$, relative to $L$, to a convex torus $T_1'$ satisfying 
\begin{enumerate}
\item  $T_1'$ is transverse to $T_0$, 
\item the union $T_0 \cup T_1'$ is contained in a solid torus $S$ with core in the knot type $K$ and standard convex boundary with dividing slope $\overline{tb}(K)$, and
\item  $T_1'$ is contained in a non-rotative thickened torus $R_{1}$ with convex bound each having two dividing curves of slope $q/p$ and the ordering on the components of $L$ induced from a complementary annulus agrees with the ordering from $T_1$. 
\end{enumerate}
\begin{proof}[Proof of Claim]
There is a $C^\infty$-perturbation of  $T_{1}$ relative  to $L$ to a convex torus and a further $C^\infty$-perturbation, relative to $L$, that makes it transverse to $T_0$. Denote the resulting torus by $T_1'$.
Claim (1) has now been established.

To verify Claim (2), since $K$ is a uniformly thick knot type, it suffices to show that both $T_{0}$ and $T_{1}'$ are contained in a solid torus that has a core curve in the knot type of $K$; this is
where we will use the fact that $K$ is a non-cable knot type {or that $K$ is a cable knot type, $K = K'_{(r,s)}$, and $q/p \neq rs$.}
The intersection of $T_1'$ and $T_0$ consists of simple closed curves. The null-homologous curves can be eliminated by an isotopy of $T_1'$ using a standard innermost disk argument.
So we are left with $T_{1}' \cap T_{0}$ consisting of curves that are parallel to $L$, and thus have slope $q/p$.
The curves in $T_0\cap T_1'$ cut $T_1'$ into several annuli. The annuli that lie on the interior of $T_{0}$ do not obstruct the existence of the solid torus $S$ that engulfs $T_{0}$ and $T_{1}'$. 
If $A$ is an annulus  on the outside of the solid torus $S_0$,
 then $\partial A$ are not meridians (of slope $\infty$), and by hypothesis  $K$ is not a cable or  $K$ is a cable, $K = K'_{(r,s)}$, such that $q/p \neq rs$.
So by Theorem~\ref{cable-prop} we see that  
 the $A$ must be boundary-parallel.  In particular, each such exterior annulus $A$ together with an annulus on $T_{0}$ cobound a  solid torus. Thus the union of $S_0$ and the $A$ is contained in a solid torus whose core is in the knot type $K$.  By the uniform thickness of $K$ this torus can be further enlarged to a torus $S$ that is a standard neighborhood of a maximum \tb invariant representative of $K$, as stated in Claim (2).

For Claim (3), we can find $R_{1}$ using a state transition argument as we did in the proof of Lemma~\ref{claim2}.
\end{proof}
To complete the proof of the lemma we now notice that Lemma~\ref{prelagmodel2} says that inside $S$ there is a pre-Lagrangian torus $P_1$ that intersects $T_1'$ in its Legendrian divides (in particular the intersection contains $L$) and the ordering on $L$ coming from $R_{1}$ and $P_{1}$ is the same. Of course the ordering of the components of $L$ coming from $T_0$ is different by hypothesis. Thus as in Step~1 of the proof of Theorem~\ref{thickenedtori} we see that there is an isotopy of $L$ in $S$ that permutes the components non-cyclically, as desired.
\end{proof}
\begin{proof}[Proof of Lemma~\ref{0cableperms}]
By definition, the  $(np,nq)$-cable of $K$ is formed by taking a standard neighborhood of $N$ of a maximal \tb invariant representative of $K$ and then taking $n$ leaves of the foliation of the pre-Lagrangian annulus $A$ in $N$. If there were a non-trivial permutation of the components of the  $(np,nq)$-cable of $K$, then two of the components of the cable on $A$ interchange their order; by sliding through the leaves of $A$ we see that we have interchanged two leaves of characteristic foliation of $A$. So if we show that this cannot happen, then there are no permutations of the cable of $K$. 
So assume, for a contradiction, that there is a non-trivial permutation of the $(2p,2q)$-cable of $K$. More specifically, there is a Legendrian knot $\Lambda$ in the knot type $K$ such that $L$ is a union of two leaves $\leg_1$ and $\leg_2$ of the characteristic foliation on a pre-Lagrangian annulus $A$ in a standard neighborhood $N$ of $\Lambda$ and
there is a Legendrian isotopy exchanging $\leg_1$ and $\leg_2$.  This Legendrian isotopy 
 can be  extended it to an ambient contact isotopy and then further extended, using Lemma~\ref{close}, so that $A$ and the image of $A$ under the isotopy, denoted by $A'$,  agree in a neighborhood of $A_i$ of $\leg_i$. Because an orientation of the contact structure puts a co-orientation on the foliation of $A$ and $A'$, that must agree where the annuli agree, 
we see the intersection of $A$ and $A'$, near $L$, must be as shown in Figure~\ref{AandAprime}. 
\begin{figure}[ht]
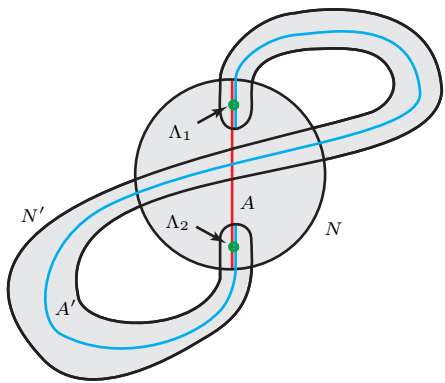

\begin{overpic} 
{fig/AandAprime}
\tiny
\put(88, 65){$A$}
\put(18,25){$A'$}
\put(61, 92){$\leg_1$}
\put(60, 58){$\leg_2$}
\put(120, 55){$N$}
\put(5, 60){$N'$}
\end{overpic}
        \caption{This is a cross section of $N\cup N'$. We see that annulus $A$ in red and the annulus $A'$ in blue. They agree near $\leg_1\cup \leg_2$; in the figure they have been slightly offset so that they can both be seen. 
        The grey regions are $N$ and $N'$. The sets $A'$ and $N'$ could be more complicated, but $A$, $N$ and $A\cap A'$ are as shown in the figure. }                    
                \label{AandAprime}
\end{figure}
Let $N'$ be the image of $N$ under this isotopy, and denote $\partial N$ and $\partial N'$ by $T$ and $T'$, respectively. By shrinking $N$ and $N'$ we may assume $A_i$ contains the dividing curves of $T$ and $T'$.
\smallskip

\noindent
{\bf Claim:} There is a contact isotopy, fixing the annuli $A_i$, that takes $A'$ to an annulus contained in a solid torus $B$ such that either  { $\partial B \subset N$} (but $B\not\subset N$), or $N\cup B \subset S$,
 where $S$ is a solid torus  in the knot type of $K$.

To finish the proof of Lemma~\ref{0cableperms}, suppose $\partial B \subset N$.  
Then $N\cup B=S^3$ (since $B$ is not contained in $N$)
so $T$ is a Heegaard torus for $S^3$, and thus $K$ is the unknot. Since we are assuming $K$ is uniformly thick and hence not an unknot, this cannot be the case. So we are left to deal with the case when $N\cup B$ is contained inside a solid torus $S$ in the knot type of $K$. By the uniform thickness of $K$, we can assume that $S$ is a standard neighborhood of a maximal Thurston-Bennequin invariant representative of $K$. But now $S$ contains $A\cup A'$ and we can use these to guide an isotopy exchanging $\leg_1$ and $\leg_2$ inside of $S$, giving us the desired conclusion of the lemma. 
\end{proof}

\begin{proof}[Proof of Claim]
We can $C^\infty$ perturb $T'$, fixing the $A_i$, so that $T'$ is transverse to $T$. 
We will first show that there is a topological isotopy of $T'$, fixing the annuli $A_i$, such that $T'$ is contained in $N$ or $T'$ intersects $T$ in a union of simple closed curves parallel to the dividing curves on $T$; we will then apply a Discretization of Isotopy argument {and, if needed, an ``engulfing'' argument to construct the desired $S$}.
 
Observe that
$T\cap T'$ consists of a disjoint union of simple closed curves where each closed curve is either null-homologous or parallel to the dividing curves on $T$. A standard innermost disk argument can be made to isotop $T'$ so as to remove null-homologous closed curves. In the case that are no additional curves of intersection, then we have shown that there is a topological isotopy of $N'$ such that $\partial N' \subset N$.  Otherwise, all the remaining circles of intersections between $T$ and $T'$ are homologically essential, and since they must be disjoint from $\partial A_i$ they must be parallel to $\partial A_i$. Since $A_i\cap T$ are the dividing curves of $T$, we see that the curves of intersection are parallel to the dividing curves of $T$, which have slope $q/p=q$.

Now using the Discretization of Isotopy technique, \cite[Section~2.2.3]{Honda02}, we can find a sequence of convex tori $T_0$,\ldots, $T_l$, such that $T_0$ is the original $T'$, $T_l$ is the result of $T'$ under the above isotopy, each $T_i$ intersects $A_i$ in a leaf of $A_i$, and each pair $T_i\cup T_{i+1}$ cobound a thickened torus that is the result of a bypass attachment. 
{We inductively claim that for each $i$ there exists a solid torus $B_{i}$ such that $\partial B_{i} = T_{i}$ and there exists a boundary parallel torus $T_{i}' \subset B_{i}$ that is contact isotopic to $T'$.
For the base case of $i=0$ , we can take $B_{0} = N'$.  
Now inductively assume this is true for $T_i$, and we will verify that it is true for $T_{i+1}$. 

{To see this if $T_{i+1}$ is 
not contained in  $B_i$, then let $B_{i+1}$ be the torus that $T_{i+1}$ bounds, and $T'_{i+1}=T'_{i}$ (clearly $T'_{i+1}=T'_i\subset B_i \subset B_{i+1}$) is the desired torus. If $T_{i+1}$ is contained in $B_i$ then let $B_{i+1}$ be the torus that it bounds. Since $B_{i+1}$ is a solid torus with longitudinal dividing curves on it, we know that it is contactomorphic to a standard model, and thus we know inside $B_{i+1}$ there is a torus $T_{i+1}'$ having two dividing curves of slope $q/p=q$. Now since $K$ is uniformly thick $B_i$ (and hence $B_{i+1}$) is contained in a standard neighborhood $S'$ of a maximal \tb invariant representative of $K$. We know that since $T'_i$ and $T'_{i+1}$ both have two dividing curves of slope the same as $\partial S'$ that $T_i$ and $T_{i+1}$ are both contact isotopic to $\partial S'$ and hence to each other. That is $T'_{i+1}$ is isotopic to $T'$ as claimed. }}

So after contact isotopy we can assume that $N'$ (and $A'$) is contained in a solid torus $B$ such that $\partial B$ is contained in $N$ or intersects $T$ transversely and in curves parallel to the dividing set of $T$ which have slope $q/p=\overline{tb}(K)$. In the former case we are done and in the latter we have that $\partial B$ is divided into annuli by $\partial B\cap T$. If one of these annuli is outside of $N$ then since $K$ is not cable, or if $K$ is a $(r,s)$-cable then $q/p\not = rs$, we know by Theorem~\ref{cable-prop} it must be parallel to $\partial N$. Thus there is a solid torus that contains $N$ and this annulus. Arguing similarly for all the annuli there is a solid torus $S$ that contains $N$ and $B$, and hence $N\cup N'$. 
\end{proof}
 
Now that the hard work has been done to prove Theorem~\ref{cableperms}, which determines the possible permutations  in the max $tb$ representatives, we can easily establish the ordered classification of cable links.

\begin{proof}[Proof of Theorem~\ref{orderedcable}]
With Theorem~\ref{cableperms} in hand, the proof is almost identical to that of the proof of the ordered classification of Legendrian torus knots, Theorem~\ref{orderedclass}. 
\end{proof}

\def\cprime{$'$}

%\bibliography{references}
%%\bibliographystyle{gtart}
%\bibliographystyle{plain}

\end{document}